\documentclass[]{interact}

\usepackage{epstopdf}
\usepackage[caption=false]{subfig}

\usepackage[numbers,sort&compress]{natbib}
\bibpunct[, ]{[}{]}{,}{n}{,}{,}
\renewcommand\bibfont{\fontsize{10}{12}\selectfont}

\theoremstyle{plain}
\newtheorem{theorem}{Theorem}[section]
\newtheorem{lemma}[theorem]{Lemma}

\newtheorem{proposition}[theorem]{Proposition}

\theoremstyle{definition}

\theoremstyle{remark}
\newtheorem{remark}{Remark}[section]

\usepackage{hyperref}
\usepackage{graphicx}
\usepackage{dsfont}
\usepackage{mathtools}
\usepackage{wrapfig}
\usepackage{amssymb, amsmath, latexsym}
\usepackage{nicefrac}
\usepackage{hhline}
\usepackage{multirow}
\usepackage{comment}
\usepackage{pifont}

\usepackage[colorinlistoftodos,bordercolor=orange,backgroundcolor=orange!20,linecolor=orange,textsize=scriptsize]{todonotes}
\usepackage{algorithm}\usepackage{algpseudocode}
\usepackage[symbol]{footmisc}

\usepackage{tikz}
\usetikzlibrary{matrix, positioning, fit}

\newcommand{\argmin}{\mathop{\arg\!\min}}

\def \R {\mathbb R}

\def\wtgg{\widetilde{\nabla}}
\def\wha{\widehat{\alpha}}
\def\whL{\widehat{L}}
\def\BJ{\boldsymbol{J}}
\def\fp{\mathbf{FP}}

\def\mach{\bm{\epsilon}_{{\textbf{\tiny mach}}}}

\begin{document}


\title{Lower and upper bounds of the convergence rate of gradient methods with composite noise in gradient\thanks{}}

\author{
\name{
Artem Vasin \textsuperscript{a} and
Alexander Gasnikov \textsuperscript{b, a, c, d}
}
\affil{
\textsuperscript{a}Moscow Independent Research Institute of Artificial Intelligence, Moscow, Russia.
\textsuperscript{b}Innopolis University, Innopolis, Russia.
\textsuperscript{c}Steklov Mathematical Institute of the Russian Academy of Sciences, Moscow, Russia.
\textsuperscript{d}Institute for Information Transmission Problems, Moscow, Russia.
}
}

\date{Received: date / Accepted: date}
\maketitle              
\begin{abstract}
We introduce a detailed analysis of the convergence of first-order methods with composite noise (sum of relative and absolute) in gradient for convex and smooth function minimization. This paper illustrates instances of practical problems where the utilization of inexact oracles becomes necessary, such as biased compressors, use of floating-point arithmetic and gradient-free optimization. We propose an algorithm that optimally accumulates absolute error, with intermediate convergence depending on the relative component of the noise. Usage of restart technique, regularization transformation, and stopping criteria has been demonstrated to yield results for various function classes. Also, gradient descent adaptive to relative error parameter is provided. For relative noise, lower bounds of convergence are given, confirming the dependence of the parameter of the noise on the condition number of the problem.
\end{abstract}


\section{Introduction}\label{Introduction}
We consider a global convex optimization problem:
\begin{equation} \label{optim}
    \min\limits_{x \in \mathbb{R}^n} f(x),
\end{equation}
the definition of convexity of a function can be formulated as follows:
\begin{equation} \label{convexity}
    (\forall x, y \in \mathbb{R}^n) \; f(x) + \langle \nabla{f}(x), y - x \rangle \leqslant f(y).
\end{equation}
We assume that the objective $f$ is $L$-smooth:
\begin{equation} \label{smooth cond}
    (\forall x, y \in \mathbb{R}^n) \; f(y) \leqslant f(x) + \langle \nabla{f}(x), y - x \rangle + \frac{L}{2} \|x - y \|_2^2.
\end{equation}
To study the convergence, we propose considering convexity~\eqref{convexity} enhancement -- $\mu$ strongly convexity:
\begin{equation} \label{strong conv}
    (\forall x, y \in \mathbb{R}^n) \; f(x) + \langle \nabla{f}(x), y - x \rangle + \frac{\mu}{2} \|x -y \|_2^2 \leqslant f(y).
\end{equation}
In some cases we will consider functions $f$ satisfying Polyak-{\L}ojasiewicz (P{\L}) condition with $\mu$ parameter:
\begin{equation} \label{PL cond}
    (\forall x \in \mathbb{R}^n) \; \frac{1}{2\mu} \|\nabla f(x) \|_2^2 \geqslant f(x) - f^*.
\end{equation}
We define $f^*$ - as minimum value of $f$ (solution for problem~\ref{optim}) and $x^*$: $f(x^*) = f^*$. In this paper we consider iterative first order (gradient) methods discussed in detail~\cite{gasnikov2017universal, bubeck2015convex, beck2017first, nemirovski1995information, nesterov2018lectures, polyak1987introduction}. The class of such algorithms can be defined as:
\begin{equation} \label{linear first order method}
    \begin{gathered}
        x^N = x^0 + \text{span}\left \lbrace \wtgg f(x^0), \wtgg f(x^1), \dots , \wtgg f(x^{N - 1}) \right \rbrace, \\
        \wtgg f(x^k) \text{ -- some estimation of gradient } \nabla f(x^k).
    \end{gathered}
\end{equation}
Thus we introduce starting point $x^0$ and $R =  \|x^0 - x^* \|_2$. Solving problem~\eqref{optim}, we aim to achieve a certain precision $\varepsilon$:
\begin{equation} \label{def:func_eps_solution}
    f(\widehat{x}) - f^* \leqslant \varepsilon,
\end{equation}
where $\widehat{x}$ -- point, which some algorithm will produce.

We consider, that we we don't have access to the real gradient $\nabla f(x)$, only to the noisy one $\widetilde{\nabla} f(x)$ satisfying composite noise condition:
\begin{equation} \label{noise condition}
    \left(\forall x \in \R^n \right) \; \|\widetilde{\nabla} f(x) - \nabla f(x) \|_2 \leqslant \alpha \|\nabla f(x) \|_2 + \delta.
\end{equation}
Different models of the inexact oracle $\wtgg f$ were also considered at different literature~\cite{devolder2013first, devolder2014first, polyak1987introduction, vasin2023accelerated, gasnikov2017universal, ajalloeian2020convergence}. Let us designate $\zeta(x) = \widetilde{\nabla} f(x) - \nabla f(x)$. By Lemma~\ref{noise decompose} an equivalent condition to~\ref{noise condition} can be proposed, decomposing noise $\zeta = \zeta_a + \zeta_r$ to absolute and relative component:
\begin{equation} \label{noise def abs rel}
    \zeta(x) = \wtgg f(x) - \nabla f(x) = \zeta_a(x) + \zeta_r(x), \quad  \| \zeta_a(x) \| \leqslant \delta, \hspace{5mm} \| \zeta_r(x) \|_2 \leqslant \alpha \| \nabla f(x) \|_2.
\end{equation}
Special cases are naturally occurring relative ($\delta = 0$):
\begin{equation} \label{relative noise condition}
    \left(\forall x \in \R^n \right) \; \|\widetilde{\nabla} f(x) - \nabla f(x) \|_2 \leqslant \alpha \|\nabla f(x) \|_2,
\end{equation}
and absolute noise ($\alpha = 0$):
\begin{equation} \label{absolute noise condition}
    \left(\forall x \in \R^n \right) \; \|\widetilde{\nabla} f(x) - \nabla f(x) \|_2 \leqslant \delta.
\end{equation}
In Section~\ref{motivation section} one can find examples of inexact oracles. Using model of absolute noise~\eqref{absolute noise condition} one can note, that using any first order method in case $\| \nabla f(x) \|_2 \leqslant \delta$ does not make sense. Then, using $P\L$ condition~\eqref{PL cond}: $f(x) - f^* \leqslant \nicefrac{\delta^2}{2 \mu}$. In this paper we estimate number of iterations in the presence of composite noise~\eqref{noise condition} to reach level $\left( \frac{L}{\mu} \right)^{p} \nicefrac{\delta^2}{\mu}$. At~\cite{devolder2013first, kamzolov2021universal} intermediate methods (exploring a similar question) were discussed, which guarantee optimal (in terms of Theorem~\ref{lower bound oracle absolute}) convergence rate. More about different noised gradient oracles and previous papers on relative and absolute error we provided in Section~\ref{section related work}. Our contribution described in Section~\ref{contribution section}.

\section{Motivation} \label{motivation section}

In this section we will provide some motivation (sufficient conditions) for composite noise condition~\eqref{noise condition} consideration. Specific imprecise oracles are formulated in Propositions~\ref{propostion:fp_abs},~\ref{propostion:compressor_rel},~\ref{propostion:zo_abs},~\ref{propostion:rel_interpret}.

\subsection{IEEE754} \label{motivation ieee754}

Modern computational systems use floating point representation (rounding) described at~\cite{kahan1996ieee}:
\begin{equation} \label{def:floating}
    \begin{gathered}    
        \hat{x} = (-1)^s B \cdot 2^{E - E_0}, \quad
        B = 1.b_1 \dots b_{p}, \\
        b_j \in \lbrace 0, 1 \rbrace, \; s \in \lbrace 0, 1 \rbrace, \\
        \mach = 2^{-p} \text{ - relative precision of IEEE754 (machine epsilon)}.
    \end{gathered}
\end{equation}
Such representation provides the following approximation:
\begin{equation*}
    | \widehat{x} - x | \leqslant \mach |x| \text{, where } \widehat{x} \text{ floating point representation of } x.
\end{equation*}
The most popular formats are single-precision (FP32, $p = 23$) and double-precision (FP64, $p = 52$). The basic properties of floating-point arithmetic are described at~\cite{overton2001numerical, gautschi2011numerical}, some authors investigated the numerical stability of various methods~\cite{boldo2017round} (stability of Runge-Kutta method).

Let $\fp(x \textbf{ op } y)$ an operation performed in floating-point arithmetic of $x \textbf{ op } y$, where $\textbf{ op } \in \lbrace +, -, \cdot, / \rbrace$. IEEE754 standard guarantees the following arithmetic properties for two float point numbers~\eqref{def:floating} $x, y$:
\begin{equation} \label{eq:fp_props}
    \begin{gathered}
        \fp(x + y) = (x + y) \cdot (1 + \varepsilon_0), \quad \fp(x - y) = (x - y) \cdot (1 + \varepsilon_0), \\    
        \fp(x \cdot y) = (x \cdot y) \cdot (1 + \varepsilon_0), \quad \fp(x / y) = (x / y) \cdot (1 + \varepsilon_0), \\
        \text{where } |\varepsilon_0| \leqslant \mach.
    \end{gathered}
\end{equation}
The main issue, using this calculation model will be the stability (error propagation) of the particular method. One can compare two ways of calculation for two floating point numbers $x, y$:
\begin{equation*}
    \fp\left(\fp(x - y) \cdot \fp(x + y) \right) \quad \text{vs } \quad \fp \left( \fp(x^2) - \fp(y^2) \right),
\end{equation*}
at~\cite[p. 56]{boldo2023floating} mentioned, that first way is more numerical stable, then second.

We can estimate error accumulation for recursive product algorithm of $n$ floating point numbers $\lbrace x_k \rbrace_{k = 1}^n$:
\begin{equation*}
    p_n = \prod_{k = 1}^n x_k, \quad \widehat{p}_{k + 1} = \fp(\widehat{p}_k \cdot x_{k + 1}), \quad \widehat{p}_1 = x_1.
\end{equation*}
Then using~\eqref{eq:fp_props} the following estimation can be provided:
\begin{equation*}
    \begin{gathered}
        \widehat{s}_n = s_n \cdot \prod_{k = 2}^n (1 + \varepsilon_k), \; |\varepsilon_k| \leqslant \mach, \text{then} \\
        | \widehat{s}_n - s_n | \leqslant \max \left\lbrace (1 + \mach)^{n - 1} - 1, 1 - (1 - \mach)^{n - 1} \right\rbrace |s_n|, \\
        | \widehat{s}_n - s_n | \leqslant \max \left\lbrace e^{\mach (n - 1)} - 1, 1 - e^{-\mach (n - 1)} \right\rbrace |s_n| = \left(e^{\mach (n - 1)} - 1 \right) |s_n|.
    \end{gathered}
\end{equation*}
Thus, one can obtain product of $n$ numbers, using recursive procedure above, herewith error of $\widehat{s}_n$ calculation will be relative with parameter $e^{\mach (n - 1)} - 1$.

The case of a sum of several terms demonstrates the problem of error accumulation. At~\cite{blanchard2020class} were mentioned plenty of summation methods -- recursive, blocked, pairwise (binary cascade) and compensated. The last one is also known as Kahan Summation~\cite[p. 203]{goldberg1991every}. That algorithm estimating a float point approximation error on each step.
\begin{equation*}
    \begin{gathered}
        s_n = \sum_{k = 1}^n x_k, \quad \widehat{s}_n - \text{ Kahan Summation output}, \\
        | \widehat{s}_n - s_n | = O \left( \mach + n \mach^2 \right) \cdot \sum_{k = 1}^n | x_k |.
    \end{gathered}
\end{equation*}
So, in this case error of method will be absolute and proportional to $\sum_{k = 1}^n | x_k |$. It is worth noting, that if all $x_k$ is positive, then error will be relative (such a case can be considered when calculating the $2$-norm in $\mathbb{R}^n$). For applied problems, calculating the scalar product $\sum\limits_{k = 1}^n x_k \cdot y_k$ can be useful. We can reduce it to summation problem of $\fp(x_k \cdot y_k)$, then we can guarantee, using Kahan Summation algorithm provides scalar product estimation $\widehat{s}$, such that:
\begin{equation*}
    \left| \widehat{s} - \sum_{k = 1}^n x_k  y_k \right| = O \left( \mach + n \mach^2 \right) \cdot \sum_{k = 1}^n | x_k  y_k |.
\end{equation*}
Thus we can obtain conclusions about the gradient estimation for a particular function, for example quadratic function $f(x) = \frac{1}{2} x^T A x + b^T x \Rightarrow \nabla f(x) = A x + b$ ($A$ is symmetric positive definite). If $A = (a_{k,j})_{1 \leqslant k, j \leqslant n}, g = \nabla f(x) = A x + b, g_k = \sum\limits_{j = 1}^n a_{k, j} x_j$, then Kahan Summation provides $\widetilde{g} = \widetilde{\nabla} f(x)$ such that:
\begin{equation*}
    | \widetilde{g}_k - g_k | = O \left( \mach + n \mach^2 \right) \cdot \left( | b_k | + \sum\limits_{j = 1}^n | a_{k, j} x_j | \right).
\end{equation*}
And estimation for whole norm error:
\begin{eqnarray*}
    \| \widetilde{g} - \left( Ax + b \right) \|_2 & \leqslant & \|  \widetilde{g} - \left( Ax + b \right) \|_1 = O \left( \mach + n \mach^2 \right) \cdot \sum_{k = 1}^n \left( | b_k | + \sum\limits_{j = 1}^n | a_{k, j} x_j | \right)
    \\
    & = & O \left( \mach + n \mach^2 \right) \cdot \left( \| b \|_1 + \| A \|_1 \| x \|_1 \right),
\end{eqnarray*}
here $\| A \|_1$ is corresponding operator norm:
\begin{equation*}
    \| A \|_1 = \sup_{\|x \| = 1} \frac{\| A x \|_1}{\| x \|_1} = \max_{1 \leqslant j \leqslant n} \sum_{k = 1}^n | a_{k, j} |.
\end{equation*}
Thus, we can formulate the following statement.
\begin{proposition} \label{propostion:fp_abs}
    Let $A$ is symmetric positive definite matrix, $b, x$ - vectors of dimension $n$, whose elements are floating point numbers~\eqref{def:floating}. Let $\widetilde{\nabla} f(x)$ - gradient estimation of function $f(x) = \frac{1}{2} x^T A x + b^T x, \; \nabla f(x) = Ax + b$, calculated using floating point operations and Kahan Summation Algorithm for dot product (including matrix multiplication), then $\wtgg f(x)$ satisfies absolute noise condition~\eqref{absolute noise condition}:
    \begin{equation*}
        \| \wtgg f(x) - \nabla f(x) \|_2 = O \left( \mach + n \mach^2 \right) \cdot \left( \| b \|_1 + \| A \|_1 \| x \|_1 \right).
    \end{equation*}
    If all elements of $A, b, x$ is nonnegative, then $\wtgg f(x)$ satisfies relative noise condition~\eqref{relative noise condition}:
    \begin{equation*}
        \| \wtgg f(x) - \nabla f(x) \|_2 = O \left( \sqrt{n} \mach + n\sqrt{n} \mach^2 \right) \cdot \| \nabla f(x) \|_2.
    \end{equation*}
    $\sqrt{n}$ appears, since $\forall z \; \| z \|_1 \leqslant \sqrt{n} \| z \|_2$.
\end{proposition}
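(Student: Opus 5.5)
The argument is componentwise: I take the Kahan Summation bound quoted above as given and then pass from coordinate errors to a bound on the Euclidean norm.

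\emph{Step 1: one coordinate.} Fix $k$. The $k$-th coordinate $g_k = b_k + \sum_{j=1}^n a_{k,j} x_j$ is computed by first forming the products $\fp(a_{k,j} x_j)$ and then running Kahan Summation over the $n+1$ terms $b_k, \fp(a_{k,1}x_1), \dots, \fp(a_{k,n}x_n)$.

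By~\eqref{eq:fp_props}, each product is exact up to a factor $(1+\varepsilon_0)$ with $|\varepsilon_0| \leqslant \mach$. Hence $|\fp(a_{k,j}x_j)| \leqslant (1+\mach)|a_{k,j}x_j|$, and the rounding of the products contributes an absolute error of at most $\mach\sum_j |a_{k,j}x_j|$.

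The Kahan bound contributes $O(\mach + n\mach^2)$ times the sum of the absolute values of the summands. Combining the two gives
\[
|\widetilde g_k - g_k| = O\left(\mach + n\mach^2\right)\left(|b_k| + \sum_{j=1}^n |a_{k,j}||x_j|\right),
\]
where the $(1+\mach)$ factor is absorbed into the $O(\cdot)$.

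\emph{Step 2: absolute bound.} I sum over $k$ and use $\|\cdot\|_2 \leqslant \|\cdot\|_1$. Exchanging the order of summation,
\[
\sum_k\sum_j |a_{k,j}||x_j| = \sum_j |x_j| \sum_k |a_{k,j}| \leqslant \|A\|_1 \|x\|_1,
\]
where $\|A\|_1$ is the maximal column sum. This gives the first claim. It has the form of the absolute noise condition~\eqref{absolute noise condition}, with $\delta$ depending on $\|x\|_1$; on a bounded region this is a constant.

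\emph{Step 3: relative bound.} Now assume all entries of $A$, $b$, $x$ are nonnegative. Then every summand is nonnegative, so
\[
|b_k| + \sum_j |a_{k,j}x_j| = g_k = |g_k|.
\]
Summing the coordinate bounds over $k$ gives
\[
\|\widetilde g - g\|_2 \leqslant \|\widetilde g - g\|_1 = O\left(\mach + n\mach^2\right)\|g\|_1.
\]
Applying $\|g\|_1 \leqslant \sqrt n\,\|g\|_2$ gives
\[
\|\widetilde g - g\|_2 = O\left(\sqrt n\,\mach + n\sqrt n\,\mach^2\right)\|\nabla f(x)\|_2,
\]
which is the relative noise condition~\eqref{relative noise condition}.

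A sharper version of Step 3 is also available: with nonnegative data the coordinate bound is already relative, $|\widetilde g_k - g_k| \leqslant c\,|g_k|$, so summing squares directly gives $\|\widetilde g - g\|_2 \leqslant c\,\|g\|_2$ without the $\sqrt n$ factor. The stated bound follows a fortiori.

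The only delicate point is Step 1. The Kahan error bound quoted above is stated for exactly representable summands, whereas here it is applied to the rounded products $\fp(a_{k,j}x_j)$ together with $b_k$. To handle this I first bound the error against the exact sum of the rounded terms. I then add the product rounding error $\mach\sum_j|a_{k,j}x_j|$ and check that both pieces fit inside the $O(\mach + n\mach^2)$ constant.
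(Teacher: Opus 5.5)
Your proposal is correct and follows essentially the paper's argument: the coordinatewise Kahan bound $|\widetilde g_k - g_k| = O(\mach + n\mach^2)\bigl(|b_k| + \sum_j |a_{k,j} x_j|\bigr)$, then $\|\cdot\|_2 \leqslant \|\cdot\|_1$ with the column-sum bound for the absolute part, and, under nonnegativity, the identity $|b_k| + \sum_j |a_{k,j} x_j| = g_k$ followed by $\|g\|_1 \leqslant \sqrt{n}\,\|g\|_2$. Three of your additions are valid refinements of the paper's sketch. You account explicitly for the rounding of the products, which the paper glosses over; this is harmless, since the rounded products are themselves floating-point numbers. You write $\leqslant$ where the paper writes $=$ in the column-sum step. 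And you observe that the coordinatewise relative bound already gives the estimate without the $\sqrt{n}$ factor.
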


\subsection{Biased compressors for distributed optimization}

Nowadays distributed (federated, decentralized) optimization continues to gain popularity. Various problems such as saddle point problems, variational inequalities and minimization were considered in these papers~\cite{beznosikov2025distributed, gorbunov2023high, rogozin2025decentralized, kovalev2022optimal, rogozin2021towards}. In decentralized setting communication between workers is crucial problem, inter alia reducing the load on the data transmission channel. Bounded compressors $\mathcal{Q}$ considered at~\cite{beznosikov2023biased, condat2022ef, gorbunov2020linearly}:
\begin{equation*}
    \| \mathcal{Q}(x) \|_2^2 \leqslant \rho \| x \|_2^2.
\end{equation*}
Worth noting, that we define deterministic compressors, although papers above study stochastic ones. The following proposition describes different compressors.

\begin{proposition} \label{propostion:compressor_rel}

    Below $u^k$ - orthogonal basis in $\R^n$.

    1. Consider the Top-K compressor (see~\cite{beznosikov2023biased}):
    \begin{equation*}
        \mathcal{Q}_{\text{Top-K}}(x) = \sum_{j = n - k + 1}^n x_{(j)} u^{(j)}, \quad x_{(1)} \leqslant \dots \leqslant x_{(n)}.
    \end{equation*}
    Then gradient estimation $\wtgg f(x) = \mathcal{Q}_{\text{Top-K}} \left( \nabla f(x) \right)$ satisfies relative error~\eqref{relative noise condition}:
    \begin{equation*}
        \| \wtgg f(x) - \nabla f(x) \|_2 \leqslant \sqrt{1 - \nicefrac{k}{n}} \cdot \| \nabla f(x) \|_2.
    \end{equation*}

    2. Consider the sparsification compressor (see~\cite{beznosikov2023biased}):
    \begin{equation*}
        \mathcal{Q}_{\text{spars}}(x) = \sum_{j = 1}^n \frac{s_j}{m} u^j, \quad s_j = \argmin_{s \in \mathbb{Z}} \left | \frac{s}{m} - x_j \right |, \quad m - \text{fixed natural number}.
    \end{equation*}
    Then gradient estimation $\wtgg f(x) = \mathcal{Q}_{\text{spars}} \left( \nabla f(x) \right)$ satisfies absolute error~\eqref{relative noise condition}:
    \begin{equation*}
        \| \wtgg f(x) - \nabla f(x) \|_2 \leqslant \frac{\sqrt{n}}{2m}.
    \end{equation*}

    3. Consider the sign compressor (see sign-sgd~\cite{bernstein2018signsgd}):
    \begin{equation*}
        \mathcal{Q}_{\text{sign}}(x) = \left( \frac{1}{n} \sum_{j = 1}^n | x_j | \right) \cdot \sum_{j = 1}^n \text{sign}(x_k) u^j.
    \end{equation*}
    Then gradient estimation $\wtgg f(x) = \mathcal{Q}_{\text{sign}} \left( \nabla f(x) \right)$ satisfies relative error~\eqref{relative noise condition}:
    \begin{equation*}
        \| \wtgg f(x) - \nabla f(x) \|_2 \leqslant \sqrt{1 - \nicefrac{1}{n}} \cdot \| \nabla f(x) \|_2.
    \end{equation*}

\end{proposition}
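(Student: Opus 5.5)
All three parts are coordinatewise computations in the basis $u^1,\dots,u^n$. I will read this basis as orthonormal and write $x=\sum_j x_j u^j$ with $x=\nabla f(x)$ fixed. Then $\|\wtgg f(x)-\nabla f(x)\|_2^2$ is the sum of squared coordinate errors, so I work with $\|\mathcal{Q}(x)-x\|_2^2$ throughout. No earlier result of the paper is needed beyond this Parseval-type identity and the norm inequality $\|z\|_2\leqslant\|z\|_1$.

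For the Top-K compressor I will interpret the order statistics as ordering by absolute value, $|x_{(1)}|\leqslant\dots\leqslant|x_{(n)}|$. That is the standard definition, and the bound fails for the literal signed ordering. Then $\|\mathcal{Q}_{\text{Top-K}}(x)-x\|_2^2=\sum_{j=1}^{n-k}x_{(j)}^2$, the sum of the $n-k$ smallest squared coordinates. The key step is an averaging argument. The mean of the $n-k$ smallest numbers among $x_1^2,\dots,x_n^2$ is at most the mean of all $n$ of them. Hence $\sum_{j=1}^{n-k}x_{(j)}^2\leqslant \frac{n-k}{n}\|x\|_2^2$, and taking square roots gives the factor $\sqrt{1-\nicefrac{k}{n}}$. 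If one wants to see the averaging step explicitly: the $n-k$ smallest squares are each at most every one of the $k$ largest, so $k\sum_{\text{small}}\leqslant(n-k)\sum_{\text{large}}$, which rearranges to the claim.

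For the rounding compressor (labelled sparsification in the statement), each $s_j/m$ is a nearest point of the grid $\frac{1}{m}\mathbb{Z}$ to $x_j$. Adjacent grid points are $1/m$ apart, so $|s_j/m-x_j|\leqslant\frac{1}{2m}$. Summing squares gives $\|\mathcal{Q}_{\text{spars}}(x)-x\|_2^2\leqslant\frac{n}{4m^2}$, and hence the bound $\frac{\sqrt n}{2m}$. This is an absolute error in the sense of~\eqref{absolute noise condition}, with $\delta=\frac{\sqrt n}{2m}$.

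For the sign compressor I read the typo $\text{sign}(x_k)$ as $\text{sign}(x_j)$. Put $a=\|x\|_1/n$ and let $s$ be the number of nonzero coordinates, so $s\leqslant n$. Expanding the square gives
\begin{equation*}
\|a\,\text{sign}(x)-x\|_2^2=\|x\|_2^2-2a\|x\|_1+a^2 s\leqslant \|x\|_2^2-\frac{2\|x\|_1^2}{n}+\frac{\|x\|_1^2}{n}=\|x\|_2^2-\frac{\|x\|_1^2}{n}.
\end{equation*}
Then I use $\|x\|_1\geqslant\|x\|_2$ to get $\|x\|_2^2-\frac{\|x\|_1^2}{n}\leqslant(1-\nicefrac{1}{n})\|x\|_2^2$.

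None of the three parts is really hard. The main points of care are interpretive: the absolute-value ordering in Top-K, orthonormality of $u^j$, and handling zero coordinates in the sign case, which the bound $s\leqslant n$ absorbs. The only nontrivial inequality is the averaging step for Top-K, and it is elementary.
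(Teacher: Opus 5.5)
Your proof is correct. The paper gives no proof of this proposition: it is stated in the Motivation section and deferred to the cited compression literature, so there is nothing to compare against. Your three computations are the standard elementary arguments: the averaging bound for Top-K, the half-grid-spacing bound for rounding, and the expansion $\|x\|_2^2-2a\|x\|_1+a^2 s$ for scaled sign.

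Your interpretive repairs are necessary, not cosmetic:
\begin{itemize}
\item Top-K must be ordered by absolute value. Under the literal signed ordering, $x=(-1,0)$ with $k=1$ keeps the zero coordinate, giving error $1>\sqrt{1/2}\,\|x\|_2$.
\item The basis must be orthonormal, not merely orthogonal.
\item $\text{sign}(x_k)$ must be read as $\text{sign}(x_j)$.
\item Part 2 is an instance of \eqref{absolute noise condition}, not of \eqref{relative noise condition} as the statement's reference says.
\end{itemize}

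Your sign computation also gives the sharper factor $1-\|x\|_1^2/(n\|x\|_2^2)$ before you relax it to $1-\nicefrac{1}{n}$.
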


\subsection{Gradient-free methods}

Assume we don't have access to any gradient oracle (including noised), however, there is access to first-order information:
\begin{equation} \label{def:func_inexact_oracle}
    \left| \widetilde{f}(x) - f(x) \right| \leqslant \delta_f.
\end{equation}
In this case, we can also solve the minimization problem~\eqref{optim}, various approaches have been proposed at~\cite{kornilov2023accelerated, lobanov2024black, gasnikov2023randomized, lobanov2024acceleration}. To consider the deterministic noise model (biased stochastic noise more common in that setting) we refer to~\cite{berahas2022theoretical}. Using this paper we introduce the following proposition (Theorem 2.1 at~\cite[p. 5]{berahas2022theoretical}).
\begin{proposition} \label{propostion:zo_abs}
    Let $f$ is smooth~\eqref{smooth cond} and $\widetilde{f}$ satisfies~\eqref{def:func_inexact_oracle}. Consider the following gradient estimation:
    \begin{equation*}
        \wtgg f(x) = \sum_{k = 1}^n \frac{f(x + h u^k) - f(x)}{h} u^k, \; u^k \text{ - orthogonal basis in } \mathbb{R}^n.
    \end{equation*}
    Then $\wtgg f$ satisfies absolute noise condition~\eqref{absolute noise condition}:
    \begin{equation*}
        \| \wtgg f(x) - \nabla f(x) \|_2 \leqslant \sqrt{n} \left( \frac{L h}{2} + \frac{2 \delta_f}{h} \right).
    \end{equation*}
\end{proposition}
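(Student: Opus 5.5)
Since $u^1,\dots,u^n$ is an orthonormal basis, I expand the true gradient in it, $\nabla f(x) = \sum_{k=1}^n \langle \nabla f(x), u^k\rangle u^k$, and write the error coordinatewise. I read the estimator as being built from the inexact values $\widetilde f$, as the hypothesis~\eqref{def:func_inexact_oracle} indicates. With exact values the $\delta_f$ term simply disappears and the bound still holds. Then
\begin{equation*}
    \wtgg f(x) - \nabla f(x) = \sum_{k=1}^n e_k u^k, \quad e_k = \frac{\widetilde f(x + h u^k) - \widetilde f(x)}{h} - \langle \nabla f(x), u^k \rangle .
\end{equation*}
By Parseval, $\|\wtgg f(x) - \nabla f(x)\|_2 = \left(\sum_k e_k^2\right)^{1/2} \leqslant \sqrt{n}\, \max_k |e_k|$. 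The whole proof therefore reduces to the scalar bound $|e_k| \leqslant \frac{Lh}{2} + \frac{2\delta_f}{h}$.

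To get this bound, I split each $e_k$ into a noise part and a discretization part:
\begin{equation*}
    e_k = \frac{\bigl(\widetilde f(x+hu^k) - f(x+hu^k)\bigr) - \bigl(\widetilde f(x) - f(x)\bigr)}{h} + \left( \frac{f(x+hu^k) - f(x)}{h} - \langle \nabla f(x), u^k\rangle \right).
\end{equation*}
The first fraction is at most $2\delta_f/h$ in absolute value by~\eqref{def:func_inexact_oracle} and the triangle inequality. For the second term, I apply the smoothness inequality~\eqref{smooth cond} with $y = x + hu^k$ and $\|u^k\|_2 = 1$. This gives the upper bound $\frac{f(x+hu^k) - f(x)}{h} - \langle \nabla f(x), u^k\rangle \leqslant \frac{Lh}{2}$.

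The main obstacle is the matching lower bound on the discretization term, because~\eqref{smooth cond} as stated is one-sided. Under the standing convexity assumption~\eqref{convexity}, the difference quotient minus the directional derivative is nonnegative, so its absolute value is at most $Lh/2$. Without convexity, I would instead use the Lipschitz-gradient form of $L$-smoothness (the form Theorem~2.1 of~\cite{berahas2022theoretical} assumes), i.e.\ $|f(y) - f(x) - \langle \nabla f(x), y-x\rangle| \leqslant \frac{L}{2}\|y-x\|_2^2$. This follows from $f(y) - f(x) - \langle \nabla f(x), y - x\rangle = \int_0^1 \langle \nabla f(x + t(y-x)) - \nabla f(x), y-x\rangle\,dt$.

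Combining the two pieces gives $|e_k| \leqslant \frac{Lh}{2} + \frac{2\delta_f}{h}$ for every $k$, and the Parseval step yields the claimed $\sqrt{n}\left(\frac{Lh}{2} + \frac{2\delta_f}{h}\right)$. Since this bound does not depend on $x$, the estimator satisfies the absolute noise condition~\eqref{absolute noise condition} with $\delta$ equal to this quantity.
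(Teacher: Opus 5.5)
Your proof is correct and follows the standard coordinatewise argument behind Theorem~2.1 of \cite{berahas2022theoretical}, which the paper cites instead of giving its own proof: you bound each coordinate error by $\frac{Lh}{2} + \frac{2\delta_f}{h}$ and obtain the factor $\sqrt{n}$ from orthonormality. Your side remarks are also right and supply details the paper's statement leaves implicit: the estimator must be built from $\widetilde f$, the basis must be orthonormal, and the one-sided condition \eqref{smooth cond} needs either convexity or the Lipschitz-gradient form to bound the discretization term from below.
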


\subsection{Relative interpretation of absolute noise}

\begin{proposition} \label{propostion:rel_interpret}
    Assume $\wtgg f$ satisfies absolute noise~\eqref{absolute noise condition} and $\| \nabla f(x) \| \geqslant K \delta, \; K > 1$. Then $\wtgg f$ also satisfies relative error~\eqref{relative noise condition}:
    \begin{equation*}
        \| \wtgg f(x) - \nabla f(x) \|_2 \leqslant \delta \leqslant \frac{1}{K} \| \nabla f(x) \|_2.
    \end{equation*}
\end{proposition}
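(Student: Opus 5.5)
The argument is a direct chain of two inequalities, evaluated pointwise at an arbitrary $x \in \R^n$ where the hypothesis $\|\nabla f(x)\|_2 \geqslant K\delta$ holds.

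First, the absolute noise condition~\eqref{absolute noise condition} gives directly $\|\wtgg f(x) - \nabla f(x)\|_2 \leqslant \delta$; this is the left inequality of the statement. Second, the assumption $\|\nabla f(x)\|_2 \geqslant K\delta$ with $K > 1 > 0$ can be divided by $K$ to obtain $\delta \leqslant \frac{1}{K}\|\nabla f(x)\|_2$, which is the right inequality. Chaining the two gives
\begin{equation*}
    \|\wtgg f(x) - \nabla f(x)\|_2 \leqslant \frac{1}{K}\|\nabla f(x)\|_2 ,
\end{equation*}
which is exactly the relative noise condition~\eqref{relative noise condition} with parameter $\alpha = \nicefrac{1}{K}$. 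The requirement $K > 1$ additionally guarantees $\alpha < 1$, which is the meaningful regime for relative noise: the estimate then keeps a positive inner product with the true gradient.

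There is no genuine obstacle; the only care needed is that the hypothesis $\|\nabla f(x)\|_2 \geqslant K\delta$ is local. The relative condition is therefore obtained only on the region where it holds, for instance along the iterates before the gradient norm drops to order $\delta$. This is precisely the sense in which the proposition is used: far from the optimum, absolute noise behaves like relative noise.
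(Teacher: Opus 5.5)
Your proof is correct and matches the paper's argument: the paper gives no separate proof, and the displayed chain in the statement (the absolute noise bound, then dividing $\|\nabla f(x)\|_2 \geqslant K\delta$ by $K$) is the entire argument, yielding relative noise with $\alpha = 1/K$. Your remark that the conclusion holds only pointwise, where the gradient bound holds, is accurate and matches how the paper later uses the proposition as a stopping criterion in the relative-interpretation section.
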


The proposition above seems naive, however such trick we used to provide lower bounds (see Section~\ref{lower bound section}) and series of positive results on convergence for absolute noise (see Section~\ref{section main relative interpretation}).

\section{Related Work} \label{section related work}

\subsection{Inexact $(\delta, L, \mu)$ oracle}

In~\cite{devolder2013first} one of inexact oracle model was introduced. Pair $(g_{\delta, L, \mu}(y), f_{\delta, L, \mu}(y))$ called $(\delta, \mu, L)$ oracle at point $y$ for function $f$, if $\forall x \in \mathbb{R}^n$:
\begin{equation} \label{delta L mu oracle}
    \frac{\mu}{2} \|x - y \|_2^2 \leqslant f(x) - \left(f_{\delta, L, \mu}(y) + \langle g_{\delta, L, \mu}(y), x - y \rangle \right) \leqslant \frac{L}{2} \|x - y \|_2^2 + \delta.   
\end{equation}
It follows from the Lemma~\ref{reduction to oracle} that absolute inexact noise in gradient~\ref{absolute noise condition} is a special case of $(\delta, L, \mu)$ oracle~\ref{delta L mu oracle}. Such an inexact oracle model can also be used for functions which do not satisfy smoothness~\eqref{smooth cond}, for example the Hölder continuity:
\begin{equation} \label{def:holder continity}
    \forall x, y \; \| \nabla f(x) - \nabla f(y) \|_2 \leqslant L_{\nu} \| x - y \|_2^{\nu}.
\end{equation}
If functions satisfies~\eqref{def:holder continity}, then $(\nabla f(x), f(x))$ satisfies $\left( \delta, L(\delta, \nu) \right)$ (see~\cite{devolder2014first}), where
\begin{equation*}
    L = L_{\nu} \left( \frac{L_{\nu}}{2 \delta} \frac{1 - \nu}{1 + \nu} \right)^{\frac{1 - \nu}{1 + \nu}}.
\end{equation*}

Authors of~\cite{alkousa2024higher} proposed a generalization of the model above, namely $(\delta, L, \mu, q)$ oracle:
\begin{equation} \label{delta L mu q oracle}
    \frac{\mu}{2} \|x - y \|_2^2 \leqslant f(x) - \left(f_{\delta, L, \mu, q}(y) + \langle g_{\delta, L, \mu, q}(y), x - y \rangle \right) \leqslant \frac{L}{2} \|x - y \|_2^2 + \delta \| x - y \|_2^q,
\end{equation}
and conclude that relative error in gradient~\ref{relative noise condition} is a special case of $(\delta, L, \mu, q)$ oracle~\ref{delta L mu q oracle} with $q = 1, \delta = \frac{\alpha}{1 - \alpha} \wtgg f(y)$. It is worth noting, that relative inexactness has not been adapted yet to $(\delta, L, \mu)$ oracle so that appropriate methods ensure convergence.

$(\delta, L, \mu)$ oracle was mentioned, because we used lower bound convergence of first order method to obtain lower bound convergence (see Theorem~\ref{lower bound relative noise mu > 0 text}) for method using gradient estimation $\wtgg f$ satisfies relative error~\eqref{relative noise condition}.

\subsection{Relative error}

Although relative inaccuracy seems less harmful to convergence, nowadays there is no comprehensive answer on lower and upper bound for convergence of first order methods~\eqref{linear first order method} in presence of relative error~\eqref{relative noise condition} in gradient. We will mention here the key current results on this problem.
\begin{itemize}
    \item In~\cite{polyak1987introduction} was proved convergence for smooth~\eqref{smooth cond} function satisfying $P\L$~\eqref{PL cond}:
    \begin{equation*}
        f(x^N) - f^* \leqslant \left( 1 - \left(\frac{1 - \alpha}{1 + \alpha} \right)^2 \frac{\mu}{L} \right)^N \left( f(x^0) - f^* \right).
    \end{equation*}
    \item In~\cite{vernimmen2025worst} worst case convergence analysis of Gradient Descent (Algorithm~\ref{alg gd}) convergence (via PEP framework~\cite{goujaud2024pepit}).
    \item In~\cite{alkousalipschitz} lipschitz-free optimization for strongly convex functions was considered with absolute and relative noise.
    \item In~\cite{puchinin2023gradient} adaptive gradient descent for $\alpha$ and $L$ was proposed. We used same idea for Algorithm~\ref{alg rel adaptive gd}.
    \item In~\cite{hallak2024study} the problem of convergence for convex functions~\eqref{convexity} was studied with additional assumptions:
    \begin{equation*}
        f(x^N) - \frac{1 - \alpha}{1 + \alpha} f^* \leqslant \frac{8 (1 + \alpha) LR^2}{(1 - \alpha) (N + 1)}.
    \end{equation*}
    \item In~\cite[Proposition 1.8]{gannot2022frequency} was introduced accelerated method with convergence (for strongly convex~\eqref{strong conv} and smooth~\eqref{smooth cond} function) for $\alpha = O \left( \sqrt{\frac{\mu}{L}} \right)$:
    \begin{equation*}
        \| x^N - x^* \|_2 \leqslant \left( 1 - O(1) \sqrt{\frac{\mu}{L}} \right)^N \| x^0 - x^* \|_2.
    \end{equation*}
    \item In~\cite[Theorem 5.1]{vasin2023accelerated} proposed convergence (via reduction from relative noise to absolute) for $\alpha = O \left( \frac{\mu}{L} \right)$:
    \begin{equation*}
        f(x^N) - f(x^*) \leqslant \left(\frac{5LR^2}{4} + \frac{15}{196} \sqrt{\frac{2L}{\mu}}\left[f(x^0) - f(x^*) \right] \right)\exp{\left({\displaystyle-\frac{N}{4}\sqrt{\frac{\mu}{2L}}} \right)}.
    \end{equation*}
    \item In~\cite[Theorem 3.2]{kornilov2025intermediate} proposed convergence (via restart technique) for $\alpha = O \left( \sqrt{\frac{\mu}{L}} \right)$:
    \begin{equation*}
        f(x^N) - f(x^*) \leqslant \varepsilon, \quad N = O \left( \sqrt{\frac{L}{\mu}} \log_2 \left( \frac{\mu R^2}{\varepsilon} \right) \right).
    \end{equation*}
    \item In~\cite[Theorem 5]{vasin2025solving} proved intermediate convergence (our previous paper):
    \begin{equation*}
        f(x^N) - f^* \leqslant LR^2 \left( 1 - \frac{1}{10 \sqrt{2}} \left(\frac{\mu}{L} \right)^{\frac{1}{2} + \tau} \right), \quad \alpha = \frac{1}{3} \left( \frac{\mu}{L} \right)^{\frac{1}{2} - \tau}.
    \end{equation*}
    Also lower bound, which we also proved in this paper (Theorem~\ref{lower bound relative noise mu > 0 text}) was introduced.
\end{itemize}

\section{Contribution} \label{contribution section}

Let us present the main theoretical results of this paper.
\begin{itemize}
    \item Analysis for Gradient Descent (Algorithm~\ref{alg gd}) in presence of composite noise~\eqref{noise condition} -- Theorem~\ref{gd pl conv text}:
    \begin{equation*}
        f(x^N) - f^* \leqslant \left(1 -  O(1) \frac{(1 - \alpha)^3}{1 + \alpha} \frac{\mu}{L} \right)^N (f(x^0) - f^*) + O(1) \frac{1 + \alpha}{(1 - \alpha)^3} \frac{\delta^2}{\mu}.
    \end{equation*}
    Using regularization technique obtained result for convex function and relative noise~\eqref{relative noise condition} ($\alpha < \nicefrac{1}{2}$) -- Theorem~\ref{gd reg text}:
    \begin{equation*}
        f(x^N) - f^* \leqslant \varepsilon, \quad N = O \left( \frac{LR^2}{\varepsilon} \ln \left( \frac{LR^2}{\varepsilon} \right) \right).
    \end{equation*}
    For $\alpha$-adaptive version (Algorithm~\ref{alg rel adaptive gd}) provides convergence -- Theorem~\ref{adaptive PL convergence only alpha text}:
    \begin{equation*}
        f(x^N) - f^* \leqslant \left( 1 - O(1) (1 - \alpha)^3 \frac{\mu}{L} \right)^N (f(x^0) - f^*) + O\left(\frac{1}{(1 - \alpha)^{3}} \frac{\delta^2}{\mu} \right).
    \end{equation*}

    \item Analysis for accelerated method -- RE-AGM (Algorithm~\ref{alg re-agm}) in presence of composite noise~\eqref{noise condition} -- Theorem~\ref{acc re agm conv text}.
    \begin{equation*}
        \begin{gathered}
            f(x^N) - f^* \leqslant \left(1 - O(1) \left(\frac{\mu}{L}\right)^{1 - \gamma^*} \right)^N \left(f(x^0) - f^* + \frac{\mu}{4} R^2 \right)
            + O \left(\left(\frac{L}{\mu} \right)^{\gamma^*} \frac{\delta^2}{\mu} \right), \\
            \gamma^* = \min \left \lbrace \log_{\mu / 2L} (3 \alpha ), \frac{1}{2} \right \rbrace.
        \end{gathered}
    \end{equation*}
    Using regularization technique obtained result for convex function and relative noise~\eqref{relative noise condition} ($\alpha \leqslant \frac{1}{3} \left(\frac{\varepsilon}{12 LR^2} \right)^{\beta}, 0 \leqslant \beta \leqslant 1/2$) -- Theorem~\ref{re-agm reg text}:
    \begin{equation*}
        f(x^N) - f^* \leqslant \varepsilon, \quad N = O \left( \left(\frac{LR^2}{\varepsilon} \right)^{1 - \beta } \ln \left(\frac{LR^2}{\varepsilon} \right) \right).
    \end{equation*}

    \item Convergence in presence of relative noise in gradient~\eqref{relative noise condition} lower bounds for strongly convex~\eqref{strong conv} and convex~\eqref{convexity} functions provided in Section~\ref{lower bound section}. For strongly convex functions lower bound presented in Theorem~\ref{lower bound relative noise mu > 0 text}:
    \begin{equation*}
        \begin{gathered}
            \text{If: } f(x^N) - f^* = O\left(LR^2 \exp \left(- O(1) \left(\frac{\mu}{L} \right)^{p(\alpha)} N \right) \right), \\
            \text{Then: } p(\alpha) \geqslant 1 - 2 \cdot \min \left \lbrace 1/4, \log_{\mu / L}(\alpha) \right \rbrace.
        \end{gathered}
    \end{equation*}
    For convex functions lower bound presented in Theorem~\ref{lower bound relative noise mu = 0 text}:
    \begin{equation*}
        \begin{gathered}
            \text{If: } f(x^N) - f^* = O\left( \frac{LR^2}{N^{p(\alpha)}} \right), \\
            \text{Then: } p(\alpha) \leqslant 1.
        \end{gathered}
    \end{equation*}
\end{itemize}

There are many other results and techniques provided below. Proofs for theorems are moved to the Appendix. Numerical experiments visualizing described methods are given in Section~\ref{exp section}.

\section{Gradient descent} \label{section gd}

\begin{algorithm}[H]
\caption{Gradient Descent}
\label{alg gd}
\begin{algorithmic}[1]
\State 
\noindent {\bf Input:} Starting point $x^0$, number of steps $N$, $h$ - learning rate.
\For {$k = 1 \dots N$}
        \State $x^{k} = x^{k - 1} - h \widetilde{\nabla}f(x^{k - 1})$
\EndFor
\State 
\noindent {\bf Output:} $x^N$.
\end{algorithmic}
\end{algorithm}

This procedure can be interpreted as a standard Euler discretization of autonomous dynamic system:
\begin{equation*}
    \dot{x} = - \widetilde{\nabla} f(x).
\end{equation*}
In the classical literature in the convex optimization~\cite{polyak1987introduction} convergence of such method was analyzed in details, including using the first and second Lyapunov methods. It is worth noting that this approach in our case has a number of disadvantages, namely, considering the Lie derivative in the direction $\widetilde{\nabla} f$:
\begin{equation*}
    \mathcal{L}_{\wtgg f} \phi = \langle \wtgg f(x), \nabla \phi (x) \rangle,
\end{equation*}
we obtain, that $\frac{1}{2} \| x - x^* \|_2^2$ will not Lyapunov function for dynamic system above, however using convexity one can note, that $\mathcal{L}_{-\nabla f} \left( \frac{1}{2} \|x - x^* \|_2^2 \right) = \langle -\nabla f(x), x - x^* \rangle \leqslant f^* - f(x) \leqslant 0$. At the same time $f$ will be Lyapunov function for noised dynamics if $\delta = 0$:
\begin{equation*}
    \mathcal{L}_{-\wtgg f} f = \langle -\wtgg f(x), \nabla f(x) \rangle \overset{\ref{cos lb}}{\leqslant} - \sqrt{1 - \alpha^2} \|\wtgg f(x) \|_2 \| \nabla f(x) \|_2 < 0.
\end{equation*}
The analysis of such a method has already been carried out under various conditions, both functions and noise models - variational inequalities~\cite{beznosikov2023stochastic}, distributed optimization~\cite{woodworth2020minibatch}, stochastic optimization~\cite{garrigos2023handbook}, $(\delta, L)$-oracle~\cite{devolder2014first, gasnikov2017universal} and generalized smoothness $(L_0, L_1)$~\cite{lobanov2024linear, gorbunov2024methods}. Therefore, it is important to consider such a method in the concept of the composition of absolute and relative deterministic noise.

\begin{theorem} \label{gd norm conv text}
    Let $f$ smooth~\eqref{smooth cond},  $\widetilde{f}$ satisfies error condition~\eqref{noise condition}. Then Algorithm~\ref{alg gd} with step:
    \begin{equation*}
        h = \left( \frac{1 - \alpha}{ 1 + \alpha} \right)^{3 / 2} \frac{1}{4 L},
    \end{equation*}
    guarantees convergence:
    \begin{eqnarray*}
            \min_{k \leqslant N} \|\nabla f(x^k) \|_2^2
            & \leqslant & \frac{(1 + \alpha)}{(1 - \alpha)^3} \frac{16L (f(x^0) - f^*)}{N + 1} + \frac{3}{(1 - \alpha)^3(1 + \alpha)} \delta^2.
    \end{eqnarray*}
\end{theorem}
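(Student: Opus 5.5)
The plan is the standard descent-lemma argument for Algorithm~\ref{alg gd}, with the composite noise split as in~\eqref{noise def abs rel}. I write $g_k = \nabla f(x^k)$ and $\widetilde g_k = \wtgg f(x^k) = g_k + \zeta_k$, where $\|\zeta_k\|_2 \leqslant \alpha\|g_k\|_2 + \delta$. I assume $\alpha<1$ throughout, which the statement implicitly requires.

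\textbf{Step 1 (descent inequality).} Apply smoothness~\eqref{smooth cond} with $y = x^{k+1} = x^k - h\widetilde g_k$:
\begin{equation*}
f(x^{k+1}) \leqslant f(x^k) - h\langle g_k, \widetilde g_k\rangle + \frac{Lh^2}{2}\|\widetilde g_k\|_2^2 .
\end{equation*}

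\textbf{Step 2 (bounding the inner product and the norm).} Use $\langle g_k,\widetilde g_k\rangle = \|g_k\|_2^2 + \langle g_k,\zeta_k\rangle \geqslant (1-\alpha)\|g_k\|_2^2 - \delta\|g_k\|_2$. For the absolute part, apply Young's inequality $\delta\|g_k\|_2 \leqslant \frac{1-\alpha}{2}\|g_k\|_2^2 + \frac{\delta^2}{2(1-\alpha)}$. Similarly, $\|\widetilde g_k\|_2^2 \leqslant ((1+\alpha)\|g_k\|_2+\delta)^2 \leqslant 2(1+\alpha)^2\|g_k\|_2^2 + 2\delta^2$. Substituting gives
\begin{equation*}
f(x^{k+1}) \leqslant f(x^k) - h\Bigl(\frac{1-\alpha}{2} - Lh(1+\alpha)^2\Bigr)\|g_k\|_2^2 + \frac{h\delta^2}{2(1-\alpha)} + Lh^2\delta^2 .
\end{equation*}

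\textbf{Step 3 (step size).} With $h = \left(\frac{1-\alpha}{1+\alpha}\right)^{3/2}\frac{1}{4L}$ I will check that the bracket is at least $c\,\frac{(1-\alpha)^3}{(1+\alpha)^{3/2}\cdots}$-type quantities, so that the coefficient of $\|g_k\|_2^2$ becomes of order $h(1-\alpha)$. Then $h(1-\alpha)\cdot\mathrm{const}$ is comparable to $\frac{(1-\alpha)^{5/2}}{(1+\alpha)^{3/2}L}$. Matching the exact constants $16$ and $3$ in the statement is the main obstacle. I expect the authors used a sharper treatment, for example the cosine bound (Lemma~\ref{cos lb} style) together with the decomposition lemma, giving $\langle g,\widetilde g\rangle \geqslant$ something like $\frac{1-\alpha}{2}\|g\|^2 + \frac{1}{2(1+\alpha)}\|\widetilde g\|^2$-type terms. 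Alternatively, they handle the noise term by $\frac{1}{2}\|g\|^2+\frac12\|\zeta\|^2$ and $\|\zeta\|^2 \leqslant 2\alpha^2\|g\|^2+2\delta^2$. I would first try the identity
\begin{equation*}
-\langle g,\widetilde g\rangle = \tfrac12\|\zeta\|^2 - \tfrac12\|g\|^2 - \tfrac12\|\widetilde g\|^2
\end{equation*}
to keep the $\|\widetilde g\|^2$ term with a negative sign, and tune the constants so that the coefficient in front of $\|g_k\|^2$ is at least $\frac{(1-\alpha)^3}{16L(1+\alpha)}$. In the worst case I would accept the weaker exponents and adjust the constants.

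\textbf{Step 4 (telescoping).} Summing from $k=0$ to $N$ and using $f(x^{N+1})\geqslant f^*$ gives
\begin{equation*}
c_1\sum_{k=0}^N\|g_k\|_2^2 \leqslant f(x^0)-f^* + (N+1)c_2\delta^2 .
\end{equation*}
Bounding the minimum by the average yields
\begin{equation*}
\min_{k\leqslant N}\|g_k\|_2^2 \leqslant \frac{f(x^0)-f^*}{c_1(N+1)} + \frac{c_2}{c_1}\delta^2 .
\end{equation*}
Here $1/c_1$ should be $\frac{16L(1+\alpha)}{(1-\alpha)^3}$ and $c_2/c_1$ should be $\frac{3}{(1-\alpha)^3(1+\alpha)}$, which is again the same constant-matching in Step 3.
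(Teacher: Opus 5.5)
Your skeleton is the paper's: the descent inequality from \eqref{smooth cond} at $x^{k+1}=x^k-h\widetilde{\nabla} f(x^k)$, a per-step bound $f(x^{k+1})\leqslant f(x^k)-c_1\|\nabla f(x^k)\|_2^2+c_2\delta^2$ (the paper's Lemma~\ref{GD step}), then telescoping over $k=0,\dots,N$ with $f(x^{N+1})\geqslant f^*$. Your Step~2 is correct. What is missing is Step~3. You leave it as an intention, call it the main obstacle, and consider weakening the constants. As written, the proposal therefore does not yet prove the stated bound.

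No new idea is needed, though: your Step~2 inequality already suffices. With the given $h$ one has $Lh(1+\alpha)^2=\tfrac14(1-\alpha)^{3/2}(1+\alpha)^{1/2}$. Hence
\[
c_1=h\Bigl(\frac{1-\alpha}{2}-Lh(1+\alpha)^2\Bigr)=\frac{h(1-\alpha)}{4}\Bigl(2-\sqrt{1-\alpha^2}\Bigr)\geqslant\frac{(1-\alpha)^{5/2}}{16L(1+\alpha)^{3/2}}=\frac{1}{\sqrt{1-\alpha^2}}\cdot\frac{(1-\alpha)^3}{16L(1+\alpha)}\geqslant\frac{(1-\alpha)^3}{16L(1+\alpha)},
\]
and
\[
c_2=\frac{h}{2(1-\alpha)}+Lh^2=\frac{\sqrt{1-\alpha^2}}{8L(1+\alpha)^2}+\frac{(1-\alpha)^3}{16L(1+\alpha)^3}\leqslant\frac{3}{16L(1+\alpha)^2},
\]
using $\sqrt{1-\alpha^2}\leqslant 1$ and $(1-\alpha)^3\leqslant 1+\alpha$. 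Replacing $c_1,c_2$ by these bounds gives exactly the per-step inequality of Lemma~\ref{GD step}. Your Step~4 then produces the constants $16$ and $3$ verbatim.

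Your guess about where the paper's constants come from is backwards. The paper splits $\zeta=\zeta_a+\zeta_r$ and uses Lemma~\ref{cos lb}, $\langle\nabla f+\zeta_r,\nabla f\rangle\geqslant\sqrt{1-\alpha^2}(1-\alpha)\|\nabla f\|_2^2$. This is \emph{weaker} than your direct bound $\|\nabla f\|_2^2+\langle\zeta,\nabla f\rangle\geqslant(1-\alpha)\|\nabla f\|_2^2-\delta\|\nabla f\|_2$. The paper then applies Young with parameter $\sqrt{1-\alpha^2}(1-\alpha)$, so its bracket is exactly $\tfrac14\sqrt{1-\alpha^2}(1-\alpha)$. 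The factor $(1-\alpha)^3/(1+\alpha)$ and the exponent $3/2$ in $h$ are tuned to this weaker estimate.

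Your direct Cauchy--Schwarz route needs neither the decomposition lemma nor the cosine lemma. It even yields the slightly better factor $(1-\alpha)^{5/2}/(1+\alpha)^{3/2}$ in front of $\|\nabla f(x^k)\|_2^2$, so the theorem follows with room to spare. The identity-based rewriting you considered is unnecessary.
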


In the book~\cite{polyak1987introduction}, a convergence estimation was proposed for functions satisfying Polyak-{\L}ojasiewicz condition~\eqref{PL cond} and relative error~\eqref{relative noise condition}.

\begin{theorem} \label{gd pl conv text}
    Let $f$ function satisfies P$\L$ condition~\eqref{PL cond} and smoothness~\eqref{smooth cond}, $\wtgg f$ satisfies~\eqref{noise condition}. Then Algorithm~\ref{alg gd} with step:
    \begin{equation*}
        h = \left( \frac{1 - \alpha}{ 1 + \alpha} \right)^{3 / 2} \frac{1}{4 L},
    \end{equation*}
    guarantees convergence:
    \begin{eqnarray*}
            f(x^N) - f^* 
            & \leqslant &  \left(1 -  \frac{(1 - \alpha)^3}{1 + \alpha} \frac{\mu}{8 L} \right)^N (f(x^0) - f^*) + \frac{3}{2} \frac{1 + \alpha}{(1 - \alpha)^3} \frac{\delta^2}{\mu}.
    \end{eqnarray*}
\end{theorem}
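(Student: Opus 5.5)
The natural route is to reuse the one-step descent inequality behind Theorem~\ref{gd norm conv text} and close it with the P{\L} condition~\eqref{PL cond}. Write $x^{k+1} = x^k - h\wtgg f(x^k)$ and $g = \nabla f(x^k)$. By the decomposition~\eqref{noise def abs rel}, $\wtgg f(x^k) = g + \zeta_r + \zeta_a$ with $\|\zeta_r\|_2 \leqslant \alpha\|g\|_2$ and $\|\zeta_a\|_2\leqslant\delta$. Smoothness~\eqref{smooth cond} gives
\begin{equation*}
    f(x^{k+1}) \leqslant f(x^k) - h\langle g, \wtgg f(x^k)\rangle + \frac{L h^2}{2}\|\wtgg f(x^k)\|_2^2 .
\end{equation*}
I would then bound the two terms separately:
\begin{equation*}
    \langle g, g+\zeta_r+\zeta_a\rangle \geqslant (1-\alpha)\|g\|_2^2 - \delta\|g\|_2,
\end{equation*}
\begin{equation*}
    \|\wtgg f(x^k)\|_2^2 \leqslant 2(1+\alpha)^2\|g\|_2^2 + 2\delta^2 .
\end{equation*}
The cross term $\delta\|g\|_2$ is split by Young's inequality with weight proportional to $(1-\alpha)$, namely $\delta\|g\|_2 \leqslant \frac{1-\alpha}{2}\|g\|_2^2 + \frac{\delta^2}{2(1-\alpha)}$. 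This leaves a per-step inequality of the form
\begin{equation*}
    f(x^{k+1}) \leqslant f(x^k) - h\,c_1\|g\|_2^2 + h\,c_2\,\delta^2 ,
\end{equation*}
where
\begin{equation*}
    c_1 = \frac{1-\alpha}{2} - L h(1+\alpha)^2, \qquad c_2 = \frac{1}{2(1-\alpha)} + L h .
\end{equation*}
This should be exactly the inequality summed in the proof of Theorem~\ref{gd norm conv text}, so I would take it from there if it was established as a lemma.

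With $h = \left(\frac{1-\alpha}{1+\alpha}\right)^{3/2}\frac{1}{4L}$, the main check is that $Lh(1+\alpha)^2$ is at most a fixed fraction of $1-\alpha$, so that $h c_1 \geqslant \frac{(1-\alpha)^3}{1+\alpha}\frac{1}{16L}$ or similar. The exponent $3/2$ together with $\alpha<1$ should make this work, possibly after lower-bounding $\left(\frac{1-\alpha}{1+\alpha}\right)^{3/2}$ by $\frac{(1-\alpha)^2}{1+\alpha}$-type expressions. If the crude $2(1+\alpha)^2$ bound on $\|\wtgg f\|_2^2$ makes the constant fail, I would instead expand $\|g+\zeta\|_2^2$ exactly and cancel it against the inner-product term.

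Next apply~\eqref{PL cond}: $\|g\|_2^2 \geqslant 2\mu\,(f(x^k)-f^*)$. Subtracting $f^*$ gives the contraction
\begin{equation*}
    r_{k+1} \leqslant (1-q)\,r_k + h c_2 \delta^2, \qquad r_k = f(x^k)-f^*, \quad q = 2\mu h c_1 \geqslant \frac{(1-\alpha)^3}{1+\alpha}\frac{\mu}{8L}.
\end{equation*}
Unrolling over $N$ steps and summing the geometric series gives
\begin{equation*}
    r_N \leqslant (1-q)^N r_0 + \frac{h c_2\,\delta^2}{q} = (1-q)^N r_0 + \frac{c_2\,\delta^2}{2\mu c_1}.
\end{equation*}
The step size cancels in the additive term, and $c_2/(2c_1)$ is of order $\frac{1}{(1-\alpha)^2}$, which is bounded by $\frac{3}{2}\frac{1+\alpha}{(1-\alpha)^3}$.

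The main obstacle is purely the constant bookkeeping: $q$ must come out exactly as $\frac{(1-\alpha)^3}{1+\alpha}\frac{\mu}{8L}$, and the additive term as $\frac{3}{2}\frac{1+\alpha}{(1-\alpha)^3}\frac{\delta^2}{\mu}$. I would first fix the Young weight so that the $\delta^2$ coefficient matches the $\frac{3}{(1-\alpha)^3(1+\alpha)}$ pattern of Theorem~\ref{gd norm conv text}, and then verify $q$. If the exact constants fail, I would weaken $c_1$ to its lower bound, since the target bound is loose enough to absorb small slack.
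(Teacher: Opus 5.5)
Your proposal is correct and follows the paper's proof: smoothness plus the noise split and Young's inequality give a one-step descent inequality (the paper's Lemma~\ref{GD step}), and then P{\L} and unrolling the geometric series finish the argument. Your direct Cauchy--Schwarz bound $\langle g, g+\zeta_r\rangle \geqslant (1-\alpha)\|g\|_2^2$ is slightly sharper than the paper's angle-based factor $\sqrt{1-\alpha^2}(1-\alpha)$, and the constants do close, because $Lh(1+\alpha)^2 = \frac14(1-\alpha)\sqrt{1-\alpha^2}$ gives $c_1 \geqslant \frac{1-\alpha}{4}$, hence $q \geqslant \frac{(1-\alpha)^3}{1+\alpha}\frac{\mu}{8L}$ and $\frac{c_2}{2c_1} \leqslant \frac{3-\alpha}{2(1-\alpha)^2} \leqslant \frac32\frac{1+\alpha}{(1-\alpha)^3}$.
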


We will return to gradient descent convergence in Section~\ref{section main relative interpretation} when we cover regularization and early stopping techniques.

\section{Accelerated method (RE-AGM) } \label{re-agm section text}

Now we will consider the accelerated method proposed in~\cite{vasin2025solving}. We propose an improved version of this method that guarantees convergence under composite noise condition~\eqref{noise condition}.

\begin{algorithm}[H]
    \caption{ RE-AGM (Relative Error Accelerated Gradient Method).} 
    \label{alg re-agm}
    \begin{algorithmic}[1]
        \State {\bfseries Input:} Starting point $x^0$, number of steps $N$, $L$ - smoothness parameter,
        $\mu$ - strongly convexity parameter, $\alpha$ - relative error parameter.
        \State {\bf Set} $u^0 = x^0$.
        \State {\bf Set} $h, \omega = \text{CalculateParameters}(\alpha, \mu, L)$,
        \For { $k = 0 \dots N - 1$}
            \State $\displaystyle y^k = \frac{\omega u^k + x^k}{1 + \omega}$, 
            \State $\displaystyle u^{k + 1} = (1 - \omega) u^{k} + \omega y^k - \frac{2 \omega}{\mu} \widetilde{\nabla} f(y^{k})$, 
            \State $\displaystyle x^{k + 1} = y^{k} - h \widetilde{\nabla} f(y^{k})$.
        \EndFor
        \State
        \noindent {\bf Output:} $x^N$.

    \State

    \Function{CalculateParameters}{$\alpha, \mu, L$}
        \State {\bf Set} $\displaystyle h = \frac{1}{4L}\left(\frac{1 - \alpha}{1 + \alpha} \right)^\frac{3}{2}$,
        \State {\bf Set} $\displaystyle \widehat{L} = 8\frac{1 + \alpha}{(1 - \alpha)^3} L$, 
        \State {\bf Set} $\gamma^* = \min \left \lbrace \log_{\mu / 2L} (3 \alpha ), \frac{1}{2} \right \rbrace$.
        \State {\bf Set} $\displaystyle s = \left(1 + \frac{1}{4} \left(\frac{\mu}{2L} \right)^{\gamma^*} \right) (1 + \alpha)^2 + 2\alpha^2$.
        \State {\bf Set} $\displaystyle m = \left(1 - \frac{1}{4} \left(\frac{\mu}{2L} \right)^{\gamma^*} \right) (1 - \alpha)^2 - 2\alpha^2$.
        \State {\bf Set} $q = \mu / 2\widehat{L}$.
        \State {\bf Set} $\displaystyle \omega = \frac{(m - s) + \sqrt{(s - m)^2 +4mq }}{2 m}$,
        \State i.e. the largest root of $ \displaystyle
           m \omega^2 + (s - m) \omega - q = 0,  
        $
        \State
        \Return $\left(h, \omega \right)$.
    \EndFunction
\end{algorithmic}
\end{algorithm}

\begin{theorem} \label{acc re agm conv text}
Let $f$ be an $L$-smooth and $\mu$-strongly convex function, $\widetilde{\nabla} f$ satisfies noise condition~\eqref{noise condition}, $\alpha \in [0, 1/3]$. Then Algorithm~\ref{alg re-agm} with parameters $(L, \mu, x^0, \alpha)$ generates $x^N$, s.t. 
\begin{eqnarray*}
    f(x^N) - f^* & \leqslant & 
    \left(1 - \frac{1}{150} \left(\frac{\mu}{2L}\right)^{1 - \gamma^*} \right)^N \left(f(x^0) - f^* + \frac{\mu}{4} R^2 \right)
    +  \left( \left(\frac{2L}{\mu} \right)^{\gamma^*} + 5 \right) \frac{\delta^2}{\mu}
    \\
    & \leqslant & 
    \left(1 - \frac{1}{300} \left(\frac{\mu}{L}\right)^{1 - \gamma^*} \right)^N \left(f(x^0) - f^* + \frac{\mu}{4} R^2 \right)
    + \left(2 \left(\frac{L}{\mu} \right)^{\gamma^*}  + 5 \right) \frac{\delta^2}{\mu}
\end{eqnarray*}
where $R := \|x^0 - x^* \|_2, \gamma^* = \min \left \lbrace \log_{\mu / 2L} (3 \alpha ), \frac{1}{2} \right \rbrace$.
\end{theorem}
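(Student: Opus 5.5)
We run a Lyapunov-function argument for the coupled sequences of Algorithm~\ref{alg re-agm}, with potential
\begin{equation*}
    \Psi_k = f(x^k) - f^* + \frac{\mu}{4}\|u^k - x^*\|_2^2,
\end{equation*}
and aim for a one-step contraction $\Psi_{k+1} \leqslant (1 - \omega)\Psi_k + C\,\delta^2/\mu$. Unrolling this gives $\Psi_N \leqslant (1-\omega)^N \Psi_0 + C\delta^2/(\mu\omega)$, where $\Psi_0 = f(x^0)-f^*+\frac{\mu}{4}R^2$. Since $f(x^N)-f^* \leqslant \Psi_N$, the theorem then reduces to two facts: $\omega \geqslant \frac{1}{150}(\mu/2L)^{1-\gamma^*}$, and $C/\omega$ is at most $(2L/\mu)^{\gamma^*}+5$.

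\textbf{Step 1: the gradient step.} Write $\wtgg f(y^k)=\nabla f(y^k)+\zeta_a+\zeta_r$ with $\|\zeta_a\|_2\leqslant\delta$ and $\|\zeta_r\|_2\leqslant\alpha\|\nabla f(y^k)\|_2$; this is the decomposition behind~\eqref{noise def abs rel}. The same computation as in the proof of Theorem~\ref{gd norm conv text}, using smoothness~\eqref{smooth cond} and the step $h$, should give a descent inequality
\begin{equation*}
    f(x^{k+1}) \leqslant f(y^k) - \frac{1}{2\widehat{L}}\|\nabla f(y^k)\|_2^2 + c_1\frac{\delta^2}{L}.
\end{equation*}
This is exactly where $\widehat{L}=8\frac{1+\alpha}{(1-\alpha)^3}L$ comes from.

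\textbf{Step 2: the $u$-step.} Expand $\|u^{k+1}-x^*\|_2^2$. The update $u^{k+1}=(1-\omega)u^k+\omega y^k-\frac{2\omega}{\mu}\wtgg f(y^k)$ yields:
\begin{itemize}
    \item a cross term $-\frac{4\omega}{\mu}\langle \wtgg f(y^k), (1-\omega)u^k+\omega y^k - x^*\rangle$;
    \item a quadratic term $\frac{4\omega^2}{\mu^2}\|\wtgg f(y^k)\|_2^2$.
\end{itemize}
The definition of $y^k$ gives $(1-\omega)u^k+\omega y^k = y^k + \frac{1-\omega}{\omega}(y^k - x^k) + O(\omega)$-corrections. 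The standard estimate-sequence choice makes the true-gradient part of the cross term combine with strong convexity~\eqref{strong conv} at $y^k$ and convexity between $y^k$ and $x^k$. The outcome is $-(f(y^k)-f^*)-\frac{\mu}{2}\|y^k-x^*\|^2$ plus a term $f(x^k)-f(y^k)$ scaled by $(1-\omega)$.

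The noise part of the cross term is $\langle\zeta_a+\zeta_r,\,y^k-x^*+\dots\rangle$. We bound it by Young's inequality with weight $\tau=\frac14(\mu/2L)^{\gamma^*}$:
\begin{itemize}
    \item the $\zeta_r$ piece is at most $\tau\frac{\mu}{4}\|\cdot\|^2 + \frac{\alpha^2}{\tau\mu}\|\nabla f(y^k)\|^2$;
    \item the $\zeta_a$ piece gives the analogous bound with $\delta^2$ in place of $\alpha^2\|\nabla f(y^k)\|^2$.
\end{itemize}
The quadratic term is bounded by $\frac{4\omega^2}{\mu^2}\bigl((1+\alpha)^2(1+\tau)\|\nabla f\|^2+(1+1/\tau)\delta^2\bigr)$.

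\textbf{Step 3: combine and balance.} Add the two steps. The $\|\nabla f(y^k)\|^2$ coefficients must be nonpositive, and they should collect into $-(m\omega^2+(s-m)\omega-q)\times(\text{positive})$. Here $s$ and $m$ play the roles of the upper and lower bounds $(1\pm\tau)(1\pm\alpha)^2\pm2\alpha^2$ on $\langle\wtgg f,\nabla f\rangle$ and $\|\wtgg f\|^2$. Choosing $\omega$ as the largest root of $m\omega^2+(s-m)\omega-q=0$ makes this coefficient vanish; that is the design of \textsc{CalculateParameters}.

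The $\|y^k-x^*\|^2$ and $\|u^k-x^*\|^2$ terms then leave a contraction factor $(1-\omega)$. The $\tau$-weighted Young losses are absorbed because $\tau\lesssim\omega$ is not needed; we only need $\tau\leqslant\frac14$ of the strong-convexity margin. The additive error is $C\delta^2/\mu$ with $C\approx\omega(1+1/\tau)+\omega^2/\tau$, so $C/\omega\lesssim 1/\tau+O(1)=4(2L/\mu)^{\gamma^*}+O(1)$. Tightening the constants should give $(2L/\mu)^{\gamma^*}+5$.

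\textbf{Step 4: lower bound on $\omega$ (main obstacle).} Since $\alpha\leqslant 3^{-1}(\mu/2L)^{\gamma^*}$, we have $\alpha^2\leqslant\tau/\!\!\;$const. Hence $s-m=O(\tau+\alpha)=O\bigl((\mu/2L)^{\gamma^*}\bigr)$, while $m\geqslant$ const for $\alpha\leqslant1/3$. From the root formula, $\omega\geqslant\min\{\sqrt{q/m},\,q/(s-m)\}/2$ with $q=\mu/(2\widehat{L})$. With $\gamma^*\leqslant\frac12$, the second option dominates and gives $\omega\gtrsim (\mu/2L)^{1-\gamma^*}$. Tracking the explicit constants ($\widehat{L}\leqslant 27L$ for $\alpha\leqslant1/3$, and so on) should produce the $1/150$.

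I expect Step~3 to be the hardest. The relative noise term $\langle\zeta_r,y^k-x^*\rangle$ must be absorbed with weight exactly $\tau\sim(\mu/L)^{\gamma^*}$ without destroying the contraction, and this forces the precise definitions of $s$ and $m$. I would first verify the combination with $\delta=0$, and then add the $\zeta_a$ terms.

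The second inequality of the theorem follows from $(\mu/2L)^{1-\gamma^*}\geqslant\frac12(\mu/L)^{1-\gamma^*}$ and $2^{\gamma^*}\leqslant 2$.
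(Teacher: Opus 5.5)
Your potential $\Psi_k=f(x^k)-f^*+\frac{\mu}{4}\|u^k-x^*\|_2^2$ is exactly what the paper's estimate-sequence proof controls: Lemmas~\ref{c > f lemma} and~\ref{upper bound for lemma}, evaluated at $x=x^*$, give $\Psi_k\leqslant\lambda_k\Psi_0+\xi_k$. A one-step contraction by $(1-\omega)$ does hold, so the skeleton is sound.

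\textbf{The gap is in Step~3.} The mechanism you describe there is not what your expansion produces.
\begin{itemize}
\item Expanding $\frac{\mu}{4}\|u^{k+1}-x^*\|_2^2$ directly, $\|\wtgg f(y^k)\|_2^2$ appears only once, with weight $\omega^2/\mu$. So only its upper bound $(1+\tau)(1+\alpha)^2\|\nabla f(y^k)\|_2^2+(1+1/\tau)\delta^2$ is used, and $m$ never enters.
\item Your $\tau$-weighted Young on $\langle\zeta_r,\cdot\rangle$, applied to the $y^k-x^*$ and $(1-\omega)(u^k-y^k)$ parts of the cross term, gives $\alpha^2/\tau$, not $2\alpha^2$.
\item The coefficient of $\|\nabla f(y^k)\|_2^2$ is therefore $\frac{1}{\mu}\bigl[\omega^2(1+\tau)(1+\alpha)^2+\omega(2-\omega)\alpha^2/\tau-q\bigr]$. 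The root equation does not annihilate this.
\end{itemize}
In the paper, $s$ and $m$ arise because the same $\|\wtgg f(y^k)\|_2^2$ is bounded twice. It is bounded from above inside $c_\phi$ (Lemma~\ref{lower bound}, weight $\omega$) and from below inside $\frac{\omega(1-\omega)\mu}{4}\|u_\phi-u^k\|_2^2$ (proof of Lemma~\ref{c > f lemma}). The $\pm 2\alpha^2$ come from Young at the full $\mu/4$ margin, $\frac{1}{\mu}\|\zeta\|_2^2\leqslant\frac{2}{\mu}(\alpha^2\|\nabla f\|_2^2+\delta^2)$. It is the resulting coefficient $\omega s-\omega(1-\omega)m$ that the root equation sets equal to $q$.

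There are two ways to repair this.
\begin{enumerate}
\item Write $u^{k+1}-x^*=(1-\omega)(u^k-x^*)+\omega(u_\phi-x^*)$ with $u_\phi=y^k-\frac{2}{\mu}\wtgg f(y^k)$, and use $\|(1-\omega)p+\omega r\|^2=(1-\omega)\|p\|^2+\omega\|r\|^2-\omega(1-\omega)\|p-r\|^2$. This reproduces the paper's bounds.
\item Keep your expansion and verify $(2-\omega)\alpha^2/\tau-2\omega\alpha^2\leqslant(1-\omega)(s-m)$. This holds because $\alpha\leqslant\frac43\tau$, $s-m\geqslant 4\alpha+2\tau$, and $\omega\leqslant q/(s-m)\leqslant\frac14$. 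Here you also need the term $-\frac{\omega(1-\omega)\mu}{4}\|u^k-y^k\|_2^2$ from expanding $\|(1-\omega)(u^k-x^*)+\omega(y^k-x^*)\|_2^2$. It absorbs the $(1-\omega)(u^k-y^k)$ part of the cross term that your ``$\dots$'' hides.
\end{enumerate}

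\textbf{The stated noise constant does not follow by ``tightening''.} Write $X:=(2L/\mu)^{\gamma^*}$. With Young weight $\tau$, the factor $1/\tau=4X$ is built in.
\begin{itemize}
\item Your accumulated error is about $\frac{2-\omega}{\tau}\frac{\delta^2}{\mu}\approx 8X\frac{\delta^2}{\mu}$.
\item On top of that comes $\frac{3\delta^2}{16L(1+\alpha)^2\omega}$ from Step~1. With $\omega\geqslant\frac{1}{150}(\mu/2L)^{1-\gamma^*}$ this is only bounded by $\frac{225}{4(1+\alpha)^2}X^{-1}\frac{\delta^2}{\mu}$.
\end{itemize}
The paper's proof has the same bookkeeping: by~\eqref{xi noise accumulation accelerated}, $\xi_N\leqslant\delta_0/\omega=(8X+4)\frac{\delta^2}{\mu}+\frac{3\delta^2}{16L(1+\alpha)^2\omega}$. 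It reaches $X+5$ only through two slips in its final chain:
\begin{itemize}
\item $\frac{4\omega}{\mu}(2X+1)$ is rewritten as $\frac{\omega}{\mu}(X+4)$;
\item the factor $1/\omega$ is bounded by $\frac{1}{150}(2L/\mu)^{1-\gamma^*}$, whereas Lemma~\ref{w solution estimation} gives only $150(2L/\mu)^{1-\gamma^*}$.
\end{itemize}
So either argument honestly yields the stated contraction with an additive term of at most about $(8X+61)\frac{\delta^2}{\mu}$.

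Your direct route can in fact do better. Apply Young to $\langle\zeta_a+\zeta_r,\cdot\rangle$ at the full $\mu/4$ margin on both parts of the cross term, with $\|\zeta_a+\zeta_r\|_2^2\leqslant 2\alpha^2\|\nabla f\|_2^2+2\delta^2$. The gradient coefficient then becomes exactly $\frac{\omega(1-\omega)}{\mu}\bigl(4\alpha^2-(s-m)\bigr)\leqslant 0$. Since $\|\wtgg f(y^k)\|_2^2$ carries only weight $\omega^2/\mu$ and $\omega/\tau\leqslant(\mu/2L)^{1-2\gamma^*}\leqslant 1$, the noise term becomes $\bigl(5.25+\frac{225}{4(1+\alpha)^2}X^{-1}\bigr)\frac{\delta^2}{\mu}$, with no $(L/\mu)^{\gamma^*}$ growth.

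Minor points:
\begin{itemize}
\item $\widehat L\leqslant 36L$, not $27L$.
\item $\omega\geqslant\frac12\min\{\sqrt{q/m},\,q/(s-m)\}$ only gives $\frac{1}{168}$. To get $\frac{1}{150}$, evaluate $m\omega^2+(s-m)\omega-q$ at $\frac{1}{150}(\mu/2L)^{1-\gamma^*}$, as in Lemma~\ref{w solution estimation}.
\end{itemize}
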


\begin{remark} \label{re-agm convergence remark}
    Let's consider the various options for convergence that are guaranteed by Theorem~\ref{acc re agm conv text}. Firstly, it is worth noting, that convergence rate $1 - \frac{1}{300} \left(\frac{\mu}{L}\right)^{1 - \gamma^*}$ depends on $\alpha$ and $1 - \gamma^* \leqslant 1 / 2$, which is consistent with the lower bound provided at~\cite{nemirovski1995information, nesterov2018lectures}.

    In papers~\cite{vasin2023accelerated,devolder2013exactness} (for $\alpha = 0$) the following convergence was provided for strong convex~\eqref{strong conv} and smooth~\eqref{smooth cond} function:
    \begin{equation*}
        f(x^N) - f(x^*) \leqslant L R^2 \exp{\left(-\frac{1}{2}\sqrt{\frac{\mu}{2L}}N \right)} + 2 \left(1 + \sqrt{{\frac{2L}{\mu}}} \right) \frac{\delta^2}{\mu}.
    \end{equation*}
    Using Theorem~\ref{acc re agm conv text} for $\alpha \leqslant \frac{1}{3} \left(\frac{\mu}{2 L} \right)^{1 / 2}$ we can obtain similar convergence rate:
    \begin{equation*}
        f(x^N) - f^* \leqslant \left(1 - \frac{1}{300} \left(\frac{\mu}{L}\right)^{1/2} \right)^N \left(f(x^0) - f^* + \frac{\mu}{4} R^2 \right)
        + \left(2 \left(\frac{L}{\mu} \right)^{1/2}  + 5 \right) \frac{\delta^2}{\mu}
    \end{equation*}
    If $\alpha = \frac{1}{3} \Longrightarrow \gamma^* = 0$, then using Theorem~\ref{acc re agm conv text} we obtain:
    \begin{equation*}
        f(x^N) - f^* \leqslant \left(1 - \frac{1}{300} \left(\frac{\mu}{L}\right) \right)^N \left(f(x^0) - f^* + \frac{\mu}{4} R^2 \right)
        + 7 \frac{\delta^2}{\mu},
    \end{equation*}
    that corresponds to Theorem~\ref{gd pl conv text}.
    Let $\alpha = \frac{1}{3} \left(\frac{\mu}{2 L} \right)^{1 / 4} $
    \begin{equation*}
        f(x^N) - f^* \leqslant \left(1 - \frac{1}{300} \left(\frac{\mu}{L}\right)^{3/4} \right)^N \left(f(x^0) - f^* + \frac{\mu}{4} R^2 \right)
        + \left(2 \left(\frac{L}{\mu} \right)^{1/4}  + 5 \right) \frac{\delta^2}{\mu}.
    \end{equation*}
\end{remark}

\begin{remark} \label{remark:intermediate_conv_accelerated}
    One can obtain method with intermediate convergence. That is, we can run Algorithm~\ref{alg re-agm} with $\wha = \frac{1}{3} \left(\frac{\mu}{2 L} \right)^{p}, \; \frac{1}{2} \leqslant p \leqslant 1$. If $\wtgg f$ satisfies~\eqref{absolute noise condition}, then Algorithm~\eqref{alg re-agm} with parameter $\alpha = \wha$ guarantees convergence:
    \begin{equation*}
        f(x^N) - f^* \leqslant \left(1 - \frac{1}{300} \left(\frac{\mu}{L}\right)^{1 - p} \right)^N \left(f(x^0) - f^* + \frac{\mu}{4} R^2 \right)
        + \left(2 \left(\frac{L}{\mu} \right)^{p}  + 5 \right) \frac{\delta^2}{\mu}.
    \end{equation*}
    Such convergence satisfies lower bound described at Theorem~\ref{lower bound oracle absolute}.
\end{remark}

\section{Regularization} \label{section regularization}

We introduce common technique (for example \cite{vasin2023accelerated}) for obtaining convergence rate for convex functions~\eqref{convexity} $f$. Such method proposes to consider a new function:
\begin{equation} \label{reg def}
    f_{\mu}(x | x^0) = f(x) + \frac{\mu}{2} \|x - x^0 \|_2^2.
\end{equation}
Let us introduce the notation:
\begin{equation*}
    \begin{gathered}
        x_\mu^* \text{ - solution for problem: } f_{\mu}(\cdot | x^0) \to \min_{x \in \mathbb{R}^n}, \\
        f_{\mu}^* = f_{\mu}(x_{\mu} | x^0), \\
        R_\mu = \|x_{\mu}^* - x^0 \|_2, \\
        \wtgg f_{\mu}(x) = \wtgg f(x) + \mu (x - x^0).
    \end{gathered}
\end{equation*}

This approach allows us to reduce degenerate problems to strongly convex ones. The inverse reduction (from strongly convex to degenerate problems) is restarts (such trick was used at~\cite{kornilov2025intermediate}), in our paper we use it in proof of Theorem~\ref{lower bound relative noise mu = 0}. Consider usage of regularization technique for Algorithm~\ref{alg gd} and Algorithm~\ref{alg re-agm}.

\begin{theorem} \label{gd reg text}
    Let $f$ is convex~\eqref{convexity} and $L$ smooth~\eqref{smooth cond}, $\widetilde{\nabla} f$ satisfies relative noise condition~\eqref{relative noise condition} with $\alpha < 1 / 2$.
    Then we can solve problem~\eqref{optim} with precision $f(x^N) - f^* \leqslant \varepsilon \; (\varepsilon < LR^2)$, using Algorithm~\ref{alg gd} via regularization technique with following amount of iterations:
    \begin{equation*}
        N \leqslant  12 \frac{(1 + \alpha)^2}{(1 - \alpha)^6} \frac{LR^2}{\varepsilon} \ln \left(\frac{2 LR^2}{\varepsilon} \right) + 1.
    \end{equation*}
\end{theorem}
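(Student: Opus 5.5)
The plan is to run Algorithm~\ref{alg gd} on the regularized function $f_\mu(\cdot|x^0)$ from~\eqref{reg def}, with $\mu = \varepsilon/R^2$, and apply Theorem~\ref{gd pl conv text} to it. The function $f_\mu$ is $\mu$-strongly convex, hence satisfies P{\L}~\eqref{PL cond} with parameter $\mu$, and it is $(L+\mu)$-smooth. The first issue is the oracle: we only have $\wtgg f_\mu(x) = \wtgg f(x) + \mu(x - x^0)$, and its error $\zeta(x)$ satisfies $\|\zeta(x)\|_2 \leqslant \alpha\|\nabla f(x)\|_2$. This is a relative error with respect to $\nabla f$, not with respect to $\nabla f_\mu = \nabla f + \mu(x-x^0)$.

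To fix this, write $\|\nabla f(x)\|_2 \leqslant \|\nabla f_\mu(x)\|_2 + \mu\|x - x^0\|_2$. Then $\wtgg f_\mu$ satisfies the composite condition~\eqref{noise condition} with relative parameter $\alpha$ and absolute part $\delta = \alpha\mu \sup_k\|x^k - x^0\|_2$. A cleaner route is to use strong convexity, $\mu\|x - x_\mu^*\|_2 \leqslant \|\nabla f_\mu(x)\|_2$, so that $\mu\|x-x^0\|_2 \leqslant \|\nabla f_\mu(x)\|_2 + \mu R_\mu$. This gives relative parameter $2\alpha$, which is $<1$ exactly because $\alpha<1/2$; that is where the hypothesis enters. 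It also gives absolute part $\delta = \alpha\mu R_\mu \leqslant \alpha \mu R$. The bound $R_\mu \leqslant R$ is standard: compare $f_\mu(x_\mu^*) \leqslant f_\mu(x^*)$ and use $f(x_\mu^*)\geqslant f^*$. I would keep track of which variant yields the stated constants $(1+\alpha)^2/(1-\alpha)^6$.

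Next, apply Theorem~\ref{gd pl conv text} to $f_\mu$ with smoothness $L+\mu \leqslant 2L$ (since $\varepsilon < LR^2$). The linear rate is then $1 - c\frac{(1-\alpha)^3}{1+\alpha}\frac{\mu}{L}$, and the additive term is of order $\frac{1+\alpha}{(1-\alpha)^3}\frac{\alpha^2\mu^2R^2}{\mu} = O(\alpha^2 \mu R^2)$. Both are $O(\varepsilon)$ once $\mu = \Theta(\varepsilon/R^2)$. The initial gap is bounded by smoothness: $f_\mu(x^0) - f_\mu^* \leqslant f(x^0) - f^* \leqslant LR^2/2$.

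Finally, transfer back to $f$ using
\[
f(x^N) - f^* \leqslant f_\mu(x^N) - f_\mu^* + \frac{\mu}{2}R^2 ,
\]
which holds because $f_\mu^* \leqslant f^* + \frac{\mu}{2}R^2$ and $f\leqslant f_\mu$. Choosing $\mu = \varepsilon/R^2$ and splitting the target $\varepsilon$ into the regularization bias $\varepsilon/2$, the noise floor and the linear term, we need
\[
\Bigl(1 - c\tfrac{(1-\alpha)^3}{1+\alpha}\tfrac{\varepsilon}{LR^2}\Bigr)^N \tfrac{LR^2}{2} \leqslant \tfrac{\varepsilon}{4},
\]
which gives $N \approx \frac{1+\alpha}{c(1-\alpha)^3}\frac{LR^2}{\varepsilon}\ln\frac{2LR^2}{\varepsilon}$. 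The extra powers in $(1+\alpha)^2/(1-\alpha)^6$ suggest that the effective relative parameter is something like $2\alpha$, or that one tightens the split, producing the squared factor. The $+1$ comes from rounding up.

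The main obstacle is the noise floor. The term $\frac{(1+\alpha)}{(1-\alpha)^3}\alpha^2\mu R^2$ must fit within the budget with the given constants, and the effective relative parameter $\alpha'$ must stay below $1$ so that $(1-\alpha')$ factors replace $(1-\alpha)$ with only polynomial loss. I would first try the strong-convexity bound above and then tune $\mu$ (possibly $\mu = \varepsilon/(2R^2)$) until the absolute term is at most $\varepsilon/4$ with $\alpha<1/2$.
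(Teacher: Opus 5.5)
Your architecture is the paper's, namely Lemma~\ref{reg base} instantiated with Theorem~\ref{gd pl conv text}. You regularize, bound the error of $\wtgg f_\mu$ by $2\alpha\|\nabla f_\mu(x)\|_2+\alpha\mu R$ via strong convexity and $R_\mu\leqslant R$ (Lemmas~\ref{reg noise} and~\ref{reg dist bound}), run gradient descent on $f_\mu$, and return to $f$ through $f(x^N)-f^*\leqslant f_\mu(x^N)-f_\mu^*+\frac{\mu}{2}R^2$.

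\textbf{The gap is the $\alpha$-independent choice $\mu=\varepsilon/R^2$ (or $\varepsilon/(2R^2)$).} The noise floor of Theorem~\ref{gd pl conv text} is $\frac{3}{2}\frac{1+\alpha}{(1-\alpha)^3}\frac{(\alpha\mu R)^2}{\mu}=\frac{3}{2}\frac{1+\alpha}{(1-\alpha)^3}\alpha^2\mu R^2$. The factor $\frac{1+\alpha}{(1-\alpha)^3}$ that you absorbed into $O(\cdot)$ ranges up to $12$ on $\alpha<1/2$.
\begin{itemize}
\item With $\mu=\varepsilon/R^2$ and relative level $\alpha$ (let alone $2\alpha$), the floor is already about $0.51\varepsilon$ at $\alpha=0.3$. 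Together with the bias $\varepsilon/2$ this exceeds $\varepsilon$ for every $N$.
\item As $\alpha\to1/2$ the floor tends to $4.5\varepsilon$.
\end{itemize}
So $\mu$ must scale like $(1-\alpha)^3/(1+\alpha)$.

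The missing idea is to tie $\mu$ to the floor constant itself: take $\mu=\varepsilon/(B_0R^2)$ with $B_0=\frac{3}{2}\frac{1+\alpha}{(1-\alpha)^3}$. Then:
\begin{itemize}
\item The floor equals $\alpha^2\varepsilon<\varepsilon/4$. This inequality $\alpha^2<1/4$ is where the paper spends $\alpha<1/2$.
\item The bias is $\frac{\varepsilon}{2B_0}\leqslant\frac{\varepsilon}{2}$.
\item Requiring $\frac{LR^2}{2}\bigl(1-A_0\frac{\mu}{L}\bigr)^N\leqslant\frac{\varepsilon}{4}$ with $A_0=\frac{1}{8}\frac{(1-\alpha)^3}{1+\alpha}$ gives $N=\frac{B_0}{A_0}\frac{LR^2}{\varepsilon}\ln\frac{2LR^2}{\varepsilon}=12\frac{(1+\alpha)^2}{(1-\alpha)^6}\frac{LR^2}{\varepsilon}\ln\frac{2LR^2}{\varepsilon}$.
\end{itemize}
The squared factor is simply $B_0/A_0$. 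It does not come from the $2\alpha$, nor from tightening the split.

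Be aware that the paper reaches exactly these constants only by inserting relative level $\alpha$ and smoothness $L$ into Theorem~\ref{gd pl conv text} for $f_\mu$. Your bound (Lemma~\ref{reg noise}) actually gives relative level $2\alpha$, and $f_\mu$ is $(L+\mu)$-smooth. If you carry $2\alpha$ and $L+\mu$ honestly, as you plan, the same recipe with $B_0'=\frac{3}{2}\frac{1+2\alpha}{(1-2\alpha)^3}$ yields $N\approx12\frac{(1+2\alpha)^2}{(1-2\alpha)^6}\bigl(1+\frac{\mu}{L}\bigr)\frac{LR^2}{\varepsilon}\ln\frac{2LR^2}{\varepsilon}$, which blows up as $\alpha\to1/2$.

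So your plan to ``keep track of which variant yields the stated constants'' has no rigorous answer along this route. The careful version proves the bound with $(1\pm2\alpha)$ in place of $(1\pm\alpha)$, plus a factor $1+\mu/L\leqslant2$. The stated $(1-\alpha)^{-6}$ reflects looseness in the paper's own argument, not something missing from your bookkeeping.
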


\begin{theorem} \label{re-agm reg text}
    Let $f$ is convex~\eqref{convexity} and $L$ smooth~\eqref{smooth cond}, $\widetilde{\nabla} f$ satisfies relative noise condition~\eqref{relative noise condition} with $\alpha \leqslant \frac{1}{3} \left(\frac{\varepsilon}{12 LR^2} \right)^{\beta}, 0 \leqslant \beta \leqslant 1/2$.
    Then we can solve problem~\eqref{optim} with precision $f(x^N) - f^* \leqslant \varepsilon \; (\varepsilon < LR^2)$, using Algorithm~\ref{alg re-agm} via regularization technique with following amount of iterations:
    \begin{equation*}
        N \leqslant 150 \left(\frac{12 LR^2}{\varepsilon} \right)^{1 - \beta } \ln \left(\frac{4 LR^2}{\varepsilon} \right) + 1.
    \end{equation*}
\end{theorem}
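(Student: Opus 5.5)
We run Algorithm~\ref{alg re-agm} on the regularized function $f_\mu(\cdot\,|\,x^0)$ from~\eqref{reg def} and then transfer the guarantee back to $f$. We take $\mu = \frac{\varepsilon}{2R^2}$ (up to a constant; we will settle on $\mu = \varepsilon/(6R^2)$ so that the ratio $\frac{\mu}{2(L+\mu)}$ matches $\frac{\varepsilon}{12LR^2}$). Then $f_\mu$ is $\mu$-strongly convex and $(L+\mu)\le 2L$-smooth, since $\varepsilon<LR^2$.

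The first step is to turn the oracle into a composite oracle for $f_\mu$. We have $\wtgg f_\mu(x)-\nabla f_\mu(x)=\wtgg f(x)-\nabla f(x)$, whose norm is at most $\alpha\|\nabla f(x)\|_2$. Using $\nabla f(x)=\nabla f_\mu(x)-\mu(x-x^0)$, this is bounded by $\alpha\|\nabla f_\mu(x)\|_2+\alpha\mu\|x-x^0\|_2$, a composite noise~\eqref{noise condition} with the same $\alpha$ and an absolute part $\delta=\alpha\mu\|x-x^0\|_2$. This absolute part is point dependent, which is the main obstacle. The cleanest fix is to observe that the proof of Theorem~\ref{acc re agm conv text} only uses the noise at the points $y^k$. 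So we need an a priori bound $\|y^k-x^0\|_2\le CR$; the iterates of RE-AGM stay in a ball determined by the Lyapunov function $f_\mu(x)-f_\mu^*+\frac{\mu}{4}\|u-x_\mu^*\|^2$. If that proves awkward, the alternative is to absorb the term directly: the bound $\alpha\mu\|x-x^0\|\le \alpha\mu(\|x-x_\mu^*\|+R_\mu)$ together with $\mu\|x-x_\mu^*\|\le\|\nabla f_\mu(x)\|$ gives noise $\le 2\alpha\|\nabla f_\mu\|+\alpha\mu R_\mu$. The relative parameter becomes $2\alpha\le 2/3$, which is slightly outside the theorem's range, so we prefer the first route, or shrink constants. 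Either way we get $\delta\lesssim \alpha\mu R$, using $R_\mu\le R$ (standard: $f_\mu(x_\mu^*)\le f_\mu(x^*)$ implies $\frac{\mu}{2}R_\mu^2\le\frac{\mu}{2}R^2$).

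Next we check the exponent. The hypothesis $\alpha\le\frac13(\varepsilon/12LR^2)^\beta$ gives $3\alpha\le(\mu/2\widetilde L)^\beta$ with $\widetilde L=L+\mu$ and our choice of $\mu$. Hence $\gamma^*\ge\beta$, because $\log_{q}$ with base $q<1$ is decreasing. Theorem~\ref{acc re agm conv text} then yields
\[
f_\mu(x^N)-f_\mu^*\le \Bigl(1-\tfrac1{150}\bigl(\tfrac{\mu}{2\widetilde L}\bigr)^{1-\beta}\Bigr)^N\Bigl(f_\mu(x^0)-f_\mu^*+\tfrac{\mu}{4}R_\mu^2\Bigr)+\Bigl(\bigl(\tfrac{2\widetilde L}{\mu}\bigr)^{\gamma^*}+5\Bigr)\tfrac{\delta^2}{\mu}.
\]
The noise term is at most $((2\widetilde L/\mu)^{\gamma^*}+5)\alpha^2\mu R^2$. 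Since $\alpha^2(2\widetilde L/\mu)^{\gamma^*}\le \frac19(\mu/2\widetilde L)^{2\gamma^*-\gamma^*}\le\frac19$, this term is $O(\mu R^2)=O(\varepsilon)$ with a small constant.

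The initial gap satisfies $f_\mu(x^0)-f_\mu^*\le f(x^0)-f^*\le LR^2/2$, so the bracket is at most $LR^2$. Finally $f(x^N)-f^*\le f_\mu(x^N)-f_\mu^*+\frac{\mu}{2}R^2$, since $f_\mu^*\le f^*+\frac\mu2R^2$. Making the contraction term at most $\varepsilon/4$ requires $N\ge150(12LR^2/\varepsilon)^{1-\beta}\ln(4LR^2/\varepsilon)$, using $(1-t)^N\le e^{-tN}$. The $+1$ accounts for rounding. We adjust constants so that the three pieces sum to $\varepsilon$.
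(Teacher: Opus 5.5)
Your bookkeeping matches the paper's proof (Lemma~\ref{reg base} with $\mu\approx\varepsilon/(6R^2)$, $A_0=1/150$, $B_0=6$, $K_0=2$, $C_0=1$), and your contraction, noise-floor and transfer estimates are fine. The gap is the step you yourself flag as the main obstacle, which is never closed.

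\emph{Route one fails.} It needs $\|y^k-x^0\|_2\leqslant CR$ with an absolute constant $C$, and the Lyapunov function does not give that. It only yields $\frac{\mu}{4}\|u^k-x_\mu^*\|_2^2\lesssim f_\mu(x^0)-f_\mu^*+\mu R^2$, and $f_\mu(x^0)-f_\mu^*$ can be of order $LR^2$ (take $f=\frac{L}{2}\|x\|_2^2$). So the ball it determines has radius of order $R\sqrt{L/\mu}$. Then $\delta^2/\mu\sim\alpha^2LR^2$, and the noise floor is $\sim\alpha^2(2L/\mu)^{\gamma^*}LR^2$, which is not $O(\varepsilon)$ in general.

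\emph{Route two fails for the stated constants.} It is exactly Lemma~\ref{reg noise}, and it makes the relative parameter $2\alpha$. Theorem~\ref{acc re agm conv text} then needs $2\alpha\leqslant 1/3$, which is false for $\alpha\in(1/6,1/3]$ when $\beta=0$. Moreover $\gamma^*$ must be computed from $6\alpha\leqslant 2(\mu/2L)^{\beta}$, which only gives $(2L/\mu)^{1-\gamma^*}\leqslant 2(2L/\mu)^{1-\beta}$, so the constant $150$ becomes $300$. Shrinking constants cannot recover the stated bound, and your check $\gamma^*\geqslant\beta$ silently uses $\alpha$ rather than $2\alpha$. The paper's proof has the same hole: Lemma~\ref{reg base} quotes Lemma~\ref{reg noise} and then plugs the undoubled $\alpha$ into $\gamma(\alpha)$.

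\emph{The missing idea} is to split through the gradient at $x_\mu^*$ rather than through the distance. First, $\nabla f(x_\mu^*)=-\mu(x_\mu^*-x^0)$ has norm $\mu R_\mu\leqslant\mu R$. Second, set $a=\nabla f(x)-\nabla f(x_\mu^*)$ and $b=\mu(x-x_\mu^*)$. Monotonicity of $\nabla f$ gives $\langle a,b\rangle\geqslant 0$, hence $\|a\|_2\leqslant\|a+b\|_2=\|\nabla f_\mu(x)\|_2$. Therefore, at every $x$,
\[
\|\widetilde{\nabla} f_\mu(x)-\nabla f_\mu(x)\|_2\leqslant\alpha\|\nabla f(x)\|_2\leqslant\alpha\|\nabla f_\mu(x)\|_2+\alpha\mu R .
\]
This is composite noise with the same $\alpha\leqslant 1/3$ and $\delta=\alpha\mu R$, after which your exponent and noise-floor computations go through verbatim.

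A second, smaller error concerns the choice of $\mu$. Setting $\mu=\varepsilon/(6R^2)$ makes $\mu/(2L)$, not $\mu/(2(L+\mu))$, equal to $\varepsilon/(12LR^2)$. Since $f_\mu$ is only $(L+\mu)$-smooth, $\mu/(2\widetilde L)$ is strictly smaller, so for $\beta>0$ the hypothesis $3\alpha\leqslant(\varepsilon/12LR^2)^{\beta}$ does not imply $3\alpha\leqslant(\mu/2\widetilde L)^{\beta}$. The paper simply ignores the $+\mu$. The fix is to choose $\mu=\varepsilon L/(6LR^2-\varepsilon)<\varepsilon/(5R^2)$, which gives $\mu/(2(L+\mu))=\varepsilon/(12LR^2)$ exactly. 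With this choice:
\begin{itemize}
\item $\gamma^*\geqslant\beta$ holds.
\item After $150(12LR^2/\varepsilon)^{1-\beta}\ln(4LR^2/\varepsilon)$ steps the contraction term (with bracket at most $LR^2$) is below $\varepsilon/4$.
\item The noise floor is at most $\tfrac{2}{3}\mu R^2<\tfrac{2}{15}\varepsilon$.
\item The transfer term is $\tfrac{\mu}{2}R^2<\tfrac{\varepsilon}{10}$.
\end{itemize}
The total is below $\varepsilon$.
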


\section{Adaptiveness} \label{main Adap}

Adaptivity is a fairly popular tool for convergence analysis when the function parameters are unknown, for example papers~\cite{dvinskikh2019adaptive, dvurechensky2017adaptive, gasnikov2019adaptive, ivanova2020adaptive, ivanova2021adaptive, kornilov2025intermediate}. In this paper we will consider adaptivity on $\alpha$ (relative noise~\eqref{relative noise condition} parameter) and $L$ (smoothness~\eqref{smooth cond}). Usually adaptivity used in case, when $L$ parameter is unknown, particularly adaptive methods applied to different oracle models, for example $(\delta, L, \mu)$ model~\eqref{delta L mu oracle}. This technique also allows to provide universal method~\cite{yurtsever2015universal, gasnikov2017universal}, taking into account the Hölder continuity~\eqref{def:holder continity}.
Such universal methods provides $\varepsilon$ solution~\eqref{def:func_eps_solution} by:
\begin{equation*}
    N = \inf_{\nu \in [0; 1]} \left( \frac{2 L_{\nu} R^{1 + \nu} }{\varepsilon} \right)^{\frac{2}{1 + \nu}} + 1 \text{, iterations},
\end{equation*}
in this case the set $\lbrace L_{\nu} \rbrace_{\nu \in [0; 1]}$ may be unknown and some of $L_{\nu}$ can equal $+\infty$. In this paper we will not take into account the possible Hölder~\eqref{def:holder continity} property of the function and assume only smoothness, what is more the corresponding parameter $L$ may be unknown.

We can introduce modification of Algorithm~\eqref{alg gd}, which can adapt relative error parameter $\alpha$ and smothenss $L$.
\begin{algorithm}[H]
\caption{Relative Noise Adaptive Gradient Descent $(N, L_0, x^0, \delta, \tau)$}
\label{alg rel adaptive gd}
\begin{algorithmic}[1]
\State
\noindent {\bf Input:} Starting point $x^0$, number of steps $N$, $L_0$ - initial estimation of smoothness parameter, $\delta$ - absolute error, $\tau \in \lbrace \textbf{false}, \textbf{true} \rbrace$ - whether to adapt the smoothness parameter.
\State {\bf Set} $J_0 = 1$.
\For {$k = 0 \dots N - 1$}
    \State $\wha_k, \whL_k, \theta_k, x^{k + 1} = \text{UpdateCoefficientsAndMakeStep}\left(J_k, L_0, x^k, \wtgg f, \tau \right)$.

    \While {$f(x^{k + 1}) > f(x^{k}) - \theta_k \| \wtgg f(x^{k}) \|_2^2 + \frac{3}{4 (1 + \wha_k)^2 \whL_k} \delta^2$}
        \State $J_k = J_k + 1$.
        \State $\wha_k, \whL_k, \theta_k, x^{k + 1} = \text{UpdateCoefficientsAndMakeStep}\left(J_k, L_0, x^k, \wtgg f, \tau \right)$.
    \EndWhile
   \State $J_{k + 1} = \max(1, J_{k} - 1)$.
\EndFor
\State 
\noindent {\bf Output:} $x^N$.

\State

\Function{UpdateCoefficientsAndMakeStep}{$t, L_0, x, \wtgg f, \tau$}
    \State $\wha = 1 - 2^{-t}$.
    \If{$\tau$ is \textbf{true}}
        \State $\whL = L_0 \cdot 2^t$.
    \Else
        \State $\whL = L_0$.
    \EndIf
    \State $h = \frac{1}{4 \whL} \left(\frac{1 - \wha}{1 + \wha} \right)^\frac{1}{2}$.
    \State $\theta = \frac{1}{32 \whL} \frac{1 - \wha}{1 + \wha}$.
    \State $y = x - h \wtgg f(x)$.
    \State
    \Return $\left(\wha, \whL, \theta, y \right)$.
\EndFunction
\end{algorithmic}
\end{algorithm}

Usually, adaptive algorithms use $L_{k} = 2 L_k$ and $L_{k + 1} = L_k / 2$. In the Algorithm above we used sequence $J_k$ to update sequences $\wha_k, \whL_k$ simultaneously. The Algorithm above uses the idea of binary search by $\widehat{\alpha}_k$ and $\whL_k$ with predicate, corresponding to Gradient Descent step decreasing the value of a function (check Lemma~\ref{GD step}). Algorithm~\ref{alg rel adaptive gd} provides option $\tau$, which enables/disables $L$-smoothness adaptiveness. We will provide two theorems below with $\tau = \textbf{true}$ and $\tau = \textbf{false}$.

\begin{theorem} \label{adaptive PL convergence alpha and smooth text}
    Let $f$ function satisfies P$\L$ condition~\eqref{PL cond} and smoothness~\eqref{smooth cond}, $\wtgg f$ satisfies error condition~\eqref{noise condition}. Then Algorithm~\ref{alg rel adaptive gd} with $\tau = \textbf{true}$ (enable smooth adaptiveness) provides convergence:
    \begin{eqnarray*}    
        f(x^N) - f^* & \leqslant & \left( 1 - \frac{(1 - \alpha)^2}{256} \min \left\lbrace (1 - \alpha)^{2}, \left(\nicefrac{L_0}{L} \right)^2 \right \rbrace \frac{\mu}{L_0} \right)^N (f(x^0) - f^*)
        \\
        & + & \frac{200}{(1 - \alpha)^{2}} \max \left\lbrace (1 - \alpha)^{-2}, \left(\nicefrac{L}{L_0} \right)^2 \right\rbrace \frac{\delta^2}{\mu},
    \end{eqnarray*}
    and total number iterations of the inner loop is bounded by:
    \begin{equation*}
         N + \max \left \lbrace \log_2\left((1 - \alpha)^{-1}\right), \log_2\left(\nicefrac{L}{L_0} \right) \right \rbrace + 1.
    \end{equation*}
\end{theorem}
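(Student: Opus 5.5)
The proof rests on one descent inequality per step. Fix an iteration $k$. For a given trial index $t=J_k$ let $\wha=1-2^{-t}$, $\whL=L_0 2^t$, $h=\frac{1}{4\whL}\left(\frac{1-\wha}{1+\wha}\right)^{1/2}$ and $\theta=\frac{1}{32\whL}\frac{1-\wha}{1+\wha}$. I will show the following: if $\wha\geqslant\alpha$ and $\whL\geqslant L$, the while-condition fails. This means
\begin{equation*}
f(x^k-h\wtgg f(x^k))\leqslant f(x^k)-\theta\|\wtgg f(x^k)\|_2^2+\frac{3}{4(1+\wha)^2\whL}\delta^2 .
\end{equation*}

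To prove it, apply smoothness~\eqref{smooth cond} with constant $L\leqslant\whL$ and write $\nabla f=\wtgg f-\zeta_a-\zeta_r$ via the decomposition~\eqref{noise def abs rel}. Then bound $\langle\nabla f,\wtgg f\rangle$ from below. Two facts are needed. First, $\|\zeta_r\|\leqslant\alpha\|\nabla f\|$ and $\|\nabla f\|\leqslant(\|\wtgg f\|+\delta)/(1-\alpha)$. Second, Young's inequality separates the $\delta$ terms. Since $1-\wha\leqslant 1-\alpha$, the monotonicity in $\alpha$ lets me replace $\alpha$ by $\wha$ in the constants. This is the same computation as the paper's GD step lemma (Lemma~\ref{GD step}), now with $(\wha,\whL)$ in place of $(\alpha,L)$; I expect to invoke that lemma directly. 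Consequently the inner loop terminates at the latest when $2^{-J}\leqslant 1-\alpha$ and $2^{J}\geqslant L/L_0$. That is, accepted indices always satisfy $J\leqslant J^*:=\lceil\max\{\log_2(1-\alpha)^{-1},\log_2(L/L_0)\}\rceil$.

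Next comes the counting of inner iterations, which is the standard amortization. Each failed check increments $J$ by one, and after each accepted step $J$ decreases by at most one. Define the potential $J_k$. The total number of oracle calls equals $N$ plus the number of increments, and the number of increments is at most $N+J^*-1$ minus the number of decrements. This yields the bound in the statement, $N+\max\{\dots\}+1$, up to careful bookkeeping of the ceiling.

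For the rate, at each accepted step the index satisfies $J_k\leqslant J^*$. Hence $\whL_k\leqslant 2L_0\max\{(1-\alpha)^{-1},L/L_0\}$ and $1-\wha_k\geqslant 2^{-J^*}\geqslant \frac12\min\{1-\alpha,L_0/L\}$. Then I convert $\|\wtgg f\|^2$ into $\|\nabla f\|^2$, using $\|\wtgg f\|\geqslant(1-\alpha)\|\nabla f\|-\delta$ so that $\|\wtgg f\|^2\geqslant\frac{(1-\alpha)^2}{2}\|\nabla f\|^2-\delta^2$. Finally I apply P{\L}~\eqref{PL cond}, which gives
\begin{equation*}
f(x^{k+1})-f^*\leqslant(1-2\mu\theta_k(1-\alpha)^2/2)(f(x^k)-f^*)+C_k\delta^2 .
\end{equation*}
Plugging in the worst-case $\theta_k\gtrsim\frac{1}{64\whL}\min\{\cdots\}$ gives the contraction factor $1-\frac{(1-\alpha)^2}{256}\min\{(1-\alpha)^2,(L_0/L)^2\}\frac{\mu}{L_0}$. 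The squared minimum arises because $\theta$ contains both $1/\whL$ and $1-\wha$, and each contributes one factor of the minimum. Unrolling the recursion and summing the geometric series $\sum C\delta^2 q^j\leqslant C\delta^2/(1-q)$ gives the additive term $\frac{200}{(1-\alpha)^2}\max\{\cdots\}\frac{\delta^2}{\mu}$.

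The main obstacle is this last bookkeeping. The $\delta^2$ coefficient $\theta_k+\frac{3}{4(1+\wha_k)^2\whL_k}$ is small when $\whL_k$ is large. But $1/(1-q)$ scales like $L_0/(\mu\cdot\min^2)$, so the ratio must be bounded uniformly over $k$. To do so I will use $\whL_k\geqslant L_0$ for the coefficient and the worst-case $\min$ for the contraction, and I will track the constants to reach $200$.
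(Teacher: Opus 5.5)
The overall architecture is the paper's own, and your rate bookkeeping (the identity $\frac{1-\wha_k}{\whL_k}=\frac{4^{-J_k}}{L_0}$, Lemma~\ref{PL noised}, the per-step noise $\frac{25\delta^2}{32L_0}$, the constants $256$ and $200$) goes through as you describe. The gap is the acceptance step, on which the bound $J_k\leqslant J^*$ and everything after it rests.

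Lemma~\ref{GD step} cannot be invoked there. It is proved for the step $\left(\frac{1-\alpha}{1+\alpha}\right)^{3/2}\frac{1}{4L}$ and measures decrease by $\|\nabla f\|_2^2$. Algorithm~\ref{alg rel adaptive gd} instead takes $h=\frac{1}{4\whL}\left(\frac{1-\wha}{1+\wha}\right)^{1/2}$, which is larger by the factor $\frac{1+\wha}{1-\wha}$, and tests a decrease $\theta\|\wtgg f\|_2^2$. That is why the paper proves the separate Lemma~\ref{alpha est inequality}. (Passing from $(\alpha,L)$ to $(\wha,\whL)$ needs no monotonicity: the oracle simply satisfies the noise condition with $\wha$, and $f$ is $\whL$-smooth.)

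Your direct computation does not close either. From $\nabla f=\wtgg f-\zeta_a-\zeta_r$, $\|\zeta_r\|_2\leqslant\alpha\|\nabla f\|_2$ and $\|\nabla f\|_2\leqslant(\|\wtgg f\|_2+\delta)/(1-\alpha)$ you only get $\langle\nabla f,\wtgg f\rangle\geqslant\frac{1-2\alpha}{1-\alpha}\|\wtgg f\|_2^2-\frac{\delta}{1-\alpha}\|\wtgg f\|_2$. This is vacuous for $\alpha\geqslant\frac12$, which is exactly the regime the scheme must handle, since every trial value $\wha=1-2^{-t}$ is at least $\frac12$. What you need is $\langle\nabla f,\wtgg f\rangle\geqslant c\|\wtgg f\|_2^2-D\delta^2$ with $c\geqslant\frac14\sqrt{\frac{1-\wha}{1+\wha}}$, so that $ch-\frac{Lh^2}{2}\geqslant\theta$. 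The paper gets this from the angle argument of Lemma~\ref{cos lb}, part 2. Polarization does even better and gives $c=\frac12$, since $2\langle\nabla f,\wtgg f\rangle=\|\wtgg f\|_2^2+\|\nabla f\|_2^2-\|\wtgg f-\nabla f\|_2^2\geqslant\|\wtgg f\|_2^2-\frac{\delta^2}{1-\alpha^2}$.

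Be careful as well with the slack $\frac{3\delta^2}{4(1+\wha)^2\whL}$ in the test. For $\alpha$ close to $1$ it is too small, and the acceptance claim you state is then false. The same goes for the paper's Lemma~\ref{alpha est inequality}, and its ingredient Lemma~\ref{cos lb}, part 2, fails in the following example too.

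Take $f(x)=\frac L2x^2$ on $\mathbb{R}$, any $\whL\geqslant L$, and $\wha=\alpha=\frac{31}{32}$. Choose a point with $\nabla f(x)=\frac{\delta}{2(1-\alpha)}$ and set $\wtgg f(x)=-\frac{\delta}{2}$. This is admissible, since $|\wtgg f(x)-\nabla f(x)|=\alpha|\nabla f(x)|+\delta$. Then $f(x-h\wtgg f(x))-f(x)\geqslant\frac{h\delta^2}{4(1-\alpha)}=\frac{\delta^2}{16\whL\sqrt{1-\alpha^2}}\approx0.25\,\frac{\delta^2}{\whL}$. Passing the test would require this to be at most $\frac{3\delta^2}{4(1+\alpha)^2\whL}\approx0.19\,\frac{\delta^2}{\whL}$, so the test fails.

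With the polarization bound and $Lh\leqslant\frac14$ you do get acceptance whenever $\sqrt{1-\wha}\,(1+\wha)^{3/2}\leqslant6(1-\alpha^2)$. This holds for every $\wha\in[0,1)$ once $\alpha\leqslant0.88$. Beyond that, the bound on $J_k$, and with it the inner-loop count, needs a restriction on $\alpha$ or a larger slack in the test.

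Finally, in your count, the number of increments equals $J_{\mathrm{last}}-1$ plus (not minus) the number of decrements, so it is at most $(J^*-1)+(N-1)$. That bounds the while-loop executions, which is what the statement counts. The total number of trial steps is $N$ larger, so you should not identify the two.
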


\begin{theorem}\label{adaptive PL convergence only alpha text}
    Let $f$ function satisfies P$\L$ condition~\eqref{PL cond} and $L$-smoothness~\eqref{smooth cond}, $\wtgg f$ satisfies error condition~\eqref{noise condition}. Then Algorithm~\ref{alg rel adaptive gd} with $\tau = \textbf{false}$ (disable smooth adaptiveness) and $L_0 = L$ provides convergence:
    \begin{eqnarray*}    
        f(x^N) - f^* \leqslant \left( 1 - \frac{(1 - \alpha)^3}{128} \frac{\mu}{L} \right)^N (f(x^0) - f^*)
       + \frac{100}{(1 - \alpha)^{3}} \frac{\delta^2}{\mu},
    \end{eqnarray*}  
    and total number iterations of the inner loop is bounded by:
    \begin{equation*}
         N +  \log_2\left((1 - \alpha)^{-1}\right) + 1.
    \end{equation*}
\end{theorem}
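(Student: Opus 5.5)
We prove Theorem~\ref{adaptive PL convergence only alpha text} by combining a one-step descent lemma for accepted steps, a guarantee that the backtracking loop stops at a controlled level $J_k$, and the P{\L} condition~\eqref{PL cond}. Here $\tau=\textbf{false}$ and $L_0=L$, so $\whL_k\equiv L$ and only $\wha_k = 1-2^{-J_k}$ changes.

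\textbf{Step 1: termination of the inner loop.} We show that once $\wha_k \geqslant \alpha$, the acceptance test always holds. Let $h = \frac{1}{4L}\left(\frac{1-\wha}{1+\wha}\right)^{1/2}$ and write $\wtgg f = \nabla f + \zeta_a+\zeta_r$ as in~\eqref{noise def abs rel}, with $\|\zeta_r\|\leqslant \alpha\|\nabla f\|\leqslant \wha\|\nabla f\|$. By $L$-smoothness,
\begin{equation*}
f(x^{k+1})\leqslant f(x^k) - h\langle \nabla f, \wtgg f\rangle + \tfrac{Lh^2}{2}\|\wtgg f\|^2 .
\end{equation*}
We bound $\langle \nabla f,\wtgg f\rangle$ from below using the relative part (cf. Lemma~\ref{GD step}), and absorb the absolute part by Young's inequality. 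This gives
\begin{equation*}
f(x^{k+1})\leqslant f(x^k) - \theta\|\wtgg f(x^k)\|^2 + \frac{3}{4(1+\wha)^2 L}\delta^2 ,
\end{equation*}
with $\theta = \frac{1}{32L}\frac{1-\wha}{1+\wha}$. The constants $1/4$ and $1/32$ leave room for the factor $(1-\wha)^{1/2}$ lost in the step size. Consequently $J_k$ never exceeds $J^* = \lceil \log_2 (1-\alpha)^{-1}\rceil$. Since $J_{k+1}=\max(1,J_k-1)$, each outer iteration lowers $J$ by at most one. Hence the total number of extra inner evaluations telescopes to at most $J^*$, which yields the bound $N+\log_2((1-\alpha)^{-1})+1$.

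\textbf{Step 2: per-step decrease in terms of the true gradient.} On accepted steps, $J_k\leqslant J^*$ gives $1-\wha_k \geqslant (1-\alpha)/2$, so $\theta_k \geqslant \frac{1-\alpha}{128 L}$. We then convert $\|\wtgg f\|^2$ into $\|\nabla f\|^2$.

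This conversion is the main obstacle. The accepted $\wha_k$ may be smaller than $\alpha$, so the acceptance test alone does not relate $\|\wtgg f\|$ to $\|\nabla f\|$; for that we must use the true noise bound~\eqref{noise condition}. It gives
\begin{equation*}
\|\wtgg f\|^2 \geqslant \tfrac12(1-\alpha)^2\|\nabla f\|^2 - c\,\delta^2
\end{equation*}
for a constant $c$, via $(a-b)^2\geqslant a^2/2 - b^2$. Combining,
\begin{equation*}
f(x^{k+1})-f^*\leqslant f(x^k)-f^* - \frac{(1-\alpha)^3}{256L}\|\nabla f(x^k)\|^2 + \frac{C\delta^2}{L},
\end{equation*}
where we use $(1+\wha)^{-2}\leqslant 1$ and absorb $\theta_k c\delta^2$ into the last term. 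The constants must be tracked to land on $1/128$. This should follow by noting that the acceptance inequality is checked with the actual $\wtgg f$ rather than with $\|\nabla f\|$, and by using $\|\nabla f\|^2\geqslant 2\mu(f-f^*)$.

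\textbf{Step 3: unrolling.} Applying the P{\L} condition~\eqref{PL cond} gives the contraction
\begin{equation*}
r_{k+1}\leqslant \left(1-\frac{(1-\alpha)^3\mu}{128L}\right) r_k + \frac{C\delta^2}{L},\qquad r_k := f(x^k)-f^*.
\end{equation*}
Summing the geometric series bounds the accumulated noise by $\frac{C\delta^2/L}{(1-\alpha)^3\mu/(128L)} = \frac{128\,C}{(1-\alpha)^3}\frac{\delta^2}{\mu}$. A sufficiently small $C$, achievable by the Young-inequality choice, yields the stated constant $100$.
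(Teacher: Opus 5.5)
\textbf{Where your proposal matches the paper.} Your three steps follow the paper's route: the stopping level from Lemma~\ref{alpha est inequality}, then $1-\wha_k\geqslant\frac{1-\alpha}{2}$, then Lemma~\ref{PL noised}, then a geometric sum. Steps~2--3 do close once Step~1 is granted. Using $\|\wtgg f\|_2^2\geqslant\frac{1}{2}(1-\alpha)^2\|\nabla f\|_2^2-\delta^2$ and $\theta_k\leqslant\frac{1}{32L}$ gives $C=\frac{1}{32}+\frac{3}{4}=\frac{25}{32}$, so $128C=100$.

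\textbf{The gap is Step~1.} Your claim that the test passes as soon as $\wha\geqslant\alpha$ is false for $\alpha$ near $1$, so no tuning of Young's inequality can prove it. The allowance $\frac{3\delta^2}{4(1+\wha)^2L}$ is bounded uniformly in $\wha$. However, $\wtgg f$ may point uphill whenever $(1-\alpha)\|\nabla f\|_2<\delta$, that is, for gradients as large as $\delta/(1-\alpha)$. Concretely, take the following example.
\begin{itemize}
\item Let $f(x)=\frac{L}{2}x^2$ on $\R$ (so $\mu=L$), $\alpha=1-2^{-m}$, $\nabla f(x^0)=\frac{\delta}{2(1-\alpha)}$ and $\wtgg f(x^0)=-\frac{\delta}{2}$. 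The error is exactly $\alpha\|\nabla f(x^0)\|_2+\delta$, so this oracle is admissible.
\item At the trial $J=m$, i.e.\ $\wha=\alpha$, the step raises $f$ by at least $\frac{h\delta^2}{4(1-\alpha)}=\frac{\delta^2}{16L\sqrt{1-\alpha^2}}$. The test tolerates less than $\frac{3\delta^2}{4L(1+\alpha)^2}$.
\item The increase exceeds the tolerance whenever $(1+\alpha)^{3/2}\geqslant 12\sqrt{1-\alpha}$, which holds for all $m\geqslant 5$. So your $J_k\leqslant\lceil\log_2(1-\alpha)^{-1}\rceil$ fails.
\item The loop in fact runs until $1-\wha$ is of order $(1-\alpha)^2$. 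For $m=10$ it first accepts at $J=16$, so $1-\wha_0=2^{-16}<\frac{1-\alpha}{2}$. For $N=1$ its $15$ iterations already exceed the stated bound $N+\log_2\bigl((1-\alpha)^{-1}\bigr)+1=12$.
\end{itemize}

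\textbf{Citing the paper does not repair this.} Lemma~\ref{alpha est inequality} asserts exactly your Step~1 inequality, and the same example refutes it. Its proof goes through part~2 of Lemma~\ref{cos lb}. There, the term $-\frac{\lambda}{2}\|\nabla f(x)\|_2^2$ is bounded below by substituting a \emph{lower} bound for $\|\nabla f(x)\|_2^2$, which is the wrong direction.

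\textbf{What does hold.} Over all admissible noise and all gradients,
$\min\bigl(\langle\nabla f,\wtgg f\rangle-Lh\|\wtgg f\|_2^2\bigr)=-\frac{\delta^2}{4(1-\alpha)(1-Lh(1-\alpha))}\geqslant-\frac{\delta^2}{3(1-\alpha)}$.
Since $\whL=L$ gives $\theta=\frac{Lh^2}{2}$, the test is guaranteed to pass iff this minimum is at least $-\frac{3\delta^2}{\sqrt{(1+\wha)^3(1-\wha)}}$. The quantity $\frac{3}{\sqrt{(1+\wha)^3(1-\wha)}}$ is at least $2.3$ for every $\wha\in[0,1)$. Hence your argument, with the stated constants, is valid for, e.g., $\alpha\leqslant 0.85$.

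For $\alpha$ closer to $1$, acceptance is guaranteed only once $1-\wha$ is of order $(1-\alpha)^2$; for instance, $1-\wha\leqslant 10(1-\alpha)^2$ suffices. That replaces $(1-\alpha)^3$ by order $(1-\alpha)^4$ in both the rate and the noise term. It also adds about $\log_2(1-\alpha)^{-1}$ to the inner-loop count. So you need either a restriction on $\alpha$ or these weaker constants.
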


\section{Relative interpretation of absolute noise} \label{section main relative interpretation}

As it was shown in Theorem~\ref{acc re agm conv text} component of absolute noise accumulated as $\left(\frac{L}{\mu} \right)^{\gamma^*} \frac{\delta^2}{\mu}$. In papers~\cite{vasin2023accelerated, devolder2013first, devolder2014first, stonyakin2021inexact} considering absolute noise (without relative component $\zeta_r$) it was shown that accelerated methods contain an absolute noise component in convergence rate as $\sqrt{\frac{L}{\mu}} \delta^2 / \mu$. On the contrary, as it was shown at Theorem~\ref{gd pl conv text} that type of noise is accumulated accordingly to $\delta^2 / \mu$. Obviously, that in presence of absolute noise $\zeta_a$ with magnitude $\delta$ any first order method can not guarantee any convergence after gradient norm $\| \nabla f(x) \|_2$ reaches $\delta$ level, which corresponds to $f(x) - f^* \leqslant \delta^2 / 2 \mu$, by P$\L$ condition~\eqref{PL cond}. However, the presence of relative noise $\zeta_r$ in the gradient does not add an additive term to the convergence of the method. Therefore, the idea arises to interpret the absolute component $\zeta_a$ of noise as part of the relative one. Let the magnitude of the additive noise be known and the method stops as soon as the gradient norm reaches $K \delta$ for some $K > 1$, then we can re-estimate relative noise level:
\begin{equation*}
    \widehat{\alpha} = \frac{\alpha \|\nabla f(x) \|_2 + \delta}{\|\nabla f(x) \|_2} \leqslant \alpha + \frac{\delta}{K \delta} = \alpha + \frac{1}{K}.
\end{equation*}
That is we can set $K = \left(\frac{L}{\mu} \right)^{\gamma}$:
\begin{equation*}
    \widehat{\alpha} = \alpha + \left(\frac{\mu}{L} \right)^{\gamma},
\end{equation*}
and we can converge to $f(x) - f^* \leqslant \left(\frac{L}{\mu} \right)^{2 \gamma} \delta^2 /\mu$. Such reasoning leads us to the following Theorem.

\begin{theorem} \label{main relative interpretation th text}
    Let $f$ is $\mu$-strongly convex~\eqref{strong conv} and $L$-smooth~\eqref{smooth cond}, $\wtgg f$ satisfies~\eqref{noise condition}. Algorithm $\mathcal{A}$ is first order method~\eqref{linear first order method}, using $\wtgg f$ satisfying relative noise~\ref{relative noise condition} with magnitude $\alpha_0$ produces $x^N$, such that:
    \begin{equation*}
        f(x^N) - f^* \leqslant C_0 LR^2 \exp \left(-A_0 \left(\frac{\mu}{L} N \right)^{\gamma(\alpha_0)} \right)^N,
    \end{equation*}
    where:
    \begin{equation*}
        \gamma: [0; 1) \to [1 / 2; + \infty]
    \end{equation*}
    Then for any $K > (1 - \alpha)^{-1}$ we can solve the optimization problem~\eqref{optim} with precision $\varepsilon_0$, where
    \begin{equation*}
        f(x^{N_0}) - f^*
        \leqslant \varepsilon_0 = 
        \frac{1}{(1 - \alpha)^2} \left( \left((1 + \alpha)K + 1 \right)^2 + 1 \right) \frac{\delta^2}{\mu},
    \end{equation*}
    and
    \begin{equation*}
        N_0 \leqslant \frac{1}{A_0} \left(\frac{L}{\mu} \right)^{\gamma(\widehat{\alpha})} \ln \left( \frac{(1 - \alpha)^2}{\left((1 + \alpha)K + 1 \right)^2 + 1} \cdot C_0 \cdot \frac{\mu}{\delta^2} \right),
    \end{equation*}
    using algorithm $\mathcal{A}$ with a relative error parameter $\widehat{\alpha} = \alpha + \frac{1}{K}$.
\end{theorem}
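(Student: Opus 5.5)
We reduce the composite noise to pure relative noise by using a stopping rule, in the spirit of Proposition~\ref{propostion:rel_interpret}. We run $\mathcal{A}$ with parameter $\widehat{\alpha} = \alpha + \frac{1}{K}$ and stop at the first iterate $x^k$ where the observed gradient is small, namely $\|\wtgg f(x^k)\|_2 \leqslant (1+\alpha)K\delta + \delta$. The stopping test uses only $\wtgg f$, since the true gradient is not available. The proof splits into two cases.

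\emph{Case 1: the test has not yet fired.} Then $\|\nabla f(x^k)\|_2 \geqslant K\delta$ at every iterate visited so far. Indeed, if $\|\nabla f(x^k)\|_2 < K\delta$, then by~\eqref{noise condition}
\begin{equation*}
\|\wtgg f(x^k)\|_2 \leqslant (1+\alpha)\|\nabla f(x^k)\|_2 + \delta < (1+\alpha)K\delta + \delta,
\end{equation*}
so the test would already have fired. On such iterates
\begin{equation*}
\|\zeta(x^k)\|_2 \leqslant \alpha\|\nabla f(x^k)\|_2 + \delta \leqslant \left(\alpha + \tfrac{1}{K}\right)\|\nabla f(x^k)\|_2 .
\end{equation*}
The oracle therefore satisfies~\eqref{relative noise condition} with magnitude $\widehat{\alpha}$ along the whole trajectory. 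The condition $K > (1-\alpha)^{-1}$ makes $\widehat{\alpha} < 1$, so $\gamma(\widehat{\alpha})$ is defined.

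We invoke the hypothesis on $\mathcal{A}$ only along the actual trajectory. This is legitimate because a first-order method~\eqref{linear first order method} only queries the oracle at its own iterates, so the oracle could be extended to a globally $\widehat{\alpha}$-relative one without changing the run. The hypothesis then gives linear convergence with rate $\exp(-A_0 (\mu/L)^{\gamma(\widehat{\alpha})} N)$; I read the garbled exponent in the statement this way.

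\emph{Case 2: the test fires at $x^k$.} Combining the test with~\eqref{noise condition},
\begin{equation*}
(1-\alpha)\|\nabla f(x^k)\|_2 - \delta \leqslant \|\wtgg f(x^k)\|_2 \leqslant \left((1+\alpha)K+1\right)\delta .
\end{equation*}
Hence $\|\nabla f(x^k)\|_2 \leqslant \frac{(1+\alpha)K + 2}{1-\alpha}\,\delta$. Strong convexity implies the P{\L} condition~\eqref{PL cond}, so
\begin{equation*}
f(x^k) - f^* \leqslant \frac{\|\nabla f(x^k)\|_2^2}{2\mu}.
\end{equation*}
Using $(a+1)^2 \leqslant 2(a^2+1)$ with $a = (1+\alpha)K+1$ gives
\begin{equation*}
f(x^k) - f^* \leqslant \frac{\left((1+\alpha)K+1\right)^2 + 1}{(1-\alpha)^2}\,\frac{\delta^2}{\mu} = \varepsilon_0 .
\end{equation*}
This accounts for the form of $\varepsilon_0$ in the statement. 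Some slack in the constants is available, and the threshold can be tuned to match exactly.

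\emph{Iteration bound.} If the test never fires within $N_0$ steps, Case 1 applies throughout. We choose $N_0$ so that $C_0 LR^2 \exp(-A_0(\mu/L)^{\gamma(\widehat{\alpha})} N_0) \leqslant \varepsilon_0$. Solving for $N_0$ gives the logarithmic bound in the statement: the argument of the logarithm is essentially $C_0 LR^2/\varepsilon_0$, and the displayed formula appears to absorb $LR^2$ into the constant. Either way the output satisfies $f - f^* \leqslant \varepsilon_0$.

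\emph{Main obstacle.} The delicate step is the claim in Case 1 that the guarantee of $\mathcal{A}$ applies when the relative bound holds only at visited points rather than globally. I would argue by constructing the modified oracle $\wtgg' f$, equal to $\wtgg f$ on the trajectory and to $\nabla f$ elsewhere, and noting that $\mathcal{A}$ produces the identical sequence. A secondary point is that if $\mathcal{A}$ outputs an averaged or auxiliary point (as RE-AGM does with $y^k$), the stopping test must be applied to all query points and the output taken as the point that triggered the test.
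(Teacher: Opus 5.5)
Your proof is correct and follows the paper's argument: you use the same stopping threshold $((1+\alpha)K+1)\delta$ on the observed gradient, the same reduction to $\widehat{\alpha}$-relative noise while the test has not fired, and a P{\L}-based bound when it fires. The only difference is in that last step, where you bound $\|\nabla f\|_2$ directly and use $(a+1)^2\leqslant 2(a^2+1)$ instead of Lemma~\ref{PL noised}, reaching the identical $\varepsilon_0$; your remarks on needing the relative bound only along the trajectory and on the $LR^2$ dropped inside the logarithm address points the paper passes over silently.
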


\begin{remark}
    Theorem~\eqref{main relative interpretation th text} can be interpreted as stopping criterion:
    \begin{equation*}
        \| \wtgg f(x) \|_2 \leqslant K \delta.
    \end{equation*}
    For absolute noise in gradient~\eqref{absolute noise condition} such criteria was already proposed in~\cite{poljak1981iterative, vasin2023accelerated, stonyakin2023stopping}. Usually such criteria used for convex~\eqref{convexity} (degenerate case, $\mu = 0$) functions, however in this paper and in~\cite{stonyakin2023stopping} used for strongly convex~\eqref{strong conv} functions.
\end{remark}

\begin{theorem} \label{re-agm rel interpretation text}
    Let $f$ is $\mu$-strongly convex~\eqref{strong conv} and $L$-smooth~\eqref{smooth cond}, $\wtgg f$ satisfies~\eqref{noise condition}, $\alpha \leqslant \frac{1}{6} \left(\frac{\mu}{2L}\right)^{\gamma_0}, 0 \leqslant \gamma_0 \leqslant \frac{1}{2}$. Using Algorithm~\ref{alg re-agm} with parameters $(L, \mu, x^0, \widehat{\alpha})$, where
    \begin{equation*}
        \widehat{\alpha} = \alpha + \frac{1}{6} \left(\frac{\mu}{2L}\right)^{\beta}, \quad 0 \leqslant \beta \leqslant \frac{1}{2},
    \end{equation*}
    we can obtain:
    \begin{equation*}
        \min \left \lbrace f(y^{N}) - f^*, f(x^{N}) - f^* \right \rbrace
        \leqslant \frac{1}{(1 - \alpha)^2} \left( \left(6(1 + \alpha)\left(\frac{2L}{\mu}\right)^{\beta} + 1 \right)^2 + 1 \right) \frac{\delta^2}{\mu}.
    \end{equation*}
    We will forcefully stop the algorithm based on the condition:
    \begin{equation*}
        \|\wtgg f(x^N) \|_2 \leqslant \left(6(1 + \alpha) \left(\frac{2L}{\mu}\right)^{\beta} + 1 \right) \delta,
    \end{equation*}
    in this case, the method will work no more than
    \begin{equation*}
        N = 300 \left(\frac{L}{\mu} \right)^{1 - \min\left \lbrace \gamma_0, \beta \right \rbrace} \ln \left( \frac{(1 - \alpha)^2 }{\left(6(1 + \alpha) \left(\frac{2L}{\mu}\right)^{\beta} + 1 \right)^2 + 1} \cdot \frac{LR^2}{\nicefrac{\delta^2}{\mu}} \right).
    \end{equation*}
\end{theorem}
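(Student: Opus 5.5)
The plan is to specialize the general reduction of Theorem~\ref{main relative interpretation th text} to Algorithm~\ref{alg re-agm}. The convergence rate comes from Theorem~\ref{acc re agm conv text} with $\delta = 0$, and the absolute component is absorbed into the relative one by the stopping rule. Set $K = 6(1+\alpha)\left(\frac{2L}{\mu}\right)^{\beta} + 1$, which is the threshold in the stopping condition.

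\emph{Step 1 (noise re-estimation on iterates that are not stopped).} As long as the stopping criterion $\|\wtgg f(x^k)\|_2 \leqslant K\delta$ fails, we have $\|\nabla f(x^k)\|_2 \geqslant \frac{\|\wtgg f(x^k)\|_2 - \delta}{1+\alpha} > \frac{K-1}{1+\alpha}\delta = 6\left(\frac{2L}{\mu}\right)^{\beta}\delta$. Hence, by~\eqref{noise condition},
\begin{equation*}
\|\zeta\|_2 \leqslant \left(\alpha + \tfrac{1}{6}\left(\tfrac{\mu}{2L}\right)^{\beta}\right)\|\nabla f\|_2 = \widehat\alpha\|\nabla f\|_2 .
\end{equation*}
So on such points the oracle is purely relative with parameter $\widehat{\alpha}$. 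Since $\widehat\alpha \leqslant \frac{1}{3}\left(\frac{\mu}{2L}\right)^{\min\{\gamma_0,\beta\}} \leqslant \frac13$, Theorem~\ref{acc re agm conv text} applies with $\gamma^* \geqslant \min\{\gamma_0,\beta\}$, because $3\widehat\alpha \leqslant (\mu/2L)^{\min\{\gamma_0,\beta\}}$.

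\emph{The main obstacle} is that RE-AGM queries the oracle at $y^k$, not at $x^k$. The relative reinterpretation therefore has to hold at the $y^k$ points, while the stopping criterion is checked at the $x$ points. This is why the statement controls $\min\{f(y^N)-f^*, f(x^N)-f^*\}$. I would monitor the criterion at the query points. If it triggers at some $y^k$ or $x^k$, we stop there. Otherwise every query satisfies the purely relative bound, and the proof of Theorem~\ref{acc re agm conv text} goes through verbatim with $\delta=0$.

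\emph{Step 2 (the stopping point is accurate).} At a stopped point, $\|\nabla f\|_2 \leqslant \frac{\|\wtgg f\|_2 + \delta}{1-\alpha} \leqslant \frac{K+1}{1-\alpha}\delta$. Strong convexity implies the P\L\ condition~\eqref{PL cond}, which gives $f - f^* \leqslant \frac{(K+1)^2}{2(1-\alpha)^2}\frac{\delta^2}{\mu}$. This matches the stated bound $\frac{1}{(1-\alpha)^2}\left(K^2+1\right)\frac{\delta^2}{\mu}$ up to the harmless slack, since $(K+1)^2/2 \leqslant K^2+1$.

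\emph{Step 3 (iteration count).} If the criterion never triggers in $N$ steps, Theorem~\ref{acc re agm conv text} with $\delta=0$ gives
\begin{equation*}
f(x^N)-f^* \leqslant \left(1-\tfrac{1}{300}\left(\tfrac{\mu}{L}\right)^{1-\min\{\gamma_0,\beta\}}\right)^N \left(f(x^0)-f^*+\tfrac{\mu}{4}R^2\right).
\end{equation*}
Here $f(x^0)-f^*+\frac{\mu}{4}R^2 \leqslant LR^2$ by smoothness. Using $1-t\leqslant e^{-t}$, this quantity drops below $\varepsilon_0 = \frac{K^2+1}{(1-\alpha)^2}\frac{\delta^2}{\mu}$ once $N$ equals the stated value
\begin{equation*}
300\left(\tfrac{L}{\mu}\right)^{1-\min\{\gamma_0,\beta\}}\ln\left(\tfrac{(1-\alpha)^2}{K^2+1}\cdot\tfrac{LR^2}{\delta^2/\mu}\right).
\end{equation*}
So either the method stops earlier with the Step 2 guarantee, or it reaches $N$ with the same accuracy. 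In both cases the claimed bound on the minimum holds.
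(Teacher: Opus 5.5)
Your proposal is correct and is essentially the paper's proof. The paper likewise specializes Theorem~\ref{main relative interpretation th} to Algorithm~\ref{alg re-agm} with $A_0 = \frac{1}{300}$ and $C_0 = 1$ (via $f(x^0)-f^*+\frac{\mu}{4}R^2 \leqslant LR^2$), uses $\widehat{\alpha} \leqslant \frac{1}{3}\left(\frac{\mu}{2L}\right)^{\min\{\beta,\gamma_0\}}$ to get $\gamma^* \geqslant \min\{\beta,\gamma_0\}$, and its threshold $(1+\alpha)K+1$ with $K = 6\left(\frac{2L}{\mu}\right)^{\beta}$ is exactly your $K$. The one difference is that you unroll the reduction by hand and monitor the observable $\|\wtgg f(y^k)\|_2$ at the query points, whereas the paper runs while $\|\nabla f(y^N)\|_2 > K\delta$; both handle the $y^k$ versus $x^k$ issue the same way and yield the $\min\{f(y^N)-f^*, f(x^N)-f^*\}$ form.
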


\begin{remark} \label{accelerated noise accum remark}
    Assume $\alpha = 0$. In paper~\cite{vasin2023accelerated} the following convergence for accelerated method was provided for strong convex~\eqref{strong conv} and smooth~\eqref{smooth cond} function:
    \begin{equation*}
        f(x^N) - f(x^*) \leqslant L R^2 \exp{\left(-\frac{1}{2}\sqrt{\frac{\mu}{2L}}N \right)} + 2 \left(1 + \sqrt{{\frac{2L}{\mu}}} \right) \frac{\delta^2}{\mu}.
    \end{equation*}
    Therefore, to achieve the $\sim \sqrt{\frac{L}{\mu}} \frac{\delta^2}{\mu}$ level of $f(x^N) - f^*$ we will need:
    \begin{equation*}
        N = O \left( \sqrt{\frac{L}{\mu}} \ln \left( \sqrt{\frac{\mu}{L}} \frac{LR^2}{\nicefrac{\delta^2}{\mu}} \right) \right), \text{ iterations}.
    \end{equation*}
    Using Theorem~\ref{re-agm rel interpretation text} with $\beta = \frac{1}{4}$ we can obtain the $\sim \sqrt{\frac{L}{\mu}} \frac{\delta^2}{\mu}$ level of $f(x^N) - f^*$ after:
    \begin{equation*}
        N = O \left( \left(\frac{L}{\mu} \right)^{\frac{3}{4}} \ln \left( \sqrt{\frac{\mu}{L}} \frac{LR^2}{\nicefrac{\delta^2}{\mu}} \right) \right), \text{ iterations}.
    \end{equation*}
    Now we can consider the case $\beta = \frac{1}{8}$, then from result of Theorem~\ref{re-agm rel interpretation text} Algorithm~\ref{alg re-agm} reaches $~\sim \left(\frac{L}{\mu} \right)^{1 / 4} \frac{\delta^2}{\mu}$ after:
    \begin{equation*}
        N = O \left( \left(\frac{L}{\mu} \right)^{\frac{7}{8}} \ln \left( \left(\frac{\mu}{L} \right)^{1 / 4} \frac{LR^2}{\nicefrac{\delta^2}{\mu}} \right) \right), \text{ iterations}.
    \end{equation*}
    To reach level $\sim \frac{\delta^2}{\mu}$ ($\beta = 0$) it will required:
    \begin{equation*}
        N = O \left( \frac{L}{\mu} \ln \left(\frac{LR^2}{\nicefrac{\delta^2}{\mu}} \right) \right), \text{ iterations},
    \end{equation*}
    which corresponds to Gradient Descent~\ref{alg gd} convergence, Theorem~\ref{gd pl conv text}.
\end{remark}
We can also combine the regularization technique (Section~\ref{section regularization}) with the one described in this Section. This combination yields the following theorem:
\begin{theorem} \label{re-agm rel interpret and reg text}
    Let $f$ is convex~\eqref{convexity} and smooth~\eqref{smooth cond}, $\varepsilon \leqslant L R^2$, $\widetilde{\nabla} f$ satisfies relative noise condition with:
    \begin{equation*}
        \begin{gathered}
            0 < \alpha \leqslant \frac{1}{9} \left(\frac{\varepsilon}{2 L R^2} \right)^{\tau}, \quad \text{where } 0 \leqslant  \tau \leqslant \frac{1}{2}, \\
            \| \widetilde{\nabla} f(x) - \nabla f(x) \|_2 \leqslant \alpha \|\nabla f(x) \|_2.
        \end{gathered}
    \end{equation*}
    Then we can solve problem~\eqref{optim} with precision $\varepsilon$ ($f(x^N) - f^* \leqslant \varepsilon)$, using Algorithm~\ref{alg re-agm} via regularization technique with complexity:
    \begin{equation*}
        N = 72000 \left(\frac{L R^2}{\varepsilon} \right)^{1 - \tau} \ln \left( \frac{480 L R^2}{\varepsilon} \right).
    \end{equation*}
\end{theorem}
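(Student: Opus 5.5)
The plan is to combine the regularization reduction of Section~\ref{section regularization} with Theorem~\ref{re-agm rel interpretation text}, treating the term $\mu(x - x^0)$ in $\wtgg f_\mu$ as exact.

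\textbf{Step 1 (regularize).} Choose $\mu = \varepsilon / (2R^2)$ and minimize $f_\mu(\cdot \,|\, x^0)$ from~\eqref{reg def}. This function is $\mu$-strongly convex and $(L+\mu) \leqslant 2L$-smooth. Since $f_\mu \geqslant f$ and $f_\mu^* \leqslant f_\mu(x^*) = f^* + \frac{\mu}{2}R^2$, we have
\begin{equation*}
f(x) - f^* \leqslant f_\mu(x) - f_\mu^* + \frac{\varepsilon}{4}.
\end{equation*}
It therefore suffices to reach $f_\mu(x) - f_\mu^* \leqslant \frac{3\varepsilon}{4}$. We also need $R_\mu \leqslant R$, which follows from strong convexity of $f_\mu$ together with optimality of $x^*$.

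\textbf{Step 2 (rewrite the noise as composite noise for $f_\mu$).} The oracle $\wtgg f_\mu(x) = \wtgg f(x) + \mu(x - x^0)$ has error $\zeta_r(x)$ with
\begin{equation*}
\|\zeta_r(x)\|_2 \leqslant \alpha \|\nabla f(x)\|_2 \leqslant \alpha \|\nabla f_\mu(x)\|_2 + \alpha\mu\|x - x^0\|_2.
\end{equation*}
The second term is not relative to $\nabla f_\mu$, so I would turn it into an absolute error. This requires showing the iterates of Algorithm~\ref{alg re-agm} remain in a ball $\|x - x^0\|_2 \leqslant cR$; the natural route is the Lyapunov potential $f_\mu(x^k) - f_\mu^* + \frac{\mu}{4}\|u^k - x_\mu^*\|_2^2$ from the proof of Theorem~\ref{acc re agm conv text}, applied to $x^k$, $u^k$, and then $y^k$ as a convex combination. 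Alternatively, one can bound it directly by $\|x - x_\mu^*\|_2 \leqslant \|\nabla f_\mu(x)\|_2 / \mu$, giving
\begin{equation*}
\alpha\mu\|x - x^0\|_2 \leqslant \alpha\|\nabla f_\mu(x)\|_2 + \alpha\mu R.
\end{equation*}
This second bound avoids the trajectory argument entirely, yielding relative parameter $2\alpha$ and absolute level $\delta = \alpha\mu R$. I will take this route.

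\textbf{Step 3 (apply the relative-interpretation theorem).} With condition number $2L/\mu = 4LR^2/\varepsilon$, the hypothesis $\alpha \leqslant \frac{1}{9}(\varepsilon/2LR^2)^\tau$ gives
\begin{equation*}
2\alpha \leqslant \frac{1}{6}\left(\frac{\mu}{2\cdot 2L}\right)^{\tau}
\end{equation*}
up to constants, which is the setup of Theorem~\ref{re-agm rel interpretation text} with $\gamma_0 = \tau$. Take $\beta = \tau$ as well. That theorem guarantees accuracy
\begin{equation*}
O\!\left(\left(\frac{L}{\mu}\right)^{2\tau}\frac{\delta^2}{\mu}\right) = O\!\left(\left(\frac{L}{\mu}\right)^{2\tau}\alpha^2\mu R^2\right) \leqslant O\!\left(\mu R^2\right) = O(\varepsilon),
\end{equation*}
since $\alpha^2 (L/\mu)^{2\tau} \lesssim 1/81$. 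Its iteration count is
\begin{equation*}
300\left(\frac{L}{\mu}\right)^{1-\tau}\ln(\cdot) = O\!\left(\left(\frac{LR^2}{\varepsilon}\right)^{1-\tau}\ln\frac{LR^2}{\varepsilon}\right).
\end{equation*}
Inside the logarithm, $\frac{LR^2}{\delta^2/\mu}$ must be bounded by a power of $LR^2/\varepsilon$. Here $\delta^2/\mu = \alpha^2\mu R^2$ can be tiny, so I will run the algorithm with a fictitious $\delta' = \max\{\delta, c\sqrt{\varepsilon\mu}\}$, which keeps both the accuracy and the logarithm controlled. The stopping rule is used with $\delta'$.

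\textbf{Main obstacle.} Because the final accuracy statement in Theorem~\ref{re-agm rel interpretation text} is a $\min$ over $y^N$ and $x^N$, I must output whichever point fails the stopping test, or simply the point where it triggers. The real difficulty is bookkeeping: tracking how the factors $1/9$, $2$, $6$ and the $2L$ smoothness propagate into the stated constants $72000$ and $480$, while ensuring the final error is at most $3\varepsilon/4$. If the direct gradient bound on $\|x - x^0\|_2$ loses too much, I would fall back on the Lyapunov boundedness argument.
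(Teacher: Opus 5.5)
Your skeleton is the paper's. You regularize, use Lemma~\ref{reg noise} to view the oracle for $f_\mu$ as composite noise with relative parameter $2\alpha$ and absolute level $\delta=\alpha\mu R$, and run RE-AGM with a gradient-norm stopping rule. The step that fails is your repair of the logarithm. You fix $K=6(2L/\mu)^{\tau}$, i.e.\ Theorem~\ref{re-agm rel interpretation text} with $\beta=\gamma_0=\tau$, so the guaranteed accuracy is of order $(2L/\mu)^{2\tau}\delta'^2/\mu$. With $\delta'=\max\{\delta,c\sqrt{\varepsilon\mu}\}$ and an absolute constant $c$, this is at least of order $c^2(LR^2/\varepsilon)^{2\tau}\varepsilon$. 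That is not $O(\varepsilon)$ for $\tau>0$; at $\tau=1/2$ it is of order $c^2LR^2$. The inflation would have to carry the factor $(\varepsilon/LR^2)^{\tau}$, e.g.\ $\delta'=\frac19(\varepsilon/2LR^2)^{\tau}\mu R$, which is the a priori bound on $\alpha\mu R$. In fact no inflation is needed. If the test never fires at the query points $y^0,\dots,y^{N-1}$, the pure relative-noise rate of RE-AGM holds for all $N$ steps. So you only need to run until that rate reaches $\Theta(\varepsilon)$, not $\varepsilon_0\propto\delta^2$, and the logarithm becomes $\ln(LR^2/\varepsilon)$.

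The paper avoids the issue by tuning differently: it applies Theorem~\ref{main relative interpretation th text} directly with $K=\alpha^{-1}$. This gives the following:
\begin{itemize}
\item $K\delta=\mu R$, however small $\alpha$ is.
\item $\widehat\alpha=2\alpha+\alpha=3\alpha\le\frac13(\varepsilon/2LR^2)^{\tau}$, which is exactly what the hypothesis $\alpha\le\frac19(\varepsilon/2LR^2)^{\tau}$ is designed for.
\item The final accuracy is a $\tau$-independent multiple of $\mu R^2$, and the logarithm is $O(\ln(L/\mu))$.
\end{itemize}
Choosing $\mu=\varepsilon/(120R^2)$ then produces the stated constants, since $300\cdot120=36000$ and $\ln(16\cdot120^2L^2R^4/\varepsilon^2)=2\ln(480LR^2/\varepsilon)$.

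Your tuning, by contrast, leaves little room for the bookkeeping you anticipate. At $\tau=1/2$ and maximal $\alpha$, the term $(K\delta)^2/\mu$ is at least $\frac49\varepsilon$ and does not shrink as $\mu\to0$. With your $\mu=\varepsilon/(2R^2)$ and the true smoothness constant $L+\mu$ of $f_\mu$, the accuracy bound of Theorem~\ref{re-agm rel interpretation text} is about $0.81\varepsilon>\frac34\varepsilon$ at $\varepsilon=LR^2/3$. Moreover, that theorem's hypothesis $2\alpha\le\frac16(\mu/2L)^{\gamma_0}$ with $\gamma_0=\tau$ fails by a constant factor ($\frac29>\frac16$ already at $\tau=0$). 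So you would have to go through Theorems~\ref{main relative interpretation th text} and~\ref{acc re agm conv text} directly and absorb the loss into the constant.
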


\section{Lower Bounds} \label{lower bound section}

In this section, we propose lower bounds for the convergence of first order methods~\ref{linear first order method} in the presence of relative noise~\eqref{relative noise condition}. We will reduce absolute noise~\eqref{absolute noise condition} to relative one and use the lower bounds that have been proposed~\cite{devolder2013first}.

\begin{theorem} \label{lower bound relative noise mu > 0 text}
    Let $f$ is $\mu$-strongly convex~\eqref{strong conv} and $L$-smooth~\eqref{smooth cond}, algorithm $\mathcal{A}$ is first order gradient method such that using $\wtgg f$ satisfying relative noise~\eqref{relative noise condition} with magnitude $\alpha$, produces $x^N$, such that:
    \begin{equation*}
        f(x^N) - f^* \leqslant C_0 LR^2 \exp \left(- A_0 \left(\frac{\mu}{L} \right)^{p(\alpha)} N \right),
    \end{equation*}
    where:
    \begin{equation*}
        p: [0; 1) \to [1 / 2; + \infty).
    \end{equation*}
    Then:
    \begin{equation*}
        p(\alpha) \geqslant 1 - 2 \cdot \min \left \lbrace 1/4, \log_{\mu / L}(\alpha) \right \rbrace.
    \end{equation*}
\end{theorem}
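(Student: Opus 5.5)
We argue by contradiction and reduce to the lower bound for first order methods with an absolute-noise (equivalently $(\delta,L,\mu)$) oracle, Theorem~\ref{lower bound oracle absolute}, following~\cite{devolder2013first}. That bound says the following. If a method, using $\wtgg f$ with $\|\wtgg f - \nabla f\|_2 \leqslant \delta$, converges linearly at rate $\exp\bigl(-A(\mu/L)^{q} N\bigr)$ with $q \geqslant 1/2$, then the residual it accumulates on the worst-case instance is at least of order $(L/\mu)^{1-q}\,\delta^2/\mu$. Equivalently: reaching accuracy $\varepsilon \sim (L/\mu)^{\kappa}\delta^2/\mu$ with $\kappa \leqslant 1/2$ forces linear rate exponent $q \geqslant 1-\kappa$.

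The bridge is Proposition~\ref{propostion:rel_interpret}. Fix $\alpha$, set $\gamma = \min\{1/4, \log_{\mu/L}(\alpha)\}$, so that $\alpha \geqslant (\mu/L)^{\gamma}$ (with equality unless the cap $1/4$ is active), and put $K = 1/\alpha \leqslant (L/\mu)^{\gamma}$. Take the method $\mathcal{A}$ from the statement and feed it an absolute-noise oracle with level $\delta$. As long as the iterates satisfy $\|\nabla f(x^k)\|_2 \geqslant K\delta$, Proposition~\ref{propostion:rel_interpret} shows this oracle is a legitimate relative-noise oracle with parameter $1/K = \alpha$. On that phase, $\mathcal{A}$ therefore enjoys the assumed guarantee $C_0LR^2\exp(-A_0(\mu/L)^{p(\alpha)}N)$.

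We would then formalize this as the absolute-noise method $\mathcal{A}'$: run $\mathcal{A}$ and stop as soon as $\|\wtgg f(x^k)\|_2 \lesssim K\delta$. At the stopping point, the $P\L$ inequality~\eqref{PL cond}, implied by strong convexity, gives
\[
f - f^* \lesssim K^2\delta^2/\mu \leqslant (L/\mu)^{2\gamma}\delta^2/\mu .
\]
This is reached after $O\bigl((L/\mu)^{p(\alpha)}\ln(LR^2\mu/\delta^2)\bigr)$ iterations. So $\mathcal{A}'$ is a first order method with an absolute-noise oracle that converges linearly with exponent $p(\alpha)$ down to accuracy of order $(L/\mu)^{2\gamma}\delta^2/\mu$, where $2\gamma \leqslant 1/2$. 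Applying Theorem~\ref{lower bound oracle absolute} with $\kappa = 2\gamma$ yields $p(\alpha) \geqslant 1 - 2\gamma$, which is the claim. The cap $1/4$ keeps $\kappa \leqslant 1/2$, the range where the absolute lower bound is nontrivial; for small $\alpha$ we simply get the classical $p \geqslant 1/2$.

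The main obstacle is this last step: matching the exact form of Theorem~\ref{lower bound oracle absolute} to the output of $\mathcal{A}'$. In particular, the worst-case function in~\cite{devolder2013first} must be compatible with our oracle. That lower bound is built for the $(\delta,L,\mu)$ oracle, so we need the hard instance to be realizable by a gradient-noise oracle, at least in the regime $\|\nabla f\|_2 \geqslant K\delta$; Lemma~\ref{reduction to oracle} gives the relation between these two models. We also need to handle the case where the stopping test fires early, or where the adversarial oracle is used once $\|\nabla f\|_2 < K\delta$. In that region the relative-noise guarantee no longer applies, but we stop there anyway, and the residual is then already of order $K^2\delta^2/\mu$. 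Constants and the logarithmic factor are absorbed because the statement concerns only the exponent $p(\alpha)$, with $L/\mu$ taken large.
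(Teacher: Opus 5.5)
Your outline is the paper's proof. You run $\mathcal{A}$ on a purely absolute-noise oracle with relative parameter $\widehat{\alpha} = 1/K$ and stop once the observed gradient falls to order $K\delta$. You bound the residual at the stop by order $K^2\delta^2/\mu$ via the P{\L} inequality~\eqref{PL cond}, view the oracle as a $(2\delta^2/\mu, 2L, \mu/2)$ oracle through Lemma~\ref{reduction to oracle}, and invoke Theorem~\ref{lower bound oracle absolute}, falling back on the classical $p \geqslant 1/2$ for the cap. There is one small slip. Since $\mu/L<1$, when the cap is active you have $\alpha < (\mu/L)^{\gamma}$ and $K > (L/\mu)^{\gamma}$, the reverse of what you wrote. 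This is harmless, because in that case the claim reduces to $p(\alpha) \geqslant 1/2$, which is part of the hypothesis.

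The step you flag as the main obstacle, however, is a genuine gap, and Lemma~\ref{reduction to oracle} does not close it. The paper's proof makes exactly the same move with no further justification. The two oracle models are not ``equivalent'': the lemma only shows that absolute gradient noise on an $L$-smooth, $\mu$-strongly convex $f$ is a special case of a $(\delta,L,\mu)$ oracle. Your $\mathcal{A}'$ is therefore guaranteed only on this subclass. Theorem~\ref{lower bound oracle absolute}, by contrast, needs the rate on all problems equipped with a $(\delta,L,\mu)$ oracle, and a lower bound for a larger class does not transfer to a smaller one.

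Concretely, the hard instances behind that theorem in \cite{devolder2013first} are exact subgradient oracles of nonsmooth strongly convex functions. They are read as $(\delta, L(\delta), \mu)$ oracles with $L(\delta)$ of order $M^2/\delta$, and accuracy is measured on the nonsmooth function itself. They are not of the form $\nabla f + \zeta_a$ with $f$ smooth and $\|\zeta_a\|_2 \leqslant \delta$ small. A continuous $\nabla f$ cannot stay uniformly within $\delta$ of a subgradient selection that jumps by $J$ across a kink unless $\delta \geqslant J/2$. With $\delta$ of order $M$, the guaranteed accuracy of your method is no better than order $M^2/\mu$, which does not contradict the nonsmooth lower bound $M^2/(\mu N)$.

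To finish, you need a lower bound proved directly for the absolute gradient-noise class. For instance, an adversarial $\zeta_a$ on a smooth worst-case function could force an additive term of order $(L/\mu)^{1-p}\delta^2/\mu$ whenever the rate is $\exp\bigl(-c(\mu/L)^{p}N\bigr)$. Alternatively, you could exhibit hard instances for Theorem~\ref{lower bound oracle absolute} that actually lie in that class. Neither your proposal nor the paper supplies this ingredient.
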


\begin{theorem} \label{lower bound relative noise mu = 0 text}
    Let $f$ is convex~\eqref{convexity} and $L$-smooth~\eqref{smooth cond}, algorithm $\mathcal{A}$ is first order gradient method.

    1. If solving problem $f(x^N) - f^* \leqslant \varepsilon$ using the algorithm $\mathcal{A}$ in the presence of relative noise~\ref{relative noise condition} with magnitude $\alpha$ requires
    \begin{equation*}
        N_{\varepsilon} \leqslant C_1 \left(\frac{L R^2}{\varepsilon} \right)^{1 / p(\alpha, \varepsilon)}, \quad p(\alpha, \varepsilon) \leqslant 2.
    \end{equation*}
    iterations, then:
    \begin{equation*}
        p(\alpha, \varepsilon) \leqslant \frac{1}{1 - 2 \min \left \lbrace 1/4, \log_{LR^2 / \varepsilon}(\alpha) \right \rbrace}.
    \end{equation*}

    2. If algorithm $\mathcal{A}$ in the presence of relative noise~\ref{relative noise condition} with magnitude $\alpha$ has convergence:
    \begin{equation*}
        f(x^N) - f^* \leqslant C_2 \frac{LR^2}{N^{p(\alpha)}},
    \end{equation*}
    then:
    \begin{equation*}
        p(\alpha) \leqslant 1.
    \end{equation*}
\end{theorem}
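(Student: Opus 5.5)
We reduce the convex case to the strongly convex lower bound of Theorem~\ref{lower bound relative noise mu > 0 text} by a restart argument, the inverse of the regularization of Section~\ref{section regularization}. For part 1, assume that $\mathcal{A}$ reaches accuracy $\varepsilon$ on any convex $L$-smooth function after $N_\varepsilon \leqslant C_1 (LR^2/\varepsilon)^{1/p(\alpha,\varepsilon)}$ iterations under relative noise $\alpha$. Apply it to a $\mu$-strongly convex $L$-smooth $f$, which is in particular convex. Strong convexity gives $\frac{\mu}{2}\|x-x^*\|_2^2 \leqslant f(x)-f^*$, so we run $\mathcal{A}$ in stages.

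\emph{Stage $j$.} Start at the output of the previous stage, whose distance to the minimizer satisfies $R_j^2 \leqslant R^2 2^{-j}$. Run $\mathcal{A}$ to accuracy $\varepsilon_j = \mu R_j^2/4$. This takes $C_1 (4L/\mu)^{1/p}$ iterations, with $p$ evaluated at the level $\varepsilon$ corresponding to $LR^2/\varepsilon \sim L/\mu$. The stage then yields $R_{j+1}^2 \leqslant 2\varepsilon_j/\mu = R_j^2/2$.

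The relative noise condition is unchanged, since it is the same function and the same oracle. After $k$ stages the total iteration count is $N = k\, C_1 (4L/\mu)^{1/p}$. The final accuracy satisfies $f - f^* \leqslant \frac{L}{2} R^2 2^{-k} = \frac{L}{2}R^2 \exp\bigl(-\ln 2 \cdot C_1^{-1} (\mu/4L)^{1/p} N\bigr)$. This is a method for strongly convex functions of the form required by Theorem~\ref{lower bound relative noise mu > 0 text} with exponent $1/p$. The restarted procedure is still a first order method~\eqref{linear first order method}, since restarting only re-centres the span.

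That theorem then forces $1/p \geqslant 1 - 2\min\{1/4, \log_{\mu/L}\alpha\}$. It remains to translate $\mu/L$ into $\varepsilon/(LR^2)$. The restart used accuracy levels with $LR_j^2/\varepsilon_j = 4L/\mu$, so identifying $\mu/L \sim \varepsilon/(LR^2)$ gives $\log_{\mu/L}\alpha = \log_{LR^2/\varepsilon}(1/\alpha)$ up to the sign convention used in the statement, and rearranging yields the claimed bound on $p(\alpha,\varepsilon)$.

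\emph{Main obstacle.} The delicate point is this identification, because $p(\alpha,\varepsilon)$ may depend on $\varepsilon$. We choose the strongly convex instance with $\mu = c\,\varepsilon/R^2$ for a fixed $c$. We also take the hard function from the proof of Theorem~\ref{lower bound relative noise mu > 0 text}, which is built from the absolute-noise lower bound of~\cite{devolder2013first} via Proposition~\ref{propostion:rel_interpret}. Then every stage works at accuracies comparable to $\varepsilon$, and constants are absorbed into $C_0, A_0$.

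Part 2 follows from part 1. A rate $C_2 LR^2/N^{p(\alpha)}$ gives $N_\varepsilon \leqslant C_2^{1/p}(LR^2/\varepsilon)^{1/p}$, and part 1 then gives $p(\alpha) \leqslant (1-2\min\{1/4,\log_{LR^2/\varepsilon}\alpha\})^{-1}$ for every $\varepsilon$. Since $\alpha$ is fixed, $\log_{LR^2/\varepsilon}\alpha = \ln\alpha/\ln(LR^2/\varepsilon) \to 0$ as $\varepsilon\to 0$, so the right-hand side tends to $1$. Hence $p(\alpha)\leqslant 1$. If the sign makes this term negative, the bound is at most $1$ anyway.
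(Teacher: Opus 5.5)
Your proposal is correct at the paper's level of rigor and follows essentially the same route. You restart $\mathcal{A}$ on a $\mu$-strongly convex instance with stage accuracy $\mu R_j^2/4$, so each stage halves $R_j^2$ at cost $C_1(4L/\mu)^{1/p}$; you then invoke Theorem~\ref{lower bound relative noise mu > 0 text}, set $\mu \propto \varepsilon/R^2$, and for part 2 let $\varepsilon \to 0$ (the paper takes the infimum over $\varepsilon$). The only difference is that you recast the restart count as a linear rate fed into that theorem's hypothesis, while the paper compares iteration counts directly.

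Two small corrections. Your identification gives $\log_{\mu/L}\alpha = \log_{\varepsilon/LR^2}\alpha$, which is what the paper's own proof and appendix restatement use. So the main-text $\log_{LR^2/\varepsilon}\alpha$ is a typo, not a sign convention: read literally it forces $p<1$, and even $p \leqslant 0$ at $\alpha = 0$, contradicting noiseless acceleration. Also, part 2 tacitly requires $\alpha > 0$, because at $\alpha = 0$ the bound stays at $2$ for every $\varepsilon$.
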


\begin{remark} \label{remark: unbiguos usage}
    In the conditions of Theorem~\ref{lower bound relative noise mu > 0 text} and Theorem~\ref{lower bound relative noise mu = 0 text} lower bounds are implicitly used for the corresponding classes of functions (described at~\cite{nesterov2018lectures, nemirovski1995information}). For strongly convex functions:
    \begin{equation*}
        p(\alpha) \geqslant 1/2,
    \end{equation*}
    and for convex one:
    \begin{equation*}
        p(\alpha) \leqslant 2.
    \end{equation*}
\end{remark}

\begin{remark} \label{remark:lower bound precision}
    In Theorem~\ref{acc re agm conv text} proved convergence for Algorithm~\ref{alg re-agm}, which in designations of Theorem~\ref{lower bound relative noise mu > 0 text} can be described as:
    \begin{equation*}
        p(\alpha) \leqslant 1 - \min \left \lbrace \nicefrac{1}{2}, \log_{\mu / L}(\alpha) \right \rbrace.
    \end{equation*}
    For convex case ($\mu = 0$) in Theorem~\ref{gd reg text} for Algorithm~\ref{alg gd} proved, that problem~\eqref{def:func_eps_solution} can be solved using following amount of iterations:
    \begin{equation*}
        N = O \left( \frac{LR^2}{\varepsilon} \ln \left( \frac{LR^2}{\varepsilon} \right) \right).
    \end{equation*}
    Compared to the lower bound (Theorem~\ref{lower bound relative noise mu = 0 text}), the upper bound (for $\alpha < \nicefrac{1}{2}$) differs by the log factor.
\end{remark}

\section{Experiments} \label{exp section}

In this section we will provide numerical experiments for algorithms which were introduced in sections~\eqref{section gd},\eqref{re-agm section text},\eqref{section main relative interpretation}. For algorithms above we will use "worst" on particular class functions for first-order algorithms. That functions were introduced in~\cite[p. 69, p.78]{nesterov2018lectures}. For degenerate case ($\mu$ = 0):
\begin{equation} \label{nesterov mu=0}
    \begin{gathered}
        f(x) = \frac{L}{8} \left( x_1^2 + \displaystyle\sum_{j = 0}^{k - 1} \left(x_j - x_{j + 1}\right)^2 + x_k^2 \right) - \frac{L}{4} x_1, \\
        x^* = \left(1 - \frac{1}{k + 1}, \: \dots \:,  1 - \frac{k}{k + 1}, \: 0, \: \dots \:, 0\right)^T, \\
        1 \leqslant k \leqslant \dim{x},
    \end{gathered}
\end{equation}
for strongly convex case ($\mu > 0$):
\begin{equation} \label{nesterov mu>0}
    \begin{gathered}
        f(x) = \frac{\mu \left(\chi - 1\right)}{8} \left(x_1^2 + \displaystyle\sum_{j = 1}^{n - 1} {\left(x_j - x_{j + 1}\right)^2} - 2x_1\right) + \frac{\mu}{2} \|x \|_2^2, \\
        \chi = \frac{L}{\mu}. 
    \end{gathered}
\end{equation}

Components of noise $\zeta_r, \zeta_a$ were generated independently, randomly, uniformly and unbiased, according to condition~\eqref{noise condition}:
\begin{equation} \label{uniform noise gen}
    \mathbb{E} \left[ \widetilde{\nabla}f(x) \big| x \right] = \nabla f(x).
\end{equation}

\subsection{Gradient Descent} \label{gd exps}

In this section we will consider numerical experiments for the Algorithm~\ref{alg gd}. Firstly convex degenerated case ($\mu = 0$):

\begin{figure}[H]
	\begin{center}
		\includegraphics[width=1\linewidth]{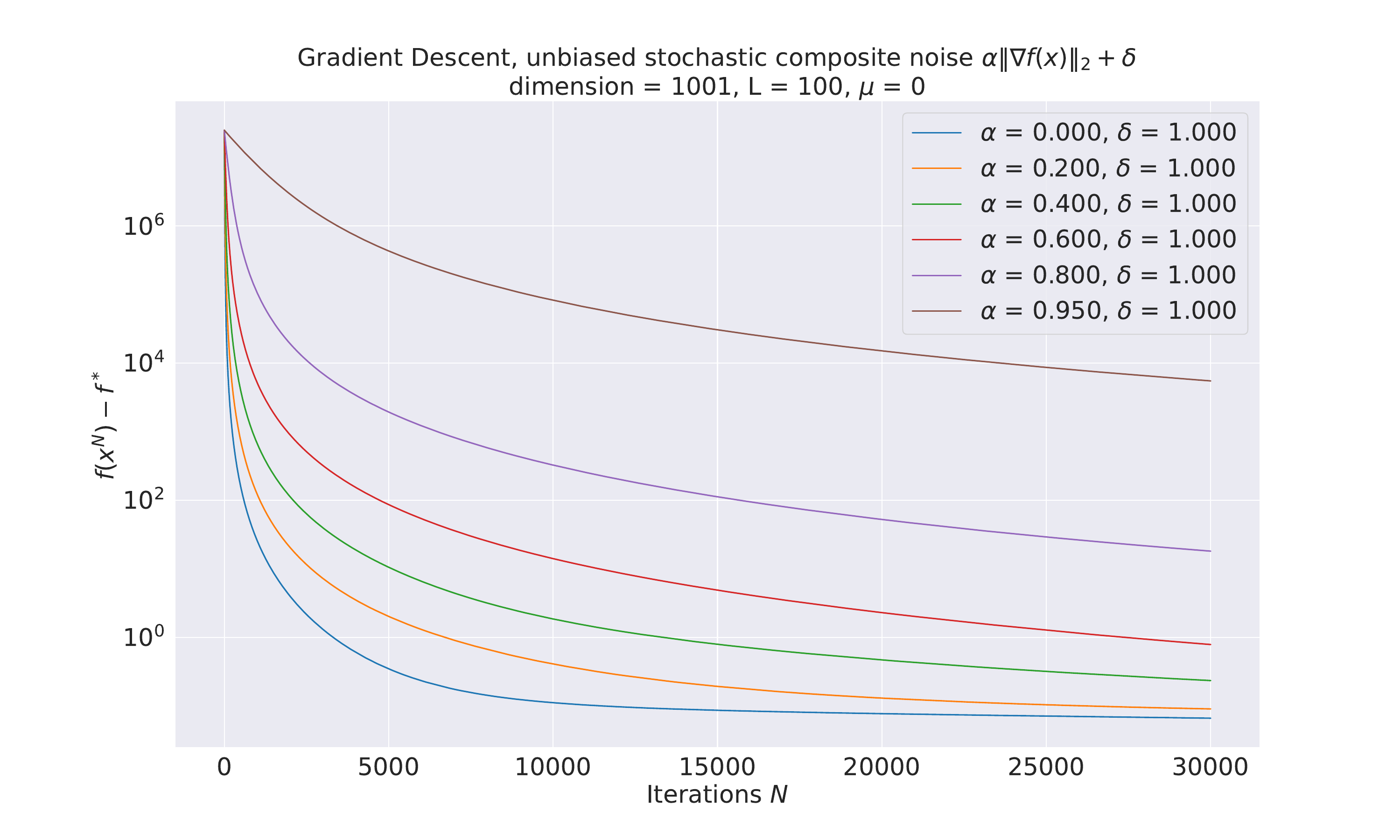}
    	\caption{The performance of GD with composite noise and $\mu = 0, L = 100$ for different values of $\alpha$ and $\delta = 1$.}
	\end{center}
\end{figure}

\begin{figure}[H]
	\begin{center}
		\includegraphics[width=1\linewidth]{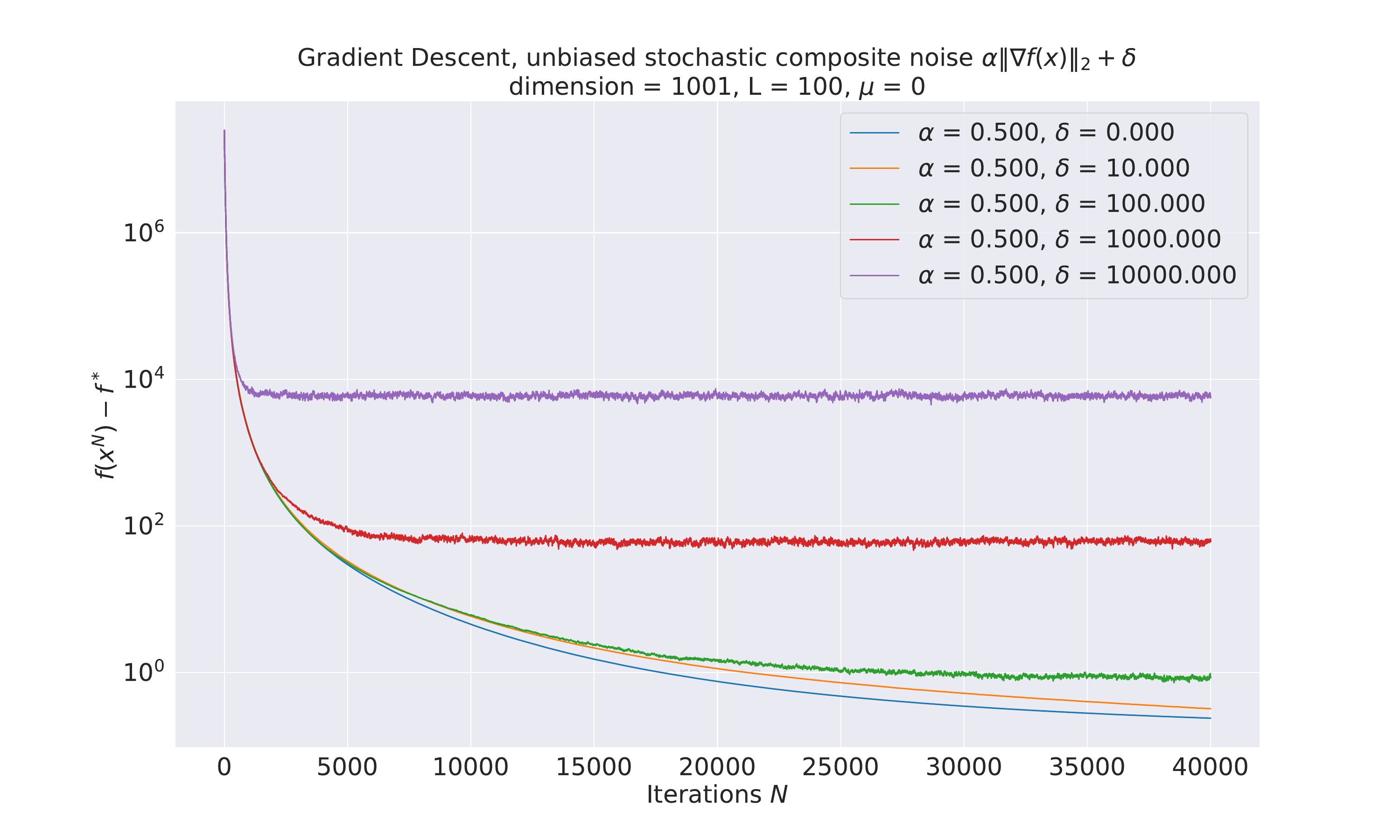}
    	\caption{The performance of GD with composite noise and $\mu = 0, L = 100$ for different values of $\delta$ and $\alpha = 0.5$.}
	\end{center}
\end{figure}

Then we move on to the strongly convex case:

\begin{figure}[H]
	\begin{center}
		\includegraphics[width=1\linewidth]{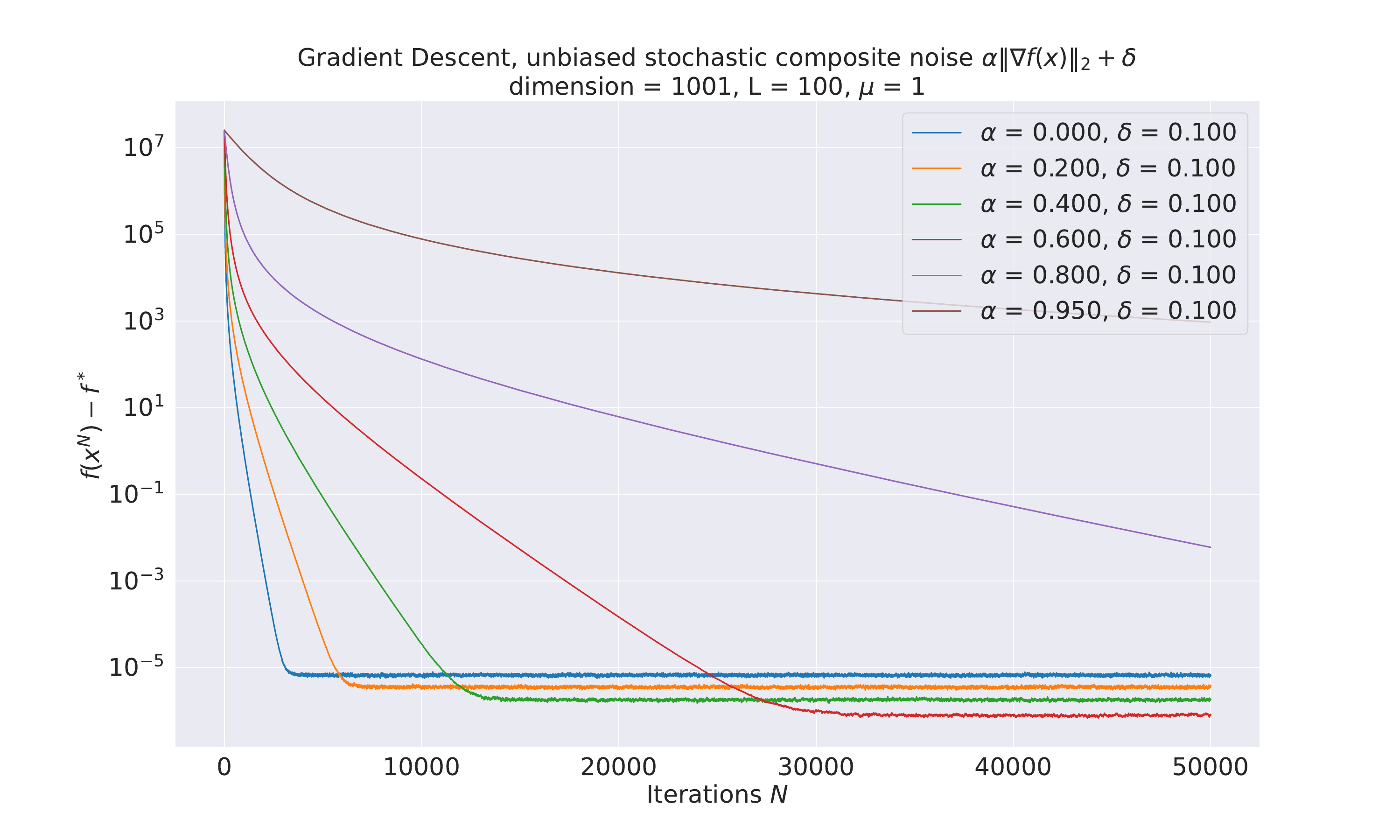}
    	\caption{The performance of GD with composite noise and $\mu = 1, L = 100$ for different values of $\alpha$ and $\delta = 0.1$.}
	\end{center}
\end{figure}

\begin{figure}[H]
	\begin{center}
		\includegraphics[width=1\linewidth]{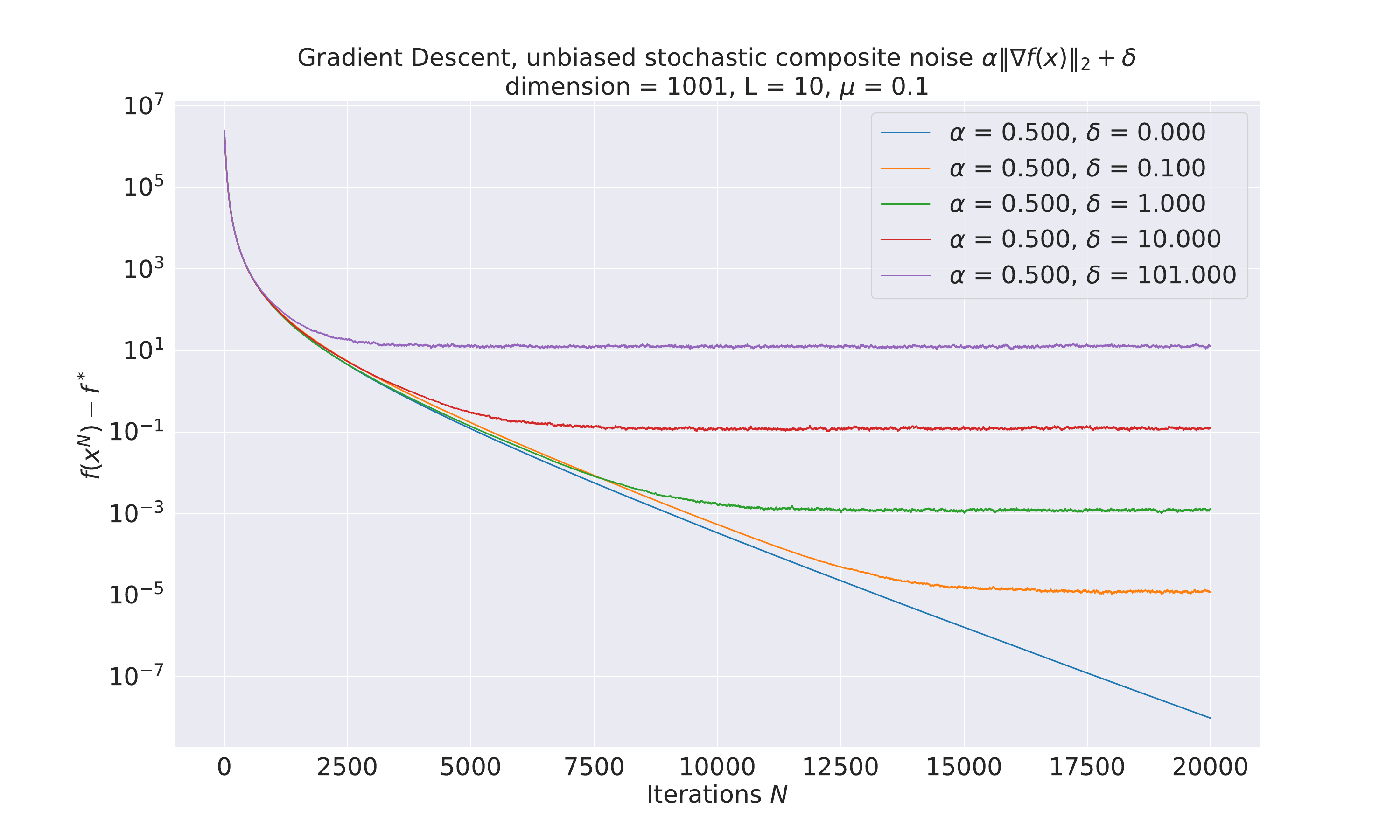}
    	\caption{The performance of GD with composite noise and $\mu = 1, L = 100$ for different values of $\delta$ and $\alpha = 0.5$.}
	\end{center}
\end{figure}

We see that for different $\alpha$ values and fixed $\delta$ value the Algorithm~\ref{alg gd} continues to converge, but at a reduced level of convergence. On the contrary, for various $\delta$ values, the algorithm converges to the corresponding level. It's worth noting that this convergence effect is due to the stochastic nature and unbiased nature of the noise.

\subsection{RE-AGM} \label{re-agm exps}

Algorithm~\ref{alg re-agm} works only with strongly convex functions, that is we will test it only on function~\eqref{nesterov mu>0}. To begin, we will consider the different levels of noise accumulation $\left(\frac{L}{\mu} \right)^{\gamma^*} \frac{\delta^2}{\mu}$ described in the Theorem~\ref{acc re agm conv text}.

\begin{figure}[H]
	\begin{center}
		\includegraphics[width=1\linewidth]{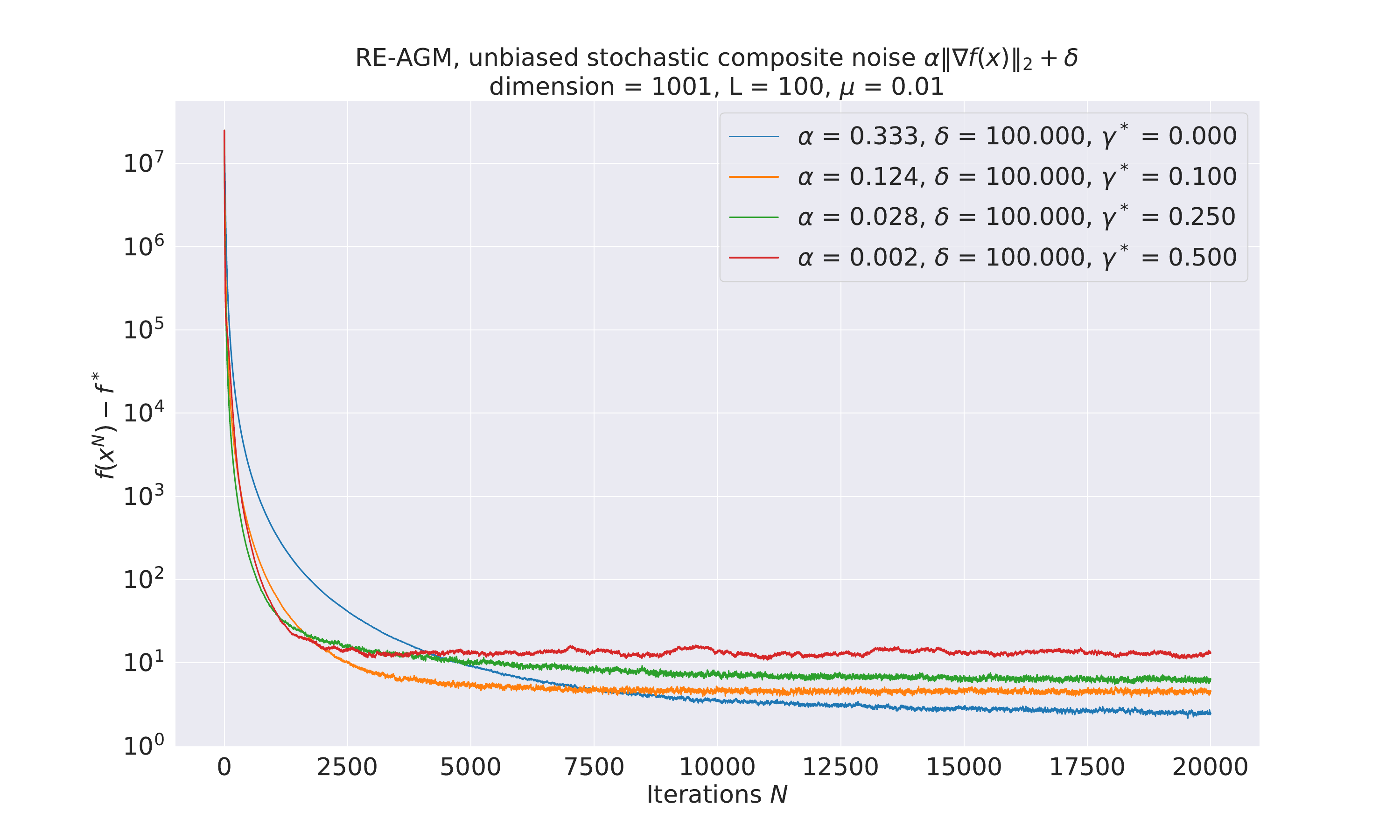}
    	\caption{The performance of RE-AGM with composite noise and $\mu = 0.01, L = 100$ for     different values of $\alpha$ and $\delta = 100$.}
	\end{center}
\end{figure}

It can be seen in the figure above that as the $\alpha$ value decreases, the limit to which the method converges increases, which demonstrates the intermediate nature of convergence rate.

\begin{figure}[H]
	\begin{center}
		\includegraphics[width=1\linewidth]{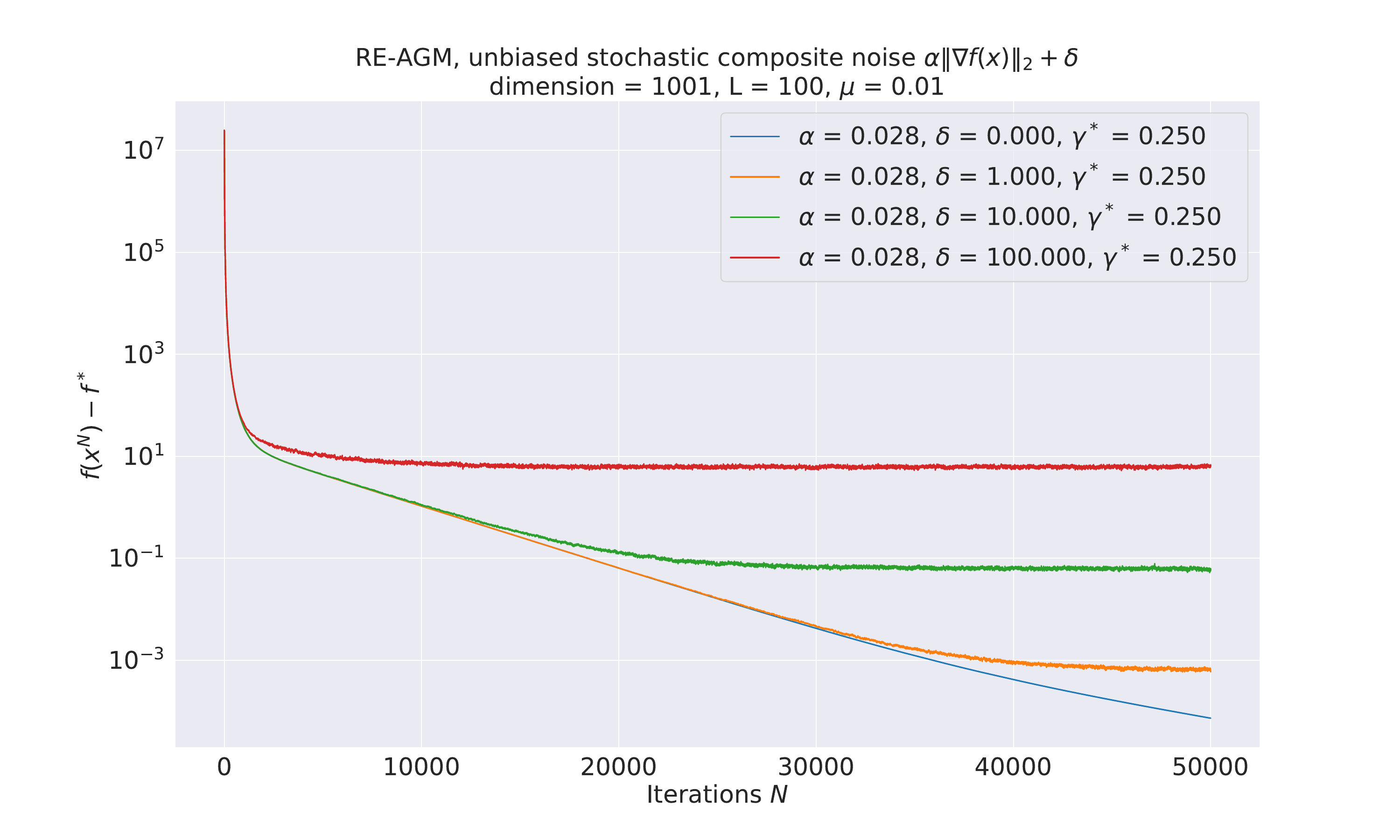}
    	\caption{The performance of RE-AGM with composite noise and $\mu = 0.01, L = 100$ for     different values of $\delta$ and $\alpha = 0.028$.}
	\end{center}
\end{figure}

Plot above demonstrates, that increasing $\delta$ value $10$ times increases limit of convergence $100$ times.

Then we can compare Algorithm~\ref{alg gd} and Algorithm~\ref{alg re-agm}. We should remind, that for $\alpha = 1/3$ convergence of Algorithm~\ref{alg re-agm} corresponds to convergence of Algorithm~\ref{alg gd}.

\begin{figure}[H]
	\begin{center}
		\includegraphics[width=1\linewidth]{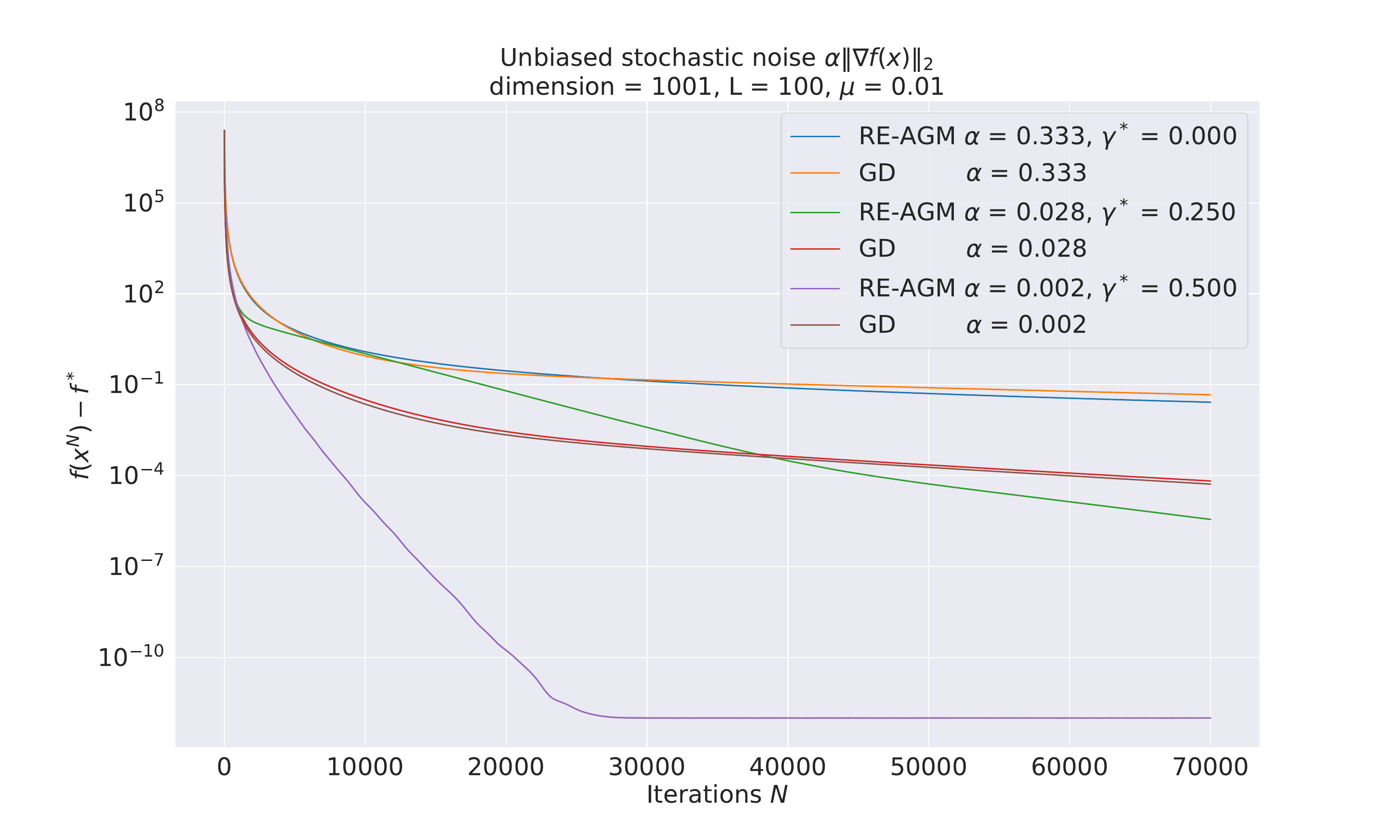}
    	\caption{The performance GD vs RE-AGM with relative noise and $\mu = 0.01, L = 100$       for  different values of $\alpha$.}
	\end{center}
\end{figure}

This comparison shows that the case $\alpha \sim \left(\mu / L \right)^{1/2}$ gives significant difference between the accelerated and non-accelerated method, case $\alpha = 1/3$ confirms their equal convergence rate and case $\alpha \sim \left(\mu / L \right)^{1/4}$ provides intermediate convergence which is better, than $GD$ and worse than accelerated methods lower bound for noiseless case ($\alpha = 0$).

\bibliographystyle{apalike}
\bibliography{literature}
\begin{appendix}

\section{Miscellaneous Used Lemmas}

\begin{lemma} \label{fenchel}
    If $u, v \in \mathbb{R}^n$, then $\forall \lambda > 0$:
    \begin{equation*}
        \langle u, v \rangle \leqslant \frac{\lambda}{2} \|u \|_2^2 + \frac{1}{2 \lambda} \|v \|_2^2.
    \end{equation*}
\end{lemma}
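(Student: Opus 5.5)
The inequality is Young's (Fenchel's) inequality. The plan is to derive it directly from the nonnegativity of a squared norm, after rescaling $u$ and $v$ by $\sqrt{\lambda}$ and $1/\sqrt{\lambda}$.

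Fix $\lambda > 0$ and consider the vector $w = \sqrt{\lambda}\, u - \frac{1}{\sqrt{\lambda}}\, v$. Since $\|w\|_2^2 \geqslant 0$, expanding the square by bilinearity of the inner product gives
\begin{equation*}
    0 \leqslant \|w\|_2^2 = \lambda \|u\|_2^2 - 2 \langle u, v \rangle + \frac{1}{\lambda} \|v\|_2^2 .
\end{equation*}
Moving the cross term to the left and dividing by $2$ yields exactly $\langle u, v \rangle \leqslant \frac{\lambda}{2}\|u\|_2^2 + \frac{1}{2\lambda}\|v\|_2^2$.

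An equivalent route is to apply Cauchy--Schwarz, $\langle u, v\rangle \leqslant \|u\|_2\|v\|_2$, and then the scalar AM--GM inequality $ab \leqslant \frac{\lambda}{2}a^2 + \frac{1}{2\lambda}b^2$ for $a,b\geqslant 0$. That scalar inequality follows from $(\sqrt{\lambda}a - b/\sqrt{\lambda})^2 \geqslant 0$, i.e. the same computation in one dimension.

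The argument has no real obstacle. The only point needing care is the choice of the scaling $\sqrt{\lambda}$ versus $1/\sqrt{\lambda}$, so that the coefficients $\frac{\lambda}{2}$ and $\frac{1}{2\lambda}$ come out matching the statement.
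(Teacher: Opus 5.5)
Your proof is correct: expanding $\|\sqrt{\lambda}\,u - v/\sqrt{\lambda}\|_2^2 \geqslant 0$ gives exactly the claimed inequality. The paper states this lemma without proof, treating it as the standard Young (Fenchel--Young) inequality for the conjugate pair $\frac{\lambda}{2}\|\cdot\|_2^2$ and $\frac{1}{2\lambda}\|\cdot\|_2^2$, and your completing-the-square argument is the usual elementary justification of that fact.
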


\begin{lemma} \label{(a - b)2 lb}
    If $a, b \geqslant 0$, then
    \begin{equation*}
        (a - b)^2 \geqslant \frac{1}{2} a^2 - b^2.
    \end{equation*}
\end{lemma}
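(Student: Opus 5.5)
The inequality is elementary, so I will derive it directly from the identity $(a-b)^2 = a^2 - 2ab + b^2$ and control the cross term with Lemma~\ref{fenchel}.

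\textbf{Step 1: bound the cross term.} Take $n=1$, $u=a$, $v=b$ and $\lambda = 1/2$ in Lemma~\ref{fenchel}. This gives
\begin{equation*}
    ab \leqslant \frac{1}{4} a^2 + b^2, \qquad \text{hence} \qquad 2ab \leqslant \frac{1}{2} a^2 + 2 b^2 .
\end{equation*}

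\textbf{Step 2: substitute into the expansion.} We get
\begin{equation*}
    (a-b)^2 = a^2 - 2ab + b^2 \geqslant a^2 - \frac{1}{2}a^2 - 2b^2 + b^2 = \frac{1}{2}a^2 - b^2 .
\end{equation*}

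As an alternative check, the difference equals a perfect square:
\begin{equation*}
    (a-b)^2 - \tfrac{1}{2}a^2 + b^2 = \tfrac{1}{2}a^2 - 2ab + 2b^2 = \tfrac{1}{2}(a - 2b)^2 \geqslant 0 .
\end{equation*}
This also shows that the nonnegativity of $a, b$ is not actually needed.

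There is no real obstacle here. The only point requiring care is choosing $\lambda = 1/2$ so that the coefficient of $a^2$ comes out to exactly $1/2$.
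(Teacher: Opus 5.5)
Your proof is correct and follows the paper's argument: the paper also proves this by applying Lemma~\ref{fenchel} with $\lambda = \frac{1}{2}$ to the cross term. Your perfect-square identity $(a-b)^2 - \frac{1}{2}a^2 + b^2 = \frac{1}{2}(a-2b)^2$ is a valid direct check, and you are right that it shows the hypothesis $a, b \geqslant 0$ is not needed.
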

\begin{proof}
    Using Fenchel~\ref{fenchel} inequality with $\lambda = \frac{1}{2}$.
\end{proof}

\begin{lemma} \label{basic alpha conditions}
    If $\widetilde{\nabla} f$ satisfies error conditions~\eqref{noise condition}. Following inequalities take place:
    \begin{equation*}
        \begin{gathered}
            (1 - \alpha) \|\nabla f(x) \|_2 - \delta \leqslant \|\widetilde{\nabla} f(x) \|_2 \leqslant (1 + \alpha) \|\nabla f(x) \|_2 + \delta, \\
            \frac{1}{1 + \alpha} \left( \| \widetilde{\nabla} f(x) \|_2 - \delta \right) \leqslant \| \nabla f(x) \|_2 \leqslant \frac{1}{1 - \alpha} \left( \| \widetilde{\nabla} f(x) \|_2 + \delta \right), \\
            \frac{1}{2} (1 - \alpha)^2 \|\nabla f(x) \|_2^2 - \delta^2 \leqslant \|\wtgg f(x) \|_2^2 \leqslant 2 (1 + \alpha)^2 \|\nabla f(x) \|_2^2 + 2 \delta^2, \\
            \frac{1}{2(1 + \alpha)^2} \| \widetilde{\nabla} f(x) \|_2^2 - \frac{1}{(1 + \alpha)^2} \delta^2 \leqslant \|\nabla f(x) \|_2^2 \leqslant \frac{2}{(1 - \alpha)^2} \|\wtgg f(x) \|_2^2 + \frac{2}{(1 - \alpha)^2} \delta^2.
        \end{gathered}
    \end{equation*}
\end{lemma}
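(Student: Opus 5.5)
The proof rests on the triangle inequality applied to the decomposition $\widetilde{\nabla} f(x) = \nabla f(x) + \zeta(x)$, where by \eqref{noise condition} we have $\|\zeta(x)\|_2 \leqslant \alpha\|\nabla f(x)\|_2 + \delta$. We prove the four lines of Lemma~\ref{basic alpha conditions} in order, each from the previous one.

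\textbf{First line.} The triangle inequality gives
\begin{equation*}
\|\nabla f(x)\|_2 - \|\zeta(x)\|_2 \leqslant \|\widetilde{\nabla} f(x)\|_2 \leqslant \|\nabla f(x)\|_2 + \|\zeta(x)\|_2 .
\end{equation*}
Substituting the bound on $\|\zeta(x)\|_2$ yields both inequalities of the first line immediately.

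\textbf{Second line.} This is the first line solved for $\|\nabla f(x)\|_2$. The upper bound $\|\widetilde{\nabla} f(x)\|_2 \leqslant (1+\alpha)\|\nabla f(x)\|_2 + \delta$ rearranges to $\|\nabla f(x)\|_2 \geqslant \frac{1}{1+\alpha}\left(\|\widetilde{\nabla} f(x)\|_2 - \delta\right)$. The lower bound $(1-\alpha)\|\nabla f(x)\|_2 - \delta \leqslant \|\widetilde{\nabla} f(x)\|_2$ rearranges to $\|\nabla f(x)\|_2 \leqslant \frac{1}{1-\alpha}\left(\|\widetilde{\nabla} f(x)\|_2 + \delta\right)$. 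Dividing by $1-\alpha$ requires $\alpha < 1$, which is the implicit standing assumption (all $\alpha$ considered lie in $[0;1)$).

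\textbf{Third and fourth lines.} Both come from squaring the first two lines.
\begin{itemize}
\item \emph{Upper bounds.} Use $(a+b)^2 \leqslant 2a^2 + 2b^2$, which is Lemma~\ref{fenchel} with $\lambda = 1$. Applied to the first line this gives $\|\widetilde{\nabla} f(x)\|_2^2 \leqslant 2(1+\alpha)^2\|\nabla f(x)\|_2^2 + 2\delta^2$. Applied to the second line it gives $\|\nabla f(x)\|_2^2 \leqslant \frac{2}{(1-\alpha)^2}\left(\|\widetilde{\nabla} f(x)\|_2^2 + \delta^2\right)$.
\item \emph{Lower bounds.} These need more care, because the left-hand sides $(1-\alpha)\|\nabla f(x)\|_2 - \delta$ and $\|\widetilde{\nabla} f(x)\|_2 - \delta$ may be negative, so squaring is not directly legitimate.
\end{itemize}

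\textbf{Handling the sign issue in the lower bounds.} Take the third line; write $a = (1-\alpha)\|\nabla f(x)\|_2$ and $b = \delta$, both nonnegative.
\begin{itemize}
\item If $a \geqslant b$, the first line gives $\|\widetilde{\nabla} f(x)\|_2 \geqslant a - b \geqslant 0$. Squaring and applying Lemma~\ref{(a - b)2 lb} gives $\|\widetilde{\nabla} f(x)\|_2^2 \geqslant (a-b)^2 \geqslant \frac12 a^2 - b^2$.
\item If $a < b$, then $\frac12 a^2 - b^2 < 0 \leqslant \|\widetilde{\nabla} f(x)\|_2^2$, so the bound holds trivially.
\end{itemize}
The fourth-line lower bound follows the same way with $a = \|\widetilde{\nabla} f(x)\|_2$ and $b = \delta$, divided by $(1+\alpha)^2$. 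This case split is the only nonroutine point of the proof.
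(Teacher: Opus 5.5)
Your proof is correct and takes the same route as the paper, which cites only the noise condition~\eqref{noise condition} (used via the triangle inequality), Lemma~\ref{fenchel} for the squared upper bounds, and Lemma~\ref{(a - b)2 lb} for the squared lower bounds. You also supply details the paper leaves implicit: the case split on the sign of $(1-\alpha)\|\nabla f(x)\|_2 - \delta$ (resp.\ $\|\widetilde{\nabla} f(x)\|_2 - \delta$) before squaring, and the requirement $\alpha < 1$.
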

\begin{proof}
    Follows from definition~\eqref{noise condition}, Lemma~\ref{fenchel} and Lemma~\ref{(a - b)2 lb}.
\end{proof}

\begin{lemma} \label{PL noised}
    If $f$ satisfies~\eqref{PL cond}, $\widetilde{\nabla} f$ satisfies error condition~\eqref{noise condition}. Then a similar condition takes place:
    \begin{equation*}
            \|\wtgg f(x) \|_2^2 \geqslant (1 - \alpha)^2 \mu (f(x) - f^*) - \delta^2
    \end{equation*}
\end{lemma}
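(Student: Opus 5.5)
The plan is to chain the third line of Lemma~\ref{basic alpha conditions} with the P{\L} condition~\eqref{PL cond}. That line gives
\begin{equation*}
    \|\wtgg f(x) \|_2^2 \geqslant \frac{1}{2}(1 - \alpha)^2 \|\nabla f(x) \|_2^2 - \delta^2 .
\end{equation*}
It comes from $\|\wtgg f(x)\|_2 \geqslant (1-\alpha)\|\nabla f(x)\|_2 - \delta$ combined with Lemma~\ref{(a - b)2 lb}. Then~\eqref{PL cond} gives $\|\nabla f(x)\|_2^2 \geqslant 2\mu (f(x) - f^*)$, so $\frac{1}{2}(1-\alpha)^2\|\nabla f(x)\|_2^2 \geqslant (1-\alpha)^2 \mu (f(x) - f^*)$. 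Substituting this into the first inequality yields exactly the claim.

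The one point needing care is the sign issue in squaring. The bound $\|\wtgg f(x)\|_2 \geqslant (1-\alpha)\|\nabla f(x)\|_2 - \delta$ can have a negative right-hand side, so it cannot simply be squared.

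To handle this I split into two cases. In the first case, $(1-\alpha)\|\nabla f(x)\|_2 \geqslant \delta$. Then squaring is legitimate, and Lemma~\ref{(a - b)2 lb} with $a = (1-\alpha)\|\nabla f(x)\|_2$ and $b = \delta$ gives the bound.

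In the second case, $(1-\alpha)\|\nabla f(x)\|_2 < \delta$. Then $\frac{1}{2}(1-\alpha)^2\|\nabla f(x)\|_2^2 - \delta^2 < 0 \leqslant \|\wtgg f(x)\|_2^2$, so the bound holds trivially.

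I assume $\alpha < 1$ throughout, as the paper implicitly does. Nothing else is needed.
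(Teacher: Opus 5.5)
Your proof is correct and follows the paper's argument: it chains the third inequality of Lemma~\ref{basic alpha conditions} with the P{\L} condition~\eqref{PL cond}. Your case split on the sign of $(1-\alpha)\|\nabla f(x)\|_2 - \delta$, which uses $\alpha \leqslant 1$, makes explicit the squaring step that the paper leaves implicit inside that lemma.
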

\begin{proof}
    Using P{\L} condition and Lemma~\ref{basic alpha conditions}:
    \begin{equation*}
        \|\wtgg f(x) \|_2^2 \geqslant \frac{(1 - \alpha)^2}{2} \| \nabla f(x) \|_2^2 - \delta^2 \geqslant \mu (1 - \alpha)^2 (f(x) - f^*) - \delta^2.
    \end{equation*}
\end{proof}

\begin{lemma} \label{noise decompose}
    If $\zeta(x) = \widetilde{\nabla} f(x) - \nabla f(x)$ satisfies error condition~\ref{noise condition}, then exist two vector fields $\zeta_a, \zeta_r$:
    \begin{equation*}
        \zeta(x) = \zeta_a(x) + \zeta_r(x),
    \end{equation*}
    which satisfy:
    \begin{equation*}
        \| \zeta_a(x) \| \leqslant \delta, \hspace{5mm} \| \zeta_r(x) \|_2 \leqslant \alpha \| \nabla f(x) \|_2.
    \end{equation*}
\end{lemma}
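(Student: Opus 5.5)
We construct the decomposition pointwise, by an explicit formula; no regularity of the fields is required. Fix $x$ and write $\zeta = \zeta(x)$, $g = \nabla f(x)$. Condition~\eqref{noise condition} gives $\|\zeta\|_2 \leqslant \alpha \|g\|_2 + \delta$. The plan is to split $\zeta$ along its own direction: the part of its length up to $\alpha\|g\|_2$ becomes the relative component, and the remainder becomes the absolute component.

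First we handle the trivial case $\zeta = 0$ by setting $\zeta_a(x) = \zeta_r(x) = 0$. Otherwise, let $e = \zeta/\|\zeta\|_2$ and $t = \min\{\|\zeta\|_2, \alpha\|g\|_2\}$, and define
\begin{equation*}
    \zeta_r(x) = t\, e, \qquad \zeta_a(x) = \zeta - \zeta_r(x) = \left(\|\zeta\|_2 - t\right) e .
\end{equation*}
These clearly sum to $\zeta(x)$.

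Next we verify the two bounds. Since $t \leqslant \alpha\|g\|_2$, we get $\|\zeta_r(x)\|_2 = t \leqslant \alpha \|\nabla f(x)\|_2$. For the absolute part, $\|\zeta_a(x)\|_2 = \|\zeta\|_2 - t = \max\{0, \|\zeta\|_2 - \alpha\|g\|_2\}$. This is at most $\delta$ because $\|\zeta\|_2 - \alpha\|g\|_2 \leqslant \delta$ by~\eqref{noise condition} and $\delta \geqslant 0$. Doing this for every $x$ defines the two vector fields.

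There is no real obstacle here. The only care needed is the degenerate case $\zeta = 0$, where $e$ is undefined, and using the $\min$ in $t$ so that $\|\zeta_a\|_2$ stays nonnegative when $\|\zeta\|_2 < \alpha\|g\|_2$.
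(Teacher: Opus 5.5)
Your proof is correct and is essentially the paper's argument. The paper observes that the balls $\{\nabla f(x) + v : \|v\|_2 \leqslant \alpha\|\nabla f(x)\|_2\}$ and $\{\widetilde{\nabla} f(x) + v : \|v\|_2 \leqslant \delta\}$ intersect, since the distance between their centers is at most the sum of their radii, and takes any common point $p$ to split $\zeta = (\widetilde{\nabla} f(x) - p) + (p - \nabla f(x))$; your point $p = \nabla f(x) + t e$ on the segment joining the centers is an explicit witness of that intersection, which also makes the choice of the fields canonical.
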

\begin{proof}
    Let us define two sets:
    \begin{equation*}
        \begin{gathered}
            S_1 = \lbrace \nabla f(x) + v \; , \; \| v \|_2 \leqslant \alpha \| \nabla f(x) \|_2 \rbrace, \\
            S_2 = \lbrace \widetilde{\nabla} f(x) + v \; , \; \| v \|_2 \leqslant \delta \rbrace.
        \end{gathered}
    \end{equation*}
    Then from condition $\| \widetilde{\nabla} f(x) - \nabla f(x) \|_2 \leqslant \alpha \| \nabla f(x) \|_2 + \delta$ we can draw a conclusion about $S_1 \cap S_2 \not= \emptyset $ and the existence of two vectors $\zeta_r, \zeta_a$, described in theorem condition.
\end{proof}

\begin{lemma} \label{cos lb}

    1. If $\zeta_r(x)$ satisfies relative error condition~\eqref{relative noise condition}:
    \begin{equation*}
        \| \zeta_r(x) \|_2 \leqslant \alpha \| \nabla f(x) \|_2,        
    \end{equation*}
    then:
    \begin{eqnarray*}
        \langle \nabla f(x) + \zeta_r(x), \nabla f(x) \rangle
        & \geqslant & \sqrt{1 - \alpha^2} \| \nabla f(x) + \zeta_r(x) \|_2 \| \nabla f(x) \|_2 \\
        & \geqslant & \sqrt{1 - \alpha^2} (1 - \alpha) \| \nabla f(x) \|_2^2.
    \end{eqnarray*}

    2. If $\wtgg f$ satisfies composite noise~\eqref{noise condition}, then
    \begin{eqnarray*}
        \langle \nabla f(x), \wtgg f(x) \rangle
        & \geqslant & \frac{1}{4} \sqrt{\frac{1 - \alpha}{1 + \alpha}} \| \wtgg f(x) \|_2^2 
        - \frac{3}{\sqrt{(1 + \alpha)^3(1 - \alpha)}} \delta^2.
    \end{eqnarray*}
\end{lemma}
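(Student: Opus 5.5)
\emph{Part 1.} I would prove part 1 by a direct algebraic argument rather than a geometric one. Write $a = \|\nabla f(x)\|_2$, $b = \|\nabla f(x) + \zeta_r(x)\|_2$ and $t = \langle \nabla f(x) + \zeta_r(x), \nabla f(x)\rangle$. Expanding $\|\zeta_r(x)\|_2^2 \leqslant \alpha^2 a^2$ gives
\begin{equation*}
a^2 + b^2 - 2t \leqslant \alpha^2 a^2, \qquad\text{that is,}\qquad t \geqslant \tfrac{1}{2}\left(b^2 + (1-\alpha^2)a^2\right).
\end{equation*}
AM--GM then yields $t \geqslant \sqrt{1-\alpha^2}\, ab$, which is the first inequality. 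The second follows from $b \geqslant (1-\alpha)a$, which is the triangle inequality applied to the relative condition. I will keep the intermediate bound $t \geqslant \frac12 (b^2 + (1-\alpha^2)a^2)$, because it is more flexible than the cosine form and is what I will use in part 2.

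\emph{Part 2.} I would apply Lemma~\ref{noise decompose} to write $\wtgg f(x) = r + \zeta_a(x)$ with $r = \nabla f(x) + \zeta_r(x)$ and $\|\zeta_a(x)\|_2 \leqslant \delta$. Then
\begin{equation*}
\langle \nabla f(x), \wtgg f(x) \rangle = \langle \nabla f(x), r\rangle + \langle \nabla f(x), \zeta_a(x)\rangle \geqslant t - \delta a.
\end{equation*}
\begin{itemize}
\item The cross term is bounded by Fenchel (Lemma~\ref{fenchel}): $\delta a \leqslant \frac{\lambda}{2}a^2 + \frac{1}{2\lambda}\delta^2$.
\item It is absorbed by a fraction of the positive terms in $t$. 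These are either $(1-\alpha^2)a^2/2$ directly, or $b^2$ via $a \leqslant b/(1-\alpha)$.
\item What remains is a multiple of $b^2$, converted to $\|\wtgg f(x)\|_2^2$ by $b^2 \geqslant \frac12 \|\wtgg f(x)\|_2^2 - \delta^2$ (Lemma~\ref{(a - b)2 lb}).
\end{itemize}
For instance, taking $\lambda = 1-\alpha^2$ gives
\begin{equation*}
\langle \nabla f(x), \wtgg f(x) \rangle \geqslant \frac14\|\wtgg f(x)\|_2^2 - \left(\frac12 + \frac{1}{2(1-\alpha^2)}\right)\delta^2.
\end{equation*}
The coefficient $\frac14$ already dominates the required $\frac14\sqrt{\frac{1-\alpha}{1+\alpha}}$.

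\emph{Main obstacle.} The hard part is matching the exact $\delta^2$ constant $3/\sqrt{(1+\alpha)^3(1-\alpha)}$. It blows up only like $(1-\alpha)^{-1/2}$ as $\alpha \to 1$, whereas the naive choice above gives $(1-\alpha)^{-1}$. There is slack available: we only need the factor $\frac14\sqrt{(1-\alpha)/(1+\alpha)}$ in front of $\|\wtgg f\|_2^2$, not $\frac14$. So I would make the split between the $a^2$ and $b^2$ parts of $t$ and the Fenchel parameter $\lambda$ depend on $c = \sqrt{(1-\alpha)/(1+\alpha)}$, and spend part of the $b^2$ mass to absorb $\delta a$. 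Concretely, I would first try $t \geqslant \sqrt{1-\alpha^2}\,ab$ together with $\delta a \leqslant \frac{\lambda}{2}a^2 + \frac{\delta^2}{2\lambda}$, choosing $\lambda \sim \sqrt{1-\alpha^2}\,(1+\alpha)$ so that the absorption uses $ab \geqslant a^2(1-\alpha)$ only once. The final step is then a routine comparison of the resulting coefficient with $3/\sqrt{(1+\alpha)^3(1-\alpha)}$ over $\alpha \in [0,1)$.
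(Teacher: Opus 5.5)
\textbf{Part 1: correct, different route.} Your argument is correct and cleaner than the paper's. The paper argues geometrically: the angle between $\nabla f(x)+\zeta_r(x)$ and $\nabla f(x)$ is largest when the former is tangent to the ball of radius $\alpha\|\nabla f(x)\|_2$, so its sine is at most $\alpha$. You instead expand $\|\zeta_r(x)\|_2^2\leqslant\alpha^2a^2$ and apply AM--GM. This is fully rigorous and gives the stronger quadratic bound $t\geqslant\frac12\left(b^2+(1-\alpha^2)a^2\right)$. Your intermediate bound in Part~2 with $\lambda=1-\alpha^2$ is also correct.

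\textbf{Part 2: the last step cannot be done.} The gap is your final step. No choice of split or of $\lambda$ can yield the constant $3/\sqrt{(1+\alpha)^3(1-\alpha)}$ on all of $[0,1)$, because the inequality of Part~2 is false for $\alpha$ close to $1$. Let $g=\nabla f(x)$ with $\|g\|_2=\frac{\delta}{2(1-\alpha)}$, and take $\zeta_r(x)=-\alpha g$ and $\zeta_a(x)=-\delta\,g/\|g\|_2$. Then $\|\widetilde{\nabla} f(x)-\nabla f(x)\|_2=\alpha\|g\|_2+\delta$, so \eqref{noise condition} holds. Moreover $\widetilde{\nabla} f(x)=-\frac{\delta}{2}\,g/\|g\|_2$, which gives
\[
\langle \nabla f(x),\widetilde{\nabla} f(x)\rangle=-\frac{\delta^2}{4(1-\alpha)},\qquad \|\widetilde{\nabla} f(x)\|_2^2=\frac{\delta^2}{4}.
\]
The claimed inequality would therefore require
\[
-\frac{1}{4(1-\alpha)}\geqslant\frac{1}{16}\sqrt{\frac{1-\alpha}{1+\alpha}}-\frac{3}{\sqrt{(1+\alpha)^3(1-\alpha)}} .
\]
This fails for every $\alpha\gtrsim 0.95$. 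For example, at $\alpha=0.99$ the left side is $-25$ and the right side is about $-10.7$.

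\textbf{Your naive rate is sharp.} This configuration is essentially the worst case. For fixed $\nabla f(x)$, the map $v\mapsto\langle\nabla f(x),v\rangle-\frac{c}{4}\|v\|_2^2$ is concave, so its minimum over the admissible ball is attained on the boundary, at the point antiparallel to $\nabla f(x)$. Optimizing over $\|\nabla f(x)\|_2$ then shows that the best possible $\delta^2$-coefficient grows like $\frac{1}{4(1-\alpha)}$. So the $(1-\alpha)^{-1}$ behaviour of your naive choice is the true rate, not an artifact, and your ``main obstacle'' cannot be removed by any refinement.

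\textbf{Where the paper's proof breaks.} The paper reaches the $(1-\alpha)^{-1/2}$ rate only through an invalid step. After the Fenchel split, it substitutes into the term $-\frac{\lambda}{2}\|\nabla f(x)\|_2^2$ the \emph{lower} bound $\frac{1}{2(1+\alpha)^2}\|\widetilde{\nabla} f(x)\|_2^2-\frac{\delta^2}{(1+\alpha)^2}$ from Lemma~\ref{basic alpha conditions}. Because the coefficient is negative, an upper bound is needed there.

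\textbf{What you can legitimately prove.} Your own bound
\[
\langle \nabla f(x),\widetilde{\nabla} f(x)\rangle\geqslant\frac14\|\widetilde{\nabla} f(x)\|_2^2-\left(\frac12+\frac{1}{2(1-\alpha^2)}\right)\delta^2
\]
has the correct order in $1-\alpha$. It implies the stated inequality whenever $\frac12+\frac{1}{2(1-\alpha^2)}\leqslant 3/\sqrt{(1+\alpha)^3(1-\alpha)}$, which holds roughly for $\alpha\leqslant 0.93$.
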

\begin{proof}
    $\\$ 

    1. Let's consider the plane generated by vectors $\nabla f(x) + \zeta_r(x)$ and $\nabla f(x)$.  One can note that maximum angle between this two vectors will be maximum, when $\nabla f(x) + \zeta_r(x)$ is tangent to the circle with radius $\alpha \| \nabla f(x) \|_2$. That is maximum sinus of that angle is $\alpha$. Using Lemma~\ref{basic alpha conditions} we get second inequality.
    $\\$

    2. 
    \begin{eqnarray*}
        \langle \nabla f(x), \wtgg f(x) \rangle
        & \geqslant & \langle \nabla f(x), \zeta_a(x) \rangle + \langle \nabla f(x), \nabla f(x) + \zeta_r(x) \rangle
        \\
        & \overset{\ref{fenchel}, \text{ previous clause}}{\geqslant} & 
        - \frac{\lambda}{2} \|\nabla f(x) \|_2^2 - \frac{2}{\lambda} \delta^2 
        \\
        & + & \sqrt{1 - \alpha^2} \| \nabla f(x) + \zeta_r(x) \|_2 \| \nabla f(x) \|_2
        \\
        & \overset{\ref{basic alpha conditions}}{\geqslant} & 
        \frac{\lambda}{2} \|\nabla f(x) \|_2^2 + \frac{2}{\lambda} \delta^2 + 
        \sqrt{\frac{1 - \alpha}{1 + \alpha}} \| \nabla f(x) + \zeta_r(x) \|_2^2
        \\
        & \geqslant & 
        - \frac{\lambda}{2} \|\nabla f(x) \|_2^2 - \frac{2}{\lambda} \delta^2 + 
        \sqrt{\frac{1 - \alpha}{1 + \alpha}} \| \wtgg f(x) - \zeta_a(x) \|_2^2
        \\
        & \overset{\ref{(a - b)2 lb}}{\geqslant} & 
        - \frac{\lambda}{2} \|\nabla f(x) \|_2^2 - \frac{2}{\lambda} \delta^2
        \\
        & + & 
        \frac{1}{2} \sqrt{\frac{1 - \alpha}{1 + \alpha}} \| \wtgg f(x) \|_2^2 - \sqrt{\frac{1 - \alpha}{1 + \alpha}} \| \zeta_a(x) \|_2^2
        \\
        & \overset{\ref{basic alpha conditions}}{\geqslant} &
        - \frac{\lambda}{2} \left( \frac{1}{2(1 + \alpha)^2} \| \widetilde{\nabla} f(x) \|_2^2 - \frac{1}{(1 + \alpha)^2} \delta^2 \right) - \frac{2}{\lambda} \delta^2
        \\
        & + & \frac{1}{2} \sqrt{\frac{1 - \alpha}{1 + \alpha}} \| \wtgg f(x) \|_2^2 - \sqrt{\frac{1 - \alpha}{1 + \alpha}} \delta^2
        \\
        & \overset{\lambda = \sqrt{(1 + \alpha)^3(1 - \alpha)}}{\geqslant} & \frac{1}{4} \sqrt{\frac{1 - \alpha}{1 + \alpha}} \| \wtgg f(x) \|_2^2 
        \\
        & - & \sqrt{\frac{1 - \alpha}{1 + \alpha}} \delta^2 - \frac{2}{\sqrt{(1 + \alpha)^3(1 - \alpha)}} \delta^2
        \\
        & \overset{\sqrt{\frac{1 - \alpha}{1 + \alpha}} \leqslant \frac{1}{\sqrt{(1 + \alpha)^3(1 - \alpha)}}}{\geqslant} & 
        \frac{1}{4} \sqrt{\frac{1 - \alpha}{1 + \alpha}} \| \wtgg f(x) \|_2^2 
        - \frac{3}{\sqrt{(1 + \alpha)^3(1 - \alpha)}} \delta^2.
    \end{eqnarray*}
\end{proof}

\begin{lemma} \label{pl dist}
    Let $f$ satisfies strong convexity~\eqref{strong conv}, then:
    \begin{equation*}
        \|x - x^* \|_2 \leqslant \frac{1}{\mu} \| \nabla f(x) \|_2.
    \end{equation*}
\end{lemma}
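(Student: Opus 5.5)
The plan is to combine strong convexity with the Cauchy--Schwarz inequality, anchored at the minimizer $x^*$ where $\nabla f(x^*) = 0$.

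First, write the strong convexity inequality~\eqref{strong conv} twice, once with the pair $(x, x^*)$ and once with the roles swapped:
\begin{equation*}
    f(x) + \langle \nabla f(x), x^* - x \rangle + \frac{\mu}{2} \|x - x^*\|_2^2 \leqslant f(x^*), \qquad f(x^*) + \langle \nabla f(x^*), x - x^* \rangle + \frac{\mu}{2} \|x - x^*\|_2^2 \leqslant f(x).
\end{equation*}
Adding the two inequalities cancels the function values. Since $x^*$ is a global minimizer of a differentiable function, $\nabla f(x^*) = 0$. This yields
\begin{equation*}
    \mu \|x - x^*\|_2^2 \leqslant \langle \nabla f(x), x - x^* \rangle .
\end{equation*}

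Next, bound the right-hand side by Cauchy--Schwarz, $\langle \nabla f(x), x - x^* \rangle \leqslant \|\nabla f(x)\|_2 \|x - x^*\|_2$. If $x = x^*$ the claim is trivial. Otherwise we may divide by $\mu \|x - x^*\|_2 > 0$, which gives $\|x - x^*\|_2 \leqslant \frac{1}{\mu}\|\nabla f(x)\|_2$.

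There is essentially no obstacle here. The only point requiring care is remembering to use the gradient at $x^*$ (which vanishes) in the second inequality, so that the linear terms combine into a single inner product. An alternative route, though only via a weaker intermediate, would be to use $f(x) - f^* \geqslant \frac{\mu}{2}\|x - x^*\|_2^2$ together with the P\L{} inequality, which strong convexity implies. That gives $\frac{\mu}{2}\|x-x^*\|_2^2 \leqslant \frac{1}{2\mu}\|\nabla f(x)\|_2^2$, which is the same bound. Either argument suffices.
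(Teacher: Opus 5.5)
Your proof is correct, but your main argument takes a different route from the paper's. The paper's proof is the ``alternative route'' you sketch at the end. It sandwiches the function gap, $\frac{\mu}{2}\|x-x^*\|_2^2 \leqslant f(x)-f^* \leqslant \frac{1}{2\mu}\|\nabla f(x)\|_2^2$, using quadratic growth (strong convexity anchored at $x^*$) on the left and the P\L{} inequality on the right. It takes for granted that strong convexity implies P\L{} with the same $\mu$.

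Your primary argument instead adds the two strong-convexity inequalities, which gives strong monotonicity of the gradient relative to $x^*$, $\mu\|x-x^*\|_2^2 \leqslant \langle \nabla f(x), x-x^*\rangle$, and finishes with Cauchy--Schwarz. This is self-contained from definition~\eqref{strong conv}: you never need to derive P\L{} by minimizing the quadratic lower model. The intermediate inequality is also slightly stronger than the final conclusion. The symmetrization step matters: using only the first inequality together with $f(x^*) \leqslant f(x)$ would give $\|x - x^*\|_2 \leqslant \frac{2}{\mu}\|\nabla f(x)\|_2$, losing a factor of $2$.

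The paper's route, in turn, uses only two function-value inequalities (quadratic growth and P\L{}). That fits the P\L-based analysis used throughout the paper, and it would carry over to any setting where just those two inequalities are available with constant $\mu$.
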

\begin{proof}
    From strong convexity and P$\L$ condition:
    \begin{equation*}
        \begin{gathered}    
            \frac{\mu}{2} \|x - x^* \|_2^2 \leqslant f(x) - f^* \leqslant \frac{1}{2\mu} \|\nabla f(x) \|_2^2, \\
            \|x - x^* \|_2 \leqslant \frac{1}{\mu} \|\nabla f(x) \|_2.
        \end{gathered}
    \end{equation*}
\end{proof}

\section{Gradient Descent missing proofs}

\begin{lemma} \label{GD step}
    Let $f$ smooth~\eqref{smooth cond}, $\widetilde{\nabla} f$ satisfies noise condition~\eqref{noise condition},
    \begin{equation*}
        f(x^{k + 1}) \leqslant f(x^k) - \frac{(1 - \alpha)^3}{(1 + \alpha)} \frac{1}{16 L} \|\nabla f(x^k) \|_2^2 + \frac{3}{16 L} \frac{1}{(1 + \alpha)^2} \delta^2.
    \end{equation*}
    Where:
    \begin{equation} \label{gd const def}
        \begin{gathered}
            h = \left( \frac{1 - \alpha}{ 1 + \alpha} \right)^{3 / 2} \frac{1}{4 L}.
        \end{gathered}
    \end{equation}
\end{lemma}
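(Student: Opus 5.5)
The plan is to start from the standard descent inequality implied by $L$-smoothness~\eqref{smooth cond}, applied with $y = x^{k+1} = x^k - h\wtgg f(x^k)$:
\begin{equation*}
f(x^{k+1}) \leqslant f(x^k) - h\langle \nabla f(x^k), \wtgg f(x^k)\rangle + \frac{L h^2}{2}\|\wtgg f(x^k)\|_2^2 .
\end{equation*}
After that, the two terms on the right are controlled separately.

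\emph{Inner product.} Write $c = \sqrt{\frac{1-\alpha}{1+\alpha}}$, so that $h = \frac{c^3}{4L}$. Clause 2 of Lemma~\ref{cos lb} gives
\begin{equation*}
-h\langle \nabla f, \wtgg f\rangle \leqslant -\frac{c^4}{16L}\|\wtgg f\|_2^2 + \frac{3h}{\sqrt{(1+\alpha)^3(1-\alpha)}}\,\delta^2 .
\end{equation*}

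\emph{Quadratic term.} Here $\frac{Lh^2}{2} = \frac{c^6}{32L}$. Since $c \leqslant 1$, this is at most $\frac{c^4}{32L}$, so it absorbs at most half of the negative $\|\wtgg f\|_2^2$ term. What remains is
\begin{equation*}
-\frac{c^4}{32L}\|\wtgg f\|_2^2 + \frac{3(1-\alpha)}{4L(1+\alpha)^3}\,\delta^2 .
\end{equation*}

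\emph{Back to the true gradient.} Because the coefficient of $\|\wtgg f\|_2^2$ is negative, we may replace $\|\wtgg f\|_2^2$ by a lower bound. Lemma~\ref{basic alpha conditions} gives $\|\wtgg f\|_2^2 \geqslant \frac12(1-\alpha)^2\|\nabla f\|_2^2 - \delta^2$. Substituting, we obtain a bound of the form
\begin{equation*}
f(x^{k+1}) \leqslant f(x^k) - C_1\frac{(1-\alpha)^a}{(1+\alpha)^b}\frac{\|\nabla f(x^k)\|_2^2}{L} + C_2\frac{\delta^2}{L(1+\alpha)^2}.
\end{equation*}
For the $\delta^2$ coefficient we use $(1-\alpha)/(1+\alpha) \leqslant 1$.

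\emph{Main obstacle.} The difficulty is matching the exact constants $\frac{(1-\alpha)^3}{16(1+\alpha)}$ and $\frac{3}{16(1+\alpha)^2}$. The crude chain above only yields $\frac{(1-\alpha)^4}{64(1+\alpha)^2}$: it loses one factor of $\frac{1-\alpha}{1+\alpha}$ and a factor of $4$ in the gradient coefficient.

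To repair this, I would not pass through $\|\wtgg f\|_2^2$ at all. Instead, decompose $\wtgg f = \nabla f + \zeta_r + \zeta_a$ using Lemma~\ref{noise decompose}, and split the inner-product term as
\begin{equation*}
-h\langle\nabla f, \nabla f+\zeta_r\rangle - h\langle \nabla f,\zeta_a\rangle .
\end{equation*}
\begin{itemize}
\item The first piece is bounded by $-h\sqrt{1-\alpha^2}(1-\alpha)\|\nabla f\|_2^2$, using clause 1 of Lemma~\ref{cos lb}.
\item The second piece is bounded by Fenchel's inequality (Lemma~\ref{fenchel}) with a parameter $\lambda$ proportional to the leading coefficient.
\item For the quadratic term, use $\|\wtgg f\|_2^2 \leqslant 2(1+\alpha)^2\|\nabla f\|_2^2 + 2\delta^2$.
\end{itemize}
With $h$ as in~\eqref{gd const def}, the $\|\nabla f\|_2^2$ coefficient then has the form $\frac{h(1-\alpha)}{L}$ times a constant, which gives the desired $\frac{(1-\alpha)^3}{(1+\alpha)}$-type scaling. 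The last step is to choose $\lambda$ so that the $\delta^2$ terms combine to at most $\frac{3}{16L(1+\alpha)^2}$, and to check this numerically for $\alpha \in [0,1)$.
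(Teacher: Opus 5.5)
Your repaired route is the paper's proof: split $\wtgg f = (\nabla f + \zeta_r) + \zeta_a$, apply clause 1 of Lemma~\ref{cos lb} and Fenchel with $\lambda = \sqrt{1-\alpha^2}(1-\alpha)$, bound the quadratic term by $2h^2\bigl((1+\alpha)^2\|\nabla f\|_2^2 + \delta^2\bigr)$, and drop the lossy first attempt via clause 2. With this $\lambda$, the gradient coefficient is exactly $-\frac{h}{4}\sqrt{1-\alpha^2}(1-\alpha) = -\frac{(1-\alpha)^3}{16L(1+\alpha)}$ (not ``$h(1-\alpha)/L$ times a constant''), and the $\delta^2$ coefficient $\frac{1}{8L(1+\alpha)^2} + \frac{(1-\alpha)^3}{16L(1+\alpha)^3}$ is at most $\frac{3}{16L(1+\alpha)^2}$ because $(1-\alpha)^3/(1+\alpha)\leqslant 1$, so no numerical check is needed.
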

\begin{proof}
    Using smoothness of function $f$ we get:
    \begin{equation*}
        f(x^{k + 1}) \leqslant f(x^k) + \langle \nabla f(x^k), x^{k + 1} - x^k \rangle + \frac{L}{2} \| x^{k + 1} - x^k \|_2^2.
    \end{equation*}
    Let us estimate terms.
    Linear form:
    \begin{align*}
        \langle \nabla f(x^k), x^{k + 1} - x^k \rangle 
        & = -h \langle \nabla f(x^k), \wtgg f(x^k) \rangle \\
        & \overset{\eqref{noise def abs rel}}{=} -h \bigg (
        \langle \nabla f(x^k), \nabla f(x^k) + \zeta_r(x^k) \rangle + \langle \nabla f(x^k), \zeta_a(x^k) \rangle \bigg ) \\
        & \hspace{-10mm} \overset{\eqref{fenchel}, \eqref{cos lb}}{\leqslant}
        -h \bigg( \sqrt{1 - \alpha^2} (1 - \alpha) \|\nabla f(x^k) \|_2^2 - 
        \frac{\lambda}{2} \|\nabla f(x^k) \|_2^2 - \frac{1}{2 \lambda} \delta^2 \bigg) \\
        & \hspace{-12mm} \overset{\lambda = \sqrt{1 - \alpha^2}(1 - \alpha)}{\leqslant} -h \bigg( \frac{\sqrt{1 - \alpha^2}(1 - \alpha)}{2} \| \nabla f(x^k) \|_2^2  -
        \frac{1}{2 \sqrt{1 - \alpha^2}(1 - \alpha)} \delta^2 \bigg).
    \end{align*}
    Quadratic term:
    \begin{eqnarray*}
        \| x^{k + 1} - x^k \|_2^2 
        & = & h^2 \| \widetilde{\nabla} f(x^k) \|_2^2 \\
        & \leqslant & h^2 \|(\nabla f(x^k) + \zeta_r(x^k)) + \zeta_a(x^k) \|_2^2 \\
        & \overset{\eqref{fenchel}, \eqref{basic alpha conditions}}{\leqslant} & 2 h^2 \left((1 + \alpha)^2 \|\nabla f(x^k) \|_2^2 + \delta^2 \right). 
    \end{eqnarray*}
    Combining this inequalities:
    \begin{eqnarray}
        f(x^{k + 1}) 
        & \leqslant & f(x^k) \nonumber
        - h \left( \frac{\sqrt{1 - \alpha^2}(1 - \alpha)}{2} \| \nabla f(x^k) \|_2^2 - \frac{1}{2 \sqrt{1 - \alpha^2}(1 - \alpha)} \delta^2 \right) \\  \nonumber
        & + & L h^2 \left((1 + \alpha)^2 \|\nabla f(x^k) \|_2^2 + \delta^2 \right) \\ \nonumber
        & = & f(x^k) + h \|\nabla f(x^k) \|_2^2 \left(L h (1 + \alpha)^2 - \frac{\sqrt{1 - \alpha^2}(1 - \alpha)}{2} \right) \\ \nonumber
        & + & h \delta^2 \left( \frac{1}{2 \sqrt{1 - \alpha^2}(1 - \alpha)} + h L \right) \\ \label{h gd step ineq}
        & \overset{h \text{ definition}}{=} & f(x^k) - \left(\frac{1 - \alpha}{1 + \alpha} \right)^{3 / 2} \frac{1}{4 L} \frac{\sqrt{1 - \alpha^2}(1 - \alpha)}{4} \|\nabla f(x^k) \|_2^2 \\ \nonumber
        & + &  \left(\frac{1 - \alpha}{1 + \alpha} \right)^{3 / 2} \frac{1}{4 L} \delta^2 \left( \frac{1}{2 \sqrt{1 - \alpha^2}(1 - \alpha)} + \frac{1}{4} \left(\frac{1 - \alpha}{1 + \alpha} \right)^{3 / 2} \right) \\ \nonumber
        & = & f(x^k) - \frac{(1 - \alpha)^3}{(1 + \alpha)} \frac{1}{16 L} \|\nabla f(x^k) \|_2^2 \\
        & + & \frac{\delta^2}{8 L} \left(\frac{1}{(1 + \alpha)^2} + \frac{(1 - \alpha)^3}{2 (1 + \alpha)^3} \right) \nonumber
        \\
        & \overset{(1 - \alpha)^3/(1 + \alpha) \leqslant 1}{\leqslant} & f(x^k) - \frac{(1 - \alpha)^3}{(1 + \alpha)} \frac{1}{16 L} \|\nabla f(x^k) \|_2^2 \\
        & + & \frac{3}{16 L} \frac{1}{(1 + \alpha)^2} \delta^2. \nonumber
    \end{eqnarray}
\end{proof}

\begin{theorem}[Theorem~\ref{gd norm conv text}] \label{gd norm conv}
    Let $f$ smooth~\eqref{smooth cond},  $\widetilde{f}$ satisfies error condition~\eqref{noise condition}. Then Algorithm~\ref{alg gd} with step:
    \begin{equation*}
        h = \left( \frac{1 - \alpha}{ 1 + \alpha} \right)^{3 / 2} \frac{1}{4 L},
    \end{equation*}
    guarantees convergence:
    \begin{eqnarray*}
            \min_{k \leqslant N} \|\nabla f(x^k) \|_2^2
            & \leqslant & \frac{(1 + \alpha)}{(1 - \alpha)^3} \frac{16L (f(x^0) - f^*)}{N + 1} + \frac{3}{(1 - \alpha)^3(1 + \alpha)} \delta^2.
    \end{eqnarray*}
\end{theorem}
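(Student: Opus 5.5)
The plan is to telescope the one-step descent inequality of Lemma~\ref{GD step}. That lemma is available with exactly the step size $h$ in the statement. For every $k = 0, \dots, N$ it gives
\begin{equation*}
    \frac{(1 - \alpha)^3}{(1 + \alpha)} \frac{1}{16 L} \|\nabla f(x^k) \|_2^2 \leqslant f(x^k) - f(x^{k + 1}) + \frac{3}{16 L} \frac{1}{(1 + \alpha)^2} \delta^2 .
\end{equation*}
Here $x^{N+1}$ denotes one further (virtual) gradient step. Lemma~\ref{GD step} holds at any point, so adding this step costs nothing. It is needed so that we sum over $N+1$ indices $k = 0, \dots, N$ and obtain the factor $N+1$ in the denominator.

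Next I would sum these $N+1$ inequalities. The function values telescope to $f(x^0) - f(x^{N+1})$, and this is at most $f(x^0) - f^*$ because $f^* \leqslant f(x^{N+1})$. The noise terms add up to $(N+1) \frac{3}{16L(1+\alpha)^2}\delta^2$. Then I bound the left-hand side from below by $(N+1)$ times the minimum:
\begin{equation*}
    \sum_{k=0}^{N} \|\nabla f(x^k)\|_2^2 \geqslant (N+1)\min_{k \leqslant N} \|\nabla f(x^k)\|_2^2 .
\end{equation*}
This yields
\begin{equation*}
    \frac{(1 - \alpha)^3}{16 L (1 + \alpha)} (N+1) \min_{k \leqslant N} \|\nabla f(x^k) \|_2^2 \leqslant f(x^0) - f^* + (N+1)\frac{3}{16 L (1 + \alpha)^2} \delta^2 .
\end{equation*}

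Finally, I multiply through by $\frac{16L(1+\alpha)}{(1-\alpha)^3 (N+1)}$. The first term becomes $\frac{(1+\alpha)}{(1-\alpha)^3}\frac{16L(f(x^0)-f^*)}{N+1}$. The noise term becomes $\frac{3}{(1-\alpha)^3(1+\alpha)}\delta^2$, which is exactly the claimed bound.

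There is no real obstacle here; the substantive work was already done in Lemma~\ref{GD step}. The only point to check is the indexing: the minimum over the $N+1$ points $x^0, \dots, x^N$ requires the descent inequality at $k = N$ as well. Two fixes are available. The first is the virtual extra step described above. The second is to sum only $k = 0, \dots, N-1$; this gives the same bound with $N$ in place of $N+1$, which is slightly weaker than the statement. The first option gives the statement verbatim, so I would use it.
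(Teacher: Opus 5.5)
Your proposal is correct and follows the paper's own argument exactly: telescope Lemma~\ref{GD step} over $k = 0, \dots, N$, use $f(x^{N+1}) \geqslant f^*$, and bound the sum from below by $(N+1)$ times the minimum. Your remark about the virtual step $x^{N+1}$ makes explicit what the paper uses implicitly; this is legitimate because the noise condition holds at every point.
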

\begin{proof}
    From Lemma~\ref{GD step} we obtain inequality:
    \begin{equation*}
         \frac{(1 - \alpha)^3}{(1 + \alpha)} \frac{1}{16 L} \|\nabla f(x^k) \|_2^2 \leqslant f(x^k) - f(x^{k + 1}) + \frac{3}{16 L} \frac{1}{(1 + \alpha)^2} \delta^2.
    \end{equation*}
    Telescope that inequality from $0$ to $N$:
    \begin{equation*}
        \frac{(1 - \alpha)^3}{(1 + \alpha)} \frac{1}{16 L} \sum_{k = 0}^{N} \|\nabla f(x^k) \|_2^2 \leqslant f(x^0) - f(x^{N + 1}) + \frac{3(N + 1)}{16 L} \frac{1}{(1 + \alpha)^2} \delta^2.
    \end{equation*}
    Using $\|\nabla f(x^k) \|_2^2 \geqslant \min\limits_{k \leqslant N} \|\nabla f(x^k) \|_2^2$ and $f(x^{N + 1}) \geqslant f^*$ inequality above can be simplified:
    \begin{eqnarray*}
        \frac{(1 - \alpha)^3}{(1 + \alpha)} \frac{N + 1}{16 L} \min_{k \leqslant N} \|\nabla f(x^k) \|_2^2
        & \leqslant & f(x^0) - f(x^*) + \frac{3(N + 1)}{16 L} \frac{1}{(1 + \alpha)^2} \delta^2, \\
        \min_{k \leqslant N} \|\nabla f(x^k) \|_2^2
        & \leqslant & \frac{(1 + \alpha)}{(1 - \alpha)^3} \frac{16L (f(x^0) - f^*)}{N + 1} \\
        & + & \frac{3}{(1 - \alpha)^3(1 + \alpha)} \delta^2 .
    \end{eqnarray*}
\end{proof}

\begin{theorem}[Theorem~\ref{gd pl conv text}] \label{gd pl conv}
    Let $f$ function satisfies P$\L$ condition~\eqref{PL cond} and smoothness~\eqref{smooth cond}, $\wtgg f$ satisfies~\eqref{noise condition}. Then Algorithm~\ref{alg gd} with step:
    \begin{equation*}
        h = \left( \frac{1 - \alpha}{ 1 + \alpha} \right)^{3 / 2} \frac{1}{4 L},
    \end{equation*}
    guarantees convergence:
    \begin{eqnarray*}
        f(x^N) - f^* 
        & \leqslant &  \left(1 -  \frac{(1 - \alpha)^3}{1 + \alpha} \frac{\mu}{8 L} \right)^N (f(x^0) - f^*) + \frac{3}{2 \mu} \frac{1 + \alpha}{(1 - \alpha)^3} \delta^2.
    \end{eqnarray*}
\end{theorem}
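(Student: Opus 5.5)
The plan is to combine the one-step descent inequality of Lemma~\ref{GD step} with the P{\L} condition~\eqref{PL cond}. This gives a linear recursion for $r_k := f(x^k) - f^*$, which we then unroll.

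First, take Lemma~\ref{GD step} at iteration $k$ and subtract $f^*$ from both sides:
\begin{equation*}
    r_{k+1} \leqslant r_k - \frac{(1 - \alpha)^3}{1 + \alpha} \frac{1}{16 L} \|\nabla f(x^k)\|_2^2 + \frac{3}{16 L} \frac{1}{(1 + \alpha)^2} \delta^2 .
\end{equation*}
The lemma is stated in terms of the true gradient, so Lemma~\ref{PL noised} is not needed here. The P{\L} condition~\eqref{PL cond} gives $\|\nabla f(x^k)\|_2^2 \geqslant 2\mu r_k$. Substituting this yields
\begin{equation*}
    r_{k+1} \leqslant \left(1 - \frac{(1 - \alpha)^3}{1 + \alpha} \frac{\mu}{8 L}\right) r_k + \frac{3}{16 L (1 + \alpha)^2} \delta^2 =: q\, r_k + c .
\end{equation*}
The contraction factor $q$ lies in $[0,1)$ because $\mu \leqslant L$ (a consequence of P{\L} together with smoothness) and $(1-\alpha)^3/(1+\alpha) \leqslant 1$.

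Second, unroll the recursion:
\begin{equation*}
    r_N \leqslant q^N r_0 + c \sum_{j=0}^{N-1} q^j \leqslant q^N r_0 + \frac{c}{1 - q}.
\end{equation*}
It remains to evaluate the additive term:
\begin{equation*}
    \frac{c}{1-q} = \frac{3}{16 L (1+\alpha)^2} \cdot \frac{8L (1+\alpha)}{(1-\alpha)^3 \mu} = \frac{3}{2\mu} \frac{1}{(1+\alpha)(1-\alpha)^3} \delta^2 .
\end{equation*}
Since $\frac{1}{1+\alpha} \leqslant 1+\alpha$, this is bounded by $\frac{3}{2\mu}\frac{1+\alpha}{(1-\alpha)^3}\delta^2$, which is exactly the claimed bound (in fact slightly weaker than what we obtain).

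There is no real obstacle, since all the work is already in Lemma~\ref{GD step}. The only points requiring care are the constant bookkeeping in the geometric sum and checking that $q \geqslant 0$, which follows from $\mu \leqslant L$.
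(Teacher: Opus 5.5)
Your proposal is correct and follows the paper's proof: you combine Lemma~\ref{GD step} with the P{\L} inequality $\|\nabla f(x^k)\|_2^2 \geqslant 2\mu (f(x^k)-f^*)$, unroll the resulting linear recursion, bound the geometric sum by $1/(1-q)$, and relax $\frac{1}{1+\alpha}$ to $1+\alpha$, exactly as the paper does. Your explicit check that $q \geqslant 0$ (via $\mu \leqslant L$), which is needed to unroll the recursion, is a detail the paper leaves implicit.
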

\begin{proof}
    \begin{eqnarray*}
        f(x^{k + 1}) - f^* 
        & \overset{\text{Lemma}~\ref{GD step}}{\leqslant} & f(x^k) - f^* - \frac{(1 - \alpha)^3}{(1 + \alpha)} \frac{1}{16 L} \|\nabla f(x^k) \|_2^2
        +  \frac{3}{16 L} \frac{1}{(1 + \alpha)^2} \delta^2 \\
        & \overset{\eqref{PL cond}}{\leqslant} & f(x^k) - f^* - \frac{(1 - \alpha)^3}{(1 + \alpha)} \frac{\mu}{8 L} (f(x^k) - f^*)
        + \frac{3}{16 L} \frac{1}{(1 + \alpha)^2} \delta^2 \\
        & = & \left(1 -  \frac{(1 - \alpha)^3}{(1 + \alpha)} \frac{\mu}{8 L} \right) (f(x^k) - f^*)
        + \frac{3}{16 L} \frac{1}{(1 + \alpha)^2} \delta^2
    \end{eqnarray*}
    Iterating inequality above:
    \begin{eqnarray*}
        f(x^N) - f^*
        & \leqslant & \left(1 -  \frac{(1 - \alpha)^3}{1 + \alpha} \frac{\mu}{8 L} \right)^N (f(x^0) - f^*) \\
        & + & \frac{3}{16 L} \frac{\delta^2}{(1 + \alpha)^2} \sum_{k = 0}^{N - 1} \left(1 -  \frac{(1 - \alpha)^3}{(1 + \alpha)} \frac{\mu}{8 L} \right)^k \\
        & \leqslant &  \left(1 -  \frac{(1 - \alpha)^3}{1 + \alpha} \frac{\mu}{8 L} \right)^N (f(x^0) - f^*) \\
        & + & \frac{3}{16 L} \frac{\delta^2}{(1 + \alpha)^2} \frac{8 L}{\mu} \frac{1 + \alpha}{(1 - \alpha)^3} \\
        & \leqslant &  \left(1 -  \frac{(1 - \alpha)^3}{1 + \alpha} \frac{\mu}{8 L} \right)^N (f(x^0) - f^*) \\
        & + & \frac{3}{2 \mu} \frac{1 + \alpha}{(1 - \alpha)^3} \delta^2.
    \end{eqnarray*}
\end{proof}

\section{Accelerated method missing proofs} \label{appendix accelerated}

Following \citep[p. 83]{nesterov2018lectures}, let us introduce a parameterized set of functions $\Psi$, its element defined for $c \in \mathbb{R}, \kappa \in \mathbb{R}^{++},$ and $u \in \mathbb{R}^n$,  as follows
\begin{equation} \label{qudaratic family}
    \psi(x | c, \kappa, u) = c + \frac{\kappa}{2} \| x - u \|_2^2, \quad \forall x \in \mathbb{R}^n. 
\end{equation}

According to~\cite{nesterov2018lectures}, we can mention to the following useful property of the class $\Psi$.

\begin{lemma} \label{psi comb}
Let $\psi_1, \psi_2 \in \Psi$. Then $\forall \eta_1, \eta_2, c_1, c_2 \in \mathbb{R}, \forall \kappa_1, \kappa_2 \in \mathbb{R}^{++}$, and $\forall u, v \in \mathbb{R}^n$, we have 
\[
    \eta_1 \psi_1(x|c_1, \kappa_1, u) + \eta_2 \psi_2(x|c_2, \kappa_2, v) = \psi_3(x|c_3, \kappa_3, w), 
\]
where 
\begin{equation*}
    \begin{gathered}
        c_3 = \eta_1 c_1 + \eta_2 c_2 + \frac{\eta_1\eta_2\kappa_1\kappa_2}{2(\eta_1\kappa_1 + \eta_2\kappa_2)} \|u  - v \|_2^2, \\
        \kappa_3 = \eta_1\kappa_1 + \eta_2\kappa_2, \quad w = \frac{\eta_1\kappa_1 u +\eta_2\kappa_2 v}{\eta_1\kappa_1 + \eta_2\kappa_2}.
    \end{gathered}
\end{equation*}
\end{lemma}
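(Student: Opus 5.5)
The identity can be verified by direct algebra, expanding both sides as quadratics in $x$ and matching coefficients. Since every element of $\Psi$ is a quadratic with isotropic Hessian, the sum $\eta_1\psi_1 + \eta_2\psi_2$ is again such a quadratic. We need to check two things: its leading coefficient, and the location and value of its minimum.

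First expand
\begin{equation*}
    \eta_1 \psi_1(x) + \eta_2\psi_2(x) = \eta_1 c_1 + \eta_2 c_2 + \frac{\eta_1\kappa_1}{2}\|x-u\|_2^2 + \frac{\eta_2\kappa_2}{2}\|x-v\|_2^2 .
\end{equation*}
Read off the coefficient of $\|x\|_2^2$ to get $\kappa_3/2$ with $\kappa_3 = \eta_1\kappa_1 + \eta_2\kappa_2$. Read off the linear term $-\langle \eta_1\kappa_1 u + \eta_2\kappa_2 v, x\rangle$; this forces the center to be $w = (\eta_1\kappa_1 u + \eta_2\kappa_2 v)/\kappa_3$. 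This is the step that needs $\kappa_3 \neq 0$. The statement implicitly assumes this, and it holds for instance for nonnegative $\eta_i$ not both zero, which is the case used later. I would note this assumption explicitly.

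Next, compute the constant $c_3$ by evaluating the sum at $x = w$, since $\psi_3(w) = c_3$. We have
\begin{equation*}
    w - u = \frac{\eta_2\kappa_2}{\kappa_3}(v-u), \qquad w - v = \frac{\eta_1\kappa_1}{\kappa_3}(u-v).
\end{equation*}
Substituting gives
\begin{equation*}
    \frac{\eta_1\kappa_1\eta_2^2\kappa_2^2 + \eta_2\kappa_2\eta_1^2\kappa_1^2}{2\kappa_3^2}\|u-v\|_2^2 = \frac{\eta_1\eta_2\kappa_1\kappa_2}{2\kappa_3}\|u-v\|_2^2,
\end{equation*}
which matches the claimed $c_3$.

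Finally, two quadratics with the same Hessian and the same gradient at one point differ by a constant. Both sides have gradient zero at $w$ and Hessian $\kappa_3 I$, and they agree in value at $w$, so they coincide everywhere. There is no real obstacle. The only points needing care are the bookkeeping in the cross term and the caveat $\kappa_3 \neq 0$.
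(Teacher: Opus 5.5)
Your proof is correct and complete. Matching the quadratic and linear coefficients gives $\kappa_3$ and $w$, evaluating at $w$ gives exactly the stated $c_3$, and the observation that both sides have the same Hessian and agree in value and gradient at $w$ closes the argument.

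The paper gives no proof of its own; it simply defers to Nesterov's book, where the identity is checked by this same direct expansion. Your caveat is the right one. Strictly, membership $\psi_3 \in \Psi$ needs $\kappa_3 > 0$, not just $\kappa_3 \neq 0$. This causes no trouble here, because the lemma is only applied with $\eta_1 = 1-\omega$, $\eta_2 = \omega$, $\omega \in (0,1)$ and $\kappa_1 = \kappa_2 = \mu/2$, which gives $\kappa_3 = \mu/2$.
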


Also from~\cite{nesterov2018lectures} we mention the following simple lemma.
\begin{lemma} \label{conv lemma}
Let $0 < \lambda < 1$, $A > 0$, $\psi_0, \psi \in \Psi$, $z \in \mathbb{R}^n$ such that 
\begin{equation*}
f(z) \leqslant \min_{x \in \mathbb{R}^n} \psi(x) + A,
\end{equation*}
and 
\begin{equation*}
     \psi(x) \leqslant \lambda \psi_0(x) + (1 - \lambda) f(x), \quad \forall x \in \mathbb{R}^n.
\end{equation*}
Then
\begin{equation*}
    f(z) - f^* \leqslant \lambda (\psi_0(x^*) - f^*) + A. 
\end{equation*}
\end{lemma}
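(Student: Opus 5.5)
The statement to prove is Lemma~\ref{conv lemma}. It is a direct chain of inequalities, and I would prove it by combining the two hypotheses at the single point $x = x^*$.

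Start from the first hypothesis and replace the minimum of $\psi$ over $\mathbb{R}^n$ by its value at $x^*$. Since $\min_{x} \psi(x) \leqslant \psi(x^*)$, this gives
\begin{equation*}
    f(z) \leqslant \psi(x^*) + A.
\end{equation*}
Next, apply the second hypothesis at $x = x^*$:
\begin{equation*}
    \psi(x^*) \leqslant \lambda \psi_0(x^*) + (1 - \lambda) f(x^*) = \lambda \psi_0(x^*) + (1 - \lambda) f^*.
\end{equation*}
Substituting this into the previous bound yields
\begin{equation*}
    f(z) \leqslant \lambda \psi_0(x^*) + (1 - \lambda) f^* + A.
\end{equation*}
Finally, subtract $f^*$ from both sides and regroup:
\begin{equation*}
    f(z) - f^* \leqslant \lambda \left( \psi_0(x^*) - f^* \right) + A,
\end{equation*}
which is exactly the claim.

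Two side remarks. The minimum of $\psi$ is attained, since $\kappa > 0$ makes $\psi$ a strictly convex quadratic with minimum value $c$ at $u$. The existence of $x^*$ is assumed throughout the paper. No step here is a genuine obstacle. The only point needing care is that the minimizer of $\psi$ is not used at all: we bound the minimum of $\psi$ from above by evaluating at $x^*$, rather than trying to compare minimizers.
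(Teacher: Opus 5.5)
Your proposal is correct and follows essentially the same route as the paper's proof. The paper uses the same chain of inequalities: bound $\min_x \psi(x)$ by $\psi(x^*)$, apply the second hypothesis at $x = x^*$ with $f(x^*) = f^*$, and regroup.
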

\begin{proof}
    \begin{eqnarray*}
        f(z) - f^*
        & \leqslant & \min_{x \in \mathbb{R}^n} \psi(x) + A - f^*
        \\
        & \leqslant & \psi(x^*) - f^* + A \leqslant \lambda \psi_0(x^*) + (1 - \lambda) f(x^*) - f^* + A
        \\
        & = & \lambda ( \psi_0(x^*) - f^* ) + A. 
    \end{eqnarray*}
\end{proof}

\begin{lemma} \label{fenchel for accelerated}
    Let $\widetilde{\nabla} f$ satisfies error condition~\eqref{noise condition}, $0 < \mu \leqslant L$, then:
    \begin{equation*}
        \begin{gathered}
            \forall \gamma \geqslant 0 \quad \|\wtgg f(x) \|_2^2 \geqslant \left(1 - \frac{1}{4} \left(\frac{\mu}{2L} \right)^{\gamma} \right) (1 - \alpha)^2 \|\nabla f(x) \|_2^2 - \left(4\left(\frac{2L}{\mu} \right)^{\gamma} - 1 \right) \delta^2, \\
            \|\wtgg f(x) \|_2^2 \leqslant \left(1 + \frac{1}{4} \left(\frac{\mu}{2L} \right)^{\gamma} \right) (1 + \alpha)^2 \|\nabla f(x) \|_2^2 + \left(4\left(\frac{2L}{\mu} \right)^{\gamma} + 1 \right) \delta^2.
        \end{gathered}
    \end{equation*}
\end{lemma}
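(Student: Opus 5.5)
Write $\varepsilon := \frac{1}{4}\left(\frac{\mu}{2L}\right)^{\gamma}$. Since $\mu \leqslant L$ and $\gamma \geqslant 0$, we have $\varepsilon \in (0, \nicefrac{1}{4}]$. With this notation the claimed coefficients of $\delta^2$ are $\frac{1}{\varepsilon} - 1$ and $\frac{1}{\varepsilon} + 1$. The plan is to expand $\|\wtgg f(x)\|_2^2$ through the decomposition of Lemma~\ref{noise decompose}. Set $g := \nabla f(x) + \zeta_r(x)$, so that $\wtgg f(x) = g + \zeta_a(x)$. Then
\begin{equation*}
    \|\wtgg f(x)\|_2^2 = \|g\|_2^2 + 2\langle g, \zeta_a(x)\rangle + \|\zeta_a(x)\|_2^2 .
\end{equation*}
The cross term will be controlled by Fenchel's inequality (Lemma~\ref{fenchel}) with parameter $\lambda = \varepsilon$, which gives
\begin{equation*}
    |2\langle g, \zeta_a\rangle| \leqslant \varepsilon \|g\|_2^2 + \frac{1}{\varepsilon}\|\zeta_a\|_2^2 .
\end{equation*}

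For the upper bound, this yields
\begin{equation*}
    \|\wtgg f(x)\|_2^2 \leqslant (1+\varepsilon)\|g\|_2^2 + \left(\frac{1}{\varepsilon}+1\right)\|\zeta_a\|_2^2 .
\end{equation*}
We then use $\|g\|_2 \leqslant (1+\alpha)\|\nabla f(x)\|_2$, which follows from the triangle inequality and the relative bound, together with $\|\zeta_a\|_2 \leqslant \delta$. Since $\frac{1}{\varepsilon} = 4\left(\frac{2L}{\mu}\right)^{\gamma}$, this is exactly the second inequality.

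For the lower bound, the same Fenchel step gives
\begin{equation*}
    \|\wtgg f(x)\|_2^2 \geqslant (1-\varepsilon)\|g\|_2^2 + \left(1 - \frac{1}{\varepsilon}\right)\|\zeta_a\|_2^2 .
\end{equation*}
Since $\varepsilon \leqslant 1$, the coefficient $1 - \frac{1}{\varepsilon}$ is nonpositive, so we may bound $\|\zeta_a\|_2^2 \leqslant \delta^2$ in that term to obtain $-\left(\frac{1}{\varepsilon}-1\right)\delta^2$. Also $1-\varepsilon \geqslant 0$, so we can use $\|g\|_2 \geqslant (1-\alpha)\|\nabla f(x)\|_2$ (reverse triangle inequality, valid for $\alpha < 1$) in the first term. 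Together these give the first inequality.

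The only point needing care is the sign bookkeeping in the lower bound: we must check $\varepsilon \leqslant 1$ before replacing $\|\zeta_a\|_2^2$ by $\delta^2$, and check $1-\varepsilon \geqslant 0$ before replacing $\|g\|_2^2$ by its lower bound. Both checks follow from $\mu \leqslant L$ and $\gamma \geqslant 0$. No further obstacle is expected; the rest is routine algebra.
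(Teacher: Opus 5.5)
Your proof is correct and matches the paper's: the paper likewise writes $\wtgg f(x) = (\nabla f(x) + \zeta_r(x)) + \zeta_a(x)$, applies Lemma~\ref{fenchel} to the cross term with weight $\lambda = 4\left(\frac{2L}{\mu}\right)^{\gamma}$ (your $1/\varepsilon$), and then bounds the two pieces by $(1 \pm \alpha)\|\nabla f(x)\|_2$ and $\delta$. Your explicit sign checks in the lower bound ($\varepsilon \leqslant \nicefrac{1}{4}$ and $\alpha \leqslant 1$) spell out what the paper only dismisses as ``similarly.''
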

\begin{proof}
    Let us denote $g_r(x) = \nabla f(x) + \zeta_r(x)$. Then one can apply Lemma~\ref{fenchel} for $g_r, \zeta_a$:
    \begin{eqnarray*}
        \|\wtgg f(x) \|_2^2 & = & \|g_r(x) + \zeta_a(x) \|_2^2 = \|g_r(x) \|_2^2 + 2 \langle g_r(x), \zeta_a(x) \rangle + \| \zeta_a(x) \|_2^2 \\
        & \overset{\ref{fenchel}}{\leqslant} & \left(1 + \frac{1}{\lambda} \right) \| g_r(x) \|_2^2 + \left(1 + \lambda \right) \| \zeta_a(x) \|_2^2 \\
        & \overset{\ref{noise def abs rel}, \; \lambda = 4 \left(2L / \mu \right)^{\gamma}}{\leqslant} &
        \left(1 + \frac{1}{4} \left( \frac{\mu}{2L} \right) ^{\gamma} \right) (1 + \alpha)^2 \|\nabla f(x) \|_2^2 + \delta^2 \left(1 + 4\left(\frac{2L}{\mu}\right)^{\gamma} \right).
    \end{eqnarray*}
    Similarly for the lower bound.
\end{proof}

\begin{lemma} \label{accelerated quadratic estimation}
    Let $\wtgg f$ satisfies error condition~\eqref{noise condition}, $0 < \mu \leqslant L, 0 < \gamma$, then:
    \begin{eqnarray*}
        \frac{\mu}{4} \left\|x - z + \frac{2}{\mu} \widetilde{\nabla} f(z) \right \|_2^2
        & \leqslant & \langle \nabla f(z), x - z \rangle + \frac{\mu}{2} \|x - z \|_2^2 \\
        & + & \frac{1}{\mu} \left( \left(1 + \frac{1}{4} \left(\frac{\mu}{2L} \right)^{\gamma} \right) (1 + \alpha)^2 + 2\alpha^2 \right) \|\nabla f(z) \|_2^2 \\
        & + & \frac{1}{\mu} \left(4\left(\frac{2L}{\mu} \right)^{\gamma} + 3 \right) \delta^2, \\
        \frac{\mu}{4} \left\|x - z + \frac{2}{\mu} \widetilde{\nabla} f(z) \right \|_2^2
        & \geqslant & \langle \nabla f(z), x - z \rangle + \frac{\mu}{2} \|x - z \|_2^2 \\
        & + & \frac{1}{\mu} \left( \left(1 - \frac{1}{4} \left(\frac{\mu}{2L} \right)^{\gamma} \right) (1 - \alpha)^2 - 2\alpha^2 \right) \|\nabla f(z) \|_2^2 \\
        & - & \frac{1}{\mu} \left(4\left(\frac{2L}{\mu} \right)^{\gamma} + 1 \right) \delta^2.
    \end{eqnarray*}
\end{lemma}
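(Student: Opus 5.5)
The plan is a direct expansion of the square, followed by one Fenchel-type estimate (Lemma~\ref{fenchel}) for the cross term with the noise and the norm bounds of Lemma~\ref{fenchel for accelerated}. Write $\zeta(z)=\wtgg f(z)-\nabla f(z)$ and $r=x-z$. Expanding gives
\begin{equation*}
    \frac{\mu}{4}\left\|r+\frac{2}{\mu}\wtgg f(z)\right\|_2^2=\frac{\mu}{4}\|r\|_2^2+\langle \nabla f(z),r\rangle+\langle \zeta(z),r\rangle+\frac{1}{\mu}\|\wtgg f(z)\|_2^2 .
\end{equation*}
After subtracting $\langle\nabla f(z),r\rangle$, it remains to control $\frac{\mu}{4}\|r\|_2^2+\langle\zeta(z),r\rangle+\frac1\mu\|\wtgg f(z)\|_2^2$ from both sides.

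\emph{Upper bound.} Apply Lemma~\ref{fenchel} with $\lambda=\mu/2$:
\begin{equation*}
    \langle\zeta(z),r\rangle\leqslant\frac{\mu}{4}\|r\|_2^2+\frac1\mu\|\zeta(z)\|_2^2 .
\end{equation*}
This produces the term $\frac{\mu}{2}\|r\|_2^2$ in the statement. By \eqref{noise condition} and $(a+b)^2\leqslant 2a^2+2b^2$,
\begin{equation*}
    \|\zeta(z)\|_2^2\leqslant(\alpha\|\nabla f(z)\|_2+\delta)^2\leqslant 2\alpha^2\|\nabla f(z)\|_2^2+2\delta^2 .
\end{equation*}
For $\|\wtgg f(z)\|_2^2$ use the upper inequality of Lemma~\ref{fenchel for accelerated} with the same $\gamma$. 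Collecting terms, the coefficient of $\|\nabla f(z)\|_2^2$ is $\frac1\mu\bigl((1+\frac14(\frac{\mu}{2L})^\gamma)(1+\alpha)^2+2\alpha^2\bigr)$. The coefficient of $\delta^2$ is $\frac1\mu(4(\frac{2L}{\mu})^\gamma+1+2)$. Both match the claim exactly.

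\emph{Lower bound.} Symmetrically, Lemma~\ref{fenchel} gives
\begin{equation*}
    \langle\zeta(z),r\rangle\geqslant-\frac{\mu}{4}\|r\|_2^2-\frac{1}{\mu}\|\zeta(z)\|_2^2\geqslant-\frac{\mu}{4}\|r\|_2^2-\frac{2\alpha^2}{\mu}\|\nabla f(z)\|_2^2-\frac{2}{\mu}\delta^2 ,
\end{equation*}
and the lower inequality of Lemma~\ref{fenchel for accelerated} bounds $\|\wtgg f(z)\|_2^2$ from below. The gradient coefficient then becomes $\frac1\mu\bigl((1-\frac14(\frac{\mu}{2L})^\gamma)(1-\alpha)^2-2\alpha^2\bigr)$. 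The $\delta^2$ coefficient becomes $-\frac1\mu(4(\frac{2L}{\mu})^\gamma-1+2)=-\frac1\mu(4(\frac{2L}{\mu})^\gamma+1)$, again matching.

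The main obstacle is the quadratic term in the lower bound. This argument leaves $\frac{\mu}{4}\|r\|_2^2-\frac{\mu}{4}\|r\|_2^2=0$, not $\frac{\mu}{2}\|r\|_2^2$. The lower bound as literally written cannot hold for all $x$: the left-hand side grows like $\frac{\mu}{4}\|r\|_2^2$, while the right-hand side grows like $\frac{\mu}{2}\|r\|_2^2$. I would therefore prove the lower inequality with the quadratic term $\frac{\mu}{2}\|x-z\|_2^2$ replaced by $0$, or equivalently with the left-hand side augmented by $\frac{\mu}{2}\|x-z\|_2^2$. Then I would check that this corrected form is what the later estimating-sequence argument for Algorithm~\ref{alg re-agm} actually uses; presumably only the minimum over $x$ and the gradient and $\delta^2$ coefficients $m$ and $s$ matter there. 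Everything else is the routine bookkeeping above.
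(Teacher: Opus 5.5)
Your proposal is correct and is essentially the paper's own argument. Both expand the square, apply Lemma~\ref{fenchel} with $\lambda=\mu/2$ to the noise cross term, use $\|\zeta(z)\|_2^2\leqslant 2\alpha^2\|\nabla f(z)\|_2^2+2\delta^2$, and bound $\|\widetilde{\nabla} f(z)\|_2^2$ via Lemma~\ref{fenchel for accelerated}.

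Your diagnosis of the lower inequality is also right: it already fails for $\alpha=\delta=0$, $\nabla f(z)=0$, $x\neq z$. The paper's chain carries a spurious $\frac{\mu}{2}\|x-z\|_2^2$ in an intermediate line, but its final line has no quadratic term. That corrected form is exactly what the paper later uses with $x=u^k$, $z=y^k$ in the induction $c_k\geqslant f(x^k)-\xi_k$.
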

\begin{proof}
We will find bounds for each term of sum:
\begin{equation*}
    \frac{\mu}{4} \left\|x - z + \frac{2}{\mu} \widetilde{\nabla} f(z) \right \|_2^2 = \frac{\mu}{4} \|x - z \|_2^2 + \langle \wtgg f(z), x - z \rangle + \frac{1}{\mu} \|\wtgg f(z) \|_2^2.
\end{equation*}
Linear form:
\begin{eqnarray*}
    \langle \wtgg f(z), x - z \rangle
    & \overset{\text{Lemma }\ref{fenchel}}{\leqslant} & \langle \nabla f(z), x - z \rangle + \frac{1}{\mu} \left\| \zeta_a(z) + \zeta_r(z) \right \|_2^2 + \frac{\mu}{4} \|x - z \|_2^2 \\
    & \leqslant &  \langle \nabla f(z), x - z \rangle + \frac{1}{\mu} \left(2\alpha^2 \|\nabla f(z) \|_2^2 + 2\delta^2 \right) + \frac{\mu}{4} \|x - z \|_2^2.
\end{eqnarray*}
\begin{eqnarray*}
    \langle \wtgg f(z), x - z \rangle
    & \overset{\text{Lemma }\ref{fenchel}}{\geqslant} & \langle \nabla f(z), x - z \rangle - \frac{1}{\mu} \| \zeta_a(z) + \zeta_r(z) \|_2^2 - \frac{\mu}{4} \|x - z \|_2^2 \\
    & \geqslant &  \langle \nabla f(z), x - z \rangle - \frac{1}{\mu} \left(2\alpha^2 \|\nabla f(z) \|_2^2 + 2\delta^2 \right) - \frac{\mu}{4} \|x - z \|_2^2.
\end{eqnarray*}
Quadratic term:
\begin{equation*}
    \|\wtgg f(z) \|_2^2 \overset{\text{Lemma }\ref{fenchel for accelerated}}{\leqslant} \left(1 + \frac{1}{4} \left(\frac{\mu}{2L} \right)^{\gamma} \right) (1 + \alpha)^2 \|\nabla f(z) \|_2^2 + \left(4\left(\frac{2L}{\mu} \right)^{\gamma} + 1 \right) \delta^2.
\end{equation*}
\begin{equation*}
    \|\wtgg f(z) \|_2^2 \overset{\text{Lemma }\ref{fenchel for accelerated}}{\geqslant} \left(1 - \frac{1}{4} \left(\frac{\mu}{2L} \right)^{\gamma} \right) (1 - \alpha)^2 \|\nabla f(z) \|_2^2 - \left(4\left(\frac{2L}{\mu} \right)^{\gamma} - 1 \right) \delta^2.
\end{equation*}
Then we can sum up estimations above:
\begin{eqnarray*}
    \frac{\mu}{4} \left\|x - z + \frac{2}{\mu} \widetilde{\nabla} f(z) \right \|_2^2
    & \leqslant & \langle \nabla f(z), x - z \rangle + \frac{\mu}{2} \|x - z \|_2^2 \\
    & + & \frac{1}{\mu} \left(2\alpha^2 \|\nabla f(z) \|_2^2 + 2\delta^2 \right) \\
    & + & \frac{1}{\mu} \left(1 + \frac{1}{4} \left(\frac{\mu}{2L} \right)^{\gamma} \right) (1 + \alpha)^2 \|\nabla f(z) \|_2^2 \\
    & + & \frac{1}{\mu} \left(4\left(\frac{2L}{\mu} \right)^{\gamma} + 1 \right) \delta^2 \\
    & \overset{\eqref{strong conv}}{\leqslant} & \langle \nabla f(z), x - z \rangle + \frac{\mu}{2} \|x - z \|_2^2 \\
    & + & \frac{1}{\mu} \left( \left(1 + \frac{1}{4} \left(\frac{\mu}{2L} \right)^{\gamma} \right) (1 + \alpha)^2 + 2\alpha^2 \right) \|\nabla f(z) \|_2^2 \\
    & + & \frac{1}{\mu} \left(4\left(\frac{2L}{\mu} \right)^{\gamma} + 3 \right) \delta^2.
\end{eqnarray*}
\begin{eqnarray*}
    \frac{\mu}{4} \left\|x - z + \frac{2}{\mu} \widetilde{\nabla} f(z) \right \|_2^2
    & \geqslant & \langle \nabla f(z), x - z \rangle + \frac{\mu}{2} \|x - z \|_2^2 \\
    & - & \frac{1}{\mu} \left(2\alpha^2 \|\nabla f(z) \|_2^2 + 2\delta^2 \right) \\
    & + & \frac{1}{\mu} \left(1 - \frac{1}{4} \left(\frac{\mu}{2L} \right)^{\gamma} \right) (1 - \alpha)^2 \|\nabla f(z) \|_2^2 \\
    & - & \frac{1}{\mu} \left(\left(\frac{2L}{\mu} \right)^{\gamma} - 1 \right) \delta^2 \\
    & \overset{\eqref{strong conv}}{\geqslant} & \langle \nabla f(z), x - z \rangle \\
    & + & \frac{1}{\mu} \left( \left(1 - \frac{1}{4} \left(\frac{\mu}{2L} \right)^{\gamma} \right) (1 - \alpha)^2 - 2\alpha^2 \right) \|\nabla f(z) \|_2^2 \\
    & - & \frac{1}{\mu} \left(4\left(\frac{2L}{\mu} \right)^{\gamma} + 1 \right) \delta^2.
\end{eqnarray*}

\end{proof}

Now, let us proof the following lemma which provides a lower bound of the function $f$ from the set $\Psi$. 

\begin{lemma} \label{lower bound}
Let $f$ be a $\mu$-strongly convex~\eqref{strong conv} function and $\widetilde{\nabla}{f}$ satisfies~\eqref{noise condition}. Then for any $z \in \mathbb{R}^n$ and $\gamma > 0$, there is $\phi_{z, \gamma} \in \Psi$, such that: 
\begin{equation*}
    \begin{gathered}
        \phi_{z, \gamma}(x) = c_{\phi} + \frac{\kappa_{\phi}}{2} \|x - u_{\phi} \|_2^2 \leqslant f(x), \quad \forall x \in \mathbb{R}^n. \\
        c_{\phi} := f(z) - \frac{1}{\mu} \left( \left(1 + \frac{1}{4} \left(\frac{\mu}{2L} \right)^{\gamma} \right) (1 + \alpha)^2 + 2\alpha^2 \right) \|\nabla f(z) \|_2^2 - \frac{1}{\mu} \left(4\left(\frac{2L}{\mu} \right)^{\gamma} + 3 \right) \delta^2, \\
        \kappa_{\phi} := \mu / 2, \\
        u_{\phi} := z - \frac{2}{\mu} \widetilde{\nabla}f(z).
    \end{gathered}
\end{equation*}
\end{lemma}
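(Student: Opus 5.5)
The plan is to combine strong convexity at $z$ with the upper estimate of Lemma~\ref{accelerated quadratic estimation}, taking the same $z$ and $\gamma$.

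First, strong convexity~\eqref{strong conv} with base point $z$ gives, for every $x \in \mathbb{R}^n$,
\begin{equation*}
    f(x) \geqslant f(z) + \langle \nabla f(z), x - z \rangle + \frac{\mu}{2} \|x - z\|_2^2 .
\end{equation*}
The right-hand side is still expressed through the true gradient $\nabla f(z)$. We must replace it by a function of $\Psi$ built only from $\widetilde{\nabla} f(z)$.

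Second, the first inequality of Lemma~\ref{accelerated quadratic estimation}, rearranged, gives
\begin{equation*}
    \langle \nabla f(z), x - z \rangle + \frac{\mu}{2} \|x - z \|_2^2 \geqslant \frac{\mu}{4} \left\| x - z + \frac{2}{\mu} \widetilde{\nabla} f(z) \right\|_2^2 - \frac{1}{\mu} S_\gamma \|\nabla f(z)\|_2^2 - \frac{1}{\mu}\left(4\left(\frac{2L}{\mu}\right)^{\gamma} + 3\right)\delta^2 ,
\end{equation*}
where $S_\gamma = \left(1 + \frac{1}{4}\left(\frac{\mu}{2L}\right)^{\gamma}\right)(1+\alpha)^2 + 2\alpha^2$.

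Third, substitute this into the strong convexity bound. Because
\begin{equation*}
    x - z + \frac{2}{\mu}\widetilde{\nabla} f(z) = x - u_\phi ,
\end{equation*}
the quadratic term becomes $\frac{\kappa_\phi}{2}\|x - u_\phi\|_2^2$ with $\kappa_\phi = \mu/2$. The remaining constant terms are exactly $c_\phi$. This gives $\phi_{z,\gamma}(x) \leqslant f(x)$ for all $x$, and $\phi_{z,\gamma} \in \Psi$ since $\kappa_\phi > 0$.

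The main obstacle is checking that Lemma~\ref{accelerated quadratic estimation} holds as stated, because its proof invokes~\eqref{strong conv} in a way that is not transparent. If I reprove it, I expand the square as
\begin{equation*}
    \frac{\mu}{4}\|x-z\|_2^2 + \langle \widetilde{\nabla} f(z), x-z\rangle + \frac{1}{\mu}\|\widetilde{\nabla} f(z)\|_2^2 .
\end{equation*}
I then split $\widetilde{\nabla} f(z) = \nabla f(z) + \zeta_a(z) + \zeta_r(z)$ and apply Lemma~\ref{fenchel} with weight $\mu/2$:
\begin{equation*}
    \langle \zeta_a + \zeta_r, x - z\rangle \leqslant \frac{\mu}{4}\|x-z\|_2^2 + \frac{1}{\mu}\|\zeta_a+\zeta_r\|_2^2 \leqslant \frac{\mu}{4}\|x-z\|_2^2 + \frac{2}{\mu}\left(\alpha^2\|\nabla f(z)\|_2^2 + \delta^2\right).
\end{equation*}
For the last term I use the upper bound of Lemma~\ref{fenchel for accelerated} on $\|\widetilde{\nabla} f(z)\|_2^2$. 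Collecting terms reproduces the constants in $c_\phi$ without using strong convexity at this step.
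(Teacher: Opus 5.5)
Your proof is correct and follows the paper's route: strong convexity at $z$, plus the first inequality of Lemma~\ref{accelerated quadratic estimation} rearranged, with the identity $x - z + \frac{2}{\mu}\widetilde{\nabla} f(z) = x - u_\phi$ giving the quadratic term and the leftover terms giving exactly $c_\phi$. Your re-derivation of that upper estimate (Young's inequality with weight $\mu/2$, plus the upper bound of Lemma~\ref{fenchel for accelerated}) is also the paper's own argument, and you are right that the step there tagged with~\eqref{strong conv} only collects terms, so strong convexity is used just once, at $z$.
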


\begin{proof}
Proof follows directly from Lemma~\ref{accelerated quadratic estimation} and strong convexity definition~\eqref{strong conv}.
\end{proof}

Now, for Algorithm~\ref{alg re-agm}, we have $u^0 = x^0, x^0 \in \mathbb{R}^n$. Let $u^k$ be defined in Algorithm~\ref{alg re-agm} and $\omega$ defined in Algorithm~\ref{alg re-agm}. Let us define, recursively, a pair of sequences $\{\psi_k(x)\}_{k \geqslant 0}, \{\lambda_k\}_{k \geqslant 0}$ corresponding to Algorithm~\ref{alg re-agm} as follows:
\begin{align}\label{function seq}
    &  \lambda_0 = 1, \quad {c_0 = f(x^0)}, \quad \lambda_{k+1} = (1 - \omega) \lambda_k, \quad \forall k \geqslant0,
    \\& \psi_k(x| c_k, \mu / 2, u^k) = c_k + \frac{\mu}{4} \|x - u^k\|_2^2, \quad \forall k \geqslant 0.  \label{psi^k_Alg1}
\end{align}

Let $y^k\, \forall k \geqslant 0$ be generated by Algorithm~\ref{alg re-agm} (see item 10 of this algorithm), variable
\begin{equation} \label{gamma star remind}
    \gamma^* = \min \left \lbrace \log_{\mu / 2L} (3 \alpha ), \frac{1}{2} \right \rbrace.
\end{equation}

$\phi_{y^k, \gamma^*}$ be the function that is obtained from Lemma \ref{lower bound}. We define, recursively
\begin{align} \label{psi seq def}
    \psi_{k + 1}(x | c_{k + 1}, \mu / 2, u^{k + 1}) := (1 - \omega) \psi_k(x | c_k, \mu / 2, u^k) +  \omega \phi_{y^k, \gamma^*}(x), \quad \forall k \geqslant 0.
\end{align}
Using Lemma~\ref{psi comb} we obtain parameters of $\{ \psi_k \}$ that correspond to $ u^k$ in Algorithm~\ref{alg re-agm}. For simplicity, we use the values declared in the Algorithm~\ref{alg re-agm}:
\begin{equation} \label{s, m remind}
    \begin{gathered}
        s = \left(1 + \frac{1}{4} \left(\frac{\mu}{2L} \right)^{\gamma^*} \right) (1 + \alpha)^2 + 2\alpha^2, \\
        m = \left(1 - \frac{1}{4} \left(\frac{\mu}{2L} \right)^{\gamma^*} \right) (1 - \alpha)^2 - 2\alpha^2.
    \end{gathered}
\end{equation}
Also we define absolute noise accumulation sequence:
\begin{equation} \label{xi noise accumulation accelerated}
    \begin{gathered}
        \delta_0 = \left(\frac{3}{16L (1 + \alpha)^2} + \frac{4\omega}{\mu} \left(2\left(\frac{2L}{\mu} \right)^{\gamma^*} + 1 \right) \right) \delta^2, \\
        \xi_0 = 0, \\
        \xi_{k + 1} = (1 - \omega) \xi_k + \delta_0, \\
        \xi_{k} = \delta_0 \sum_{j = 1}^k (1 - \omega)^{j - 1} \leqslant \frac{\delta_0}{\omega}
    \end{gathered}
\end{equation}

\begin{lemma} \label{w solution estimation}
Let $\alpha \in [0, 1/3]$, variables $s, m$ defined above at~\eqref{s, m remind}, $\omega$ is the largest root of the quadratic equation
\begin{equation}\label{omega lemma def}
    m \omega^2 + (s - m) \omega - q = 0,
\end{equation}
where $q = \frac{\mu}{\widehat{L}}$ and $\widehat{L} = 8 \frac{1 + \alpha}{(1 - \alpha)^3} L$. Then:

\begin{equation*}
     \frac{1}{150} \left(\frac{\mu}{2L} \right)^{1 - \gamma^*} \leqslant \omega < 1,
\end{equation*}

where $\gamma^*$ is defined at~\eqref{gamma star remind}.

\end{lemma}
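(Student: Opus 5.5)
We need to bound the largest root $\omega$ of $m\omega^2 + (s-m)\omega - q = 0$ from both sides. First we record elementary bounds on $s$ and $m$ for $\alpha \in [0,1/3]$ and $\gamma^* \in [0,1/2]$. Writing $\varepsilon := \frac14 (\mu/2L)^{\gamma^*} \le \frac14$, we have
\[
m = (1-\varepsilon)(1-\alpha)^2 - 2\alpha^2 \ge \tfrac34\cdot\tfrac49 - \tfrac29 = \tfrac19 > 0,
\qquad
s - m = \varepsilon\big((1+\alpha)^2 + (1-\alpha)^2\big) + 4\alpha + 4\alpha^2 .
\]
Hence $s - m \ge 0$. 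Moreover, $s-m \le 2\varepsilon(1+\alpha^2) + 4\alpha(1+\alpha) \le C\big((\mu/2L)^{\gamma^*} + \alpha\big)$ for a small absolute constant $C$. By the definition of $\gamma^*$, $3\alpha \le (\mu/2L)^{\gamma^*}$: in the case $\gamma^* = \log_{\mu/2L}(3\alpha)$ this is an equality, and in the case $\gamma^* = 1/2$ we have $\log_{\mu/2L}(3\alpha) \ge 1/2$, i.e.\ $3\alpha \le (\mu/2L)^{1/2}$. Thus $s - m \le C'(\mu/2L)^{\gamma^*}$, with an explicit $C'$ of order $3$ (e.g.\ $\tfrac{20}{9}\cdot\tfrac14 + \tfrac{16}{9} \le 3$ after substitution). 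Also $m \le 1$, and $q = \mu/\widehat L = \frac{(1-\alpha)^3}{8(1+\alpha)}\frac{\mu}{L}$ lies between $\frac{1}{27}\frac{\mu}{2L}$ and $\frac14\frac{\mu}{L}$. Note that the algorithm uses $q = \mu/2\widehat L$, which changes constants by a factor of $2$; I would use whichever definition the lemma states and carry the factor through.

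For the upper bound, $\omega<1$, I evaluate the quadratic $P(\omega) = m\omega^2 + (s-m)\omega - q$ at $\omega=1$, which gives $P(1) = s - q$. Since $s \ge (1+\alpha)^2 \ge 1 > q$ (because $q \le \mu/(4L) < 1$), we get $P(1) > 0$. Also $P(0) = -q < 0$ and $m > 0$, so the larger root lies in $(0,1)$.

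For the lower bound I rewrite the root as
\[
\omega = \frac{2q}{(s-m) + \sqrt{(s-m)^2 + 4mq}},
\]
which follows by rationalizing and avoids cancellation. Then $\omega \ge \frac{2q}{2(s-m) + 2\sqrt{mq}} = \frac{q}{(s-m) + \sqrt{mq}}$. Now I compare the two terms in the denominator. Since $\gamma^* \le 1/2$, we have $\sqrt{q} \lesssim (\mu/2L)^{1/2} \le (\mu/2L)^{\gamma^*}$, so $\sqrt{mq} \le \sqrt q \le c\,(\mu/2L)^{\gamma^*}$. Combined with the bound on $s-m$, the denominator is at most $C''(\mu/2L)^{\gamma^*}$. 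Therefore
\[
\omega \ge \frac{q}{C''}\,(\mu/2L)^{-\gamma^*} \ge \frac{1}{27\,C''}\,(\mu/2L)^{1-\gamma^*},
\]
and the remaining task is to verify that $27\,C'' \le 150$.

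The main obstacle is the constant bookkeeping, making sure the explicit numbers give exactly $1/150$. To handle it I would bound each piece crudely but explicitly, at $\alpha = 1/3$ worst case: $(1-\alpha)^3/(1+\alpha) \ge 2/9$, $\varepsilon \le \frac14(\mu/2L)^{\gamma^*}$, $4\alpha(1+\alpha) \le \frac{16}{9}\alpha \le \frac{16}{27}(\mu/2L)^{\gamma^*}$, and $\sqrt{q} \le \frac12\sqrt{\mu/L} = \frac{1}{\sqrt2}(\mu/2L)^{1/2}$. If the constant overshoots $150$, I would tighten the estimate by using $\sqrt{(s-m)^2 + 4mq} \le \max\{2(s-m),\, 2\sqrt{2mq}\}$ instead of the sum.
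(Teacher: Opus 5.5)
Your proposal is correct. The upper bound $\omega<1$ is the same as the paper's: evaluate the quadratic at $1$ and use $s\ge 1>q$ and $m>0$. The lower bound, however, is reached by a different mechanism.

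The paper argues by contradiction. It takes $\omega_0=\frac{1}{150}(\mu/2L)^{1-\gamma^*}$ and implicitly uses that $P(\omega)=m\omega^2+(s-m)\omega-q$ is increasing on $[0,\infty)$. It then checks $P(\omega_0)<0$ from $m<1$, $s-m\le\frac{11}{3}(\mu/2L)^{\gamma^*}$ and $q\ge\frac{1}{36}\frac{\mu}{2L}$. There $\gamma^*\le\frac12$ is used only to make the term $\frac{1}{150^2}(\mu/2L)^{1-2\gamma^*}$ negligible.

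You instead bound the rationalized root $\omega=2q/\bigl((s-m)+\sqrt{(s-m)^2+4mq}\bigr)$ directly. In your version $\gamma^*\le\frac12$ enters when you absorb $\sqrt{mq}$ into $(\mu/2L)^{\gamma^*}$. The ingredients are identical. Your route is constructive and exhibits the true constant. The paper's route only verifies one scalar inequality at a guessed point and never has to handle the square root.

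The constant check you leave open does close, with room to spare.

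\begin{itemize}
\item Your last paragraph has a slip: it should read $4\alpha(1+\alpha)\le\frac{16}{3}\alpha\le\frac{16}{9}(\mu/2L)^{\gamma^*}$, not $\frac{16}{9}\alpha\le\frac{16}{27}(\mu/2L)^{\gamma^*}$. Your earlier tally $\frac{20}{9}\cdot\frac14+\frac{16}{9}=\frac73$ is the correct one, giving $s-m\le\frac73(\mu/2L)^{\gamma^*}$.
\item The paper's proof, like the algorithm, actually uses $q=\mu/(2\widehat{L})$. For that $q$ one has $\frac{1}{36}\frac{\mu}{2L}\le q\le\frac18\frac{\mu}{2L}$, so your $\frac{1}{27}$ must become $\frac{1}{36}$. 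Then $\sqrt{mq}\le\frac{1}{2\sqrt2}(\mu/2L)^{\gamma^*}$, and
\[
\omega\ge\Bigl(36\bigl(\tfrac73+\tfrac{1}{2\sqrt2}\bigr)\Bigr)^{-1}(\mu/2L)^{1-\gamma^*}\ge\tfrac{1}{97}(\mu/2L)^{1-\gamma^*}.
\]
\item With the lemma's literal $q=\mu/\widehat{L}$ one gets $q\ge\frac{1}{18}\frac{\mu}{2L}$ and $\sqrt{mq}\le\frac12(\mu/2L)^{\gamma^*}$, giving the constant $\frac{1}{51}$.
\end{itemize}

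Either way $\frac{1}{150}$ holds, and the $\max$ refinement you mention is unnecessary.
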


\begin{proof}
By solving the equation \eqref{omega lemma def} and choosing the largest root we get
\begin{equation} \label{omega quadsol}
    \omega = \frac{(m - s) + \sqrt{(s - m)^2 + 4mq }}{2 m}.
\end{equation}

Let us assume that $\omega \geqslant 1$. We have $\mu \leqslant L,$ thus $q \leqslant 1$. Also, we have $s > 1$. Therefore, 
\[
    m \omega^2 + (s - m) \omega -q  \geqslant m + s - m - q = s - q \geqslant 1 - q > 0,
\]
which is contradiction with \eqref{omega lemma def}. Hence $\omega < 1$.

We can proof, that $0 < m < 1$:

\begin{eqnarray*}
    m & = & \left(1 - \frac{1}{4} \left(\frac{\mu}{2L} \right)^{\gamma^*} \right) (1 - \alpha)^2 - 2\alpha^2 \\
    & \geqslant & \left(1 - \frac{1}{4} \right) (1 - \alpha)^2 - 2 \alpha^2
    \overset{\alpha \leqslant 1/3}{\geqslant} \frac{3}{4} \cdot \frac{4}{9} - \frac{2}{9} = \frac{1}{9} > 0, \\
    m & = & \left(1 - \frac{1}{4} \left(\frac{\mu}{2L} \right)^{\gamma^*} \right) (1 - \alpha)^2 - 2\alpha^2 \\
    & \overset{\gamma^* \leqslant 1/2}{\leqslant} & 1 - \frac{1}{4} \left(\frac{\mu}{2L} \right)^{1 / 2} < 1
\end{eqnarray*}

From the definition of $\gamma^*$~\eqref{gamma star remind} we can make an estimation:
\begin{equation} \label{gamma star ineq in lemma}
    \gamma^* \leqslant \log_{\mu / 2L}(3 \alpha) \Longrightarrow \alpha \leqslant \frac{1}{3} \left( \frac{\mu}{2L} \right)^{\gamma^*}.
\end{equation}

We can provide upper bound for linear term coefficient, using $(1 + \alpha)^2 + (1 - \alpha)^2 \leqslant 4$:
\begin{eqnarray*}
    s - m 
    & = & \left(1 + \frac{1}{4} \left(\frac{\mu}{2L} \right)^{\gamma^*} \right) (1 + \alpha)^2 + 2\alpha^2 - \left(1 - \frac{1}{4} \left(\frac{\mu}{2L} \right)^{\gamma^*} \right) (1 - \alpha)^2 + 2\alpha^2
    \\
    & = & 4\alpha + 4\alpha^2 + \frac{1}{4} \left(\frac{\mu}{2L} \right)^{\gamma^*} \left((1 + \alpha)^2 + (1 - \alpha)^2\right)
    \\
    & \leqslant & 4 (\alpha + \alpha^2) + 4 \left(\frac{\mu}{2L} \right)^{\gamma^*} 
    \leqslant 8 \alpha + \left(\frac{\mu}{2L} \right)^{\gamma^*} \overset{\eqref{gamma star ineq in lemma}}{\leqslant} \frac{11}{3} \left(\frac{\mu}{2L} \right)^{\gamma^*}.
\end{eqnarray*}

From $q$ definition:
\begin{equation*}
    q = \frac{\mu}{2\widehat{L}} = \frac{(1 - \alpha)^3}{8(1 + \alpha)} \frac{\mu}{2L} \quad \overset{\alpha \leqslant 1 / 3}{\geqslant} \quad \frac{1}{36} \cdot \frac{\mu}{2L}.
\end{equation*}

Let us prove by contradiction. To that end, we assume $\omega < \frac{1}{150} \left(\frac{\mu}{L}\right)^{1 - \gamma^*}$. Then:

\begin{align*}
    m \omega^2 + (s - m) \omega - q 
    & < \frac{m}{150^2} \left(\frac{\mu}{2L}\right)^{2(1 - \gamma^*)} + \frac{s - m}{150}\left(\frac{\mu}{2L}\right)^{1 - \gamma^*} - q \\
    &\leqslant \frac{m}{150^2} \left(\frac{\mu}{2L}\right)^{2(1 - \gamma^*)} + \frac{s - m}{150}\left(\frac{\mu}{2L}\right)^{1 - \gamma^*} - \frac{1}{36} \cdot \frac{\mu}{2L} \\ 
    & \overset{0 < m < 1}{\leqslant}
    \frac{1}{150^2} \left(\frac{\mu}{2L}\right)^{2(1 - \gamma^*)} + \frac{11}{450} \left(\frac{\mu}{2L}\right)^{\gamma^*}  \left(\frac{\mu}{2L}\right)^{1 - \gamma^*} - \frac{1}{36} \cdot \frac{\mu}{2L} \\
    &= \frac{\mu}{2L} \left( \frac{1}{150^2} \left(\frac{\mu}{2L}\right)^{1 - 2\gamma^*} + \frac{11}{450} - \frac{1}{36} \right) \overset{\gamma^* \leqslant 1/2}{<} 0
\end{align*}
Thus, we come to a contradiction because $\omega$ is a root of the equation $m \omega^2 + (s - m) \omega - q = 0$.

\end{proof}

\begin{lemma} \label{c > f lemma}
Let $f$ be an $L$-smooth and $\mu$-strongly convex function, $\widetilde{\nabla} f$ satisfies~\eqref{noise condition}, $\{x^k\}_{k \geqslant 0}$ be a sequence of points generated by Algorithm~\ref{alg re-agm}, and $c_k\, \forall k \geqslant 0$ be a minimal value of the function $\psi_k$ corresponding to Algorithm~\ref{alg re-agm}. Then $c_k \geqslant f(x^k) - \xi_k \, \forall k \geqslant 0$, where $\xi_k$ defined at~\eqref{xi noise accumulation accelerated}.
\end{lemma}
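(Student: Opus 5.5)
The proof goes by induction on $k$, following Nesterov's estimate-sequence argument with the error terms tracked explicitly.

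\textbf{Base case and recursion for $c_k$.} For $k=0$ we have $c_0 = f(x^0)$ and $\xi_0 = 0$, so the claim is trivial. Assume $c_k \geqslant f(x^k) - \xi_k$. By \eqref{psi seq def} and Lemma~\ref{psi comb} with $\eta_1 = 1-\omega$, $\eta_2=\omega$ and $\kappa_1=\kappa_2=\mu/2$, the new minimizer is $u^{k+1}=(1-\omega)u^k+\omega\bigl(y^k-\tfrac{2}{\mu}\wtgg f(y^k)\bigr)$, which matches the algorithm. The new minimal value is
\begin{equation*}
c_{k+1} = (1-\omega)c_k + \omega c_{\phi}(y^k) + \frac{\omega(1-\omega)\mu}{4}\Bigl\|u^k - y^k + \frac{2}{\mu}\wtgg f(y^k)\Bigr\|_2^2 .
\end{equation*}

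\textbf{Lower-bounding each piece.} Insert the induction hypothesis and then $f(x^k)\geqslant f(y^k)+\langle\nabla f(y^k),x^k-y^k\rangle$ (convexity). Substitute $c_\phi$ from Lemma~\ref{lower bound} with $\gamma=\gamma^*$, which carries $-\frac{s}{\mu}\|\nabla f(y^k)\|_2^2$ and the $\delta^2$ term. Apply the lower estimate of Lemma~\ref{accelerated quadratic estimation} with $x=u^k$, $z=y^k$ to the last squared norm; this produces $\langle\nabla f(y^k),u^k-y^k\rangle+\frac{m}{\mu}\|\nabla f(y^k)\|_2^2$ minus a $\delta^2$ term, after dropping $\frac{\mu}{2}\|u^k-y^k\|^2\geqslant0$. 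Collecting terms gives
\begin{equation*}
c_{k+1}\geqslant f(y^k) - (1-\omega)\xi_k + (1-\omega)\bigl\langle \nabla f(y^k), x^k-y^k+\omega(u^k-y^k)\bigr\rangle - \frac{\omega s-\omega(1-\omega)m}{\mu}\|\nabla f(y^k)\|_2^2 - \frac{\omega}{\mu}(\cdots)\delta^2 .
\end{equation*}
The choice $y^k=(\omega u^k+x^k)/(1+\omega)$ gives $x^k-y^k+\omega(u^k-y^k)=0$, so the linear term vanishes.

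\textbf{Closing with a descent step.} The defining equation $m\omega^2+(s-m)\omega=q$ gives $\omega s-\omega(1-\omega)m = q = \mu/(2\widehat L)$. The gradient coefficient is therefore $\frac{1}{2\widehat L}=\frac{(1-\alpha)^3}{16(1+\alpha)L}$, which is exactly the decrease in Lemma~\ref{GD step}. Since $x^{k+1}=y^k-h\wtgg f(y^k)$ with the same $h$, that lemma yields
\begin{equation*}
f(y^k)-\frac{(1-\alpha)^3}{16(1+\alpha)L}\|\nabla f(y^k)\|_2^2\geqslant f(x^{k+1})-\frac{3}{16L(1+\alpha)^2}\delta^2 .
\end{equation*}
It remains to collect the $\delta^2$ terms: $\frac{3}{16L(1+\alpha)^2}$ from the descent step, and $\frac{\omega}{\mu}\bigl(4(2L/\mu)^{\gamma^*}+3\bigr)$ plus $\frac{\omega(1-\omega)}{\mu}\bigl(4(2L/\mu)^{\gamma^*}+1\bigr)$ from the two quadratic estimates. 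Their sum is at most $\frac{4\omega}{\mu}\bigl(2(2L/\mu)^{\gamma^*}+1\bigr)\delta^2$ plus the first term, i.e.\ at most $\delta_0$. This gives $c_{k+1}\geqslant f(x^{k+1})-(1-\omega)\xi_k-\delta_0=f(x^{k+1})-\xi_{k+1}$.

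\textbf{Main obstacle.} The delicate step is the bookkeeping of signs when combining Lemma~\ref{accelerated quadratic estimation}, which carries the factor $\omega(1-\omega)$, with $c_\phi$. The aim is that the coefficient of $\|\nabla f(y^k)\|_2^2$ matches exactly $q$ from the quadratic equation for $\omega$. I expect to need $m>0$, which holds by Lemma~\ref{w solution estimation}, so that the $m$ term helps rather than hurts. I would also check that the constant $\frac{1}{\mu}\bigl(4(2L/\mu)^{\gamma^*}+\ldots\bigr)$ terms really fit under $\delta_0$; if they do not, I would loosen the constant in $\delta_0$ accordingly.
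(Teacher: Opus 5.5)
Your proposal is correct and follows the paper's proof step for step. The shared steps are: induction via Lemma~\ref{psi comb} with $c_\phi$ from Lemma~\ref{lower bound}; the lower estimate of Lemma~\ref{accelerated quadratic estimation} on the cross term; convexity together with the choice of $y^k$ to cancel the linear form; the identity $\omega s-\omega(1-\omega)m=q=\mu/(2\widehat{L})$; Lemma~\ref{GD step} applied at $y^k$; and the same $\delta^2$ bookkeeping that yields $\delta_0$.

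One small correction to your ``main obstacle'' remark: the sign of $m$ plays no role in that identity. What the argument actually uses is $0\leqslant\omega<1$ from Lemma~\ref{w solution estimation}. That is what justifies multiplying the induction hypothesis by $1-\omega$, multiplying the quadratic estimate by $\omega(1-\omega)$, and bounding $\omega(1-\omega)\leqslant\omega$ in the noise terms.
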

\begin{proof}
We will use the principle of mathematical induction to prove the statement of this lemma. For $k = 0$, it is obvious that $c_0 \geqslant f(x^0)$. Now, let us assume that $c_k \geqslant f(x^k) - \xi_k$, and prove the statement for $k+1$. For this, from Lemma \ref{psi comb}, \eqref{psi seq def}, denotation~\eqref{s, m remind} and definition of $\phi_{y^k}$ from Lemma~\ref{lower bound} we have 

\begin{eqnarray}\label{c combination}
    c_{k + 1}
    & = & (1 - \omega) c_k
    \\
    & + & \omega \Bigg( f(y^k) - \frac{s}{\mu} \|\nabla f(y^k) \|_2^2
    - \frac{1}{\mu} \left(4 \left(\frac{2L}{\mu} \right)^{\gamma^*} + 3 \right) \delta^2 \Bigg )
    \\
    & + & \frac{\omega (1 - \omega) \mu}{4} \left \|y^k - u^k - \frac{2}{\mu} \widetilde{\nabla} f(y^k) \right \|_2^2.
\end{eqnarray}

From Lemma~\ref{accelerated quadratic estimation} we can estimate last term:
\begin{eqnarray*}
    \frac{\omega (1 - \omega) \mu}{4} \left \|y^k - u^k - \frac{2}{\mu} \widetilde{\nabla} f(y^k) \right \|_2^2 
    & \geqslant & \omega (1 - \omega) \langle \nabla f(y^k), u^k - y^k \rangle \\
    & + & \frac{m \omega (1 - \omega)}{\mu} \|\nabla f(y^k) \|_2^2
    \\
    & - & \frac{\omega (1 - \omega)}{\mu} \left(4\left(\frac{2L}{\mu} \right)^{\gamma^*} + 1 \right) \delta^2.
\end{eqnarray*}
Combining the inequalities above and grouping the corresponding terms:
\begin{eqnarray*}
    c_{k + 1}
    & \geqslant & (1 - \omega) c_k + \omega f(y^k) + \omega (1 - \omega) \langle \nabla f(y^k), u^k - y^k \rangle
    \\
    & - & \frac{1}{\mu} \left( \omega s - \omega (1 - \omega) m \right) \|\nabla f(y^k) \|_2^2
    \\
    & \overset{(i)}{-} & \frac{\omega}{\mu} \left(8\left(\frac{2L}{\mu} \right)^{\gamma^*} + 4 \right) \delta^2, 
\end{eqnarray*}
$(i)$ can be obtained from $0 \leqslant \omega < 1$ (Lemma~\ref{omega quadsol}):
\begin{equation*}
    \omega \left(4 \left(\frac{2L}{\mu} \right)^{\gamma^*} + 3 \right) + \omega (1 - \omega) \left(4 \left(\frac{2L}{\mu} \right)^{\gamma^*} + 1 \right) \leqslant \omega \left(8\left(\frac{2L}{\mu} \right)^{\gamma^*} + 4 \right).
\end{equation*}
From convexity of function $f$ and induction hypotheses:
\begin{eqnarray*}
    (1 - \omega) c_k 
    & \geqslant & (1 - \omega) \left( f(x^k) - \xi_k \right)
    \\
    & \geqslant & (1 - \omega) \left( f(y^k) + \langle \nabla f(y^k), x^k - y^k \rangle \right) - (1 - \omega) \xi_k,
\end{eqnarray*}
then, using definition $y^k$ we can eliminate the linear form:
\begin{equation*}
     \langle \nabla f(y^k), \omega (1 - \omega) (u^k - y^k) + (1 - \omega) (x^k - y^k) \rangle = 0
\end{equation*}
and continue $c_{k + 1}$ estimation:
\begin{eqnarray*}
    c_{k + 1}
    & \geqslant & f(y^k) - (1 - \omega) \xi_k
    \\
    & - & \frac{1}{\mu} \left( \omega s - \omega (1 - \omega) m \right) \|\nabla f(y^k) \|_2^2
    \\
    & - & \frac{4\omega}{\mu} \left(2\left(\frac{2L}{\mu} \right)^{\gamma^*} + 1 \right) \delta^2, 
\end{eqnarray*}
From $\omega$ definition~\eqref{omega lemma def} one can simplify the coefficient of the gradient norm:
\begin{eqnarray*}
    c_{k + 1}
    & \geqslant & f(y^k) - (1 - \omega) \xi_k - \frac{q}{\mu} \|\nabla f(y^k) \|_2^2
    - \frac{4\omega}{\mu} \left(2\left(\frac{2L}{\mu} \right)^{\gamma^*} + 1 \right) \delta^2
    \\
    & \geqslant & f(y^k) - (1 - \omega) \xi_k
    - \frac{1}{2\widehat{L}} \|\nabla f(y^k) \|_2^2
    - \frac{4\omega}{\mu} \left(2\left(\frac{2L}{\mu} \right)^{\gamma^*} + 1 \right) \delta^2.
\end{eqnarray*}
Since $x^{k + 1} = y^k - h \wtgg f(y^k)$, where $h$ selected consistently with the Lemma~\ref{GD step} we can apply that Lemma:
\begin{eqnarray*}
    c_{k + 1}
    & \geqslant & f(x^{k + 1}) - (1 - \omega) \xi_k
    - \frac{3}{16 L} \frac{1}{(1 + \alpha)^2} \delta^2 - \frac{4\omega}{\mu} \left(2\left(\frac{2L}{\mu} \right)^{\gamma^*} + 1 \right) \delta^2 \\
    & \overset{\eqref{xi noise accumulation accelerated}}{=} & f(x^{k + 1}) - \xi_{k + 1}.
\end{eqnarray*}

\end{proof}

\begin{lemma} \label{upper bound for lemma}
Let $\{\lambda_k\}_{k \geqslant 0}$ be a sequence defined in \eqref{function seq} and $\{\psi_k\}_{k \geqslant 0}$ be the corresponding functions~\eqref{psi seq def}. Then 
\begin{equation*}
    \psi_k(x) \leqslant \lambda_k \psi_0(x) + (1 - \lambda_k) f(x), \quad \forall k \geqslant 0 \;\; \text{and} \;\; \forall x \in \mathbb{R}^n.
\end{equation*}

\end{lemma}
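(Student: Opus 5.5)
This is a standard induction on $k$, following Nesterov's estimating-sequence argument; the only input needed about $\phi$ is the lower bound $\phi_{y^k,\gamma^*}\leqslant f$ from Lemma~\ref{lower bound}.

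\emph{Base case.} For $k=0$ we have $\lambda_0=1$, so the right-hand side is $\psi_0(x)$ and the inequality holds with equality for every $x$.

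\emph{Inductive step.} Assume $\psi_k(x)\leqslant \lambda_k\psi_0(x)+(1-\lambda_k)f(x)$ for all $x$. By the recursive definition~\eqref{psi seq def},
\begin{equation*}
\psi_{k+1}(x)=(1-\omega)\psi_k(x)+\omega\,\phi_{y^k,\gamma^*}(x).
\end{equation*}
Lemma~\ref{lower bound} applies with $z=y^k$ and $\gamma=\gamma^*$; this requires $\gamma^*>0$ (if $\gamma^*=0$ the lemma's computation goes through verbatim). It gives $\phi_{y^k,\gamma^*}(x)\leqslant f(x)$ for all $x$. The weights satisfy $0<\omega<1$ by Lemma~\ref{w solution estimation}, so both $1-\omega$ and $\omega$ are nonnegative and each term may be bounded separately:
\begin{equation*}
\psi_{k+1}(x)\leqslant (1-\omega)\lambda_k\psi_0(x)+(1-\omega)(1-\lambda_k)f(x)+\omega f(x).
\end{equation*}
Since $\lambda_{k+1}=(1-\omega)\lambda_k$ by~\eqref{function seq}, the coefficient of $f(x)$ is
\begin{equation*}
(1-\omega)-(1-\omega)\lambda_k+\omega=1-\lambda_{k+1}.
\end{equation*}
Hence $\psi_{k+1}(x)\leqslant \lambda_{k+1}\psi_0(x)+(1-\lambda_{k+1})f(x)$, which closes the induction.

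The argument has no real obstacle. The only point needing care is the sign of the weights, i.e.\ confirming $0\leqslant\omega<1$ from Lemma~\ref{w solution estimation}. The lemma is a pointwise statement about functions, so it does not matter that $\psi_{k+1}$ is also written in the canonical form of Lemma~\ref{psi comb}.
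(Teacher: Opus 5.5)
Your proof is correct and is the same induction the paper uses: the base case from $\lambda_0=1$, then the recursion $\psi_{k+1}=(1-\omega)\psi_k+\omega\,\phi_{y^k,\gamma^*}$ combined with the bound $\phi_{y^k,\gamma^*}\leqslant f$ from Lemma~\ref{lower bound} and the update $\lambda_{k+1}=(1-\omega)\lambda_k$. Your explicit checks that $0<\omega<1$ and that the case $\gamma^*=0$ is harmless are details the paper leaves implicit.
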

\begin{proof}
By the principle of induction. For $k = 0$, the inequality is obvious. Assume that inequality takes place for $k$. From \eqref{psi seq def}, and Lemma \ref{lower bound}, we get 
\begin{align*}
    \psi_{k + 1}(x)  &= (1 - \omega) \psi_k(x) + \omega \phi_{y^k, \gamma^*}(x) 
    \\& \leqslant (1 - \omega) \left( (1 - \lambda_k) f(x) + \lambda_k \psi_0(x) \right) + \omega f(x) \\
    & = \lambda_{k + 1} \psi_0(x) + (1 - \lambda_{k + 1}) f(x).
\end{align*}
\end{proof}

\begin{theorem}[Theorem~\ref{acc re agm conv text}] \label{acc re agm conv}
Let $f$ be an $L$-smooth and $\mu$-strongly convex function, $\widetilde{\nabla} f$ satisfies noise condition~\eqref{noise condition}, $\alpha \in [0, 1/3]$. Then Algorithm~\ref{alg re-agm} with parameters $(L, \mu, x^0, \alpha)$ generates $x^N$, s.t. 
\begin{eqnarray*}
    f(x^N) - f^* & \leqslant & 
    \left(1 - \frac{1}{150} \left(\frac{\mu}{2L}\right)^{1 - \gamma^*} \right)^N \left(f(x^0) - f^* + \frac{\mu}{4} R^2 \right)
    +  \left( \left(\frac{2L}{\mu} \right)^{\gamma^*} + 5 \right) \frac{\delta^2}{\mu}
    \\
    & \leqslant & 
    \left(1 - \frac{1}{300} \left(\frac{\mu}{L}\right)^{1 - \gamma^*} \right)^N \left(f(x^0) - f^* + \frac{\mu}{4} R^2 \right)
    + \left(2 \left(\frac{L}{\mu} \right)^{\gamma^*}  + 5 \right) \frac{\delta^2}{\mu}
\end{eqnarray*}
where $R := \|x^0 - x^* \|_2, \gamma^* = \min \left \lbrace \log_{\mu / 2L} (3 \alpha ), \frac{1}{2} \right \rbrace$.
\end{theorem}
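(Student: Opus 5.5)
The proof is the standard estimating-sequence argument, assembled from the lemmas established just above. We combine Lemma~\ref{c > f lemma}, Lemma~\ref{upper bound for lemma} and Lemma~\ref{conv lemma} with $z = x^N$, $\psi = \psi_N$, $\lambda = \lambda_N = (1-\omega)^N$ and $A = \xi_N$.

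First, Lemma~\ref{c > f lemma} gives $f(x^N) \leqslant c_N + \xi_N = \min_x \psi_N(x) + \xi_N$. Second, Lemma~\ref{upper bound for lemma} gives $\psi_N \leqslant \lambda_N \psi_0 + (1-\lambda_N) f$. Applying Lemma~\ref{conv lemma} then yields
\begin{equation*}
    f(x^N) - f^* \leqslant (1-\omega)^N \left(\psi_0(x^*) - f^*\right) + \xi_N .
\end{equation*}
Since $\psi_0(x^*) = f(x^0) + \frac{\mu}{4}\|x^* - x^0\|_2^2$, the bracket equals $f(x^0) - f^* + \frac{\mu}{4}R^2$. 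By Lemma~\ref{w solution estimation}, $\omega \geqslant \frac{1}{150}\left(\frac{\mu}{2L}\right)^{1-\gamma^*}$, so $(1-\omega)^N \leqslant \left(1 - \frac{1}{150}\left(\frac{\mu}{2L}\right)^{1-\gamma^*}\right)^N$. This is the first term.

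The main work is bounding the noise term $\xi_N \leqslant \delta_0/\omega$, which by \eqref{xi noise accumulation accelerated} equals
\begin{equation*}
    \frac{3}{16 L (1+\alpha)^2 \omega}\,\delta^2 + \frac{4}{\mu}\left(2\left(\frac{2L}{\mu}\right)^{\gamma^*} + 1\right)\delta^2 .
\end{equation*}
The first piece is controlled by $1/\omega \leqslant 150\left(\frac{2L}{\mu}\right)^{1-\gamma^*}$. This gives at most $\frac{3\cdot 150}{16 L}\left(\frac{2L}{\mu}\right)^{1-\gamma^*}\delta^2 \leqslant \frac{C}{\mu}\left(\frac{2L}{\mu}\right)^{-\gamma^*}\delta^2 \leqslant \frac{C}{\mu}\delta^2$, using $\frac{1}{L}\left(\frac{2L}{\mu}\right)^{1-\gamma^*} = \frac{2}{\mu}\left(\frac{\mu}{2L}\right)^{\gamma^*}$. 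The second piece is directly of order $\left(\frac{2L}{\mu}\right)^{\gamma^*}\frac{\delta^2}{\mu}$.

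I expect the obstacle to be matching the stated constants $\left(\left(\frac{2L}{\mu}\right)^{\gamma^*} + 5\right)$. The naive computation above gives roughly $8\left(\frac{2L}{\mu}\right)^{\gamma^*} + 4 + \frac{450}{16}\cdot 2$. To close this gap I would revisit the Fenchel parameter choice in Lemma~\ref{fenchel for accelerated} and the crude bound in step (i) of Lemma~\ref{c > f lemma}. Alternatively, I would accept the bound in the form $O\!\left(\left(\frac{L}{\mu}\right)^{\gamma^*}\frac{\delta^2}{\mu}\right)$ and present the constants as they come out of the argument.

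The second displayed inequality follows from $\left(\frac{\mu}{2L}\right)^{1-\gamma^*} \geqslant \frac{1}{2}\left(\frac{\mu}{L}\right)^{1-\gamma^*}$, which holds since $1-\gamma^* \leqslant 1$. Likewise $2^{\gamma^*} \leqslant \sqrt{2} \leqslant 2$ handles the noise term.
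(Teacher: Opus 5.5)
Your route is exactly the paper's. Lemma~\ref{c > f lemma}, Lemma~\ref{upper bound for lemma} and Lemma~\ref{conv lemma} give $f(x^N)-f^*\le(1-\omega)^N\bigl(f(x^0)-f^*+\frac{\mu}{4}R^2\bigr)+\delta_0/\omega$. Lemma~\ref{w solution estimation} then handles both the contraction factor and $1/\omega$. Your arithmetic is correct, and the discrepancy you anticipate is real: it is an error in the paper's proof, not a missing idea on your side.

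The paper reaches $\bigl((2L/\mu)^{\gamma^*}+5\bigr)\frac{\delta^2}{\mu}$ only through two slips when it substitutes $\delta_0/\omega$:
\begin{itemize}
\item the term $\frac{4}{\mu}\bigl(2(2L/\mu)^{\gamma^*}+1\bigr)$ is rewritten as $\frac{1}{\mu}\bigl((2L/\mu)^{\gamma^*}+4\bigr)$, so the factor $8$ is lost;
\item $\frac{3}{16L(1+\alpha)^2}\cdot\frac{1}{\omega}$ with $\frac{1}{\omega}\le 150(2L/\mu)^{1-\gamma^*}$ is written as $\frac{1}{800(1+\alpha)^2L}(2L/\mu)^{1-\gamma^*}$, i.e.\ divided by $150$ instead of multiplied.
\end{itemize}
Done correctly, the same chain of lemmas yields
\begin{align*}
f(x^N)-f^* &\le \Bigl(1-\tfrac{1}{150}\bigl(\tfrac{\mu}{2L}\bigr)^{1-\gamma^*}\Bigr)^N\Bigl(f(x^0)-f^*+\tfrac{\mu}{4}R^2\Bigr)\\
&\quad+\Bigl(8\bigl(\tfrac{2L}{\mu}\bigr)^{\gamma^*}+4+\tfrac{225}{4(1+\alpha)^2}\bigl(\tfrac{\mu}{2L}\bigr)^{\gamma^*}\Bigr)\frac{\delta^2}{\mu}.
\end{align*}
This is what you found: the same rate and the same $(L/\mu)^{\gamma^*}\delta^2/\mu$ order, but larger constants.

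Your first proposed repair, retuning the Fenchel parameter, would not close the gap. The parameter $4(2L/\mu)^{\gamma^*}$ in Lemma~\ref{fenchel for accelerated} is built into $s$ and $m$, and hence into the algorithm's $\omega$.
\begin{itemize}
\item You cannot shrink it in the analysis: that increases $s$ and decreases $m$, so $\omega s-\omega(1-\omega)m$ would exceed $q$. The gradient term could then no longer be absorbed via Lemma~\ref{GD step}.
\item You can enlarge it, but that only increases the $\delta^2$ coefficient.
\item The term $\frac{3\delta^2}{16L(1+\alpha)^2\omega}$ coming from Lemma~\ref{GD step} does not depend on this choice at all.
\end{itemize}
To see the last point concretely, take $\alpha=1/3$. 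Then $\gamma^*=0$, $s=22/9$, $m=1/9$ and $q=\mu/(72L)$, so $\omega\le q/(s-m)=\mu/(168L)$. That single term is therefore at least $\frac{4536}{256}\frac{\delta^2}{\mu}\approx 17.7\,\frac{\delta^2}{\mu}$. The other piece, $8(2L/\mu)^{\gamma^*}+4$, alone contributes $12\,\frac{\delta^2}{\mu}$. The claimed total is $6\,\frac{\delta^2}{\mu}$.

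So within this estimating-sequence accounting, where $\xi_N\to\delta_0/\omega$, the printed constants cannot be reached. Your second option, stating the constants as they come out of the argument, is the right resolution. Neither your proposal nor the paper's proof establishes the theorem with the constants as printed.
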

\begin{proof}
From Lemma~\ref{c > f lemma} we obtain:
\begin{equation*}
    f(x^N) \leqslant c_N + \xi_N \quad \text{ see denotation~\eqref{xi noise accumulation accelerated} }. 
\end{equation*}
Applying Lemma~\ref{upper bound for lemma} and then Lemma~\ref{conv lemma}:
\begin{eqnarray*}
    f(x^N) - f^*
    & \leqslant &
    \lambda_N (\psi_0(x^*) - f^*) + \xi_N \\
    & \overset{\eqref{xi noise accumulation accelerated}}{\leqslant} & \lambda_N \left(f(x^0) - f^* + \frac{\mu}{4}R^2 \right)
    \\
    & + & \frac{1}{\omega} \left(\frac{3}{16L (1 + \alpha)^2} + \frac{\omega}{\mu} \left(\left(\frac{2L}{\mu} \right)^{\gamma^*} + 4 \right) \right) \delta^2
    \\
    & \overset{\text{Lemma}~\ref{w solution estimation}, \eqref{gamma star remind}}{\leqslant} & \left(1 - \frac{1}{150} \left(\frac{\mu}{2L}\right)^{1 - \gamma^*} \right)^N \left(f(x^0) - f^* + \frac{\mu}{4} R^2 \right)
    \\
    & + & \left( \frac{1}{800 (1 + \alpha)^2 L} \left(\frac{2L}{\mu}\right)^{1 - \gamma^*} + \frac{1}{\mu} \left(\left(\frac{2L}{\mu} \right)^{\gamma^*} + 4 \right) \right) \delta^2
    \\
    & \leqslant & \left(1 - \frac{1}{150} \left(\frac{\mu}{2L}\right)^{1 - \gamma^*} \right)^N \left(f(x^0) - f^* + \frac{\mu}{4} R^2 \right)
    \\
    & + & \left( \left(\frac{2L}{\mu}\right)^{-\gamma^*} + \left(\frac{2L}{\mu} \right)^{\gamma^*} + 4 \right) \frac{\delta^2}{\mu}
    \\
    & \overset{\gamma^* \geqslant 0}{\leqslant} & \left(1 - \frac{1}{150} \left(\frac{\mu}{2L}\right)^{1 - \gamma^*} \right)^N \left(f(x^0) - f^* + \frac{\mu}{4} R^2 \right)
    \\
    & + & \left( \left(\frac{2L}{\mu} \right)^{\gamma^*} + 5 \right) \frac{\delta^2}{\mu}
    \\
    & \overset{\gamma^* \leqslant 1}{\leqslant} & \left(1 - \frac{1}{300} \left(\frac{\mu}{L}\right)^{1 - \gamma^*} \right)^N \left(f(x^0) - f^* + \frac{\mu}{4} R^2 \right)
    \\
    & + & \left(2 \left(\frac{L}{\mu} \right)^{\gamma^*}  + 5 \right) \frac{\delta^2}{\mu}.
\end{eqnarray*}

\end{proof}

\section{Regularization missing proofs}

\begin{lemma} \label{reg dist bound}
    Let $f_{\mu}$ regularization function for $f$, then:
    \begin{equation*}
        R_{\mu} \leqslant R.
    \end{equation*}
\end{lemma}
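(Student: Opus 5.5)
The statement concerns $f_\mu(x\,|\,x^0) = f(x) + \frac{\mu}{2}\|x - x^0\|_2^2$ with minimizer $x_\mu^*$, where $f$ is convex and $x^*$ is any minimizer of $f$. The claim is $\|x_\mu^* - x^0\|_2 \leqslant \|x^* - x^0\|_2$. I will compare the two minimality conditions directly.

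\emph{Step 1 (optimality of $x_\mu^*$).} Since $x_\mu^*$ minimizes $f_\mu(\cdot\,|\,x^0)$, we have $f_\mu(x_\mu^*\,|\,x^0) \leqslant f_\mu(x^*\,|\,x^0)$. Written out, this is
\begin{equation*}
    f(x_\mu^*) + \frac{\mu}{2} R_\mu^2 \leqslant f(x^*) + \frac{\mu}{2} R^2 .
\end{equation*}

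\emph{Step 2 (optimality of $x^*$).} Since $x^*$ minimizes $f$, we have $f(x^*) \leqslant f(x_\mu^*)$. Substituting this into the inequality of Step 1 and cancelling the function values gives
\begin{equation*}
    \frac{\mu}{2} R_\mu^2 \leqslant \frac{\mu}{2} R^2 .
\end{equation*}

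\emph{Step 3 (conclusion).} Because $\mu > 0$, dividing by $\mu/2$ and taking square roots yields $R_\mu \leqslant R$.

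No step here is a real obstacle. The only points to check are that $x_\mu^*$ exists and is unique, which holds because $f_\mu$ is $\mu$-strongly convex, and that $\mu > 0$ is assumed. Smoothness of $f$ is not used at all.

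An alternative route uses the first-order condition $\nabla f(x_\mu^*) = -\mu(x_\mu^* - x^0)$ together with monotonicity of $\nabla f$. It gives the stronger bound $\|x_\mu^* - x^0\|_2^2 + \|x_\mu^* - x^*\|_2^2 \leqslant R^2$. I will not need this, since the function-value comparison above is shorter and suffices.
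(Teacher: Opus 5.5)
Your proof is correct and is the paper's argument: the paper writes the chain $f^* + \frac{\mu}{2}R_\mu^2 \leqslant f(x_\mu^*) + \frac{\mu}{2}R_\mu^2 = f_\mu^* \leqslant f_\mu(x^*\,|\,x^0) = f^* + \frac{\mu}{2}R^2$, which uses the same two minimality facts as your Steps 1 and 2. The paper attributes the first inequality to strong convexity, but as you observe, it only requires minimality of $f^*$.
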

\begin{proof}
    \begin{equation*}
        f^* + \frac{\mu}{2} R_{\mu}^2 \overset{\eqref{strong conv}}{\leqslant} f(x_{\mu}^* | x^0) + \frac{\mu}{2} R_{\mu}^2 = f_{\mu}^* \leqslant f_{\mu}(x^* | x^0) = f(x^*) + \frac{\mu}{2} R^2 \Rightarrow R_{\mu}^2 \leqslant R^2.
    \end{equation*}
\end{proof}

\begin{lemma} \label{reg noise}
    Let $f$ convex~\eqref{convexity} and $L$ smooth~\eqref{smooth cond}, $\wtgg f$ satisfies relative error condition~\eqref{relative noise condition}. Then gradient estimation of regularized function satisfies~\eqref{noise condition}:
    \begin{equation*}
        \| \wtgg f_{\mu}(x) - \nabla f_{\mu}(x) \|_2 \leqslant 2 \alpha \|\nabla f_{\mu}(x) \|_2 + \alpha \mu R.
    \end{equation*}
\end{lemma}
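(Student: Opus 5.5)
Since $\wtgg f_{\mu}(x) = \wtgg f(x) + \mu(x - x^0)$ and $\nabla f_{\mu}(x) = \nabla f(x) + \mu(x - x^0)$, the two $\mu(x-x^0)$ terms cancel and the relative noise condition~\eqref{relative noise condition} gives
\begin{equation*}
    \| \wtgg f_{\mu}(x) - \nabla f_{\mu}(x) \|_2 = \| \wtgg f(x) - \nabla f(x) \|_2 \leqslant \alpha \| \nabla f(x) \|_2 .
\end{equation*}
The remaining task is to bound $\|\nabla f(x)\|_2$ by $2\|\nabla f_{\mu}(x)\|_2 + \mu R$.

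From $\nabla f(x) = \nabla f_{\mu}(x) - \mu(x - x^0)$ we get $\|\nabla f(x)\|_2 \leqslant \|\nabla f_{\mu}(x)\|_2 + \mu\|x - x^0\|_2$. We then split $\|x - x^0\|_2 \leqslant \|x - x_{\mu}^*\|_2 + \|x_{\mu}^* - x^0\|_2 = \|x - x_{\mu}^*\|_2 + R_{\mu}$.

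\begin{itemize}
    \item For the first piece: $f_{\mu}(\cdot\,|\,x^0)$ is $\mu$-strongly convex, because $f$ is convex and the added term $\frac{\mu}{2}\|x-x^0\|_2^2$ is $\mu$-strongly convex. Lemma~\ref{pl dist} applied to $f_{\mu}$ then gives $\|x - x_{\mu}^*\|_2 \leqslant \frac{1}{\mu}\|\nabla f_{\mu}(x)\|_2$, so $\mu\|x - x_{\mu}^*\|_2 \leqslant \|\nabla f_{\mu}(x)\|_2$.
    \item For the second piece: Lemma~\ref{reg dist bound} gives $R_{\mu} \leqslant R$.
\end{itemize}

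Combining, $\|\nabla f(x)\|_2 \leqslant 2\|\nabla f_{\mu}(x)\|_2 + \mu R$, and multiplying by $\alpha$ yields the claim.

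The only step needing care is the first bullet. Lemma~\ref{pl dist} is stated for a strongly convex function, so we must confirm $f_{\mu}$ qualifies (it does, as argued) and that it is applied with the minimizer $x_{\mu}^*$ of $f_{\mu}$, not $x^*$. No other obstacle is expected.
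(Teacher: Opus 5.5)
Your proof is correct and follows the paper's argument step for step: cancel the $\mu(x - x^0)$ terms, bound $\|\nabla f(x)\|_2 \leqslant \|\nabla f_{\mu}(x)\|_2 + \mu\|x - x_{\mu}^*\|_2 + \mu R_{\mu}$ by the triangle inequality, then apply Lemma~\ref{pl dist} to the $\mu$-strongly convex $f_{\mu}$ and Lemma~\ref{reg dist bound} to get $R_{\mu} \leqslant R$. Your explicit check that $f_{\mu}$ is strongly convex with minimizer $x_{\mu}^*$ is what the paper uses implicitly, and your final bound correctly has $\|\nabla f_{\mu}(x)\|_2$ where the paper's last line has a typo ($\nabla f(x)$).
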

\begin{proof}
    \begin{eqnarray*}
        \| \wtgg f_{\mu}(x) - \nabla f_{\mu}(x) \|_2
        & = & \|\wtgg f(x) + \mu (x - x^0) - \nabla f(x) - \mu(x - x^0) \|_2 \\
        & \leqslant & \alpha \|\nabla f(x) \|_2 = \alpha \|\nabla f(x) + \mu (x - x^0) - \mu(x - x^0) \|_2 \\
        & \leqslant & \alpha \|\nabla f_{\mu}(x) \|_2 + \alpha \mu \|x - x^0 \|_2 \\
        & \leqslant & \alpha \|\nabla f_{\mu}(x) \|_2 + \alpha \mu \|x_{\mu}^* - x^0 \|_2 + \alpha \mu \|x - x_{\mu}^* \|_2 \\
        & \overset{\eqref{pl dist}}{\leqslant} & \alpha \|\nabla f_{\mu}(x) \|_2 + \alpha \mu R_{\mu} + \alpha \mu \cdot \frac{1}{\mu} \|\nabla f_{\mu}(x) \|_2 \\
        & \overset{\ref{reg dist bound}}{\leqslant} & 2 \alpha \| \nabla f(x) \|_2 + \alpha \mu R.
    \end{eqnarray*}
\end{proof}

\begin{lemma} \label{reg base}
    Let there be a method that guarantees convergence for strongly convex~\eqref{strong conv}, smooth~\eqref{smooth cond} function $f$ and gradient estimation $\wtgg f$ satisfying~\eqref{noise condition}:
    \begin{equation*}
        \begin{gathered}
            f(x^N) - f^* \leqslant C_0 LR^2 \cdot \left(1 - A_0 \left(\frac{1}{K_0}\frac{\mu}{L} \right)^{1 - \gamma(\alpha)} \right)^N + B_0 \left(K_0 \frac{L}{\mu} \right)^{\gamma(\alpha)} \frac{\delta^2}{\mu}, \\
            A_0 < 1, 1 \leqslant B_0, 0 < C_0, 0 \leqslant \gamma(\alpha) \leqslant 1/2, 0 < K_0.
        \end{gathered}
    \end{equation*}
    To solve the optimization problem for convex~\eqref{convexity}, smooth~\eqref{smooth cond} function $f$, under relative noise~\eqref{relative noise condition} with parameter $\alpha$, such that $\alpha^2 \left(K_0 \frac{B_0 L R^2}{\varepsilon} \right)^{\gamma(\alpha)} < \frac{1}{4}$, with precision $f(x^N) - f^* \leqslant \varepsilon \; (\varepsilon < LR^2)$ we can use regularization technique with parameter
    \begin{equation*}
        \mu = \frac{\varepsilon}{B_0 R^2}.
    \end{equation*}
    And required number of iterations will be:
    \begin{equation*}
        N = \frac{1}{A_0} \left(\frac{K_0 B_0 L R^2}{\varepsilon} \right)^{1 - \gamma(\alpha)} \ln \left(\frac{4 C_0 LR^2}{\varepsilon} \right).
    \end{equation*}
\end{lemma}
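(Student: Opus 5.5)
We apply the assumed method to the regularized function $f_\mu(\cdot\,|\,x^0)$ from~\eqref{reg def} with $\mu = \varepsilon/(B_0R^2)$. By Lemma~\ref{reg noise}, the oracle $\wtgg f_\mu$ satisfies the composite condition~\eqref{noise condition} with relative parameter $2\alpha$ and absolute parameter $\delta_\mu = \alpha\mu R$. The function $f_\mu$ is $\mu$-strongly convex and $(L+\mu)$-smooth. Since $\varepsilon<LR^2$ and $B_0\geqslant 1$, we have $\mu\leqslant L$, so $L+\mu\leqslant 2L$; this constant factor is absorbed into $K_0$ and $C_0$. (One should check that the hypothesis of the lemma is read with $\gamma$ evaluated at the effective relative parameter; I would keep the notation $\gamma(\alpha)$ as in the statement.) By Lemma~\ref{reg dist bound}, the initial distance satisfies $R_\mu\leqslant R$.

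The error of the original problem splits as
\begin{equation*}
f(x^N)-f^* \leqslant f_\mu(x^N|x^0)-f^*_\mu + \frac{\mu}{2}R^2 ,
\end{equation*}
because $f\leqslant f_\mu$ and $f_\mu^*\leqslant f_\mu(x^*|x^0)=f^*+\frac{\mu}{2}R^2$. With $\mu=\varepsilon/(B_0R^2)$ and $B_0\geqslant1$, the last term is at most $\varepsilon/2$. It therefore suffices to make $f_\mu(x^N)-f_\mu^*\leqslant \varepsilon/2$. Applying the assumed guarantee with $R_\mu\leqslant R$ gives
\begin{equation*}
f_\mu(x^N)-f_\mu^* \leqslant C_0LR^2\Bigl(1-A_0\bigl(\tfrac{\mu}{K_0L}\bigr)^{1-\gamma}\Bigr)^N + B_0\Bigl(K_0\tfrac{L}{\mu}\Bigr)^{\gamma}\alpha^2\mu R^2 .
\end{equation*}

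The noise term becomes
\begin{equation*}
B_0\alpha^2\mu R^2\Bigl(\tfrac{K_0B_0LR^2}{\varepsilon}\Bigr)^{\gamma} = \varepsilon\,\alpha^2\Bigl(\tfrac{K_0B_0LR^2}{\varepsilon}\Bigr)^{\gamma} < \varepsilon/4
\end{equation*}
by the hypothesis $\alpha^2(K_0B_0LR^2/\varepsilon)^{\gamma}<1/4$. The contraction term is handled by $1-t\leqslant e^{-t}$: it is at most $\varepsilon/4$ once
\begin{equation*}
N\geqslant A_0^{-1}\Bigl(\tfrac{K_0L}{\mu}\Bigr)^{1-\gamma}\ln\tfrac{4C_0LR^2}{\varepsilon},
\end{equation*}
and substituting $\mu$ gives exactly the claimed $N$. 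Summing the three pieces gives $\varepsilon/4+\varepsilon/4+\varepsilon/2=\varepsilon$.

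The main obstacle is the bookkeeping of the relative parameter. Lemma~\ref{reg noise} doubles $\alpha$ and changes $L$ to $L+\mu$, so the exponent $\gamma$ and the constants in the assumed guarantee must be applied to these effective parameters. I would handle this by noting that $\gamma$ is monotone, so it can be evaluated at the effective parameter, and by folding the factor $2$ into $K_0$ and $C_0$. It also has to be checked that the absolute noise $\alpha\mu R$ enters squared exactly as $\alpha^2\mu^2R^2/\mu$, which matches the hypothesis.
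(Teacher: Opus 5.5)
Your argument is the paper's proof: the same choice $\mu=\varepsilon/(B_0R^2)$, Lemma~\ref{reg noise} for the transformed noise, $R_\mu\leqslant R$, and the same split into a noise term below $\varepsilon/4$, a contraction term below $\varepsilon/4$, and the regularization bias $\frac{\mu}{2}R^2=\frac{\varepsilon}{2B_0}\leqslant\varepsilon/2$. The bookkeeping issues you flag are real, namely that $f_\mu$ is $(L+\mu)$-smooth and that the relative parameter becomes $2\alpha$, but the paper's proof simply ignores them and applies the guarantee with $L$ and $\gamma(\alpha)$; note that your patches would change the hypothesis and the constants in $N$ (replacing $K_0,C_0$ by $2K_0,2C_0$, and using $\gamma(2\alpha)$, where monotonicity of $\gamma$ is not actually assumed), so ``exactly the claimed $N$'' holds only under the same identification the paper makes silently.
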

\begin{proof}
    Using Lemma~\ref{reg noise} we obtain, that $\| \wtgg f_{\mu}(x) - \nabla f_{\mu}(x) \|_2 \leqslant 2 \alpha \| \nabla f_{\mu}(x) \|_2 + \alpha \mu R$. We will choose $\mu$:
    \begin{equation*}
        \mu = \frac{\varepsilon}{B_0 R^2},
    \end{equation*}
    then
    \begin{eqnarray*}
        B_0 \left(K_0 \frac{L}{\mu} \right)^{\gamma(\alpha)} \frac{(\alpha \mu R)^2}{\mu} 
        & = &
        B_0 \left(K_0 \frac{L}{\mu} \right)^{\gamma(\alpha)} \alpha^2 \frac{\varepsilon}{B_0 R^2} \cdot R^2 
        \\
        & \overset{\alpha^2 \left(K_0 \frac{L}{\mu} \right)^{\gamma(\alpha)} < \frac{1}{4}}{\leqslant} & \varepsilon / 4.
    \end{eqnarray*}
    Let us estimate the number of steps to ensure $f_{\mu}(x^N | x^0) - f_{\mu}^* \leqslant \varepsilon / 4$:
    \begin{equation*}
        \begin{gathered}
        C_0 LR^2 \cdot \left(1 - A_0 \left(\frac{1}{K_0} \frac{\mu}{L} \right)^{1 - \gamma(\alpha)} \right)^N \leqslant \varepsilon / 4,
        \\
        N \geqslant \frac{1}{A_0} \left(\frac{K_0 B_0 L R^2}{\varepsilon} \right)^{1 - \gamma(\alpha)} \ln \left(\frac{4 C_0 LR^2}{\varepsilon} \right).
        \end{gathered}
    \end{equation*}
    Then:
    \begin{equation*}
         f_{\mu}(x^N | x^0) - f_{\mu}^* \leqslant C_0 \left(1 - A_0 \left(\frac{1}{K_0} \frac{\mu}{L} \right)^{\gamma(\alpha)} \right)^N + B_0 \left(K_0 \frac{L}{\mu} \right)^{\gamma(\alpha)} \frac{\alpha^2 \mu^2 R^2}{\mu} \leqslant \varepsilon / 2.
    \end{equation*}
    Now we can estimate the convergence for the original function:
    \begin{eqnarray*}
        f(x^N) - f^*
        & \leqslant & f(x^N) + \frac{\mu}{2} \|x^N - x^0 \|_2^2 - f^* \\
        & = & f_{\mu}(x^N | x^0) - f^* = f_{\mu}(x^N | x^0) - f_{\mu}(x^* | x^0) + \frac{\mu}{2} R^2 \\
        & \leqslant & f_{\mu}(x^N | x^0) - f_{\mu}^* + \frac{\varepsilon}{2B_0} \overset{B_0 \geqslant 1}{\leqslant} \varepsilon.
    \end{eqnarray*}
\end{proof}

\begin{theorem}[Theorem~\ref{gd reg text}] \label{gd reg}
    Let $f$ is convex~\eqref{convexity} and $L$ smooth~\eqref{smooth cond}, $\widetilde{\nabla} f$ satisfies relative noise condition~\eqref{relative noise condition} with $\alpha < 1 / 2$.
    Then we can solve problem~\eqref{optim} with precision $f(x^N) - f^* \leqslant \varepsilon \; (\varepsilon < LR^2)$, using Algorithm~\ref{alg gd} via regularization technique with following amount of iterations:
    \begin{equation*}
        N =  12 \frac{(1 + \alpha)^2}{(1 - \alpha)^6} \frac{LR^2}{\varepsilon} \ln \left(\frac{2 LR^2}{\varepsilon} \right).
    \end{equation*}
\end{theorem}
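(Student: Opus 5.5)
The plan is to instantiate Lemma~\ref{reg base} with gradient descent (Algorithm~\ref{alg gd}) as the base method, using Theorem~\ref{gd pl conv} for the strongly convex regularized problem. Two facts make this possible. First, a $\mu$-strongly convex function satisfies the P{\L} condition~\eqref{PL cond}. Second, $f_\mu(\cdot\,|\,x^0)$ is $(L+\mu)$-smooth and $\mu$-strongly convex. Since $\mu = \varepsilon/(B_0R^2) \leqslant L$, the smoothness constant can be bounded by $2L$.

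By Lemma~\ref{reg noise}, the oracle $\wtgg f_\mu$ satisfies the composite noise condition~\eqref{noise condition} with relative level $\alpha' = 2\alpha$ and absolute level $\delta' = \alpha\mu R$. The hypothesis $\alpha < 1/2$ is exactly what guarantees $\alpha' < 1$, so Theorem~\ref{gd pl conv} applies. It gives
\begin{equation*}
f_\mu(x^N) - f_\mu^* \leqslant \left(1 - \frac{(1-2\alpha)^3}{1+2\alpha}\frac{\mu}{16L}\right)^N (f_\mu(x^0) - f_\mu^*) + \frac{3}{2}\frac{1+2\alpha}{(1-2\alpha)^3}\alpha^2\mu R^2 .
\end{equation*}
To put this in the form required by Lemma~\ref{reg base}, bound the initial gap: $f_\mu(x^0) - f_\mu^* \leqslant f(x^0) - f^* \leqslant \frac{L}{2}R^2$, so one may take $C_0 = 1/2$. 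Then read off $\gamma(\alpha) = 0$, $A_0 = 1/16$ times the $\alpha$-dependent factor (or absorb that factor into $K_0$), and $B_0 = \frac{3}{2}\frac{1+2\alpha}{(1-2\alpha)^3}$, which is at least $1$.

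With $\gamma = 0$, the side condition of Lemma~\ref{reg base} reduces to $\alpha^2 < 1/4$, which holds because $\alpha < 1/2$. The lemma then yields an iteration count of
\begin{equation*}
N = \frac{1}{A_0}\cdot\frac{K_0 B_0 LR^2}{\varepsilon}\,\ln\frac{4C_0LR^2}{\varepsilon},
\end{equation*}
and with $C_0 = 1/2$ the logarithm becomes $\ln(2LR^2/\varepsilon)$, matching the statement. The remaining work is algebra: collect the constants $\frac{1}{A_0}K_0B_0$ into a single factor expressed in $\alpha$.

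The main obstacle is the constant, not the structure of the argument. A naive combination gives a factor like $(1+2\alpha)^2/(1-2\alpha)^6$ in terms of $2\alpha$, while the stated bound is $12(1+\alpha)^2/(1-\alpha)^6$. I expect the authors' bookkeeping to be loose here, for example reusing Lemma~\ref{reg noise}'s "$2\alpha$" loosely or applying Theorem~\ref{gd pl conv} with $\alpha$ in place of $2\alpha$. I would first try to obtain a sharper noise decomposition. Note that $\|\zeta\| \leqslant \alpha\|\nabla f(x)\|$ and $\nabla f = \nabla f_\mu - \mu(x - x^0)$. Avoiding the triangle-inequality step through $x_\mu^*$ gives relative level $\alpha$ with an absolute part controlled via $\|x - x^0\|$. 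Otherwise I would simply state the constant in terms of $2\alpha$, which remains bounded by a constant depending only on $\alpha < 1/2$.
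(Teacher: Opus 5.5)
Your route is the paper's route. Both feed Lemma~\ref{reg base} with the P{\L} rate of Theorem~\ref{gd pl conv}, using $\gamma\equiv 0$, $C_0=1/2$, the side condition $\alpha^2<1/4$, and the logarithm $\ln(2LR^2/\varepsilon)$.

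Your suspicion about the constant is correct, and it should be made explicit. The paper reaches $12\frac{(1+\alpha)^2}{(1-\alpha)^6}$ by taking $A_0=\frac18\frac{(1-\alpha)^3}{1+\alpha}$, $B_0=\frac32\frac{1+\alpha}{(1-\alpha)^3}$, $K_0=1$ and $\mu=\varepsilon/(B_0R^2)$. That is, it applies Theorem~\ref{gd pl conv} to $f_\mu$ with the original pair $(\alpha,L)$. But Lemma~\ref{reg noise} only provides relative level $2\alpha$ (with absolute level $\alpha\mu R$), and $f_\mu$ is $(L+\mu)$-smooth. Your bookkeeping uses $A_0=\frac1{16}\frac{(1-2\alpha)^3}{1+2\alpha}$ and $B_0=\frac32\frac{1+2\alpha}{(1-2\alpha)^3}$, and yields
\[
N=24\,\frac{(1+2\alpha)^2}{(1-2\alpha)^6}\,\frac{LR^2}{\varepsilon}\ln\frac{2LR^2}{\varepsilon}.
\]
That is what this argument actually establishes. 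It has the same order for fixed $\alpha<1/2$, but it is not the displayed bound: yours blows up as $\alpha\to 1/2$, while the stated constant stays bounded. So the literal constant is not supported by your argument, and not by the paper's either.

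Do not expect your proposed sharpening to close this gap. Keeping relative level $\alpha$ leaves the state-dependent absolute part $\alpha\mu\|x-x^0\|_2$, whereas Theorem~\ref{gd pl conv} needs a uniform $\delta$. To make it work you would need $\|x^k-x^0\|_2=O(R)$ along the noisy trajectory, and nothing available gives that. Monotonicity of $f_\mu$ plus strong convexity only bounds $\|x^k-x^*_\mu\|_2$ by something of order $\sqrt{L/\mu}\,R$. That would inflate the noise floor $B_0\delta^2/\mu$ from $\alpha^2\varepsilon$ to order $\alpha^2LR^2$, which exceeds $\varepsilon$ for fixed $\alpha$.

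Nor can you cap the distance by $R$ for free. $\|x-x^*_\mu\|_2$ is not monotone under relative noise once $\alpha$ exceeds roughly $\sqrt{\mu/L}$. For an ill-conditioned quadratic, the noise can rotate the step past orthogonality to $x-x^*_\mu$. This is exactly the regime here, since $\mu\sim\varepsilon/R^2$. The detour through $x^*_\mu$ in Lemma~\ref{reg noise}, which costs the factor $2$, is the step that actually works. State the theorem with the $2\alpha$-dependent constant.
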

\begin{proof}
    From Theorem~\ref{gd pl conv text}:
    \begin{eqnarray*}
        f(x^N) - f^* 
        & \leqslant &  \left(1 -  \frac{(1 - \alpha)^3}{1 + \alpha} \frac{\mu}{8 L} \right)^N (f(x^0) - f^*) + \frac{3}{2 \mu} \frac{1 + \alpha}{(1 - \alpha)^3} \delta^2.
    \end{eqnarray*}
    We will use Lemma~\ref{reg base}. In that notation:
    \begin{equation*}
        \gamma(\alpha) \equiv 0, \; B_0  = \frac{3}{2} \frac{1 + \alpha}{(1 - \alpha)^3}, \; A_0 = \frac{1}{8} \frac{(1 - \alpha)^3}{1 + \alpha}, \; C_0 \overset{\eqref{smooth cond}}{=} \frac{1}{2}, K_0 = 1. 
    \end{equation*}
    Then we can choose:
    \begin{equation*}
        \mu = \frac{\varepsilon}{R^2} \cdot \frac{2}{3} \frac{(1 - \alpha)^3}{1 + \alpha},
    \end{equation*}
    And number of required iterations:
    \begin{equation*}
        N = 8 \frac{1 + \alpha}{(1 - \alpha)^3} \cdot \frac{3}{2} \frac{1 + \alpha}{(1 - \alpha)^3} \cdot \frac{LR^2}{\varepsilon} \ln \left(\frac{2 LR^2}{\varepsilon} \right) = 12 \frac{(1 + \alpha)^2}{(1 - \alpha)^6} \frac{LR^2}{\varepsilon} \ln \left(\frac{2 LR^2}{\varepsilon} \right).
    \end{equation*}
\end{proof}

\begin{theorem}[Theorem~\ref{re-agm reg text}] \label{re-agm reg}
    Let $f$ is convex~\eqref{convexity} and $L$ smooth~\eqref{smooth cond}, $\widetilde{\nabla} f$ satisfies relative noise condition~\eqref{relative noise condition} with $\alpha \leqslant \frac{1}{3} \left(\frac{\varepsilon}{12 LR^2} \right)^{\beta}, 0 \leqslant \beta \leqslant 1/2$.
    Then we can solve problem~\eqref{optim} with precision $f(x^N) - f^* \leqslant \varepsilon \; (\varepsilon < LR^2)$, using Algorithm~\ref{alg re-agm} via regularization technique with following amount of iterations:
    \begin{equation*}
        N = 150 \left(\frac{12 LR^2}{\varepsilon} \right)^{1 - \beta } \ln \left(\frac{12 LR^2}{\varepsilon} \right).
    \end{equation*}
\end{theorem}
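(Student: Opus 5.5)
The plan is to apply Lemma~\ref{reg base} to the convergence guarantee of Theorem~\ref{acc re agm conv}, in the same way the proof of Theorem~\ref{gd reg} applied it to gradient descent. First, Theorem~\ref{acc re agm conv} has to be put into the form assumed by Lemma~\ref{reg base}.

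\textbf{Step 1: match the constants.} Using $f(x^0)-f^* \leqslant \frac{L}{2}R^2$ from \eqref{smooth cond} and $\mu\leqslant L$, the initial term satisfies $f(x^0)-f^*+\frac{\mu}{4}R^2 \leqslant \frac{3}{4}LR^2$. So we take $C_0=\frac34$ and $A_0=\frac1{150}$. Writing $\frac{\mu}{2L}=\frac{1}{K_0}\frac{\mu}{L}$ gives $K_0=2$. The noise term $\left(\left(\frac{2L}{\mu}\right)^{\gamma^*}+5\right)\frac{\delta^2}{\mu}$ is bounded by $6\left(\frac{2L}{\mu}\right)^{\gamma^*}\frac{\delta^2}{\mu}$, so $B_0=6$ and $\gamma(\alpha)=\gamma^*$. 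Together these give $K_0B_0=12$, which matches the factor $\frac{12LR^2}{\varepsilon}$ in the statement.

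\textbf{Step 2: control the noise after regularization.} Lemma~\ref{reg noise} shows that the regularized oracle satisfies \eqref{noise condition} with relative parameter $2\alpha$ and absolute parameter $\alpha\mu R$. Three things must therefore be checked.
\begin{itemize}
\item The doubled relative parameter must be admissible for Theorem~\ref{acc re agm conv}, i.e.\ $2\alpha\leqslant\frac13$. This holds because $\alpha\leqslant \frac13\left(\frac{\varepsilon}{12LR^2}\right)^{\beta}\leqslant\frac13$, at least after a harmless constant adjustment.
\item With $\mu=\frac{\varepsilon}{B_0R^2}=\frac{\varepsilon}{6R^2}$, we get $\frac{2L}{\mu}=\frac{12LR^2}{\varepsilon}$. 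The hypothesis on $\alpha$ then reads $3\alpha\leqslant\left(\frac{\mu}{2L}\right)^{\beta}$, so $\log_{\mu/2L}(3\alpha)\geqslant\beta$ and hence $\gamma^*\geqslant\beta$. This holds with $\alpha$ itself; with $2\alpha$ a constant factor appears, which I will absorb, possibly with a slightly worse constant than $1/3$.
\item The smallness condition of Lemma~\ref{reg base} must hold: $\alpha^2\left(\frac{12LR^2}{\varepsilon}\right)^{\gamma^*}<\frac14$. Since $\gamma^*\leqslant\log_{\mu/2L}(3\alpha)$, we have $\left(\frac{2L}{\mu}\right)^{\gamma^*}\leqslant\frac1{3\alpha}$. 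The left side is then at most $\alpha/3<\frac14$.
\end{itemize}

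\textbf{Step 3: conclude.} Lemma~\ref{reg base} gives
\[
N=150\left(\frac{12LR^2}{\varepsilon}\right)^{1-\gamma^*}\ln\!\left(\frac{3LR^2}{\varepsilon}\right).
\]
Since $\gamma^*\geqslant\beta$ and $\frac{12LR^2}{\varepsilon}>1$, this is at most $150\left(\frac{12LR^2}{\varepsilon}\right)^{1-\beta}\ln\frac{12LR^2}{\varepsilon}$, which is the claimed bound.

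The main obstacle is in Step 2: the regularized oracle has relative parameter $2\alpha$ rather than $\alpha$. This affects both the constant inside $\gamma^*$ and the condition $2\alpha\leqslant1/3$. I would handle it by running Algorithm~\ref{alg re-agm} with parameter $2\alpha$ and checking that the exponent $\gamma^*$ still dominates $\beta$. If necessary, I would note that a constant factor in $\alpha$ only changes $\left(\frac{2L}{\mu}\right)^{\gamma^*}$ by a constant, which can be absorbed into the leading constant.
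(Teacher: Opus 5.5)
\textbf{Verdict.} Your Steps 1--3 follow the paper's proof, but your patch for the $2\alpha$ issue does not work; the missing idea is a sharper noise-transfer bound that keeps the relative parameter at $\alpha$.

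\textbf{Same route as the paper.} The paper also plugs Theorem~\ref{acc re agm conv} into Lemma~\ref{reg base} with $A_0=\frac{1}{150}$, $B_0=6$, $K_0=2$ and $\mu=\frac{\varepsilon}{6R^2}$. It makes the same two checks: $\alpha^2(12LR^2/\varepsilon)^{\gamma^*}\leqslant\alpha/3<\frac14$ and $\gamma^*\geqslant\beta$. Your $C_0=\frac34$ instead of the paper's $C_0=1$ makes no difference.

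\textbf{The obstacle you flag.} The paper never addresses it. Both Lemma~\ref{reg base} and this proof use $\gamma(\alpha)$ as if the regularized oracle had relative parameter $\alpha$, whereas Lemma~\ref{reg noise} only gives $2\alpha$. Your patch fails for two reasons.
\begin{itemize}
\item The claim that $2\alpha\leqslant\frac13$ follows from $\alpha\leqslant\frac13$ is false. The hypothesis allows $2\alpha>\frac13$ whenever $\beta<\log_{12LR^2/\varepsilon}2$. This is outside the range $\alpha\in[0,\frac13]$ that Theorem~\ref{acc re agm conv} requires. At $\beta=0$, $2\alpha$ can reach $\frac23$, where even the coefficient $m$ in Algorithm~\ref{alg re-agm} is negative.
\item Where $2\alpha\leqslant\frac13$ does hold, running RE-AGM with $2\alpha$ only gives $\gamma^*\geqslant\beta-\log_{2L/\mu}2$. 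Hence $(2L/\mu)^{1-\gamma^*}\leqslant 2(2L/\mu)^{1-\beta}$, so the leading constant becomes $300$ instead of $150$. Absorbing this factor, or replacing $\frac13$ by $\frac16$ in the hypothesis, proves a different statement from the one given.
\end{itemize}

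\textbf{The fix.} The factor $2$ comes only from the crude step $\mu\|x-x_\mu^*\|_2\leqslant\|\nabla f_\mu(x)\|_2$ in Lemma~\ref{reg noise}. Since $\nabla f_\mu(x_\mu^*)=0$, write $\nabla f_\mu(x)=a+b$ with $a=\nabla f(x)-\nabla f(x_\mu^*)$ and $b=\mu(x-x_\mu^*)$. Convexity of $f$ gives $\langle a,b\rangle\geqslant 0$, so $\|a\|_2\leqslant\|\nabla f_\mu(x)\|_2$. Also $\|\nabla f(x_\mu^*)\|_2=\mu R_\mu\leqslant\mu R$. Therefore
\[
\|\wtgg f_\mu(x)-\nabla f_\mu(x)\|_2\leqslant\alpha\|\nabla f(x)\|_2\leqslant\alpha\|\nabla f_\mu(x)\|_2+\alpha\mu R ,
\]
so the relative parameter stays $\alpha$ and the absolute parameter is $\alpha\mu R$. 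With this bound, $\alpha\leqslant\frac13$ is admissible and $\gamma^*\geqslant\beta$ holds exactly. Your Steps 1--3 (equivalently, the paper's proof) then go through without losing any constant in the noise transfer.

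\textbf{Minor shared point.} You and the paper both treat $f_\mu$ as $L$-smooth, although it is $(L+\mu)$-smooth. Since $\mu<L/6$, this affects only numerical constants.
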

\begin{proof}
    From Theorem~\ref{acc re agm conv text}:
    \begin{eqnarray*}
        f(x^N) - f^* & \leqslant & 
        \left(1 - \frac{1}{150} \left(\frac{\mu}{2L}\right)^{1 - \gamma^*} \right)^N \left(f(x^0) - f^* + \frac{\mu}{4} R^2 \right)
        +  \left( \left(\frac{2L}{\mu} \right)^{\gamma^*} + 5 \right) \frac{\delta^2}{\mu}
        \\
        & \leqslant & \left(1 - \frac{1}{150} \left(\frac{\mu}{2L}\right)^{1 - \gamma^*} \right)^N \left(f(x^0) - f^* + \frac{\mu}{4} R^2 \right)
        +  6 \left(\frac{2L}{\mu} \right)^{\gamma^*} \frac{\delta^2}{\mu}
    \end{eqnarray*}
    We will use Lemma~\ref{reg base}. In that notation:
    \begin{equation*}
        \gamma(\alpha) = \min \left \lbrace 1/2, \log_{\mu / 2L} (3 \alpha ) \right \rbrace, \; B_0 = 6, \; A_0 = \frac{1}{150}, \; C_0 \overset{\eqref{smooth cond}, \eqref{strong conv}}{=} 1, \; K_0 = 2. 
    \end{equation*}
    Then we can choose:
    \begin{equation*}
        \mu = \frac{\varepsilon}{6 R^2}.
    \end{equation*}
    Condition $\alpha^2 \cdot \left(K_0 \frac{B_0 L R^2}{\varepsilon} \right)^{\gamma(\alpha)} < \frac{1}{4}$ is also guaranteed:
    \begin{eqnarray*}
        \alpha^2 \left(K_0 \frac{B_0 L R^2}{\varepsilon} \right)^{\gamma(\alpha)} 
        & \leqslant & \alpha^2 \min \left \lbrace \left(\frac{12 L R^2}{\varepsilon} \right)^{ \log_{\varepsilon/12LR^2}(3 \alpha)},  \left(\frac{12 L R^2}{\varepsilon} \right)^{1/2} \right \rbrace
        \\
        & \leqslant & \min  \left \lbrace \frac{\alpha}{3},  \frac{1}{9} \left(\frac{12 L R^2}{\varepsilon} \right)^{2\beta - 1/2} \right \rbrace \leqslant 1 / 9 < 1 / 4,
    \end{eqnarray*}
    also:
    \begin{equation*}
        1 - \gamma(\alpha) = 1 - \min \left \lbrace 1/2, \log_{\varepsilon/12LR^2}(3 \alpha) \right \rbrace \leqslant 1 - \min \left \lbrace 1/2, \beta \right \rbrace = 1 - \beta.
    \end{equation*}
    Finally number of required iterations:
    \begin{equation*}
        N = 150 \left(\frac{12 LR^2}{\varepsilon} \right)^{1 - \beta} \ln \left(\frac{4 LR^2}{\varepsilon} \right).
    \end{equation*}
\end{proof}

\section{Adaptiveness missing proofs} \label{appendix adap}

\begin{lemma} \label{alpha est inequality}
    If $f$ satisfies smoothness~\eqref{smooth cond}, $\widetilde{\nabla} f$ satisfies error condition~\eqref{noise def abs rel}, $\widehat{\alpha} \geqslant \alpha, \widehat{L} \geqslant L$, then gradient step of Gradient Descent~\ref{alg gd} with $h$ parameter:
    \begin{equation*}
        h = \frac{1}{4 \widehat{L}} \sqrt{\frac{1 - \widehat{\alpha}}{1 + \widehat{\alpha}}},
    \end{equation*}
    provides:
    \begin{eqnarray*}
        f(x^{k + 1}) 
       & \leqslant & f(x^k) - \frac{1}{32 \widehat{L}} \frac{1 - \widehat{\alpha}}{1 + \widehat{\alpha}} \| \wtgg f(x) \|_2^2 + \frac{3}{4 \widehat{L}} \frac{1}{(1 + \widehat{\alpha})^2} \delta^2.
    \end{eqnarray*}
\end{lemma}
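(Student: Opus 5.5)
We start from the $L$-smoothness inequality~\eqref{smooth cond} applied to the step $x^{k+1} = x^k - h\wtgg f(x^k)$. Since $\widehat L \geqslant L$, smoothness also holds with $\widehat L$, so
\begin{equation*}
    f(x^{k+1}) \leqslant f(x^k) - h \langle \nabla f(x^k), \wtgg f(x^k) \rangle + \frac{\widehat L h^2}{2} \|\wtgg f(x^k)\|_2^2 .
\end{equation*}
Unlike Lemma~\ref{GD step}, the target bound is expressed in $\|\wtgg f(x^k)\|_2^2$ rather than $\|\nabla f(x^k)\|_2^2$. We therefore bound the linear term directly by clause 2 of Lemma~\ref{cos lb}, with $\alpha$ replaced by $\widehat\alpha$.

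The replacement is legitimate because the noise condition~\eqref{noise condition} with $\alpha$ implies the same condition with any $\widehat\alpha \geqslant \alpha$. This gives
\begin{equation*}
    \langle \nabla f(x^k), \wtgg f(x^k)\rangle \geqslant \frac14\sqrt{\frac{1-\widehat\alpha}{1+\widehat\alpha}}\,\|\wtgg f(x^k)\|_2^2 - \frac{3}{\sqrt{(1+\widehat\alpha)^3(1-\widehat\alpha)}}\,\delta^2 .
\end{equation*}
Write $r = \sqrt{(1-\widehat\alpha)/(1+\widehat\alpha)}$, so $h = r/(4\widehat L)$. The coefficient of $\|\wtgg f(x^k)\|_2^2$ becomes
\begin{equation*}
    -\frac{r}{4}h + \frac{\widehat L}{2}h^2 = -\frac{r^2}{16\widehat L} + \frac{r^2}{32\widehat L} = -\frac{1}{32\widehat L}\,\frac{1-\widehat\alpha}{1+\widehat\alpha},
\end{equation*}
which is exactly the claimed decrease term.

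For the additive term we get
\begin{equation*}
    h\cdot\frac{3\delta^2}{\sqrt{(1+\widehat\alpha)^3(1-\widehat\alpha)}} = \frac{r}{4\widehat L}\cdot\frac{3\delta^2}{\sqrt{(1+\widehat\alpha)^3(1-\widehat\alpha)}} = \frac{3}{4\widehat L}\,\frac{\delta^2}{(1+\widehat\alpha)^2},
\end{equation*}
since $r/\sqrt{(1+\widehat\alpha)^3(1-\widehat\alpha)} = 1/(1+\widehat\alpha)^2$. This matches the stated constant exactly.

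The main point to verify is therefore the monotonicity in $\alpha$: Lemma~\ref{cos lb} must be applied with $\widehat\alpha$ in place of the true $\alpha$. This is immediate from~\eqref{noise condition}, because $\alpha\|\nabla f\|_2 + \delta \leqslant \widehat\alpha\|\nabla f\|_2 + \delta$. The rest is the algebra above, so no real obstacle remains.
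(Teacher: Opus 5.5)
Your algebra is right given clause~2 of Lemma~\ref{cos lb}. Applying that clause directly at $\widehat{\alpha}$ is actually cleaner than the paper's route. The paper applies clause~2 at the true $\alpha$ and then swaps $\alpha\to\widehat{\alpha}$ in the coefficients. That swap is invalid for the $\delta^2$ term, because $3/\sqrt{(1+\alpha)^3(1-\alpha)}$ decreases on $[0,1/2]$: it is $3$ at $\alpha=0$ and about $2.31$ at $\alpha=1/2$.

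The genuine gap is the black box itself. Clause~2 is false when its parameter is close to $1$. Its proof bounds $-\frac{\lambda}{2}\|\nabla f(x)\|_2^2$ from below using the \emph{lower} bound on $\|\nabla f(x)\|_2^2$ from Lemma~\ref{basic alpha conditions}, which is the wrong direction.

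For a counterexample, take $\delta>0$, $\|\nabla f(x)\|_2=a=\frac{\delta}{2(1-\alpha)}$ and $\widetilde{\nabla} f(x)=-\frac{\delta}{2a}\nabla f(x)$. This meets~\eqref{noise condition} with equality, and $\langle\nabla f(x),\widetilde{\nabla} f(x)\rangle=-\frac{\delta^2}{4(1-\alpha)}$. That value lies below $-3\delta^2/\sqrt{(1+\alpha)^3(1-\alpha)}$ whenever $(1+\alpha)^3>144(1-\alpha)$, e.g.\ for all $\alpha\geqslant 0.95$.

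For $f(x)=\frac{L}{2}\|x\|_2^2$ the smoothness inequality is an equality. So with $\widehat{L}=L$, $\widehat{\alpha}=\alpha$ and your $h$, the lemma is \emph{equivalent} to clause~2 at $\widehat{\alpha}$, and the statement itself fails there. Concretely, take $n=1$, $L=\widehat{L}=1$, $\alpha=\widehat{\alpha}=0.99$, $\delta=1$, $x^k=50$, $\widetilde{\nabla} f(x^k)=-\frac12$. Then $f(x^{k+1})-f(x^k)\approx 0.443$, while the claimed right-hand side is about $0.189$. This regime matters, since Algorithm~\ref{alg rel adaptive gd} allows any $\alpha<1$ and uses $\widehat{\alpha}_k=1-2^{-J_k}$.

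So ``no real obstacle remains'' is not right: you need a restriction on $\widehat{\alpha}$ and a correct proof of clause~2. One way uses three facts:
\begin{itemize}
\item $\|\zeta_r\|_2\leqslant\alpha\|\nabla f\|_2$ gives $2\langle\nabla f,\nabla f+\zeta_r\rangle\geqslant(1-\alpha^2)\|\nabla f\|_2^2+\|\nabla f+\zeta_r\|_2^2$;
\item Young gives $\langle\nabla f,\zeta_a\rangle\geqslant-\frac{1-\alpha^2}{2}\|\nabla f\|_2^2-\frac{\delta^2}{2(1-\alpha^2)}$;
\item $\|\nabla f+\zeta_r\|_2^2\geqslant\frac12\|\widetilde{\nabla} f\|_2^2-\delta^2$.
\end{itemize}
Together these give $\langle\nabla f,\widetilde{\nabla} f\rangle\geqslant\frac14\|\widetilde{\nabla} f\|_2^2-\frac{2-\alpha^2}{2(1-\alpha^2)}\delta^2$, which implies clause~2 for $\alpha\leqslant 0.9$. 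Your computation then proves the lemma verbatim for $\widehat{\alpha}\leqslant 0.9$.

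For $\widehat{\alpha}\to 1$, the example above forces the additive coefficient to be at least of order $1/(\widehat{L}\sqrt{1-\widehat{\alpha}^2})$. So with this step size and decrease term, no bounded constant like $\frac{3}{4\widehat{L}(1+\widehat{\alpha})^2}$ can work.
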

\begin{proof}
     Using smoothness of function $f$ we get:
    \begin{equation*}
        f(x^{k + 1}) \leqslant f(x^k) + \langle \nabla f(x^k), x^{k + 1} - x^k \rangle + \frac{L}{2} \| x^{k + 1} - x^k \|_2^2.
    \end{equation*}
    Using, Lemma~\ref{cos lb}:
    \begin{align*}
        \langle \nabla f(x^k), x^{k + 1} - x^k \rangle 
        & = -h \langle \nabla f(x), \wtgg f(x) \rangle
        \\
        & \leqslant \frac{-h}{4} \sqrt{\frac{1 - \alpha}{1 + \alpha}} \| \wtgg f(x) \|_2^2 
        + \frac{3h}{\sqrt{(1 + \alpha)^3(1 - \alpha)}} \delta^2
    \end{align*}
    Then we can continue initial inequality:
    \begin{eqnarray*}
        f(x^{k + 1}) 
        & \leqslant & f(x^k)
        - \frac{h}{4} \sqrt{\frac{1 - \alpha}{1 + \alpha}} \| \wtgg f(x) \|_2^2 + \frac{3h}{\sqrt{(1 + \alpha)^3(1 - \alpha)}} \delta^2
        \\
        & + & \frac{L h^2}{2} \|\wtgg f(x^k) \|_2^2
        \\
        & \overset{\alpha \leqslant \widehat{\alpha}, \; L \leqslant \widehat{L}}{\leqslant} &
        f(x^k) - \frac{h}{4} \sqrt{\frac{1 - \widehat{\alpha}}{1 + \widehat{\alpha}}} \| \wtgg f(x) \|_2^2
        \\
        & + & \frac{3h}{\sqrt{(1 + \widehat{\alpha})^3(1 - \widehat{\alpha})}} \delta^2
        + \frac{\widehat{L} h^2}{2} \|\wtgg f(x^k) \|_2^2 
        \\
        & \overset{h \text{ definition}}{=} & f(x^k) -\frac{1}{32 \widehat{L}} \frac{1 - \widehat{\alpha}}{1 + \widehat{\alpha}} \| \wtgg f(x) \|_2^2 + \frac{3}{4 \widehat{L}} \frac{1}{(1 + \widehat{\alpha})^2} \delta^2.
    \end{eqnarray*}
\end{proof}

\begin{theorem}[Theorem~\ref{adaptive PL convergence alpha and smooth text}] \label{th:gd_adapt_both}
    Let $f$ function satisfies P$\L$ condition~\eqref{PL cond} and smoothness~\eqref{smooth cond}, $\wtgg f$ satisfies error condition~\eqref{noise condition}. Then Algorithm~\ref{alg rel adaptive gd} with $\tau = \textbf{true}$ (enable smooth adaptiveness) provides convergence:
    \begin{eqnarray*}    
        f(x^N) - f^* & \leqslant & \left( 1 - \frac{(1 - \alpha)^2}{256} \min \left\lbrace (1 - \alpha)^{2}, \left(\nicefrac{L_0}{L} \right)^2 \right \rbrace \frac{\mu}{L_0} \right)^N (f(x^0) - f^*)
        \\
        & + & \frac{200}{(1 - \alpha)^{2}} \max \left\lbrace (1 - \alpha)^{-2}, \left(\nicefrac{L}{L_0} \right)^2 \right\rbrace \frac{\delta^2}{\mu},
    \end{eqnarray*}  
    and total number iterations of the inner loop is bounded by:
    \begin{equation*}
         N + \max \left \lbrace \log_2\left((1 - \alpha)^{-1}\right), \log_2\left(\nicefrac{L}{L_0} \right) \right \rbrace + 1.
    \end{equation*}
\end{theorem}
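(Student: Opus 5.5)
The plan is to follow the standard analysis of adaptive (backtracking) gradient methods, driven by Lemma~\ref{alpha est inequality}. The argument has four steps: show the inner loop always terminates at a bounded index, convert the accepted sufficient-decrease inequality into a contraction via P{\L}, unroll the recursion, and count the inner-loop iterations.

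\textbf{Step 1: termination and the bound on $J_k$.} Lemma~\ref{alpha est inequality} says the acceptance predicate
\begin{equation*}
f(x^{k+1}) \leqslant f(x^k) - \theta_k \|\wtgg f(x^k)\|_2^2 + \frac{3}{4(1+\wha_k)^2 \whL_k}\delta^2
\end{equation*}
holds whenever $\wha_k \geqslant \alpha$ and $\whL_k \geqslant L$. Indeed, $h$ and $\theta$ in UpdateCoefficientsAndMakeStep are exactly the constants of that lemma. Let $t^*$ be the smallest integer with $1 - 2^{-t^*} \geqslant \alpha$ and $L_0 2^{t^*} \geqslant L$, so
\begin{equation*}
t^* \leqslant \max\{\log_2((1-\alpha)^{-1}), \log_2(L/L_0)\} + 1.
\end{equation*}
Whenever $J_k \geqslant t^*$ the predicate holds, so the loop stops. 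By induction on $k$, every accepted index satisfies $J_k \leqslant \max\{t^*, J_{k-1}\}$, hence $J_k \leqslant t^*$. The index only decreases by at most one between outer steps.

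\textbf{Step 2: per-step contraction.} At the accepted index $t \leqslant t^*$ I will lower-bound $\theta_k$ and $1/\whL_k$. Since $1 - \wha = 2^{-t}$, one gets $1 - \wha \geqslant 2^{-t^*} \geqslant \tfrac12 \min\{1-\alpha, L_0/L\}$, together with $\whL_k \leqslant L_0 2^{t^*}$ and $2^{t^*} \leqslant 2\max\{(1-\alpha)^{-1}, L/L_0\}$.

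Next I will use Lemma~\ref{PL noised}, $\|\wtgg f(x)\|_2^2 \geqslant (1-\alpha)^2\mu(f(x)-f^*) - \delta^2$, to get
\begin{equation*}
f(x^{k+1}) - f^* \leqslant \bigl(1 - \theta_k (1-\alpha)^2 \mu\bigr)(f(x^k)-f^*) + \Bigl(\theta_k + \frac{3}{4\whL_k}\Bigr)\delta^2.
\end{equation*}
Here $\theta_k(1-\alpha)^2\mu \geqslant \frac{(1-\alpha)^2}{256}\min\{(1-\alpha)^2, (L_0/L)^2\}\frac{\mu}{L_0}$ after tracking powers of two. The constant $256$ comes from the factor $32$ in $\theta$, the factor $\tfrac12$ in $(1-\wha)/(1+\wha) \geqslant (1-\wha)/2$, and the ratio $1/\whL_k \geqslant 2^{-t^*}/L_0$. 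Both contain a $\min$, so the square of the min appears.

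\textbf{Step 3: unrolling.} I will take the worst-case contraction factor $\rho$ (the uniform lower bound above) and the worst additive term $c\,\delta^2$, with $c \leqslant \theta_k + 3/(4\whL_k) \leqslant 1/L_0$. Iterating gives the first term plus $c\delta^2/\bigl(1-(1-\rho)\bigr) = c\delta^2/\rho$. This evaluates to
\begin{equation*}
O\!\left(\frac{1}{(1-\alpha)^2}\max\{(1-\alpha)^{-2}, (L/L_0)^2\}\right)\frac{\delta^2}{\mu},
\end{equation*}
and bookkeeping the constants should give at most $200$.

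\textbf{Step 4: counting inner iterations.} Each outer step costs one evaluation plus the number of increments of $J$. Since $J$ decreases by at most one per outer step and $J \leqslant t^*$ always, a telescoping (potential) argument on $J_k$ bounds the total number of increments by $N + t^*$. This yields the stated count
\begin{equation*}
N + \max\{\log_2((1-\alpha)^{-1}), \log_2(L/L_0)\} + 1.
\end{equation*}

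The main obstacle is Step 2, specifically the consistent constants. The additive term uses the true $\alpha$ in Lemma~\ref{PL noised} but the estimates $\wha_k$, $\whL_k$ in the predicate. When $\whL_k$ is small (accepted with $\whL_k < L$), the $1/\whL_k$ in the $\delta^2$ term can be as large as $1/L_0$. Dividing that by the contraction rate produces the $\max\{\cdot\}^2$ factor, and it must be checked carefully that it does not worsen.
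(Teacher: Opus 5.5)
Your proposal is correct and follows the paper's proof essentially step for step: the bound on $J_k$ from Lemma~\ref{alpha est inequality}, the P{\L} conversion through Lemma~\ref{PL noised} using $(1-\wha_k)/\whL_k = 4^{-J_k}/L_0$ (which is your $2^{-t^*}$ bookkeeping), a uniform geometric unrolling, and the telescoping count of increments of $J_k$. The one adjustment is the additive constant: you have $c \leqslant \theta_k + \frac{3}{4\whL_k} \leqslant \frac{25}{32\whL_k}$, and rounding this up to $1/L_0$ would give $256$, so keep $c \leqslant \frac{25}{32 L_0}$ as the paper does, which yields exactly $\frac{25}{32}\cdot 256 = 200$.
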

\begin{proof}
    Based on description of Algorithm~\ref{alg rel adaptive gd} we should justify the completion of the "while" loop. That loop is running until the following condition is met:
    \begin{equation*}
        f(x^{k + 1}) \leqslant f(x^k) -\frac{1}{32 \widehat{L}_k} \frac{1 - \widehat{\alpha}_k}{1 + \widehat{\alpha}_k} \| \wtgg f(x) \|_2^2 + \frac{3}{4 \widehat{L}_k} \frac{1}{(1 + \widehat{\alpha}_k)^2} \delta^2.
    \end{equation*}
    Then (from result of Lemma~\ref{alpha est inequality}), one can see, that after, not more than:
    \begin{equation*}
        \max \left \lbrace \log_2\left((1 - \alpha)^{-1}\right), \max \left \lbrace 0, \log_2\left(\nicefrac{L}{L_0} \right) \right \rbrace \right \rbrace + 1,
    \end{equation*}
    iterations this loop will stop, because:
    \begin{equation*}
        \begin{gathered}
            1 - 2^{-\log_2(1 - \alpha) - 1} = \frac{1 + \alpha}{2} > \alpha, \\
            L_0 2^{\log_2\left(\nicefrac{L}{L_0} \right) + 1} = 2L > L,
        \end{gathered}
    \end{equation*}
    thus the following estimation can be provided:
    \begin{equation} \label{eq:J_ub}
        \begin{gathered}
            J_k \leqslant \max \left \lbrace \log_2\left((1 - \alpha)^{-1}\right), \log_2\left(\nicefrac{L}{L_0} \right) \right \rbrace + 1, \\
            4^{J_k} \leqslant 4 \cdot \max \left\lbrace (1 - \alpha)^{-2}, \left(\nicefrac{L}{L_0} \right)^2 \right \rbrace
        \end{gathered}
    \end{equation}
    We can estimate the following fraction:
    \begin{equation} \label{eq:1-a/L}
       \frac{1 - \widehat{\alpha}_k}{\widehat{L}_k} = 
       \frac{1 - \left(1 - 2^{-J_k} \right)}{L_0 2 ^{J_k}} = \frac{1}{L_0} 4^{-J_k}.
    \end{equation}
    When stop condition of the "while" loop is met:
    \begin{equation*}
        f(x^{k + 1}) \leqslant f(x^k) -\frac{1}{32 \widehat{L}_k} \frac{1 - \widehat{\alpha}_k}{1 + \widehat{\alpha}_k} \| \wtgg f(x) \|_2^2 + \frac{3}{4 \widehat{L}_k} \frac{1}{(1 + \widehat{\alpha}_k)^2} \delta^2,
    \end{equation*}
    from Lemma~\ref{PL noised}:
    \begin{eqnarray*}
        f(x^{k + 1}) - f^* & \leqslant & 
        f(x^k) - f^* - \frac{1}{32 \widehat{L}_k} \frac{1 - \widehat{\alpha}_k}{1 + \widehat{\alpha}_k} \| \wtgg f(x) \|_2^2
        \\
        & + & \frac{3}{4 \widehat{L}_k} \frac{1}{(1 + \widehat{\alpha}_k)^2} \delta^2 
        \\
        & \overset{\text{Lemma}~\ref{PL noised}}{\leqslant} & f(x^k) - f^* - \frac{\mu}{32 \widehat{L}_k}  \frac{1 - \widehat{\alpha}_k}{1 + \widehat{\alpha}_k} (1 - \alpha)^2 (f(x^k) - f^*) \\
        & + & \frac{1}{32 \widehat{L}_k} \frac{1 - \widehat{\alpha}_k}{1 + \widehat{\alpha}_k} \delta^2
        + \frac{3}{4 \widehat{L}_k} \frac{1}{(1 + \widehat{\alpha}_k)^2} \delta^2 
        \\
        & \overset{\frac{1 - \widehat{\alpha}_k}{1 + \widehat{\alpha}_k} \leqslant \frac{1}{(1 + \widehat{\alpha}_k)^2}}{\leqslant} & 
        \left( 1 - \frac{\mu}{32\widehat{L}_k}  \frac{1 - \widehat{\alpha}_k}{1 + \widehat{\alpha}_k} (1 - \alpha)^2 \right) (f(x^k) - f^*)
        \\
        & + & \frac{1}{(1 + \widehat{\alpha}_k)^2} \frac{25 \delta^2}{32 \widehat{L}_k}
        \\
        & \overset{\eqref{eq:1-a/L}, 0 \leqslant \wha_k \leqslant 1, L_0 \leqslant \whL_k }{\leqslant} & 
        \left( 1 - \frac{\mu}{64 L_0}  4^{-J_k} (1 - \alpha)^2 \right) (f(x^k) - f^*)
        + \frac{25 \delta^2}{32 L_0}
    \end{eqnarray*}
    Denoting $\BJ_N = \max\limits_{0 \leqslant k \leqslant N - 1} J_k$, we can iterate inequality above:
    \begin{eqnarray*}
       f(x^N) - f^*
       & \leqslant & \left( 1 - \frac{\mu}{64 L_0}  4^{-\BJ_N} (1 - \alpha)^2 \right)^N (f(x^0) - f^*)
       \\
       & + & \sum_{k = 0}^{N - 1} \left( 1 - \frac{\mu}{64 L_0}  4^{-\BJ_N} (1 - \alpha)^2 \right)^k \frac{25 \delta^2}{32 L_0} 
       \\
       & \leqslant & \left( 1 - \frac{\mu}{64 L_0}  4^{-\BJ_N} (1 - \alpha)^2 \right)^N (f(x^0) - f^*)
       + \frac{50}{(1 - \alpha)^{2}} 4^{\BJ_N} \frac{\delta^2}{\mu} 
       \\
       & \overset{\eqref{eq:J_ub}}{\leqslant} & \left( 1 - \frac{(1 - \alpha)^2}{256} \min \left\lbrace (1 - \alpha)^{2}, \left(\nicefrac{L_0}{L} \right)^2 \right \rbrace \frac{\mu}{L_0} \right)^N (f(x^0) - f^*)
       \\
       & + & \frac{200}{(1 - \alpha)^{2}} \max \left\lbrace (1 - \alpha)^{-2}, \left(\nicefrac{L}{L_0} \right)^2 \right \rbrace \frac{\delta^2}{\mu}.
    \end{eqnarray*}
    Now we should estimate total amount iterations of the "while" loop. On the $k$-th iteration "while" loop will work
    $J_{k + 1} - \max \left \lbrace J_k - 1, 1 \right \rbrace \leqslant J_{k + 1} - J_k + 1$. Then total number of iterations can be bounded:
    \begin{equation*}
        \sum_{k = 0}^{N - 1} \left( J_{k + 1} - J_k + 1 \right) = N + J_{N - 1} - J_0 \overset{\eqref{eq:J_ub}}{\leqslant} N +  \max \left \lbrace \log_2\left((1 - \alpha)^{-1}\right), \log_2\left(\nicefrac{L}{L_0} \right) \right \rbrace + 1.
    \end{equation*}
\end{proof}

\begin{theorem}[Theorem~\ref{adaptive PL convergence only alpha text}]
    Let $f$ function satisfies P$\L$ condition~\eqref{PL cond} and $L$-smoothness~\eqref{smooth cond}, $\wtgg f$ satisfies error condition~\eqref{noise condition}. Then Algorithm~\ref{alg rel adaptive gd} with $\tau = \textbf{false}$ (disable smooth adaptiveness) and $L_0 = L$ provides convergence:
    \begin{eqnarray*}    
        f(x^N) - f^* \leqslant \left( 1 - \frac{(1 - \alpha)^3}{128} \frac{\mu}{L} \right)^N (f(x^0) - f^*)
       + \frac{100}{(1 - \alpha)^{3}} \frac{\delta^2}{\mu},
    \end{eqnarray*}  
    and total number iterations of the inner loop is bounded by:
    \begin{equation*}
         N +  \log_2\left((1 - \alpha)^{-1}\right) + 1.
    \end{equation*}
\end{theorem}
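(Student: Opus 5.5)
The proof will mirror the one for Theorem~\ref{th:gd_adapt_both}, specialized to $\tau = \textbf{false}$, so that $\whL_k \equiv L_0 = L$ and only $\wha_k = 1 - 2^{-J_k}$ varies. The plan has four steps: show that the inner loop terminates, bound $J_k$, turn the accepted-step inequality into a contraction via P{\L}, and then unroll the recursion and count the inner iterations.

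\emph{Termination of the inner loop.} By Lemma~\ref{alpha est inequality} with $\whL = L$, any $\wha \geqslant \alpha$ makes the while-condition fail, so the loop exits. Since $1 - 2^{-t} \geqslant \alpha$ once $t \geqslant \log_2((1-\alpha)^{-1})$, we get
\begin{equation*}
J_k \leqslant \log_2\left((1-\alpha)^{-1}\right) + 1, \qquad 2^{J_k} \leqslant \frac{2}{1-\alpha}.
\end{equation*}
This is the step where the loss changes from the two-parameter case. There we had $(1-\wha_k)/\whL_k = 4^{-J_k}/L_0$. Now $\whL_k = L$ is fixed, so
\begin{equation*}
\frac{1-\wha_k}{\whL_k} = \frac{2^{-J_k}}{L} \geqslant \frac{1-\alpha}{2L}.
\end{equation*}
This single factor $(1-\alpha)$ is what yields $(1-\alpha)^3$ overall rather than $(1-\alpha)^4$.

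\emph{Contraction step.} On acceptance, the while-condition gives $f(x^{k+1}) \leqslant f(x^k) - \theta_k\|\wtgg f(x^k)\|_2^2 + \frac{3}{4(1+\wha_k)^2 L}\delta^2$. I then apply Lemma~\ref{PL noised}, $\|\wtgg f(x^k)\|_2^2 \geqslant (1-\alpha)^2\mu(f(x^k)-f^*) - \delta^2$, and use $1 + \wha_k \leqslant 2$. This gives
\begin{equation*}
f(x^{k+1}) - f^* \leqslant \left(1 - \frac{\mu}{64L}\, 2^{-J_k}(1-\alpha)^2\right)(f(x^k)-f^*) + \frac{25}{32L}\delta^2 .
\end{equation*}
Substituting the bound $2^{-J_k} \geqslant (1-\alpha)/2$ makes the contraction factor at most $1 - \frac{(1-\alpha)^3}{128}\frac{\mu}{L}$ uniformly in $k$.

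\emph{Unrolling and inner-iteration count.} Iterating the recursion and summing the geometric series bounds the additive term by
\begin{equation*}
\frac{25\delta^2}{32L}\cdot\frac{128L}{(1-\alpha)^3\mu} = \frac{100}{(1-\alpha)^3}\frac{\delta^2}{\mu},
\end{equation*}
which is exactly the constant in the statement. For the count of inner iterations, I telescope $\sum_k (J_{k+1} - J_k + 1) \leqslant N + \max_k J_k$, exactly as in Theorem~\ref{th:gd_adapt_both}, and then use the bound on $J_k$ above.

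The main obstacle is checking that the constants line up. In particular, the nonnegativity of $1 - \frac{(1-\alpha)^3\mu}{128L}$ must hold, which follows from $\mu \leqslant L$. The other point needing care is the term $\delta^2\theta_k$ coming from Lemma~\ref{PL noised}: it must be absorbed using $(1-\wha)/(1+\wha) \leqslant 1/(1+\wha)^2$, exactly as before, so that the total additive coefficient is $\frac{25}{32L}$.
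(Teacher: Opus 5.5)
Your proposal is correct and follows the paper's proof essentially step for step. The shared steps are: Lemma~\ref{alpha est inequality} to get loop termination and $J_k \leqslant \log_2((1-\alpha)^{-1}) + 1$ (hence $1-\wha_k \geqslant (1-\alpha)/2$); Lemma~\ref{PL noised} together with $1+\wha_k \leqslant 2$ and $\frac{1-\wha_k}{1+\wha_k} \leqslant \frac{1}{(1+\wha_k)^2}$ to obtain the contraction $1 - \frac{(1-\alpha)^3}{128}\frac{\mu}{L}$ with additive term $\frac{25\delta^2}{32L}$; the geometric sum giving $\frac{100}{(1-\alpha)^3}\frac{\delta^2}{\mu}$; and the same telescoping count of inner iterations. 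You also state explicitly that $1 - \frac{(1-\alpha)^3\mu}{128L} \geqslant 0$ is needed to unroll the recursion, which the paper leaves implicit.
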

\begin{proof}
    We will repeat the proof of the Theorem~\ref{th:gd_adapt_both} with clarifications for the case $\whL_k = L$.
    Based on description of Algorithm~\ref{alg rel adaptive gd} we should justify the completion of the "while" loop. That loop is running until the following condition is met:
    \begin{equation*}
        f(x^{k + 1}) \leqslant f(x^k) -\frac{1}{32 L} \frac{1 - \widehat{\alpha}_k}{1 + \widehat{\alpha}_k} \| \wtgg f(x) \|_2^2 + \frac{3}{4 L} \frac{1}{(1 + \widehat{\alpha}_k)^2} \delta^2.
    \end{equation*}
    From result of Lemma~\ref{alpha est inequality}, after not more than:
    \begin{equation*}
        \log_2\left((1 - \alpha)^{-1}\right) + 1,
    \end{equation*}
    iterations this loop will stop, thus the following estimation can be provided:
    \begin{equation} \label{eq:a_ub}
        \begin{gathered}
            J_k \leqslant \log_2\left((1 - \alpha)^{-1}\right) + 1, \\
            \wha_k \leqslant \frac{1 + \alpha}{2}, \; 1 - \wha_k \geqslant \frac{1 - \alpha}{2}.
        \end{gathered}
    \end{equation}
    After inner "while" loop is finished we can provide (see proof of Theorem~\ref{th:gd_adapt_both} for details):
    \begin{eqnarray*}
        f(x^{k + 1}) - f^* & \leqslant & 
        f(x^k) - f^*  -\frac{1}{32 L} \frac{1 - \widehat{\alpha}_k}{1 + \widehat{\alpha}_k} \| \wtgg f(x) \|_2^2 + \frac{3}{4 L} \frac{1}{(1 + \widehat{\alpha}_k)^2} \delta^2 
        \\
        & \overset{\text{Lemma}~\ref{PL noised}}{\leqslant} & 
        \left( 1 - \frac{\mu}{32 L}  \frac{1 - \widehat{\alpha}_k}{1 + \widehat{\alpha}_k} (1 - \alpha)^2 \right) (f(x^k) - f^*)
        \\
        & + & \frac{1}{(1 + \widehat{\alpha}_k)^2} \frac{25 \delta^2}{32 L}
        \\
        & \overset{\eqref{eq:a_ub}, 0 \leqslant \wha_k \leqslant 1}{\leqslant} & \left( 1 - \frac{(1 - \alpha)^3}{128} \frac{\mu}{L} \right) (f(x^k) - f^*) + \frac{25 \delta^2}{32 L}.
    \end{eqnarray*}
    Iterating inequality above:
    \begin{eqnarray*}
       f(x^N) - f^*
       & \leqslant & \left( 1 - \frac{(1 - \alpha)^3}{128} \frac{\mu}{L} \right)^N (f(x^0) - f^*)
       \\
       & + & \sum_{k = 0}^{N - 1} \left( 1 - \frac{(1 - \alpha)^3}{128} \frac{\mu}{L} \right)^k \frac{25 \delta^2}{32 L} 
       \\
       & \leqslant & \left( 1 - \frac{(1 - \alpha)^3}{128} \frac{\mu}{L} \right)^N (f(x^0) - f^*)
       + \frac{100}{(1 - \alpha)^{3}} \frac{\delta^2}{\mu}.
    \end{eqnarray*}
    And similarly to the Theorem~\ref{th:gd_adapt_both} one can estimate total number of the "while" loop:
    \begin{equation*}
        \sum_{k = 0}^{N - 1} \left( J_{k + 1} - J_k + 1 \right) = N + J_{N - 1} - J_0 \overset{\eqref{eq:a_ub}}{\leqslant} N +  \log_2\left((1 - \alpha)^{-1}\right) + 1.
    \end{equation*}
\end{proof}

\section{Relative interpretation of absolute noise missing proofs} \label{appendix relative interpretation}

\begin{theorem}[Theorem~\eqref{main relative interpretation th text}] \label{main relative interpretation th}
    Let $f$ is $\mu$-strongly convex~\eqref{strong conv} and $L$-smooth~\eqref{smooth cond}, $\wtgg f$ satisfies~\eqref{noise condition}. Algorithm $\mathcal{A}$ is first order method~\eqref{linear first order method}, using $\wtgg f$ satisfying relative noise~\ref{relative noise condition} with magnitude $\alpha_0$ produces $x^N$, such that:
    \begin{equation*}
        f(x^N) - f^* \leqslant C_0 LR^2 \exp \left(-A_0 \left(\frac{\mu}{L} N \right)^{\gamma(\alpha_0)} \right)^N,
    \end{equation*}
    where:
    \begin{equation*}
        \gamma: [0; 1) \to [1 / 2; + \infty]
    \end{equation*}
    Then for any $K > (1 - \alpha)^{-1}$ we can solve the optimization problem~\eqref{optim} with precision $\varepsilon_0$, where
    \begin{equation*}
        f(x^{N_0}) - f^*
        \leqslant \varepsilon_0 = 
        \frac{1}{(1 - \alpha)^2} \left( \left((1 + \alpha)K + 1 \right)^2 + 1 \right) \frac{\delta^2}{\mu},
    \end{equation*}
    and
    \begin{equation*}
        N_0 \leqslant \frac{1}{A_0} \left(\frac{L}{\mu} \right)^{\gamma(\widehat{\alpha})} \ln \left( \frac{(1 - \alpha)^2}{\left((1 + \alpha)K + 1 \right)^2 + 1} \cdot C_0 \cdot \frac{\mu}{\delta^2} \right),
    \end{equation*}
    using algorithm $\mathcal{A}$ with a relative error parameter $\widehat{\alpha} = \alpha + \frac{1}{K}$.
\end{theorem}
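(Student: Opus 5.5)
The plan is to run $\mathcal{A}$ with the inflated relative parameter $\widehat{\alpha} = \alpha + \frac{1}{K}$, stopping as soon as the observed gradient becomes small, and to split into two cases according to whether the stop happens. The guiding fact is Proposition~\ref{propostion:rel_interpret}, generalised to composite noise: whenever $\|\nabla f(x)\|_2 \geqslant K\delta$, condition~\eqref{noise condition} gives
\begin{equation*}
    \|\wtgg f(x) - \nabla f(x)\|_2 \leqslant \alpha \|\nabla f(x)\|_2 + \delta \leqslant \left(\alpha + \tfrac{1}{K}\right)\|\nabla f(x)\|_2 = \widehat{\alpha}\|\nabla f(x)\|_2 .
\end{equation*}
So on the region where the true gradient is large, the oracle is a purely relative oracle with parameter $\widehat{\alpha}$. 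Since $\mathcal{A}$ is a first order method~\eqref{linear first order method}, its trajectory depends only on oracle answers. As long as every query point lies in that region, the run is therefore indistinguishable from a run on some admissible relative oracle. Precisely, we can define $\wtgg' f = \wtgg f$ at the visited points and $\wtgg' f = \nabla f$ elsewhere, and $\wtgg'$ satisfies~\eqref{relative noise condition} with $\widehat{\alpha}$. Hence the assumed rate for $\mathcal{A}$ applies with exponent $\gamma(\widehat{\alpha})$. The hypothesis $K > (1-\alpha)^{-1}$ is exactly what makes $\widehat{\alpha} < 1$, so $\gamma(\widehat{\alpha})$ is defined.

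The stopping rule must be checkable, so it is stated in terms of the observed gradient: stop at the first $k$ with $\|\wtgg f(x^k)\|_2 \leqslant (1+\alpha)K\delta + \delta$.

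In the first case the rule fires at some $x^k$. By Lemma~\ref{basic alpha conditions},
\begin{equation*}
    \|\nabla f(x^k)\|_2 \leqslant \frac{\|\wtgg f(x^k)\|_2 + \delta}{1-\alpha} \leqslant \frac{(1+\alpha)K + 2}{1-\alpha}\,\delta .
\end{equation*}
The P{\L} condition~\eqref{PL cond}, which follows from strong convexity, then bounds $f(x^k) - f^*$ by $\frac{1}{2\mu}$ times the square of this. The aim is to fit this under $\varepsilon_0$; bookkeeping via $(a+b)^2 \leqslant 2a^2 + 2b^2$ or a slightly tuned threshold should give exactly $\frac{((1+\alpha)K+1)^2 + 1}{(1-\alpha)^2}\frac{\delta^2}{\mu}$.

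In the second case the rule never fires. Then at every query point $\|\wtgg f\|_2 > (1+\alpha)K\delta + \delta$. By the upper bound in Lemma~\ref{basic alpha conditions} this forces $\|\nabla f\|_2 > K\delta$, so every point is in the good region and the relative-oracle rate applies. I read the rate as $f(x^N) - f^* \leqslant C_0 LR^2 \exp\bigl(-A_0 (\mu/L)^{\gamma(\widehat{\alpha})} N\bigr)$, the form used in Theorem~\ref{lower bound relative noise mu > 0 text}; the displayed formula appears to be a typo. Setting this at most $\varepsilon_0$ and solving for $N$ yields the stated $N_0$, with $LR^2$ absorbed or normalised into $C_0$ as in the statement. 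So after $N_0$ steps either we stopped early with the guarantee, or $x^{N_0}$ satisfies it.

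The main obstacle is the simulation step: the convergence guarantee of $\mathcal{A}$ is assumed only for oracles satisfying~\eqref{relative noise condition} globally. I will handle this by the extension argument above, noting that for a deterministic first order method the iterates up to step $N_0$ are unchanged when the oracle is modified off the visited points. A secondary nuisance is matching the constants in $\varepsilon_0$; I would adjust the threshold constant so the algebra closes rather than insist on a particular one.
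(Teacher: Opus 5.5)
Your proposal is correct and follows essentially the paper's argument: the same observed-gradient threshold $\bigl((1+\alpha)K+1\bigr)\delta$, the same two-case split, and the same reading of the rate with $LR^2$ absorbed into $C_0$. In the stopping case, your bound $\|\nabla f\|_2 \leqslant (\|\wtgg f\|_2+\delta)/(1-\alpha)$ combined with P{\L} already gives exactly $\varepsilon_0$, because $\bigl((1+\alpha)K+2\bigr)^2 \leqslant 2\bigl(((1+\alpha)K+1)^2+1\bigr)$, so no threshold tuning is needed (the paper instead applies Lemma~\ref{PL noised} directly), and your oracle-extension argument makes rigorous the simulation step that the paper only asserts.
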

\begin{proof}
    We will run algorithm $\mathcal{A}$ with relative error parameter $\widehat{\alpha} = \alpha + \frac{1}{K}$ until $\| \wtgg f(x^{N}) \|_2 > \widetilde{K} \delta$, where
    \begin{equation*}
        \widetilde{K} = (1 + \alpha)K + 1.
    \end{equation*}
    Firstly we should note that if $\| \wtgg f(x^{N}) \|_2 > \widetilde{K} \delta$ one can obtain, using~\eqref{basic alpha conditions}:
    \begin{equation*}
        \|\nabla f(x^N) \|_2 \geqslant (1 + \alpha)^{-1} \left(\| \wtgg f(x^N) \|_2 - \delta \right) > (1 + \alpha)^{-1} \left(\widetilde{K} \delta - \delta \right) = K \delta,
    \end{equation*}
    that is while algorithm $\mathcal{A}$ is working we can guarantee, that:
    \begin{equation*}
        \frac{\|\zeta_a(x^{k}) + \zeta_r(x^{k}) \|_2}{\|\nabla f(x^{k}) \|_2} \leqslant \frac{\alpha \|\nabla f(x^{k}) \|_2 + \delta}{\| \nabla f(x^{k}) \|_2} < \alpha + \frac{1}{K} = \widehat{\alpha} < 1.
    \end{equation*}
    On the contrary, if $\| \wtgg f(x^{N}) \|_2 \leqslant \widetilde{K} \delta$:
    \begin{eqnarray*}
        f(x^{N}) - f^* 
        & \overset{\text{Lemma}~\ref{PL noised}}{\leqslant} & \frac{1}{\mu (1 - \alpha^2)} \left( \|\wtgg f(x^{N}) \|_2^2 + \delta^2 \right)
        \\
        & \leqslant &
        \frac{1}{(1 - \alpha)^2} \left( \left((1 + \alpha)K + 1 \right)^2 + 1 \right) \frac{\delta^2}{\mu}.
    \end{eqnarray*}
    Now we can estimate number of iterations when condition $f(x^N) - f^* \leqslant \varepsilon_0$ will be reached (while $\| \wtgg f(x^k) \|_2 > \widetilde{K} \delta$):
    \begin{equation*}
        \begin{gathered}
            f(x^N) - f^* \leqslant C_0 \exp \left( - A_0 \left(\frac{\mu}{L} \right)^{\gamma(\widehat{\alpha})} N \right) \leqslant \varepsilon_0,
            \\
            \exp \left( - A_0 \left(\frac{\mu}{L} \right)^{\gamma(\widehat{\alpha})} N \right) \leqslant \frac{\varepsilon_0}{C_0},
            \\
            A_0 \left(\frac{\mu}{L} \right)^{\gamma(\widehat{\alpha})} N  \geqslant \ln \left( \frac{C_0}{\varepsilon_0} \right),
            \\
            N \geqslant \frac{1}{A_0} \left(\frac{L}{\mu} \right)^{\gamma(\widehat{\alpha})} \ln \left( \frac{C_0}{\varepsilon_0} \right),
            \\
            N \geqslant \frac{1}{A_0} \left(\frac{L}{\mu} \right)^{\gamma(\widehat{\alpha})} \ln \left( \frac{(1 - \alpha)^2}{\left((1 + \alpha)K + 1 \right)^2 + 1} \cdot C_0 \cdot \frac{\mu}{\delta^2} \right).
        \end{gathered}
    \end{equation*}
    That is, one of the two conditions will be met - either $f(x^{N_0}) - f^* \leqslant \varepsilon_0$ or $\| \wtgg f(x^{N_0}) \|_2 > \widetilde{K} \delta$ will happen first after the following amount of iterations:
    \begin{equation*}
        N_0 \leqslant \frac{1}{A_0} \left(\frac{L}{\mu} \right)^{\gamma(\widehat{\alpha})} \ln \left( \frac{(1 - \alpha)^2}{\left((1 + \alpha)K + 1 \right)^2 + 1} \cdot C_0 \cdot \frac{\mu}{\delta^2} \right).
    \end{equation*}
\end{proof}

\begin{theorem}[Theorem~\ref{re-agm rel interpretation text}] \label{re-agm rel interpretation}
    Let $f$ is $\mu$-strongly convex~\eqref{strong conv} and $L$-smooth~\eqref{smooth cond}, $\wtgg f$ satisfies~\eqref{noise condition}, $\alpha \leqslant \frac{1}{6} \left(\frac{\mu}{2L}\right)^{\gamma_0}, 0 \leqslant \gamma_0 \leqslant \frac{1}{2}$. Using Algorithm~\ref{alg re-agm} with parameters $(L, \mu, x^0, \widehat{\alpha})$, where
    \begin{equation*}
        \widehat{\alpha} = \alpha + \frac{1}{6} \left(\frac{\mu}{2L}\right)^{\beta}, \quad 0 \leqslant \beta \leqslant \frac{1}{2},
    \end{equation*}
    we can obtain:
    \begin{equation*}
        \min \left \lbrace f(y^{N}) - f^*, f(x^{N}) - f^* \right \rbrace
        \leqslant \frac{1}{(1 - \alpha)^2} \left( \left(6(1 + \alpha)\left(\frac{2L}{\mu}\right)^{\beta} + 1 \right)^2 + 1 \right) \frac{\delta^2}{\mu}.
    \end{equation*}
    We will forcefully stop the algorithm based on the condition:
    \begin{equation*}
        \|\wtgg f(x^N) \|_2 \leqslant \left(6(1 + \alpha) \left(\frac{2L}{\mu}\right)^{\beta} + 1 \right) \delta,
    \end{equation*}
    in this case, the method will work no more than
    \begin{equation*}
        N = 300 \left(\frac{L}{\mu} \right)^{1 - \min\left \lbrace \gamma_0, \beta \right \rbrace} \ln \left( \frac{(1 - \alpha)^2 }{\left(6(1 + \alpha) \left(\frac{2L}{\mu}\right)^{\beta} + 1 \right)^2 + 1} \frac{LR^2}{\nicefrac{\delta^2}{\mu}} \right).
    \end{equation*}
\end{theorem}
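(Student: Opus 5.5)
This is a direct specialization of Theorem~\ref{main relative interpretation th} to Algorithm~\ref{alg re-agm}, with the convergence rate supplied by Theorem~\ref{acc re agm conv}. I will set
\begin{equation*}
    K = 6\left(\frac{2L}{\mu}\right)^{\beta}, \qquad \widetilde{K} = (1 + \alpha)K + 1 .
\end{equation*}
Then $\widehat{\alpha} = \alpha + 1/K$ is exactly the parameter in the statement, and the stopping threshold $\widetilde{K}\delta$ is exactly the stated criterion.

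\textbf{Admissibility of $\widehat{\alpha}$.} Since $\alpha \leqslant \frac16$ and $1/K \leqslant \frac16$, we have $\widehat{\alpha} \leqslant \frac13$, so Theorem~\ref{acc re agm conv} applies with parameter $\widehat{\alpha}$. The condition $K > (1-\alpha)^{-1}$ also holds, because $K \geqslant 6$.

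\textbf{Key step: the noise is purely relative while the algorithm runs.} As in the proof of Theorem~\ref{main relative interpretation th}, suppose the stopping test at a query point has not fired, i.e.\ $\|\wtgg f\|_2 > \widetilde{K}\delta$. Lemma~\ref{basic alpha conditions} then gives $\|\nabla f\|_2 > K\delta$. Hence the total error satisfies $\|\zeta\|_2 \leqslant (\alpha + 1/K)\|\nabla f\|_2 = \widehat{\alpha}\|\nabla f\|_2$. So along the trajectory the oracle obeys \eqref{relative noise condition} with magnitude $\widehat{\alpha}$ and $\delta = 0$.

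\textbf{Main obstacle: where the test is applied.} RE-AGM queries the oracle at $y^k$, not at $x^k$. The trick must therefore monitor $\|\wtgg f(y^k)\|_2$; this is the reason for the $\min\{f(y^N)-f^*, f(x^N)-f^*\}$ in the conclusion. I will run the algorithm and test $\|\wtgg f(y^k)\|_2 \leqslant \widetilde{K}\delta$ (and likewise at $x^k$ if the stated criterion at $x^N$ is used). As long as the test fails at every query point, the whole analysis of Theorem~\ref{acc re agm conv} (Lemmas~\ref{c > f lemma}, \ref{upper bound for lemma}, \ref{conv lemma}) goes through with $\delta = 0$. Note that Lemma~\ref{GD step} only needs the noise bound at $y^k$. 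The conclusion is
\begin{equation*}
    f(x^N) - f^* \leqslant \left(1 - \tfrac{1}{300}\left(\tfrac{\mu}{L}\right)^{1-\gamma^*(\widehat{\alpha})}\right)^N \left(f(x^0) - f^* + \tfrac{\mu}{4}R^2\right) \leqslant 2LR^2 \exp\left(-\tfrac{1}{300}\left(\tfrac{\mu}{L}\right)^{1-\gamma^*}N\right).
\end{equation*}

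\textbf{Bounding $\gamma^*$.} Since $3\widehat{\alpha} \leqslant \frac12\left(\frac{\mu}{2L}\right)^{\gamma_0} + \frac12\left(\frac{\mu}{2L}\right)^{\beta} \leqslant \left(\frac{\mu}{2L}\right)^{\min\{\gamma_0,\beta\}}$, we get $\gamma^*(\widehat{\alpha}) \geqslant \min\{\gamma_0,\beta\}$. Hence the exponent in the rate is at most $1 - \min\{\gamma_0,\beta\}$.

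\textbf{When the test fires.} If the test fires at a point $z \in \{x^N, y^N\}$, Lemma~\ref{PL noised} (with the true $\alpha$ and $\delta$) gives
\begin{equation*}
    f(z) - f^* \leqslant \frac{\|\wtgg f(z)\|_2^2 + \delta^2}{(1-\alpha)^2\mu} \leqslant \frac{\widetilde{K}^2 + 1}{(1-\alpha)^2}\frac{\delta^2}{\mu},
\end{equation*}
which is the claimed accuracy $\varepsilon_0$.

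\textbf{Iteration count.} If the test never fires, the accuracy $\varepsilon_0$ is reached once
\begin{equation*}
    N \geqslant 300\left(\frac{L}{\mu}\right)^{1-\min\{\gamma_0,\beta\}} \ln\left(\frac{LR^2}{\varepsilon_0}\right),
\end{equation*}
up to the constant in front of $LR^2$ inside the logarithm. This matches the stated bound; absorbing the factor $2$ into the constants is only bookkeeping. Thus either the stopping test fires, or $\varepsilon_0$-accuracy is attained, within the stated number of iterations.
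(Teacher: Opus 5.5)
Your proposal is correct and follows the paper's proof. Both specialize Theorem~\ref{main relative interpretation th} to RE-AGM with $K = 6(2L/\mu)^{\beta}$, show $\gamma^*(\widehat{\alpha}) \geqslant \min\{\gamma_0,\beta\}$, and invoke Lemma~\ref{PL noised} when the stopping test fires. Your choice to monitor the computable quantity $\|\wtgg f(y^k)\|_2$ at the actual query points is cleaner than the paper's test on the true gradient, $\|\nabla f(y^N)\|_2 > K\delta$. The factor $2$ you carry is unnecessary: by smoothness and $\mu \leqslant L$, $f(x^0) - f^* + \frac{\mu}{4}R^2 \leqslant \frac{3}{4}LR^2 \leqslant LR^2$, so $C_0 = 1$ as in the paper and the stated $N$ follows with its exact constants.
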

\begin{proof}
    We will use Theorem~\ref{main relative interpretation th}. Firstly, in notation of Theorem~\ref{main relative interpretation th} and result of Theorem~\ref{acc re agm conv}:
    \begin{equation*}
        \begin{gathered}
            \gamma(\alpha) = 1 - \min \left \lbrace \log_{\mu / 2L} (3 \alpha ), \frac{1}{2} \right \rbrace, \\
            A_0 = \frac{1}{300}, \\
            C_0 = 1, \; \text{We used: } f(x^0) - f^* + \frac{\mu}{4} R^2 \leqslant LR^2, \\
            K = 6 \left(\frac{2L}{\mu}\right)^{\beta}, \\
            \widehat{\alpha} = \alpha + \frac{1}{6} \left(\frac{\mu}{2L}\right)^{\beta} \leqslant  \frac{1}{6}\left(\frac{\mu}{2L}\right)^{\gamma_0} + \frac{1}{6} \left(\frac{\mu}{2L}\right)^{\beta} \leqslant \frac{1}{3} \left(\frac{\mu}{2L}\right)^{\min \left \lbrace \beta, \gamma_0 \right \rbrace}, \\
            \log_{\mu / 2L} (3 \widehat{\alpha} ) \geqslant \min \left \lbrace \beta, \gamma_0 \right \rbrace, \\
            \min \left \lbrace \log_{\mu / 2L} (3 \widehat{\alpha} ), \frac{1}{2} \right \rbrace \geqslant \min \left \lbrace \frac{1}{2}, \min \left \lbrace \beta, \gamma_0 \right \rbrace \right \rbrace = \min \left \lbrace \beta, \gamma_0 \right \rbrace, \\
            \gamma(\widehat{\alpha}) \leqslant 1 - \min \left \lbrace \beta, \gamma_0 \right \rbrace.
        \end{gathered}
    \end{equation*}
    We will run Algorithm~\ref{alg re-agm} while $\| \nabla f(y^N) \|_2 > K \delta$. Thus after $N_0$:
    \begin{eqnarray*}
        \min \left \lbrace f(y^{N_0}) - f^*, f(x^{N_0}) - f^* \right \rbrace
        & \leqslant & \frac{1}{(1 - \alpha)^2} \left( \left((1 + \alpha)K + 1 \right)^2 + 1 \right) \frac{\delta^2}{\mu}
        \\
        & \leqslant & \frac{1}{(1 - \alpha)^2} \left( \left(6(1 + \alpha)\left(\frac{2L}{\mu}\right)^{\beta} + 1 \right)^2 + 1 \right) \frac{\delta^2}{\mu},
    \end{eqnarray*}
    where:
    \begin{eqnarray*}
        N_0 & \leqslant & \frac{1}{A_0} \left(\frac{L}{\mu} \right)^{\gamma(\widehat{\alpha})} \ln \left( \frac{(1 - \alpha)^2}{\left((1 + \alpha)K + 1 \right)^2 + 1} \cdot C_0 \cdot \frac{\mu}{\delta^2} \right)
        \\
        & \leqslant & 300 \left(\frac{L}{\mu} \right)^{1 - \min\left \lbrace \gamma_0, \beta \right \rbrace} \ln \left( \frac{(1 - \alpha)^2 }{\left(6(1 + \alpha) \left(\frac{2L}{\mu}\right)^{\beta} + 1 \right)^2 + 1} LR^2 \frac{\mu}{\delta^2} \right).
    \end{eqnarray*}
\end{proof}

\begin{theorem}[Theorem~\ref{re-agm rel interpret and reg text}] \label{re-agm rel interpret and reg}
    Let $f$ is convex~\eqref{convexity} and smooth~\eqref{smooth cond}, $\varepsilon \leqslant L R^2$, $\widetilde{\nabla} f$ satisfies relative noise condition with:
    \begin{equation*}
        \begin{gathered}
            0 < \alpha \leqslant \frac{1}{9} \left(\frac{\varepsilon}{2 L R^2} \right)^{\tau}, \quad \text{where } 0 \leqslant \tau \leqslant \frac{1}{2}, \\
            \| \widetilde{\nabla} f(x) - \nabla f(x) \|_2 \leqslant \alpha \|\nabla f(x) \|_2.
        \end{gathered}
    \end{equation*}
    Then we can solve problem~\eqref{optim} with precision $\varepsilon$ ($f(x^N) - f^* \leqslant \varepsilon)$, using Algorithm~\ref{alg re-agm} via regularization technique with complexity:
    \begin{equation*}
        N = 72000 \left(\frac{L R^2}{\varepsilon} \right)^{1 - \tau} \ln \left( \frac{480 L R^2}{\varepsilon} \right).
    \end{equation*}
\end{theorem}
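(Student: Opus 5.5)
The plan is to combine the regularization reduction (Lemma~\ref{reg base}) with the relative-interpretation stopping rule (Theorem~\ref{re-agm rel interpretation}). Regularizing turns the relative noise on $f$ into composite noise on a strongly convex function, and the stopping rule then treats the absolute part as additional relative noise. This avoids the $(L/\mu)^{\gamma^*}\delta^2/\mu$ accumulation that Lemma~\ref{reg base} alone would incur.

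\textbf{Step 1 (regularize).} Set $f_\mu(x) = f(x) + \frac{\mu}{2}\|x-x^0\|_2^2$ with $\mu = c\,\varepsilon/R^2$ for a small absolute constant $c$. This $f_\mu$ is $\mu$-strongly convex and $(L+\mu)\le 2L$-smooth. By Lemma~\ref{reg noise} and Lemma~\ref{reg dist bound}, $\wtgg f_\mu$ satisfies the composite condition~\eqref{noise condition} with parameters $\alpha_\mu = 2\alpha$ and $\delta_\mu = \alpha\mu R$.

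It is also convenient to check that $\alpha \le \frac19(\varepsilon/2LR^2)^\tau$ gives $2\alpha \le \frac16(\mu/2L)^{\gamma_0}$ with $\gamma_0=\tau$, up to the constant $c$ absorbed into the logarithm base. This requires $\mu/2L \gtrsim \varepsilon/(2LR^2)$ up to constants. A constant-factor mismatch inside a power $\tau\le 1/2$ only changes the leading constant, and I would absorb it by choosing $c$ and using $(1/c)^{\tau}\le c^{-1/2}$.

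\textbf{Step 2 (relative interpretation on $f_\mu$).} Apply Theorem~\ref{re-agm rel interpretation} to $f_\mu$ with $\beta=\tau$, i.e.\ run Algorithm~\ref{alg re-agm} with $\widehat\alpha = 2\alpha + \frac16(\mu/2L)^\tau$ and the gradient-norm stopping rule. This yields a point with
\[
f_\mu(\cdot) - f_\mu^* \lesssim \left(\frac{2L}{\mu}\right)^{2\tau}\frac{\delta_\mu^2}{\mu}
= \left(\frac{2L}{\mu}\right)^{2\tau}\alpha^2\mu R^2 .
\]
Substituting $\alpha^2\le \frac1{81}(\varepsilon/2LR^2)^{2\tau}$ cancels the factor $(L/\mu)^{2\tau}$ up to $c^{-2\tau}\le c^{-1}$, leaving $O(\mu R^2) = O(c\varepsilon)$. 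Choosing $c$ small (something like $1/240$, consistent with the $480$ inside the log) makes this at most $\varepsilon/2$. The iteration count from Theorem~\ref{re-agm rel interpretation} is $300(L/\mu)^{1-\tau}\ln(\cdot)$. With $L/\mu = LR^2/(c\varepsilon)$, this gives $300\,c^{-(1-\tau)}(LR^2/\varepsilon)^{1-\tau}\ln(\cdot)$, and $300/c \approx 72000$ matches the stated constant. The log argument is bounded by $LR^2\mu/\delta_\mu^2 = L/(\alpha^2\mu)$; this depends on $\alpha$, and I would bound it by noting that when $\alpha$ is tiny one can instead stop once $f_\mu$'s optimization term is below $\varepsilon/4$, giving $\ln(4LR^2/\mu R^2\cdot\ldots)\le\ln(480LR^2/\varepsilon)$.

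\textbf{Step 3 (transfer back).} Exactly as in the end of the proof of Lemma~\ref{reg base}, $f(x)-f^* \le f_\mu(x)-f_\mu^* + \frac{\mu}{2}R^2 \le \varepsilon/2+\varepsilon/2$. The theorem returns $\min\{f(y^N),f(x^N)\}$, so one outputs whichever of $x^N,y^N$ is better. Since only inexact gradients are available, I would note that the stopping guarantee applies at the point where the check was made, and simply output that point.

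\textbf{Main obstacle.} The main difficulty is the bookkeeping of constants and of the logarithm: Theorem~\ref{re-agm rel interpretation} has $\ln(LR^2\mu/\delta^2)$, which blows up as $\alpha\to0$. To get a log depending only on $LR^2/\varepsilon$, I would cap the run at the iteration count needed for the linear term $LR^2(1-\frac1{300}(\mu/L)^{1-\tau})^N$ to fall below $\varepsilon/4$. At that point either the stopping criterion has fired, giving the bound from Step 2, or the direct convergence bound of Theorem~\ref{acc re agm conv} already gives $f_\mu-f_\mu^*\le\varepsilon/4$ plus the absolute-noise term. That term is again $O((L/\mu)^{\gamma^*}\alpha^2\mu R^2)\le\varepsilon/4$ by the same cancellation.
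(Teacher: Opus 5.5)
Your architecture matches the paper's: regularize, treat $\wtgg f_\mu$ as composite noise with parameters $(2\alpha,\alpha\mu R)$, run Algorithm~\ref{alg re-agm} with an inflated $\widehat\alpha$ and a gradient-norm stopping rule, then transfer back. But your slack choice breaks two steps, and shrinking $c$ repairs neither. You take $\beta=\tau$ in Theorem~\ref{re-agm rel interpretation}, i.e.\ $\widehat\alpha=2\alpha+\frac16(\mu/2L)^\tau$.

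First, the admissible range. $2\alpha$ can be as large as $\frac29(\varepsilon/2LR^2)^\tau$, whereas $\frac16(\mu/2L)^\tau=\frac{c^\tau}{6}(\varepsilon/2LR^2)^\tau$ is smaller for every $c\leqslant 1$, and decreasing $c$ makes it worse. So the hypothesis of Theorem~\ref{re-agm rel interpretation} fails. Worse, for $\tau=0$ and $\alpha=\frac19$ you get $\widehat\alpha=\frac{7}{18}>\frac13$. Then $3\widehat\alpha>1$, $\gamma^*<0$ and $m<0$ in the parameter routine of Algorithm~\ref{alg re-agm}, so Theorem~\ref{acc re agm conv} is unavailable in \emph{both} branches of your cap.

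Second, the cancellation. The leading term $K^2\delta_\mu^2/\mu=36(2L/\mu)^{2\tau}\alpha^2\mu R^2$ is bounded by $\frac49 c^{-2\tau}\mu R^2=\frac49 c^{1-2\tau}\varepsilon$, not by $O(c\varepsilon)$. At $\tau=\frac12$ the guaranteed stopping-branch residual is about $\frac49\frac{(1+2\alpha)^2}{(1-2\alpha)^2}\varepsilon$ whatever $c$ is, and this can exceed $\varepsilon/2$.

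The two defects pull against each other. Lowering the constant $\frac16$ to $\frac19$ restores $\widehat\alpha\leqslant\frac13$, but it raises the $\tau=\frac12$ bound to about $\frac{(1+2\alpha)^2}{(1-2\alpha)^2}\varepsilon>\varepsilon$. So no $\tau$-independent multiple of $(\mu/2L)^\tau$ works at both ends; the slack must be tied to the bound on $\alpha$ itself.

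That is what the paper does: it takes $K=\alpha^{-1}$, i.e.\ slack equal to $\alpha$. Then:
\begin{itemize}
\item $\widehat\alpha=3\alpha\leqslant\frac13$ automatically.
\item The stopping threshold is a constant multiple of $\mu R$, so the residual is an absolute constant times $\mu R^2$, and $\mu=\varepsilon/(120R^2)$ finishes the job.
\item The logarithm in Theorem~\ref{main relative interpretation th} has argument of order $LR^2\mu/(K\delta_\mu)^2=L/\mu$, independent of $\alpha$. This is why the paper needs no cap.
\end{itemize}
A slack proportional to $(\varepsilon/2LR^2)^\tau$ would fix your range and residual problems, but not the $\alpha$-dependent logarithm.

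Finally, the second branch of your cap proves the theorem by itself, so the stopping rule is unnecessary. Run Algorithm~\ref{alg re-agm} on $f_\mu$ with, e.g., $\widehat\alpha=2\alpha$. Then $(2L/\mu)^{\gamma^*}\leqslant 1/(3\widehat\alpha)=1/(6\alpha)$, so the noise term of Theorem~\ref{acc re agm conv} is at most $(\frac{1}{6\alpha}+5)\alpha^2\mu R^2=O(\alpha\mu R^2)$. This is essentially the route of Theorem~\ref{re-agm reg}, whose hypothesis follows from the present one because $6^\tau\leqslant 3$ for $\tau\leqslant\frac12$.
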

\begin{proof}
    We will use regularization technique described at Section~\ref{section regularization}. Let us choose
    \begin{equation*}
        \mu = \frac{\varepsilon}{120 R^2}
    \end{equation*}
    and consider $f_{\mu}$~\eqref{reg def}. Then we define:
    \begin{equation*}
        K = \alpha^{-1}.
    \end{equation*}
    Since we use regularization technique we can apply Lemma~\ref{reg noise}:
    \begin{equation*}
        \| \wtgg f_{\mu}(x) - \nabla f_{\mu}(x) \|_2 \leqslant 2 \alpha \|\nabla f_{\mu}(x) \|_2 + \alpha \mu R.
    \end{equation*}
    Of course we will use Theorem~\ref{main relative interpretation th} for Algorithm~\ref{alg re-agm}:
    \begin{equation*}
        \widehat{\alpha} = 2 \alpha + \frac{1}{K} \leqslant 3 \alpha \leqslant \frac{1}{3} \left(\frac{\varepsilon}{2 L R^2} \right)^{\tau}.
    \end{equation*}
    Then we can run Algorithm~\ref{alg re-agm} with $\widehat{\alpha}$ parameter and obtain function residual upper bound: 
    \begin{eqnarray*}
        f_{\mu}(x^{N} | x^0) - f_{\mu}^* 
        & \leqslant &
        \frac{1}{\mu (1 - 2\alpha)^2} \left( \left(6(1 + 2\alpha)K + 1 \right)^2 + 1 \right) \delta^2
        \\
        & \overset{\alpha \leqslant 1 / 9}{\leqslant} &
        \frac{81}{49} \left( \left(\frac{22}{3}K + 1 \right)^2 + 1 \right) \mu \alpha^2 R^2
        \\
        & \overset{K \geqslant 1}{\leqslant} & \frac{81}{49} \frac{25^2 + 9}{9} (K \alpha)^2 \mu R^2
        \\
        & \leqslant & 117 \mu R^2 \leqslant \frac{117}{120} \varepsilon.
    \end{eqnarray*}
    and the method will be stopped after no more than:
    \begin{eqnarray*}
        N 
        & = & 300 \left(\frac{L}{\mu} \right)^{1 - \tau} \ln \left( \frac{4 (1 + \alpha)^2 L^2 R^2}{\left((1 + \alpha)K + 1 \right)^2 - 2} \delta^{-2} \right)
        \\
        & \leqslant & 300 \left(\frac{120 L R^2}{\varepsilon} \right)^{1 - \tau} \ln \left( \frac{16 L^2 R^2}{K^2} \frac{1}{\alpha^2 \mu^2 R^2} \right)
        \\
        & = & 36000 \left(\frac{L R^2}{\varepsilon} \right)^{1 - \tau} \ln \left( \frac{16 \cdot 120^2 L^2 R^4}{\varepsilon^2} \right)
        \\
        & = & 72000 \left(\frac{L R^2}{\varepsilon} \right)^{1 - \tau} \ln \left( \frac{480 L R^2}{\varepsilon} \right).
    \end{eqnarray*}
    Similar to proof of Lemma~\ref{reg base} one can achieve estimation for original function:
    \begin{eqnarray*}
        f(x^N) - f^*
        & \leqslant & f(x^N) + \frac{\mu}{2} \|x^* - x^0 \|_2^2 - f^* \\
        & = & f_{\mu}(x^N | x^0) - f^* = f_{\mu}(x^N | x^0) - f_{\mu}(x^* | x^0) + \frac{\mu}{2} R^2 \\
        & \leqslant & f_{\mu}(x^N | x^0) - f_{\mu}^* + \frac{\varepsilon}{240} \leqslant \frac{117}{120} \varepsilon + \frac{1}{240} \varepsilon \leqslant \varepsilon.
    \end{eqnarray*}
\end{proof}

\section{Lower bounds missing proofs} \label{appendix lower bound}

\begin{lemma} \label{reduction to oracle}
    Let $f$ is $\mu$ strongly convex and $L$ smooth, $\wtgg f$ satisfies absolute noise condition~\ref{absolute noise condition}, then $(\wtgg f(x), f(x) - \delta^2 / \mu)$ is $(2\delta^2 / \mu, 2L, \mu / 2)$ oracle (defined at~\eqref{delta L mu oracle}) for function $f$, $\forall x \in \mathbb{R}^n$.
\end{lemma}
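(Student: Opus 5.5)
We must verify, for all $x,y$,
\begin{equation*}
\frac{\mu}{4}\|x-y\|_2^2 \leqslant f(x) - \left(f(y) - \frac{\delta^2}{\mu} + \langle \wtgg f(y), x-y\rangle\right) \leqslant L\|x-y\|_2^2 + \frac{2\delta^2}{\mu}.
\end{equation*}
The plan is to start from the exact sandwich given by strong convexity~\eqref{strong conv} and smoothness~\eqref{smooth cond},
\begin{equation*}
\frac{\mu}{2}\|x-y\|_2^2 \leqslant f(x) - f(y) - \langle \nabla f(y), x-y\rangle \leqslant \frac{L}{2}\|x-y\|_2^2,
\end{equation*}
and then replace $\nabla f(y)$ by $\wtgg f(y)$. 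The replacement introduces the error term $\langle \zeta(y), x-y\rangle$ with $\zeta = \nabla f - \wtgg f$, where $\|\zeta(y)\|_2\leqslant\delta$ by the absolute noise condition~\eqref{absolute noise condition}. This gives
\begin{equation*}
f(x) - f(y) - \langle \wtgg f(y), x-y\rangle = \big[f(x)-f(y)-\langle\nabla f(y),x-y\rangle\big] + \langle \zeta(y), x-y\rangle .
\end{equation*}

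The only nontrivial step is controlling $|\langle \zeta(y), x-y\rangle|$ without spoiling either curvature constant. Apply Lemma~\ref{fenchel} with $\lambda = \mu/2$:
\begin{equation*}
|\langle \zeta(y), x-y\rangle| \leqslant \frac{\mu}{4}\|x-y\|_2^2 + \frac{1}{\mu}\delta^2 .
\end{equation*}
This choice of $\lambda$ is forced by the lower side. We must lose exactly half the strong convexity (from $\mu/2$ down to $\mu/4$), and the matching constant is $\delta^2/\mu$.

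For the lower bound, the left-hand expression is at least $\frac{\mu}{2}\|x-y\|^2 - \frac{\mu}{4}\|x-y\|^2 - \frac{\delta^2}{\mu}$. Adding the shift $\delta^2/\mu$ that comes from taking $f(y)-\delta^2/\mu$ as the function value exactly cancels the negative constant. This yields the lower bound $\frac{\mu}{4}\|x-y\|_2^2$, which is the $(\cdot,\cdot,\mu/2)$ part of~\eqref{delta L mu oracle}.

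For the upper bound, the expression is at most $\frac{L}{2}\|x-y\|^2 + \frac{\mu}{4}\|x-y\|^2 + \frac{\delta^2}{\mu}$. Adding the shift $\delta^2/\mu$ gives $\frac{\delta^2}{\mu}+\frac{\delta^2}{\mu}=\frac{2\delta^2}{\mu}$ for the constant. For the quadratic part, $\mu\leqslant L$ gives $\frac{L}{2}+\frac{\mu}{4}\leqslant \frac{3L}{4}\leqslant L = \frac{2L}{2}$. So the upper side holds with smoothness constant $2L$ and inexactness $2\delta^2/\mu$, which completes the check of~\eqref{delta L mu oracle}.

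No step is a real obstacle here. The only care needed is choosing $\lambda$ so that the lower side keeps a positive curvature $\mu/2$ (in oracle normalization), and making the function-value shift $-\delta^2/\mu$ absorb the constant from Lemma~\ref{fenchel} on that side.
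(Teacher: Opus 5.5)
Your proof is correct and follows the paper's route. Both start from the strong convexity/smoothness sandwich, replace $\nabla f$ by $\widetilde{\nabla} f$, bound the cross term with Lemma~\ref{fenchel}, and absorb the constant using the shift $-\delta^2/\mu$ and $\mu \leqslant L$. The only difference is on the upper side: the paper takes $\lambda = L$, which yields $L\|x-y\|_2^2 + \frac{\delta^2}{2L} + \frac{\delta^2}{\mu}$, while you reuse $\lambda = \mu/2$, which yields $\frac{3L}{4}\|x-y\|_2^2 + \frac{2\delta^2}{\mu}$; both give the required $(2\delta^2/\mu, 2L, \mu/2)$ bounds.
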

\begin{proof}
    \begin{eqnarray*}
        f(y)
        & \geqslant & f(x) + \langle \nabla f(x), y - x \rangle + \frac{\mu}{2} \|x - y \|_2^2
        \\
        & \geqslant & f(x) + \langle \wtgg f(x), y - x \rangle + \langle \nabla f(x) - \wtgg f(x), y - x \rangle + \frac{\mu}{2} \|x - y \|_2^2
        \\
        & \overset{\ref{fenchel}, \lambda = \mu / 2}{\geqslant} & f(x) + \langle \wtgg f(x), y - x \rangle - \frac{\mu}{4} \|x - y \|_2^2 - \frac{\delta^2}{\mu} + \frac{\mu}{2} \|x - y \|_2^2
        \\
        & \geqslant & f(x) + \langle \wtgg f(x), y - x \rangle + \frac{\mu}{4} \|x - y \|_2^2 - \frac{\delta^2}{\mu}
    \end{eqnarray*}
    \begin{eqnarray*}
        f(y)
        & \leqslant & f(x) + \langle \nabla f(x), y - x \rangle + \frac{L}{2} \|x - y \|_2^2
        \\
        & \leqslant & f(x) + \langle \wtgg f(x), y - x \rangle + \langle \nabla f(x) - \wtgg f(x), y - x \rangle + \frac{L}{2} \|x - y \|_2^2
        \\
        & \overset{\ref{fenchel}, \lambda = L}{\leqslant} & f(x) + \langle \wtgg f(x), y - x \rangle - \frac{L}{2} \|x - y \|_2^2 + \frac{\delta^2}{2L} + \frac{L}{2} \|x - y \|_2^2
        \\
        & \leqslant & f(x) + \langle \wtgg f(x), y - x \rangle + L \|x - y \|_2^2 + \frac{\delta^2}{2L}
    \end{eqnarray*}
    That is:
    \begin{eqnarray*}
        \frac{\mu}{4} \|x - y \|_2^2
        & \leqslant & f(x) - \left(f(y) - \frac{\delta^2}{\mu} + \langle \wtgg f(x), y - x \rangle \right)
        \\
        & \leqslant & L \|x - y \|_2^2 + \frac{\delta^2}{2L} + \frac{\delta^2}{\mu} \overset{\mu \leqslant L}{\leqslant} L \|x - y \|_2^2 + \frac{2\delta^2}{\mu}
    \end{eqnarray*}
\end{proof}

\begin{theorem}[Insignificant modification of Theorem 8, p. 31~\cite{devolder2013first}] \label{lower bound oracle absolute}
    Let $\mathcal{A}$ is first order method~\eqref{linear first order method}.
    Assume that the bounds on the performance of this method, as applied to a problem equipped with an $(\delta, L, \mu)$ oracle, are given by inequality:
    \begin{eqnarray*}
        f(x^N) - f^*
        & \leqslant & C_1 LR^2 \exp \left(-\mathbf{A_0} \left(\frac{\mu}{L} \right)^{p} N \right)
        + C_2 \left(\frac{L}{\mu}\right)^{q} \delta.
    \end{eqnarray*}
    where $C_1, C_2, \mathbf{A_0}$ - constants.
    Then the inequalities must hold:
    \begin{equation*}
        q \geqslant 1 - p.
    \end{equation*}
\end{theorem}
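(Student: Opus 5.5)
The plan is to reduce the statement to the classical lower bound for \emph{nonsmooth} strongly convex minimization. We use the fact, due to Devolder, Glineur and Nesterov, that an exact subgradient oracle of a Lipschitz function is a $(\delta, L, \mu)$ oracle with $L$ of order $M^2/\delta$. Then we compare the rate that the hypothesised method would deliver with the known complexity lower bound, and let $\delta \to 0$.

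\textbf{Step 1 (oracle).} Take the standard hard instance for first order methods on the class of $\mu$-strongly convex functions that are $M$-Lipschitz on a ball of radius $R$ (Nemirovski--Yudin / Nesterov). After $N$ oracle calls, every method of the form~\eqref{linear first order method} has
\begin{equation*}
f(x^N) - f^* \geqslant c \frac{M^2}{\mu N}.
\end{equation*}
For such $f$ and any subgradient $g(y)$, convexity and the Lipschitz bound give
\begin{equation*}
f(x) - f(y) - \langle g(y), x - y\rangle \leqslant 2M\|x - y\|_2 \leqslant \frac{M^2}{\delta}\|x - y\|_2^2 + \delta ,
\end{equation*}
together with the strong convexity lower bound. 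Hence $(g, f)$ is a $(\delta, L_\delta, \mu)$ oracle with $L_\delta = 2M^2/\delta$, for every $\delta > 0$. This is exactly the mechanism of Theorem 8 in~\cite{devolder2013first}; the only modification is that we carry the strong convexity parameter $\mu$ along.

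\textbf{Step 2 (apply the hypothesis).} Write $t = \mu\delta/(2M^2)$, so that $\mu/L_\delta = t$. The assumed guarantee gives
\begin{equation*}
f(x^N) - f^* \leqslant C_1 \frac{2M^2R^2}{\delta} \exp\left(-\mathbf{A_0} t^{p} N\right) + C_2 t^{-q}\delta .
\end{equation*}
Choose
\begin{equation*}
N = \left\lceil \mathbf{A_0}^{-1} t^{-p} \ln\left(\frac{M^2R^2}{\delta^2}\right) \right\rceil .
\end{equation*}
Then the exponential term is $O(\delta)$, and the total error is $O\left(t^{-q}\delta\right) = O\left(\frac{M^2}{\mu} t^{1-q}\right)$.

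\textbf{Step 3 (compare).} The lower bound with this same $N$ gives
\begin{equation*}
c\frac{M^2}{\mu N} \gtrsim \frac{M^2}{\mu} \frac{t^{p}}{\ln(M^2R^2/\delta^2)} .
\end{equation*}
Combining with Step 2 yields
\begin{equation*}
t^{p} \lesssim t^{1-q}\, \ln\left(\frac{\mu^2 R^2}{M^2 t^2}\right)
\end{equation*}
for all small $t$, with $M,\mu,R$ fixed. If $q < 1-p$, then $t^{p-(1-q)} \to \infty$ as $t \to 0$, and a power dominates the logarithm, which is a contradiction. Hence $q \geqslant 1-p$.

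\textbf{Main obstacle.} The delicate point is that the hard nonsmooth instance lives on a bounded set, while~\eqref{delta L mu oracle} is required for all $x$. I would handle this as in~\cite{devolder2013first}: restrict to a ball of radius $R$, which contains the iterates of the resisting-oracle construction. Alternatively, one can replace $f$ outside the ball by a quadratic extension, which keeps the oracle inequalities with constants changed by absolute factors. A second, smaller point is that the ceiling in $N$ and the logarithmic factor must be checked to be absorbed uniformly as $t \to 0$. This follows because the constants $C_1, C_2, \mathbf{A_0}$ do not depend on $\delta$ or $L$.
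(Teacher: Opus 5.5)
Your route is the one the paper relies on. The paper gives no argument of its own and defers to the second half of the proof of Theorem~8 in~\cite{devolder2013first}: exact subgradients of a Lipschitz, strongly convex function form a $(\delta, O(M^2/\delta), \mu)$ oracle, which is then played against the $\Omega(M^2/(\mu N))$ nonsmooth lower bound. However, one step fails as written. With your choice of $N$, the exponential term is only $O(\delta)=O(\tfrac{M^2}{\mu}t)$, and that is $O(t^{-q}\delta)$ only when $q\geqslant 0$. For $q<0$ the total error is of order $\tfrac{M^2}{\mu}t$, so Step~3 gives only $t^{p-1}\lesssim\ln(1/t)$. That is no contradiction once $p\geqslant 1$. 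For instance, $p=1$, $q=-\tfrac12$ violates $q\geqslant 1-p$ but survives your argument, and $p\geqslant 1$ lies in the range $[1/2,+\infty)$ the paper allows. (For the paper's own application, where $q\in[0,1]$, your argument already suffices.) The repair is to push the exponential term below $t^{-q}\delta$ rather than below $\delta$, e.g.\ $N=\lceil \mathbf{A_0}^{-1}t^{-p}\ln(\tfrac{M^2R^2}{\delta^2}\max\{1,t^{q}\})\rceil$. The logarithm stays $O(\ln(1/t))$, the total error becomes $O(t^{-q}\delta)$ for either sign of $q$, and Step~3 then goes through.

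Second, the obstacle you flag is genuine in this paper's unconstrained setting. Definition~\eqref{delta L mu oracle} must hold for all $x\in\mathbb{R}^n$, whereas in the constrained framework of~\cite{devolder2013first} the inequalities are only imposed on the feasible set. So restricting to a ball does not produce an admissible instance, and a strongly convex function is never globally Lipschitz. No truncation or extension is needed, though. The standard hard instance is $f(x)=\gamma\max_{1\leqslant i\leqslant N+1}x^{(i)}+\tfrac{\mu}{2}\|x\|_2^2$. Apply your Step~1 estimate only to the max-term, which is globally $\gamma$-Lipschitz, and keep the quadratic exactly. This gives a $(\delta,\,2\gamma^2/\delta+\mu,\,\mu)$ oracle on all of $\mathbb{R}^n$. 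Take $x^0=0$ and the resisting subgradient $\mu x+\gamma e_i$, with $i$ the smallest maximizing index. This keeps $x^N\in\mathrm{span}\{e_1,\dots,e_N\}$, hence $f(x^N)-f^*\geqslant\gamma^2/(2\mu(N+1))$ with $R=\gamma/(\mu\sqrt{N+1})\leqslant\gamma/\mu$. Steps~2--3 then run with $M=\gamma$ and the fixed bound $R\leqslant\gamma/\mu$.
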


\begin{remark} \label{remark for devolder theorem modification}
    We provided small modification of Theorem 8 from~\cite{devolder2013first}. The essence of such a change:
    \begin{equation*}
        \min \left \lbrace C_1 \frac{LR^2}{N^{p_1}}, C_2 LR^2 \exp \left(-\mathbf{A_0} \left(\frac{\mu}{L} \right)^{p_2} N \right) \right \rbrace \longrightarrow \exp \left(\mathbf{A_0} \left(\frac{\mu}{L} \right)^{p_2} N \right), 
    \end{equation*}
    \begin{equation*}
         \min \left \lbrace C_3 N^{q_1} \delta, C_4 \left(\frac{L}{\mu}\right)^{q_2} \delta \right \rbrace \longrightarrow  C_2 \left(\frac{L}{\mu}\right)^{q} \delta.
    \end{equation*}
    Such modification does not change the proof and result of the theorem, but adds flexibility to its use. The proof can be obtained by leaving the second part of the proof of the original theorem.
\end{remark}

\begin{theorem}[Theorem~\ref{lower bound relative noise mu > 0 text}] \label{lower bound relative noise mu > 0}
    Let $f$ is $\mu$-strongly convex~\eqref{strong conv} and $L$-smooth~\eqref{smooth cond}, algorithm $\mathcal{A}$ is first order gradient method such that using $\wtgg f$ satisfying relative noise~\eqref{relative noise condition} with magnitude $\alpha_0$, produces $x^N$, such that:
    \begin{equation*}
        f(x^N) - f^* \leqslant C_0 LR^2 \exp \left(- A_0 \left(\frac{\mu}{L} \right)^{p(\alpha_0)} N \right),
    \end{equation*}
    where:
    \begin{equation*}
        p: [0; 1) \to [1 / 2; + \infty).
    \end{equation*}
    Then:
    \begin{equation*}
        p(\alpha_0) \geqslant 1 - 2 \cdot \min \left \lbrace 1/4, \log_{\mu / L}(\alpha) \right \rbrace.
    \end{equation*}
\end{theorem}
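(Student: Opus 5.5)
We argue by contradiction, turning absolute noise into relative noise (Proposition~\ref{propostion:rel_interpret}) and then invoking the Devolder lower bound, Theorem~\ref{lower bound oracle absolute}, through Lemma~\ref{reduction to oracle}. Write $\alpha = (\mu/L)^{\tau}$ with $\tau = \log_{\mu/L}(\alpha)$, and put $\tau' = \min\lbrace 1/4, \tau\rbrace$. The goal is $p(\alpha) \geqslant 1 - 2\tau'$. Suppose instead that $\mathcal{A}$ attains $p(\alpha) < 1 - 2\tau'$ under relative noise of magnitude $\alpha$.

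\textbf{Step 1 (turning $\mathcal{A}$ into an absolute-noise method).} Take any $\mu$-strongly convex, $L$-smooth $f$ with an oracle satisfying \eqref{absolute noise condition} with level $\delta$. Set $K = \alpha^{-1} = (L/\mu)^{\tau}$. By Proposition~\ref{propostion:rel_interpret}, at every point with $\|\nabla f(x)\|_2 \geqslant K\delta$ the oracle is relative with parameter $1/K = \alpha$. Run $\mathcal{A}$ with parameter $\alpha$ and stop it, as in Theorem~\ref{main relative interpretation th}, once $\|\wtgg f(x^k)\|_2 \leqslant (K+1)\delta$.

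Before stopping, the relative guarantee applies. It gives
\begin{equation*}
f(x^N) - f^* \leqslant C_0 LR^2 \exp\left(-A_0 (\mu/L)^{p(\alpha)} N\right).
\end{equation*}
After stopping, the P{\L} argument of Lemma~\ref{PL noised} gives $f - f^* = O(K^2 \delta^2/\mu) = O((L/\mu)^{2\tau}\delta^2/\mu)$. Combining the two, the modified method satisfies
\begin{equation*}
f(x^N)-f^* \leqslant C_1 LR^2\exp\left(-A_0(\mu/L)^{p(\alpha)}N\right) + C_2 (L/\mu)^{2\tau}\,\frac{\delta^2}{\mu}.
\end{equation*}

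A technical point is that the relative guarantee must hold along the actual trajectory, which has only local relative error. I would handle this the same way Theorem~\ref{main relative interpretation th} does: declare a noise field that agrees with the oracle on the visited points where the gradient is large. When $\tau > 1/4$, use $\alpha' = (\mu/L)^{1/4} \geqslant \alpha$ instead. Since $\mathcal{A}$ also works for smaller noise, the same argument then runs with exponent $2\tau' = 1/2$.

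\textbf{Step 2 (applying the lower bound).} By Lemma~\ref{reduction to oracle}, an absolute-noise oracle induces a $(\delta', 2L, \mu/2)$ oracle with $\delta' = 2\delta^2/\mu$. So the method above is a first-order method on an inexact-oracle class whose bound has the form of Theorem~\ref{lower bound oracle absolute}, with $q = 2\tau'$ and $p = p(\alpha)$; constant rescalings of $L$ and $\mu$ only change the constants. Theorem~\ref{lower bound oracle absolute} then forces $2\tau' \geqslant 1 - p(\alpha)$, contradicting the assumption, and this proves the claim.

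\textbf{Main obstacle.} The hard part is the reverse direction in Step 2. Lemma~\ref{reduction to oracle} only embeds absolute gradient noise into the $(\delta,L,\mu)$ model. Devolder's lower bound, however, is proved over general $(\delta,L,\mu)$ oracles, so I need its hard instance to be realizable, up to constants, by absolute gradient noise. I would check that the worst-case construction in \cite{devolder2013first} (a smoothed quadratic with oracle from a nearby function) has gradient perturbation of norm $O(\sqrt{\mu\delta'})$. That is consistent with $\delta' \sim \delta^2/\mu$. If this check fails, I would restrict to the gradient-noise subclass and repeat the second part of Devolder's proof directly, as the Remark after Theorem~\ref{lower bound oracle absolute} suggests.
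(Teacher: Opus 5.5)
Your plan reproduces the paper's proof step for step. You stop once $\|\wtgg f\|_2\leqslant (K+1)\delta$, read the absolute noise as relative noise of size $1/K$ before that, embed into the $(2\delta^2/\mu,2L,\mu/2)$ model via Lemma~\ref{reduction to oracle}, invoke Theorem~\ref{lower bound oracle absolute}, and finish with $p\geqslant 1/2$. The obstacle you flag is a genuine gap, and the paper's proof passes over it in exactly the same way. Theorem~\ref{lower bound oracle absolute} constrains methods whose guarantee holds for \emph{every} $(\delta,L,\mu)$-oracle. Your stopped method is guaranteed only on oracles generated by absolute gradient noise on smooth strongly convex functions. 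Lemma~\ref{reduction to oracle} gives the inclusion of that subclass, not the converse.

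Neither of your repairs can work, because on that subclass the trade-off $q\geqslant 1-p$ appears to be false. Given $R\geqslant\|c-x^*\|_2$, run Nesterov's projected method on $B(c,R)$. There the error enters only through $|\langle \wtgg f(x)-\nabla f(x),y-z\rangle|\leqslant 2R\delta$. To my understanding, d'Aspremont's approximate-gradient analysis shows this is not accumulated, giving $f(y_k)-f^*\leqslant O(LR^2/k^2)+O(R\delta)$. With $k=O(\sqrt{L/\mu})$, strong convexity then gives $\|y_k-x^*\|_2\leqslant R/2$ whenever $R\geqslant C\delta/\mu$. Restarting on $B(y_k,R/2)$ reaches $f-f^*=O(\delta^2/\mu)$ at the accelerated rate, i.e.\ $p=1/2$, $q=0$. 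The projections keep the iterates in $x^0$ plus the span of the oracle answers, so this is an admissible first-order method.

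This is consistent with where the bound of \cite{devolder2013first} comes from, as I understand that argument. Its hard instances are nonsmooth strongly convex functions with exact subgradients of size $M$, read as $(\delta',L,\mu)$-oracles with $L\sim M^2/\delta'$. At two query points within distance $\ll M/L$ on opposite sides of a kink, the answers differ by $\sim M\sim\sqrt{L\delta'}$. An $L$-smooth gradient changes by much less there, so the absolute noise needed is of order $\sqrt{L\delta'}$. That is a factor $\sqrt{L/\mu}$ above the $O(\sqrt{\mu\delta'})$ your check requires. No rerun of Devolder's argument restricted to the subclass can succeed either, since its conclusion is violated by the restarted method above.

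Consequently, no lower bound for absolute-noise methods can push this reduction past the trivial $p(\alpha)\geqslant 1/2$. The regime $\alpha>(\mu/L)^{1/4}$ needs a hard instance built directly for relative noise.

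Your handling of the trajectory-only relative error is fine for a deterministic $\mathcal{A}$. A minor point: for $\tau>1/4$ your switch to $\alpha'=(\mu/L)^{1/4}>\alpha$ goes the wrong way, since $\mathcal{A}$ tuned for $\alpha$ is not guaranteed at the larger level $\alpha'$. That case is just $p\geqslant 1/2$, which is already part of the hypothesis.
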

\begin{proof}
    Consider absolute noise $\zeta_a(x)$ with $\delta > 0$ and $\wtgg f(x) = \nabla f(x) + \zeta_a(x)$. Let us try to solve the minimization problem~\ref{optim} using the method $\mathcal{A}$ and noised gradient estimation $\wtgg f$. We can use relative interpretation - result of Theorem~\ref{main relative interpretation th}, that is we use the following algorithm $\widetilde{\mathcal{A}}$:
    \begin{align*}
        & \text{If: } \|\nabla f(x^N) \|_2 > K \delta \\
        & \quad \quad \text{continue running } \mathcal{A} \\
        & \text{Else: } \\
        & \quad \quad x^{N + 1} = x^N
    \end{align*}
    where:
    \[
        K = \left(\frac{L}{\mu} \right)^{q/2}, \; 0 \leqslant q \leqslant 1, \quad \widehat{\alpha} = \frac{1}{K}.
    \]
    Theorem~\ref{main relative interpretation th} guarantees for algorithm $\mathcal{A}$:
    \[
        f(x^N) - f^* \leqslant \left((K + 1)^2 + 1\right) \frac{\delta^2}{\mu} \overset{K \geqslant 1}{\leqslant} 5K^2 \frac{\delta^2}{\mu} = 25 \left(\frac{L}{\mu} \right)^{q} \frac{\delta^2}{\mu}.
    \]
    One can see, that algorithm $\widetilde{\mathcal{A}}$ has the following convergence rate:
    \begin{eqnarray*}
        f(x^N) - f^*
        & \leqslant & C_0 LR^2 \exp \left(-A_0 \left(\frac{\mu}{L} \right)^{p(\widehat{\alpha})} N \right) + 25 \left(\frac{L}{\mu}\right)^{q} \frac{\delta^2}{\mu} \\
        & \overset{L \to 2L, \mu \to \mu / 2}{\leqslant} & C_0 (2L)R^2 \exp \left(-A_0 \left(\frac{\mu}{4L} \right)^{p(\widehat{\alpha})} N \right) + 25 \left(\frac{4L}{\mu}\right)^{q} \frac{2\delta^2}{\mu}
    \end{eqnarray*}
    where $C_3$ is positive absolute constant large enough for each pair $\mu, L$. From Lemma~\ref{reduction to oracle} we obtain, that $(\wtgg f(x), f(x) - \frac{\delta^2}{\mu})$ is $(2\delta^2 / \mu, 2L, \mu / 2)$ oracle. From convergence rate of algorithm $\widetilde{\mathcal{A}}$ provided above and Theorem~\ref{lower bound oracle absolute}: $p(\widehat{\alpha}) \geqslant 1 - q$. From $\widehat{\alpha}$ definition: $q = 2 \log_{\mu / L}(\widehat{\alpha})$, thus:
    \begin{equation*}
        p(\widehat{\alpha}) \geqslant 1 - q = 1 - 2 \log_{\mu / L}(\widehat{\alpha}), \quad \alpha \geqslant \mu / L.
    \end{equation*}
    Using lower bound convergence for smooth, strongly convex functions:
    \begin{equation*}
        p(\widehat{\alpha}) \geqslant 1 - 2 \cdot \min \left \lbrace 1/4, \log_{\mu / L}(\widehat{\alpha}) \right \rbrace, \quad 0 \leqslant \alpha \leqslant 1.
    \end{equation*}
\end{proof}

\begin{theorem}[Theorem~\ref{lower bound relative noise mu = 0 text}] \label{lower bound relative noise mu = 0}
    Let $f$ is convex~\eqref{convexity} and $L$-smooth~\eqref{smooth cond}, algorithm $\mathcal{A}$ is first order gradient method.

    1. If solving problem $f(x^N) - f^* \leqslant \varepsilon$ using the algorithm $\mathcal{A}$ in the presence of relative noise~\ref{relative noise condition} with magnitude $\alpha$ requires
    \begin{equation*}
        N_{\varepsilon} \leqslant C_1 \left(\frac{L R^2}{\varepsilon} \right)^{1 / p(\alpha, \varepsilon)}, \quad p(\alpha, \varepsilon) \leqslant 2.
    \end{equation*}
    iterations, then:
    \begin{equation*}
        p(\alpha, \varepsilon) \leqslant \frac{1}{1 - 2 \min \left \lbrace 1/4, \log_{\varepsilon / LR^2}(\alpha) \right \rbrace}.
    \end{equation*}

    2. If algorithm $\mathcal{A}$ in the presence of relative noise~\ref{relative noise condition} with magnitude $\alpha$ has convergence:
    \begin{equation*}
        f(x^N) - f^* \leqslant C_2 \frac{LR^2}{N^{p(\alpha)}},
    \end{equation*}
    then:
    \begin{equation*}
        p(\alpha) \leqslant 1.
    \end{equation*}
\end{theorem}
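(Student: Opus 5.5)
The plan is to reduce the convex case to the strongly convex lower bound of Theorem~\ref{lower bound relative noise mu > 0}, using restarts; this is the inverse of the regularization technique of Section~\ref{section regularization}.

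\textbf{Part 1.} Suppose $\mathcal{A}$ reaches accuracy $\varepsilon$ on convex $L$-smooth functions under relative noise $\alpha$ within $N_\varepsilon \leqslant C_1 (LR^2/\varepsilon)^{1/p}$ iterations. Apply it to a $\mu$-strongly convex $f$ and restart. Strong convexity gives $\frac{\mu}{2}\|x-x^*\|_2^2 \leqslant f(x)-f^*$. If we run $\mathcal{A}$ until $f(x)-f^* \leqslant \varepsilon = \frac{1}{4}\mu R^2$, the new distance satisfies $R_{\text{new}}^2 \leqslant R^2/2$ and the function gap also halves. This takes $C_1(4L/\mu)^{1/p}$ iterations per restart. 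Since a restart uses only the same noisy oracle, the relative-noise assumption is preserved. After $k$ restarts we get
\[
f(x^N)-f^* \leqslant LR^2 2^{-k}, \qquad N = k\,C_1 (4L/\mu)^{1/p}.
\]
This is a linear rate of the form required by Theorem~\ref{lower bound relative noise mu > 0}, with exponent $p_{sc}(\alpha)=1/p(\alpha,\varepsilon)$.

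Theorem~\ref{lower bound relative noise mu > 0} then gives $1/p \geqslant 1-2\min\{1/4,\log_{\mu/L}\alpha\}$. The ratio $\mu/L$ here corresponds to $\varepsilon/(LR^2)$ up to a constant factor, because $\varepsilon=\mu R^2/4$. Substituting turns $\log_{\mu/L}\alpha$ into $\log_{\varepsilon/LR^2}\alpha$ and yields the claimed bound on $p(\alpha,\varepsilon)$. The hypothesis $p\leqslant 2$ is exactly what keeps $1/p\geqslant 1/2$ inside the admissible range of $p(\cdot)$ in that theorem.

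\textbf{Part 2.} A rate $C_2LR^2/N^{p}$ means $N_\varepsilon = C_2^{1/p}(LR^2/\varepsilon)^{1/p}$, so Part 1 applies. The bound has to hold for every $\varepsilon$, including $\varepsilon\to 0$, where $\log_{\varepsilon/LR^2}\alpha\to 0$ for fixed $\alpha>0$. The right-hand side of Part 1 then tends to $1$, which forces $p(\alpha)\leqslant 1$. If the intended statement also covers $\alpha=0$, the bound degenerates to the classical one, but the claim is really about $\alpha>0$.

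\textbf{Main obstacle.} The delicate step is the bookkeeping of constants in the restart argument. The constant factor between $\mu/L$ and $\varepsilon/LR^2$ (the factor $4$) has to be absorbed into the $\log$ base without changing the exponent. This works asymptotically as $\mu/L\to 0$, which is how the lower bound in Theorem~\ref{lower bound relative noise mu > 0} is meant. One also has to check that the relative-noise class of functions used in the strongly convex lower bound is contained in the convex class the hypothesis refers to, which is immediate.
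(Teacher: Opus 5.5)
Your proposal is correct and follows essentially the same route as the paper. Both restart with per-stage target $\varepsilon_0=\mu R^2/4$ to turn the convex-case complexity $C_1(4L/\mu)^{1/p}$ into a linear rate with exponent $1/p(\alpha,\varepsilon_0)$, invoke Theorem~\ref{lower bound relative noise mu > 0}, translate $\mu/L$ into $\varepsilon/LR^2$, and let $\varepsilon\to 0$ for Part~2; the factor $4$ in the logarithm base is handled just as loosely in the paper, which relabels $\widehat{\varepsilon}=4\varepsilon_0$, so your asymptotic remark is at the same level of rigor.
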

\begin{proof}
    1. To begin with, we note that we assume, that $\varepsilon \leqslant LR^2$, because from smoothness~\eqref{smooth cond} implies $f(x^0) - f^* \leqslant LR^2 / 2$. We will use restarts technique as a reduction strongly convex optimization problem to a convex one. Let us consider a $\mu$ strongly convex and $L$ smooth function $f$. Algorithm $\mathcal{A}$ can be applied to function $f$ and we will run it for $N_0$, so that:
    \begin{equation*}
        f(x^{N_{\varepsilon_0}}) - f^* \overset{\eqref{strong conv}}{\leqslant} \frac{\mu}{4} R^2 \leqslant \frac{1}{2} \left( f(x^0) - f^* \right),
    \end{equation*}
    where $\varepsilon_0 = \frac{\mu}{4} R^2 \leqslant \frac{LR^2}{4}$ and from theorem condition:
    \begin{equation*}
        N_{\varepsilon_0} \leqslant C_1 \left(\frac{L R^2}{\varepsilon_0} \right)^{1 / p(\alpha, \varepsilon_0)} = C_1 \left(\frac{4L}{\mu} \right)^{1 / p(\alpha, \varepsilon_0)}.
    \end{equation*}
    Thus, to solve the problem $f(x^N) - f^* \leqslant \varepsilon$ it will require:
    \begin{eqnarray*}
        N & = & N_{\varepsilon_0} \cdot \log_2\left( \frac{f(x^0) - f^*}{\varepsilon} \right)
        \leqslant C_1 \left(\frac{4L}{\mu} \right)^{1 / p(\alpha, \varepsilon_0)} \log_2\left( \frac{f(x^0) - f^*}{\varepsilon} \right) 
        \\
        & \overset{\eqref{smooth cond}}{\leqslant} & 16 C_1 \left(\frac{L}{\mu} \right)^{1 / p(\alpha, \varepsilon_0)} \log_2\left( \frac{L R^2}{2 \varepsilon} \right) \text{ iterations}.
    \end{eqnarray*}
    Using Theorem~\ref{lower bound relative noise mu > 0} as the lower bound we conclude:
    \begin{equation*}
        N \geqslant C_3 \left( \frac{L}{\mu} \right)^{\gamma(\alpha, \mu, L)} \ln \left( C_4 \frac{LR^2}{\varepsilon} \right), \quad \gamma(\alpha, \mu, L) \geqslant 1 - 2 \cdot \min \left \lbrace 1/4, \log_{\mu / L}(\alpha) \right \rbrace.
    \end{equation*}
    for some absolute constants $C_3, C_4 > 0$, that is:
    \begin{equation*}
        16 C_1 \left(\frac{L}{\mu} \right)^{1 / p(\alpha, \varepsilon_0)} \log_2\left( \frac{L R^2}{2\varepsilon} \right) \leqslant C_3 \left( \frac{L}{\mu} \right)^{\gamma(\alpha, \mu, L)} \ln \left( C_4 \frac{LR^2}{\varepsilon} \right). 
    \end{equation*}
    From the inequality above follows:
    \begin{equation*}
        p(\alpha, \varepsilon_0) \leqslant \frac{1}{\gamma(\alpha, \mu, L)} \leqslant \frac{1}{1 - 2 \cdot \min \left \lbrace 1/4, \log_{\mu / L}(\alpha) \right \rbrace}.
    \end{equation*}
    Since $\mu = \frac{4 \varepsilon_0}{R^2}$:
    \begin{equation*}
        p(\alpha, \varepsilon_0) \leqslant \frac{1}{1 - 2 \cdot \min \left \lbrace 1/4, \log_{4 \varepsilon_0 / L R^2}(\alpha) \right \rbrace}.
    \end{equation*}
    Definition $\varepsilon_0$ implies $\varepsilon_0 \leqslant LR^2 / 4$, so (making substitution $\widehat{\varepsilon} = 4 \varepsilon_0 \leqslant LR^2$):
    \begin{equation*}
        p(\alpha, \widehat{\varepsilon}) \leqslant \frac{1}{1 - 2 \cdot \min \left \lbrace 1/4, \log_{ \widehat{\varepsilon} / L R^2}(\alpha) \right \rbrace}.
    \end{equation*}

    2. From previous point, we obtain lower bound on convergence rate, depending on both $\alpha$ and $\varepsilon$, which takes place $\forall \varepsilon \leqslant LR^2$, then we can take minimum on left side of inequality:
    \begin{equation*}
        p(\alpha) \leqslant \inf_{0 < \varepsilon \leqslant LR^2} \frac{1}{1 - 2 \cdot \min \left \lbrace 1/4, \log_{ \widehat{\varepsilon} / L R^2}(\alpha) \right \rbrace} = 1.
    \end{equation*}
    That is, for fixed $\alpha$ we can guarantee convergence lower bound $\Omega \left( LR^2 / N \right)$, described at the theorem condition.
\end{proof}

\end{appendix}

\end{document}